\documentclass[11pt,oneside,english]{article}
\usepackage[T1]{fontenc}
\usepackage[utf8]{inputenc}
\usepackage{babel}
\usepackage{amstext}
\usepackage{enumitem}
\usepackage{amsthm}
\usepackage{amssymb}
\usepackage{stmaryrd}
\usepackage[unicode=true,pdfusetitle,
 bookmarks=true,bookmarksnumbered=false,bookmarksopen=false,
 breaklinks=false,pdfborder={0 0 1},backref=false,colorlinks=false]
 {hyperref}

 \usepackage{changes}
\usepackage{amsmath,mathtools,empheq}
\makeatletter
\numberwithin{equation}{section}
\numberwithin{figure}{section}
\theoremstyle{plain}
\newtheorem{thm}{\protect\theoremname}[section]
\theoremstyle{definition}
\newtheorem{defn}[thm]{\protect\definitionname}
\theoremstyle{remark}
\newtheorem{rem}[thm]{\protect\remarkname}
\theoremstyle{plain}

\theoremstyle{plain}
\newtheorem{prop}[thm]{\protect\propositionname}
\theoremstyle{plain}
\newtheorem{lem}[thm]{\protect\lemmaname}
\theoremstyle{plain}
\newtheorem{cor}[thm]{\protect\corollaryname}

\usepackage[a4paper]{geometry}
\usepackage{amsfonts}

\usepackage{times}

\allowdisplaybreaks[3]

\newcommand{\dd}{\mathrm d}
\newcommand{\E}{\mathbb E}

\newcommand{\Div}{\operatorname{div}}

\newcommand{\one}{\mathbf 1}

\makeatother

\providecommand{\assumptionname}{Assumption}
\providecommand{\corollaryname}{Corollary}
\providecommand{\definitionname}{Definition}
\providecommand{\lemmaname}{Lemma}
\providecommand{\propositionname}{Proposition}
\providecommand{\remarkname}{Remark}
\providecommand{\theoremname}{Theorem}

\begin{document}
\title{Well-posedness and large deviations for the obstacle problem of first-order stochastic conservation laws
 \thanks{
The first author is also supported by the Natural Science Foundation of Shanghai (Grant No.25ZR1402408). The second author is supported by Open Foundation of the State Key Laboratory of Mathematical Sciences (Grant No. 09), Beijing Institute of Technology Research Fund Program for Young Scholars and MIIT Key Laboratory of Mathematical Theory and Computation in Information Security.} }
\author{Ruoyang Liu\thanks{Department of Mathematics, Shanghai Normal University, Shanghai, China (Email: {\tt ryliu@shnu.edu.cn}).}
\and
Rangrang Zhang \thanks{ Corresponding author, School of Mathematics and Statistics,
Beijing Institute of Technology, Beijing 100081, China (Email:{\tt rrzhang@amss.ac.cn}).}
}
\date{}
\maketitle

\vspace{-1ex}
\begin{abstract}

This paper studies the obstacle problem for first-order  scalar conservation laws driven by multiplicative noise.
By adapting a barrier-substitution strategy to the kinetic formulation, we permit the reflection measure to be a general Radon measure and eliminate the usual obstacle-noise compatibility condition.
We establish the existence of kinetic solutions for continuous obstacles and further derive $L^1$-contraction and uniqueness under stronger spatial regularity.
Moreover, we prove a Freidlin--Wentzell large deviation principle in $L^1(0,T;L^1(\mathbb{T}^N))$.
Unlike existing large-deviation arguments based on control-uniform penalization of the obstacle, we work directly with the reflected skeleton equation and retain the possibly singular Radon reflection measure throughout, while a viscous approximation provides the spatial $H^1$-regularity required for compactness before passing to the inviscid limit.
Our results provide a hyperbolic counterpart to the large deviation theory for parabolic reflected SPDEs developed by Matoussi, Sabbagh, and Zhang (2021).

\medskip

\noindent\textbf{Keywords}: stochastic first-order conservation laws, obstacle problem,
kinetic solutions, well-posedness, large deviations
\medskip

\noindent\textbf{MSC (2020):} {60H15; 35R60; 60F10; 35R35}

\end{abstract}
\tableofcontents
\section{Introduction}\label{sec:introduction}
In this paper, we study the well-posedness and a Freidlin--Wentzell large deviation principle for stochastic scalar conservation laws with a lower obstacle on the $N$-dimensional torus $\mathbb T^N$.
Let $T>0$ and set $Q_T=\mathbb T^N\times(0,T]$.
Given a lower obstacle $\psi=\psi(x,t)$, we consider the obstacle problem
\begin{equation}\label{eq:intro-reflected-model}
\left\{
\begin{aligned}
\dd u+\Div A(u)\dd t &=\Phi(u)\dd W(t)+\dd\nu, &&\text{in }Q_T,\\
u&\geq\psi, &&\text{a.e. in }Q_T,\\
u(0)&=u^0.
\end{aligned}
\right.
\end{equation}
Here $A:\mathbb R\to\mathbb R^N$ is the flux, $W$ is a cylindrical Wiener process on a (separable) Hilbert space $U$, and $\Phi$ is the multiplicative noise coefficient. The unknown consists of a pair $(u,\nu)$, where $u$ is the state variable and $\nu$ is a nonnegative Radon measure that enforces the constraint $u\geq\psi$.
We assume that $u^0\in L^\infty(\mathbb T^N)$ and  that $u^0\geq\psi(\cdot,0)$ almost everywhere, so that the initial datum is compatible with the obstacle.
The precise assumptions on $A$, $\Phi$, and $\psi$ are stated in Section~2.

In the obstacle-free case, the deterministic first-order scalar conservation laws are well studied in the PDE literature. 
Comprehensive accounts of entropy solution theory can be found in the  monograph \cite{Dafermos} and the works \cite{AWC06,O,P-V}, together with references therein.
Lions, Perthame, and Tadmor \cite{L-P-T} introduced a kinetic formulation equivalent to entropy admissibility for multidimensional scalar conservation laws.
For the stochastic conservation laws with multiplicative noise, Feng and Nualart \cite{F-N} introduced a notion of strong entropy solutions for the Cauchy problem over the whole spatial space, and they established the well-posedness of stochastic strong entropy solutions only in the one-dimensional space case.
Debussche and Vovelle \cite{DV10-publish} then established well-posedness for periodic multidimensional equations with additive or multiplicative noise by using kinetic solutions.
These results do not include an obstacle or a reflection measure.


Obstacle problems for deterministic first-order equations have been studied in several settings.
Barth\'elemy \cite{Barthelemy1988} considered a quasilinear equation with a measurable obstacle.
Levi and Vallet \cite{LeviVallet2001}, and later Levi \cite{Levi2001}, developed entropy formulations for bilateral obstacle problems in bounded domains.
Amorim, Neves, and Rodrigues \cite{AmorimNevesRodrigues2017} studied a multidimensional hyperbolic conservation law with an obstacle and a mass constraint.
These works provide the deterministic background for obstacle problems associated with first-order conservation laws.

Obstacle problems for SPDEs have also been widely studied.
Early results include \cite{haussmann1989stochastic,nualart1992white,donati1993white}, followed by further developments for quasilinear, degenerate, and more general reflected equations; see, for example, \cite{XuZhang2009,denis2014obstacle,yang2019obstacle,brzezniak2023reflection}.
Most related works formulate the problem by defining the solution as a pair $(u,\nu)$ satisfying a minimal Skorokhod condition:
\begin{equation}\label{eq:SKorokhod}
\int_{Q_T}(u-\psi)\dd\nu(x,t)=0.
\end{equation}
In this framework, the reflection measure acts only when the solution reaches the obstacle.

For stochastic conservation laws, the work of Biswas, Tahraoui, and Vallet \cite{biswas2023obstacle} is the closest to the present paper.
They studied a bilateral obstacle problem in a stochastic entropy framework and proved well-posedness by combining penalization with viscous regularization.
They also established a Lewy--Stampacchia inequality for the reflection terms.
A key assumption in their analysis is a compatibility condition between each obstacle and the stochastic forcing, which eliminates stochastic forcing over the obstacle--solution contact region.
In the notation of the present paper, this condition has the form
\[
\partial_t\left(\psi-\int_0^t\Phi(\psi(s))\,\dd W(s)\right)\in L^2(\Omega\times Q_T).
\]
This assumption is closely related to the use of the classical Skorokhod condition.
Indeed, an entropy solution has low regularity, so the pairing between $u-\psi$ and a singular reflection measure in \eqref{eq:SKorokhod} may not be well defined.
The above condition gives uniform $L^2$ estimates for the penalization terms, which ensures the limiting reflection measure $\nu$ possesses an $L^2(\Omega\times Q_T)$ density.
The Skorokhod condition can then be written in the usual form.
However, this also restricts the obstacle and the noise coefficients.
In particular, if $\psi\equiv c$ is constant, the condition requires $\Phi(c)=0$, that is, the noise must vanish at the obstacle level.
A similar assumption on the obstacle is also used by Liu and Tang \cite{liu2024obstacle} for stochastic porous-media equations.

Our approach follows a different route.
Instead of improving the regularity of the reflection measure in order to retain the Skorokhod condition, we encode the contact with the obstacle directly in the kinetic equation.
To explain this idea, introduce an additional variable $\xi\in\mathbb R$ and define
\[
f(x,t,\xi)=\one_{\{u(x,t)>\xi\}}.
\]
For each $(x,t)$, the function $f$ records all levels $\xi$ below $u(x,t)$ and therefore contains the same information as $u$.
This is the kinetic representation of the solution.

If both $u$ and $\nu$ were sufficiently regular, the contribution of the reflection measure to the kinetic equation would formally be $\delta_{\xi=u}\nu$, which means the reflection measure would be placed at the kinetic level corresponding to the value of the solution.
This expression is not well defined in our setting.
Indeed, $u$ is defined only up to sets of Lebesgue measure zero, whereas $\nu$ may be singular with respect to Lebesgue measure.
Hence, the equivalence class of $u$ does not determine its values on the support of $\nu$, and the product $\delta_{\xi=u}\nu$ cannot be defined directly.

To overcome this difficulty, we adapt the barrier-substitution idea introduced at the entropy level in \cite{du2024well} and at the kinetic
level for a diffusive Dean--Kawasaki-type equation in \cite{liu2026deankawasaki}.
The core idea is to shift the concentration from the unknown value $u$ to the known obstacle $\psi$, while introducing an extra nonnegative defect measure that quantifies the substitution cost.
The reflection contribution is therefore written as
\begin{equation*}
\delta_{\xi=\psi}\nu+\partial_\xi\lambda,
\end{equation*}
where  $\lambda$ is a nonnegative ``obstacle defect measure''.
This structure is not imposed formally; it follows from the penalized equations.
Let $u_n$ be a penalized solution and define
\[
\nu_n(\dd x,\dd t) = n(u_n-\psi)^-\dd x\dd t,
\]
where $r^-=\max\{-r,0\}$. Since $\nu_n$ is supported on the set $\{u_n<\psi\}$, one has the exact identity
 \[
\delta_{\xi=u_n}\nu_n = \delta_{\xi=\psi}\nu_n+\partial_\xi\lambda_n,
\]
where
 \[
\dd\lambda_n(x,t,\xi) = \one_{\{u_n<\xi<\psi\}}\, \dd\nu_n(x,t)\dd\xi.
 \]
Thus, $\lambda_n$ measures the kinetic interval between the penalized solution $u_n$ and the obstacle $\psi$.
The measures $\lambda_n$ are nonnegative, and the above identity is stable under weak convergence of the measures (see Remark \ref{rem:barrier-substitution-identity} for the corresponding identity for the viscous penalized approximation $u_{\alpha,n}$).
In the limit, the weak limit of $\lambda_n$ can be absorbed into the total kinetic measure $q$.
The resulting kinetic equation takes the form
\begin{equation}\label{eq:intro-kinetic-form}
\begin{aligned}
(\partial_t+a(\xi)\cdot\nabla_x)f ={}& \delta_{\xi=u}\Phi(u)\dot W +\partial_\xi q -\frac12\partial_\xi \bigl(G^2(x,\xi)\delta_{\xi=u}\bigr) +\delta_{\xi=\psi}\nu
\end{aligned}
\end{equation}
in the sense of distributions.
Here $a=A'$, $\Phi(u)e_k=g_k(\cdot,u)$, $ G^2(x,\xi)=\sum_{k\geq1}|g_k(x,\xi)|^2$, and $q$ is the total nonnegative kinetic measure, including the limiting obstacle defect.
This formulation avoids the undefined pairing between $u-\psi$ and $\nu$.
Therefore, there is no need to impose a classical Skorokhod condition or to require the reflection measure to have a density.
The measure $\nu$ may remain a general Radon measure, and the formulation covers a broader class of obstacles, including constant obstacles without the condition $\Phi(c)=0$.

Based on the kinetic formulation \eqref{eq:intro-kinetic-form}, we establish well-posedness of kinetic solutions to \eqref{eq:intro-reflected-model}.
\begin{thm}[cf. Theorems \ref{thm:uniqueness} and \ref{thm:existence-obstacle-solution}]
Assume \textbf{Hypothesis (H1)}. If $\psi$ and $u^0$ satisfy \textbf{Hypothesis (H2)}, then the lower obstacle problem \eqref{eq:intro-reflected-model} admits a kinetic solution $(u,\nu)$ in the sense of Definition \ref{def:dfn-1}. If $\psi$ satisfies stronger \textbf{Hypothesis (H2)$'$}, then any two solutions $(u_1,\nu_1)$ and $(u_2,\nu_2)$ with compatible initial data $u^0_1, u^0_2$ satisfy, for almost every $t\in[0,T]$,
\begin{equation*}
\E\Vert u_1(t)-u_2(t)\Vert _{L^1(\mathbb{T}^N)}\leq\Vert u^0_{1}-u^0_{2}\Vert _{L^1(\mathbb{T}^N)}.
\end{equation*}
In particular, if $u^0_1=u^0_2$ almost everywhere on $\mathbb{T}^N$, then $u_1=u_2$ in $L^1(\Omega\times Q_T)$ and $\nu_1=\nu_2$ as Radon measures on $\mathbb{T}^N\times[0,T)$, almost surely.
\end{thm}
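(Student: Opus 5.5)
The argument has two parts: existence by viscous penalization followed by a two-parameter limit, and $L^1$-contraction by Debussche--Vovelle doubling of variables applied to the kinetic formulation \eqref{eq:intro-kinetic-form}, in which the reflection term sits on the known level $\xi=\psi$.

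\emph{Existence.} For $\alpha>0$ and $n\in\mathbb N$, I consider the viscous penalized equation
\[
\dd u_{\alpha,n}+\Div A(u_{\alpha,n})\dd t=\alpha\Delta u_{\alpha,n}\dd t+\Phi(u_{\alpha,n})\dd W+n(u_{\alpha,n}-\psi)^-\dd t ,
\]
which is well posed by standard parabolic theory. The steps are as follows.
\begin{enumerate}
\item Derive $L^p$ moment bounds uniform in $(\alpha,n)$, using It\^o's formula and the sign of the penalization.
\item Bound the total mass $\E\,\nu_{\alpha,n}(Q_T)$ by integrating the equation in space and time. The noise has zero mean, and the flux and viscosity terms integrate to zero on the torus, so the mass of $\nu_{\alpha,n}$ is controlled by the $L^1$ norms of $u_{\alpha,n}(T)$ and $u^0$. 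No compatibility condition is needed here, because only total mass is required, not $L^2$ density.
\item Use the identity of Remark~\ref{rem:barrier-substitution-identity} to write the kinetic equation of $f_{\alpha,n}=\one_{u_{\alpha,n}>\xi}$ with the reflection term $\delta_{\xi=\psi}\nu_{\alpha,n}+\partial_\xi\lambda_{\alpha,n}$. Here $\lambda_{\alpha,n}\ge 0$ has mass $\int (u_{\alpha,n}-\psi)^-\dd\nu_{\alpha,n}$.
\item Show this mass vanishes and the total kinetic measure $q_{\alpha,n}$ is bounded in expectation. I would obtain both from the same It\^o computation applied to $|\xi|^2$-type test functions in $\xi$.
\end{enumerate}
To pass to the limit, I use Young-measure / kinetic-function compactness as in Debussche--Vovelle, together with Prokhorov and Skorokhod arguments for the tight laws of $(f,\nu,q,W)$. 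This yields a generalized kinetic solution. Continuity of $\psi$ is what lets $\delta_{\xi=\psi}\nu_{\alpha,n}\to\delta_{\xi=\psi}\nu$ against test functions. The rigidity step, reduction from a generalized kinetic solution to a kinetic one, then follows from the contraction argument below applied to $f$ against itself. Gy\"ongy--Krylov turns the martingale solution into a pathwise one.

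\emph{Contraction.} Take two solutions with kinetic functions $f_1,\bar f_2=1-f_2$. Compute $\E\int f_1\bar f_2\,\varrho_\varepsilon(x-y)\chi_\delta(\xi-\zeta)$ by It\^o's formula, treating the flux, noise and $q$ terms exactly as in \cite{DV10-publish}. The new terms are
\[
\int f_1^+(x,t,\psi(y,t))\ldots\,\dd\nu_2 \quad\text{and}\quad -\int \bar f_2(y,t,\psi(x,t))\ldots\,\dd\nu_1 .
\]
Because $u_1\ge\psi$, the term with $\nu_2$ would be favorable, except that it is evaluated at $\psi(y)$ rather than $\psi(x)$. Concretely, $\bar f_2$ evaluated near $\xi=\psi(x)$ equals $\one_{u_2\le\psi(x)}$, which should vanish a.e. since $u_2\ge\psi$; mollification and traces make this only approximate. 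The error is bounded by $\nu_i$ mass times the probability that $u_j$ lies within $|\psi(x)-\psi(y)|+\delta$ of $\psi$.

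This step is the main obstacle. I would handle it using the spatial regularity in (H2)$'$, for instance $\psi$ Lipschitz or $W^{1,\infty}$ in $x$: then $|\psi(x)-\psi(y)|\lesssim\varepsilon$, and choosing $\delta\gg\varepsilon$ makes the mismatch vanish as $\varepsilon\to0$ first. A second difficulty is that $\nu$ is singular, so $f_j$ must be taken as a left/right limit in time, or a representative, on the support of $\nu$. I expect to use the left-continuous representatives $f^\pm$ from the definition.

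\emph{Uniqueness.} Letting $\varepsilon,\delta\to0$ gives the $L^1$ contraction, hence $u_1=u_2$ when $u^0_1=u^0_2$. Uniqueness of $\nu$ then follows from the equation itself: $\nu_1-\nu_2=\dd u-\dd u$ in distributions. Equivalently, testing the kinetic equation with $\xi$-independent test functions recovers \eqref{eq:intro-reflected-model} with $\nu$ uniquely determined by $u$.
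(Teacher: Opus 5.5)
There is a genuine gap in the contraction step, in how you handle the reflection terms. The unfavourable term is $\E\int\rho_\gamma(x-y)\kappa_\delta(\psi(x,r)-\zeta)\,\bar f_2^+(y,r,\zeta)\,\dd\nu_1(x,r)\,\dd y\,\dd\zeta$. It enters with a plus sign, while the $\nu_2$-term with $f_1^-$ enters with a minus sign, the opposite of what you wrote.

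This term is not small. Near $\zeta=\psi$, $\bar f_2=\one_{u_2\le\zeta}$ is the indicator of the contact set $\{u_2=\psi\}$. That set is exactly where reflection measures charge, not a null set. So your bound ``$\nu_i$-mass times the probability that $u_j$ is near $\psi$'' does not vanish.

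The correct estimate uses only $\bar f_2^+\le 1-f_\psi$ and $f_1^-\ge f_\psi$, where $f_\psi=\one_{\psi>\xi}$. This turns each velocity integral into $\tfrac12+\llbracket\kappa_\delta\rrbracket(\psi(x,r)-\psi(y,r))$. The unfavourable term is then of order $\tfrac12\E\nu_1$ whenever $u_2$ sits on the obstacle where $\nu_1$ acts (e.g.\ $u_1=u_2$). After pairing it with the $\nu_2$-term you are left with $\tfrac12\bigl[\E\nu_1(\mathbb T^N\times[0,t])-\E\nu_2(\mathbb T^N\times[0,t])\bigr]$, which has no sign. It disappears only when you add the inequality for $(u_1-u_2)^+$ to the one for $(u_1-u_2)^-$. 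So this argument gives the symmetric $L^1$ contraction, not a one-sided comparison.

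Your choice of scales also cannot control the remaining mismatch $\llbracket\kappa_\delta\rrbracket(\psi(x)-\psi(y))$. The flux term is $O(\delta/\varepsilon)$, as in \cite{DV10-publish}, which forces $\delta\ll\varepsilon$. A Lipschitz $\psi$ only gives a mismatch of $O(\varepsilon/\delta)$, so both cannot vanish. This is precisely where the paper needs $\psi\in C^2$ in $x$. Since $\rho_\gamma$ is even and $\llbracket\kappa_\delta\rrbracket$ is odd, the linear Taylor term integrates to zero. The remainder is $O(\gamma^2/\delta)$, which is compatible with $\delta=\gamma^{4/3}$.

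Consequently your existence argument does not cover (H2). Two of its steps need pathwise uniqueness for the obstacle problem: reducing the limiting generalized kinetic solution to a kinetic one, and the Gy\"ongy--Krylov step. Pathwise uniqueness is the contraction estimate, which even when repaired needs (H2)$'$.

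The paper avoids any rigidity statement for the obstacle problem.
\begin{itemize}
\item It lets $\alpha\to0$ at fixed $n$. There the only new term is the Lipschitz penalty, and its doubling error $n|\psi(x,t)-\psi(y,t)|\,|\xi-\zeta|$ vanishes by uniform continuity of $\psi$.
\item The comparison principle for penalized equations (Lemma~\ref{lem:comparison-penalty-parameter}) then makes $u_n$ nondecreasing in $n$.
\item Hence $u_n\uparrow u$ a.e.\ and strongly in $L^p$ by Vitali, on the original probability space, with no Young measures or Skorokhod argument.
\end{itemize}

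You also never verify condition 4 of Definition~\ref{def:dfn-1}, the weak initial trace \eqref{eq:weak-initial}. It is not inherited from the approximations. The paper proves it separately in Lemma~\ref{lem:weak-initial-condition}, using $u^0\ge\psi(\cdot,0)$ and continuity of $\psi$. This condition is what rules out atoms of $q$ and $\nu$ at $t=0$ (Lemma~\ref{lem:no atom of q nu}); without it your doubling argument cannot start from the initial data.

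Finally, you do not need the mass of $\lambda_{\alpha,n}$ to vanish. An It\^o computation with $|\xi|^2$-type test functions only gives a uniform bound on it. The paper simply absorbs the weak limit of $\lambda_n$ into the kinetic measure $q$.
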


Hypothesis (H1) collects the assumptions on the flux function $A$ and the coefficient $\Phi$, which remain identical to those commonly used for the obstacle-free case.
Hypothesis (H2) imposes a mild regularity condition on the barrier $\psi\in C([0,T]\times \mathbb{T}^N)$, which is sufficient for the existence of kinetic solutions to \eqref{eq:intro-reflected-model}.
To further establish the uniqueness, we strengthen this assumption to $\psi\in C^{0,2}([0,T]\times \mathbb{T}^N)$, which is stated in Hypothesis (H2)$'$.
Notably, constant obstacles $\psi\equiv c$ are admissible in our framework without requiring $\Phi(c)\equiv 0$.
Thus, our framework removes the obstacle-noise condition imposed in \cite{biswas2023obstacle,liu2024obstacle}.

A closely related recent contribution is our prior work  \cite{liu2026deankawasaki}, which develops a kinetic-level barrier-substitution principle for the generalized Dean--Kawasaki equation with correlated conservative noise and merely continuous obstacles.
Although the generalized Dean--Kawasaki equation possesses a scalar conservation law structure, the main estimates and compactness arguments in \cite{liu2026deankawasaki} fundamentally differ from the present work.
In particular, the solution concept in \cite[Definition 2.3(v)]{liu2026deankawasaki} incorporates the localized Sobolev regularity
\[
  (u\wedge K)\vee K^{-1}\in L^2\bigl(\Omega\times(0,T);H^1(\mathbb T^N)\bigr), \qquad K\in\mathbb N.
\]
This regularity, together with the associated parabolic dissipation estimates, allows the spatial-gradient terms arising in the kinetic
formulation to be controlled. 
However, no analogous $H^1$-estimate for truncations of the solution is available for the first-order stochastic conservation law considered here. 
We therefore treat the flux term by spatial integration by parts and close the estimates
using uniform $L^p(\mathbb T^N)$-bounds for $p\geq2$.

We next summarize the main steps in the proof of well-posedness.
The uniqueness of kinetic solutions is established via the doubling variables method.
A key new observation, compared with \cite{liu2026deankawasaki},  is that neither the kinetic measure nor the reflection measure has an atom at the initial time $t=0$.
This follows from the weak initial trace condition \eqref{eq:weak-initial} in Definition \ref{def:dfn-1}; see Lemma \ref{lem:no atom of q nu}.
The absence of initial atoms allows the doubling-of-variables argument to start from $t=0$.

Existence is proved through two related approximation problems: the purely penalized equation \eqref{eq:penalized-hyperbolic} and the viscous penalized equation \eqref{eq:penalized-viscous}.
We first fix the penalty parameter and let the viscosity tend to zero.
We then compare the resulting solutions for different penalty parameters, which yields a monotone sequence.
Uniform moment estimates for the solutions, kinetic measures, and penalty measures provide the compactness needed to pass to the limit as the penalty parameter tends to infinity.
Finally, we show that the limiting pair satisfies both the obstacle constraint and the kinetic formulation.
Since kinetic solutions of first-order equations do not have the time regularity available for variational solutions of parabolic obstacle problems, a separate trace argument is required to recover the initial condition.

\
\

The other purpose of the present paper is to study the small-noise problem.
From the perspective of statistical mechanics, the small-noise regime describes random fluctuations around the deterministic evolution.
In this context, establishing large deviation principles (LDP) serve as a core ingredient for refined analysis and yield deeper physical insights into the underlying evolutionary dynamics.

In the absence of an obstacle, large deviations for conservation laws have been studied in several settings.
Mariani \cite{Mariani} considered a one-dimensional viscous conservation law with simultaneously vanishing viscosity and noise.
Friz and Gess \cite{friz2016stochastic} derived support and large-deviations results for conservation laws driven by rough transport signals from continuity of the rough-path solution map.
Dong, Wu, Zhang and Zhang \cite{DWZZ20} proved a Freidlin--Wentzell large deviation principle for stochastic scalar conservation laws with small It\^o multiplicative noise by combining kinetic solutions with the weak convergence method.

The existing literature on large deviations for reflected stochastic PDEs focuses primarily on parabolic or monotone systems.
Xu and Zhang \cite{XuZhang2009} obtained well-posedness and an LDP for reflected stochastic heat equations driven by space--time white noise.
Matoussi, Sabbagh, and Zhang \cite{MSZ} established an LDP for quasilinear parabolic SPDEs with an obstacle.
More recently, Tahraoui \cite{Tahraoui2025} treated obstacle problems governed by a $T$-monotone operator using Lewy--Stampacchia inequalities to control the multiplier.
Related results for reflected Burgers-type and abstract stochastic evolution equations can be found in \cite{WangZhaiZhang2022,YangZhang2026SmallTime, BrzezniakLiZhang2024}.
These methods do not directly apply to \eqref{eq:intro-reflected-model}.
Our equation is first-order and has no viscous regularization, so the gradient estimates and spatial compactness available in parabolic or monotone problems are absent.
Moreover, in the skeleton equation, the reflection measure may be singular and depends on the control.
Therefore, both the obstacle-free argument of \cite{DWZZ20} and the parabolic techniques of \cite{MSZ,Tahraoui2025} require substantial modifications due to their reliance on gradient bounds or monotone-operator estimates.
Our second contribution is to establish a Freidlin--Wentzell LDP for multidimensional stochastic scalar conservation laws with an obstacle and a general Radon reflection measure in the kinetic framework.

For $\varepsilon>0$, let $(u^\varepsilon,\nu^\varepsilon)$ be the kinetic solution of
\begin{equation}\label{eq:intro-small-noise}
\left\{
\begin{aligned}
\dd u^\varepsilon+\Div A(u^\varepsilon)\dd t &= \sqrt{\varepsilon}\, \Phi(u^\varepsilon)\dd W(t) +\dd\nu^\varepsilon, &&\text{in }Q_T,\\
u^\varepsilon&\geq\psi, &&\text{a.e. in }Q_T,\\ u^\varepsilon(0)&=u^0.
\end{aligned}
\right.
\end{equation}
Our large-deviations result is formulated as follows.
\begin{thm}[Large deviations; cf. Theorem \ref{thm-3}]
If \textbf{Hypotheses (H1)} and \textbf{(H2)$'$} hold, then $\{u^{\varepsilon}\}_{\varepsilon>0}$ satisfies a large deviation principle on the space $L^1(0,T;L^1(\mathbb{T}^N))$, with the good rate function $I$ given by (\ref{equ-27-1}).
\end{thm}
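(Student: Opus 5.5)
The plan is to use the weak convergence approach of Budhiraja--Dupuis--Maroulas, in the form with the sufficient condition of Matoussi--Sabbagh--Zhang, exactly as in the obstacle-free case of \cite{DWZZ20}. By the well-posedness theorem above, Hypothesis (H2)$'$ yields a unique kinetic solution. Hence there is a measurable solution map $\mathcal G^\varepsilon$ with $u^\varepsilon=\mathcal G^\varepsilon(\sqrt\varepsilon W)$.

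For a control $h\in L^2(0,T;U)$ the skeleton equation is the deterministic reflected problem
\[
\partial_t u^h+\Div A(u^h)=\Phi(u^h)h+\nu^h,\qquad u^h\ge\psi,\qquad u^h(0)=u^0 .
\]
It is understood in the kinetic sense \eqref{eq:intro-kinetic-form} with the Itô correction removed and the term $\delta_{\xi=u^h}\Phi(u^h)h$ in place of the noise. Set $\mathcal G^0(\int_0^\cdot h\,\dd s)=u^h$ and define
\[
I(u)=\inf\Bigl\{\tfrac12\int_0^T\|h\|_U^2\,\dd s:\ u=u^h\Bigr\}.
\]
The proof then has three steps.

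\emph{Step 1 (skeleton well-posedness).} I will reproduce the existence and uniqueness proof for the deterministic controlled equation. Uniqueness comes from doubling variables with the barrier-substitution formulation. The extra term $\Phi(u^h)h$ is handled by the Lipschitz bound in (H1) together with Gronwall's inequality, using that $h\in L^2(0,T;U)$.

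\emph{Step 2 (compactness of level sets).} Let $h_n\rightharpoonup h$ weakly in $S_M=\{\int_0^T\|h\|^2\le M\}$. I will show $u^{h_n}\to u^h$ in $L^1(0,T;L^1(\mathbb T^N))$. Because there is no spatial regularity, I will not work with $u^{h_n}$ directly. Instead I use the viscous penalized skeleton $u^{h}_{\alpha,n}$, whose $H^1$ bounds are uniform in $h\in S_M$. For fixed viscosity $\alpha$ these give strong compactness, by Aubin--Lions in $x$ and an equation-based bound in $t$. The weak convergence of $h_n$ then passes to the limit in $\int\Phi(u_n)h_n$, since $\Phi(u_n)\to\Phi(u)$ strongly.

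The key point is to let $\alpha\to0$ and the penalty parameter go to infinity \emph{uniformly in} $h\in S_M$. I will prove an $L^1$ rate estimate of the form
\[
\sup_{h\in S_M}\|u^h_{\alpha}-u^h\|_{L^1}\to0,
\]
by doubling variables between the viscous reflected skeleton and the inviscid one. In this comparison the Radon measures $\nu$ and the barrier defects $\lambda$ enter with favourable signs: $u\ge\psi$, and the $\delta_{\xi=\psi}$ concentrations cancel up to terms controlled by the $C^{0,2}$ regularity of $\psi$. Combining this uniform approximation with the fixed-$\alpha$ compactness gives continuity of $h\mapsto u^h$ on $S_M$, and therefore goodness of $I$.

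\emph{Step 3 (stochastic controlled convergence).} Let $h^\varepsilon\to h$ in distribution as $S_M$-valued random elements. Let $v^\varepsilon$ solve the controlled equation with noise $\sqrt\varepsilon\Phi\,\dd W+\Phi h^\varepsilon\,\dd t$. Following Matoussi--Sabbagh--Zhang, it suffices to show $\|v^\varepsilon-u^{h^\varepsilon}\|_{L^1}\to0$ in probability. I will obtain this from an $L^1$-contraction-type estimate between the stochastic and the deterministic kinetic solutions driven by the same control. The doubling-variables computation of Theorem \ref{thm:uniqueness} gives the bound
\[
\E\|v^\varepsilon(t)-u^{h^\varepsilon}(t)\|_{L^1}\lesssim \text{(mollification error)}+\varepsilon\,C(M)+\int_0^t\|h^\varepsilon\|\,\|v^\varepsilon-u^{h^\varepsilon}\|_{L^1}.
\]
The reflection terms have the right sign as before. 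A stopping-time Gronwall argument then concludes, using uniform $L^p$ moment bounds for $v^\varepsilon$ that are uniform in $\varepsilon$ and the control. Once the doubling estimate is established, the rest is routine.

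\emph{Main obstacle.} The hardest step is the uniform-in-control vanishing-viscosity estimate in Step 2. The measures $\nu^h_\alpha$ and $\nu^h$ are singular and depend on $h$, so the cross terms $\delta_{\xi=\psi}\nu$ against the other solution's kinetic function must be controlled without any Skorokhod pairing. My first approach is to use the no-initial-atom property of Lemma \ref{lem:no atom of q nu} together with the fact that both kinetic functions equal $1$ for $\xi<\psi$. This makes the obstacle terms nonpositive after mollification, up to an error of order $\alpha^{1/2}$ times the uniform bounds on the total masses of $\nu$ over $S_M$.
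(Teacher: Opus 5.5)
Your overall architecture is the paper's. It consists of four parts:
\begin{itemize}
\item the Matoussi--Sabbagh--Zhang criterion;
\item well-posedness of the reflected skeleton by penalization plus doubling of variables;
\item weak--strong continuity, obtained from a vanishing-viscosity estimate for the \emph{reflected} viscous skeleton that is uniform over $h\in S_M$, combined with compactness at fixed $\alpha$;
\item a doubling-of-variables comparison between the controlled stochastic solution and $u^{h^\varepsilon}$.
\end{itemize}

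The step that fails as written is the one you call the key point: letting the penalty parameter tend to infinity uniformly in $h\in S_M$. For the penalized kinetic problems, the only available $L^1$ information is one-sided. You have comparison in the penalty parameter, hence $u^h_{\alpha,n}\uparrow u^h_\alpha$ for each fixed $h$, but no two-sided penalization error that is uniform in the control. The paper singles this out as exactly why the usual penalization-based LDP arguments break down here. Your displayed rate estimate concerns only $\alpha$. So the transfer from compactness of the penalized family $\{u^h_{\alpha,n}\}$ to continuity of $h\mapsto u^h_\alpha$ is left unjustified.

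The paper does not penalize in this step. It applies Aubin--Lions--Simon directly to the reflected viscous solutions $u^{h^\varepsilon}_\alpha$, with $B_0=H^1$, $B=L^2$ and $B_1=H^{-r}$, $r>N/2+2$.
\begin{itemize}
\item The $H^1$ bound is inherited at fixed $h$ from the penalized approximations by weak lower semicontinuity. Its constant depends only on $\alpha$ and $M$.
\item The possibly atomic measure $\nu^{h^\varepsilon}_\alpha$ enters the time-translation estimate only through its total mass. Collect the $L^1$ norms of the flux, viscous and control terms and the time marginal of $\nu^{h^\varepsilon}_\alpha$ into a finite measure $\Lambda_\varepsilon$ on $[0,T]$. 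Then $\|u(t)-u(s)\|_{H^{-r}}\le C\Lambda_\varepsilon((s,t])$, and $\int_0^{T-\tau}\Lambda_\varepsilon((t,t+\tau])\,\dd t\le\tau\Lambda_\varepsilon([0,T])$.
\item Any limit is identified as the reflected viscous solution with control $h$. One passes to the limit in the kinetic formulation: the term $\int\varphi(x,t,\psi)\,\dd\nu$ is weak-$*$ stable because $\psi$ is continuous, and $q\geq\alpha|\nabla v|^2\delta_{v}$ by lower semicontinuity. Uniqueness then concludes.
\end{itemize}
Alternatively, a diagonal choice $m_n\to\infty$ adapted to each $h_n$ would rescue your penalized route. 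This works because the compactness bounds are uniform in both $h$ and the penalty parameter. What you cannot have is a penalization rate uniform in $h$.

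There is also a smaller problem in Step 3. As written, the inequality has $\E$ on the left and the random integrand $|h^\varepsilon|_U\|v^\varepsilon-u^{h^\varepsilon}\|_{L^1}$ on the right, so Gronwall does not close it. Keep the estimate pathwise, including the supremum of the martingale term, and use $\int_0^T|h^\varepsilon|_U\,\dd t\le\sqrt{MT}$ almost surely. Only then take expectations. This gives an error of order $\sqrt\varepsilon+\varepsilon\delta^{-1}$ (from BDG and the It\^o correction), not $\varepsilon$. The mollification error must also be made uniform in the random control; the paper does this by doubling $u^{h^\varepsilon}$ against itself.
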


A powerful tool for proving the Freidlin--Wentzell LDP is the weak convergence approach developed by Dupuis and Ellis \cite{DE97}.
The core of this method is to derive a variational representation for the Laplace transform of bounded continuous functionals, which establishes the equivalence between the LDP and the Laplace principle.
For Brownian functionals, elegant variational representation formulas were further obtained by Bou\'e and Dupuis \cite{BD98} and Budhiraja and Dupuis \cite{BD}.
In the infinite-dimensional setting, the method was further developed by Budhiraja, Dupuis, and Maroulas \cite{BDM2008}.
More recently, Matoussi, Sabbagh and Zhang \cite{MSZ} proposed a sufficient condition for the large deviation criteria for Brownian functionals, which is particularly well-suited for fluid-mechanical stochastic PDEs.
In this work, we adopt this criterion, which reduces the proof to two ingredients: the weak--strong continuity of the skeleton map, and the convergence of the stochastic controlled equation toward its skeleton counterpart in probability.

For reflected equations, verifying the weak--strong continuity of the skeleton map is particularly delicate.
Most existing works  \cite{MSZ,brzezniak2023reflection,WangZhaiZhang2022} employ a penalization argument, where reflection terms are replaced by penalized approximations. 
In such frameworks, compactness, which transfers the weak convergence of controls to the strong convergence of solutions, is established for the penalized skeleton equations, with associated penalization errors that are uniform over bounded sets of controls.
However, this strategy is not applicable to our kinetic setting.
The available $L^1$ estimates for penalized kinetic or entropy solutions are essentially one-sided, namely, they only yield comparison, monotonicity, and convergence of penalized solutions (see
\cite{liu2024obstacle,du2024well,liu2026deankawasaki}), but fail to provide two-sided penalization errors uniform in the control.
An alternative approach from
Tahraoui \cite{Tahraoui2025} uses the Lewy--Stampacchia inequality to control reflection terms, yet it requires an obstacle–noise compatibility condition that  rules out our obstacles.

Our main contribution to the large-deviation argument is therefore to treat the reflected skeleton equation directly, without penalizing the obstacle.
The possibly singular Radon reflection measure is retained throughout the compactness argument and is controlled in a sufficiently negative Sobolev space by its uniform mass bound. 
We add viscosity only to recover a spatial $H^1$-estimate and hence the compact embedding needed for strong compactness. 
Together with a uniform time-translation estimate, this yields compactness of the reflected viscous skeleton solutions (see Proposition \ref{prop:weak-continuity-viscous-skeleton-obstacle}). 
A doubling-of-variables argument then removes the viscosity uniformly over bounded sets of controls and gives the weak--strong continuity of the original skeleton map.

Finally, the second ingredient is achieved via the doubling variables method, together with uniform estimates for controlled solutions and their associated reflection measures.
Taken together, these two components enable us to establish an LDP for stochastic scalar conservation laws with general continuous obstacles and possibly singular Radon reflection measures, requiring no obstacle-noise compatibility condition.
Notably, our compactness strategy offers a direct proof of weak--strong continuity for skeleton obstacle problems governed by nondegenerate equations, bypassing penalization approximations.

\

The remainder of the paper is structured as follows.
Section 2 sets up the notation, states the  assumptions, and introduces the notion of kinetic solutions for our model.
Section 3 addresses the uniqueness of kinetic solutions to the stochastic obstacle conservation law.
The existence of such kinetic solutions is established in Section 4.
Section 5 lays out the large deviation framework and formulates our large deviation result.
Section 6 is devoted to an analysis of the skeleton equation, where we prove weak--strong continuity of the associated skeleton mapping.
Section 7 completes the proof of the full large deviation principle by proving the convergence of the stochastic controlled equation toward its skeleton counterpart in probability.

\
\

\noindent \textbf{Summary of Approximation Equations}

\

Throughout this work, we introduce a collection of approximation equations. To help the reader track these approximate systems, we collect them below.

In the proof of well-posedness for the obstacle problem of stochastic conservation laws \eqref{eq:intro-reflected-model}, we employ two approximation equations. The first is the viscous penalized equation
\begin{equation*}
\text{\textbf{App Equ (1)}}\quad
\left\{
\begin{aligned}
\dd u_{\alpha,n} + \Div A(u_{\alpha,n})\dd t - \alpha\Delta u_{\alpha,n}\dd t
&= \Phi_\alpha(u_{\alpha,n})\dd W(t) + b_n(u_{\alpha,n}-\psi)\dd t,\\
u_{\alpha,n}(0) &= u_\alpha^0.
\end{aligned}
\right.
\end{equation*}
and the second is the penalized equation
\begin{equation*}
\text{\textbf{App Equ (2)}}\quad
\left\{
\begin{aligned}
\dd u_n+\Div A(u_n)\dd t
&=\Phi(u_n)\dd W(t)+b_n(u_n-\psi)\dd t,\\
u_n(0) &= u^0.
\end{aligned}
\right.
\end{equation*}

To establish well-posedness for the skeleton equation associated with \eqref{eq:intro-reflected-model}, we also require two approximation equations: the viscous penalized skeleton equation
\begin{equation*}
\text{\textbf{App Equ (i)}}\quad
\left\{
\begin{aligned}
\partial_tu^h_{\alpha,n}+\Div A(u^h_{\alpha,n})-\alpha\Delta u^h_{\alpha,n}
&=\Phi_\alpha(u^h_{\alpha,n})h(t)+b_n(u^h_{\alpha,n}-\psi),\\
u^h_{\alpha,n}(0)
&=u^0_{\alpha}.
\end{aligned}
\right.
\end{equation*}
and the penalized skeleton equation
\begin{equation*}
\text{\textbf{App Equ (ii)}}\quad
\left\{
\begin{aligned}
\partial_t u_n^h+\Div A(u_n^h)
&=\sum_{k\geq1}g_k(x,u_n^h)h_k(t)+b_n(u_n^h-\psi),\\
u_n^h(0)
&=u^0.
\end{aligned}
\right.
\end{equation*}

Finally, to prove the weak–strong continuity of the skeleton equation, we make use of an approximation equation, namely the viscous skeleton equation:
\begin{equation*}
\text{\textbf{App Equ}}\ (\ast)\quad
\left\{
\begin{aligned}
\partial_t u_\alpha^h+\Div A(u_\alpha^h)-\alpha\Delta u_\alpha^h
&=\Phi(u_\alpha^h)h(t)+\nu_\alpha^h,
&&\text{in }\mathbb T^N\times(0,T),\\
u_\alpha^h
&\geq\psi,
&&\text{a.e. in }\mathbb T^N\times(0,T),\\
u_\alpha^h(0)
&=u^0.
\end{aligned}
\right.
\end{equation*}


\section{Preliminaries}
Let $(\Omega,\mathcal{F},\mathbb{P},(\mathcal{F}_t)_{t\in[0,T]},((\beta_k(t))_{t\in[0,T]})_{k\in\mathbb{N}})$ be a stochastic basis, where $(\mathcal F_t)_{t\in[0,T]}$ is the usual augmentation of the natural filtration generated by the independent standard Brownian motions $(\beta_k)_{k\in\mathbb N}$.
We use $\E$ to denote the expectation with respect to $\mathbb{P}$.
$W$ is a cylindrical Wiener process defined on a given (separable) Hilbert space $U$ (the norm is denoted by $|\cdot|_U$) with the form $W(t)=\sum_{k\geq 1}\beta_k(t) e_k$, $t\in[0,T]$, where $(e_k)_{k\geq 1}$ is a complete orthonormal basis in the Hilbert space $U$.

Let $\mathcal{L}(K_1,K_2)$ (resp. $\mathcal{L}_2(K_1,K_2)$) be the space of bounded (resp. Hilbert-Schmidt) linear operators from a Hilbert space $K_1$ to another Hilbert space $K_2$, whose norm is denoted by $\Vert \cdot\Vert _{\mathcal{L}(K_1, K_2)}$(resp. $\Vert \cdot\Vert _{\mathcal{L}_2(K_1, K_2)})$. Further, $C_b$ represents the space of bounded, continuous functions and $C^1_b$ stands for the space of bounded, continuously differentiable functions having bounded first-order derivative. $C^{0,2}\big([0,T]\times\mathbb{T}^N\big)$ denotes the space of functions continuous in time and twice continuously differentiable in space on $[0,T]\times\mathbb{T}^N$.
Let $\Vert \cdot\Vert _{L^p(\mathbb{T}^N)}$ denote the norm of the Lebesgue space $L^p(\mathbb{T}^N)$ for $p\in [1,\infty]$. 
In particular, set $H=L^2(\mathbb{T}^N)$ with the corresponding norm $\Vert \cdot\Vert _H$. 
For all $m\in\mathbb{N}_0$, let $H^m(\mathbb{T}^N)=W^{m,2}(\mathbb{T}^N)$ be the usual Sobolev space with the norm
\[
\|u\|_{H^m(\mathbb T^N)}^2:=\sum_{\substack{\beta\in\mathbb N_0^N\\|\beta|\leq m}}\|D^\beta u\|_{L^2(\mathbb T^N)}^2,\qquad|\beta|:=\beta_1+\cdots+\beta_N.
\]
We denote by $H^{-m}(\mathbb T^N)$ the topological dual of $H^m(\mathbb T^N)$, whose norm is denoted by $\Vert \cdot\Vert _{H^{-m}(\mathbb{T}^N)}$. 
Moreover, we use the brackets $\langle\cdot,\cdot\rangle$ to denote the duality between $C^{\infty}_c(\mathbb{T}^N\times \mathbb{R})$ and the space of distributions over $\mathbb{T}^N\times \mathbb{R}$.
For $1\leq p\leq\infty$, let $p'$ be the conjugate exponent of $p$, that is,
\[
\frac1p+\frac1{p'}=1,
\]
with the conventions $p'=\infty$ when $p=1$ and $p'=1$ when $p=\infty$. 
We denote
\[
\langle F, G \rangle:=\int_{\mathbb{T}^N}\int_{\mathbb{R}}F(x,\xi)G(x,\xi)\dd x\dd\xi, \quad F\in L^p(\mathbb{T}^N\times \mathbb{R}), \,G\in L^{p'}(\mathbb{T}^N\times \mathbb{R}),
\]
and also for a measure $q$ on the Borel measurable space $\mathbb{T}^N\times[0,T]\times \mathbb{R}$
\[
q(\phi):=\langle q, \phi \rangle:=\int_{\mathbb{T}^N\times[0,T]\times \mathbb{R}}\phi(x,t,\xi)\dd q(x,t,\xi), \quad  \phi\in C_b(\mathbb{T}^N\times[0,T]\times \mathbb{R}).
\]

Throughout the paper, we frequently encounter integrals over various domains $Z$, including $[0,T]\times \mathbb{T}^N$, $[0,T]\times \mathbb{T}^N\times \mathbb{R}$ and $[0,T]\times (\mathbb{T}^N)^2\times \mathbb{R}^2$, among others. For brevity, we write $\int_Z f$ in place of $\int_Z f \dd z$. We specify the kinetic and reflection measures, but omit the Lebesgue measure. Integration variables will be displayed explicitly only when necessary.

\subsection{Hypotheses}
For the flux function $A$ and the coefficient $\Phi$, we assume
\begin{description}
  \item[\textbf{Hypothesis (H1)}] The flux function $A$ belongs to $C^2(\mathbb{R};\mathbb{R}^N)$ and its derivative $a$ has at most polynomial growth. That is, there exist constants $C>0, p>1$ such that
    \begin{align*}
     |a(\xi)-a(\zeta)|\leq \Gamma(\xi,\zeta)|\xi-\zeta|, \quad \Gamma(\xi,\zeta)=C(1+|\xi|^{p-1}+|\zeta|^{p-1}).
      \end{align*}
For each $\xi\in \mathbb{R}$, the map $\Phi(\xi): U\rightarrow H$ is defined by $\Phi(\xi) e_k=g_k(\cdot, \xi)$, where each $g_k(\cdot,\xi)$ is a regular function on $\mathbb{T}^N$.
We assume that $g_k\in C(\mathbb{T}^N\times \mathbb{R})$ and there exists a constant $D_0>0$ such that, for all $x,y\in\mathbb{T}^N$ and $\xi,\zeta\in\mathbb{R}$,
\begin{align}\label{eq:assumption for g}
G^2(x,\xi)=\sum_{k\geq 1}|g_k(x,\xi)|^2&\leq D_0(1+|\xi|^2),\\
\label{eq:assumption for g Lip}
\sum_{k\geq 1}|g_k(x,\xi)-g_k(y,\zeta)|^2&\leq D_0\Big(|x-y|^2+|\xi-\zeta|^2\Big).
\end{align}
\end{description}

Since $\Vert g_k(\cdot,\xi)\Vert _{H}\leq C\Vert g_k(\cdot,\xi)\Vert _{C(\mathbb{T}^N)}$, we deduce that $\Phi(\xi)\in \mathcal{L}_2(U,H)$, for each $\xi\in \mathbb{R}$.
We remark that Hypothesis (H1) is in force in the whole paper.

Regarding the obstacle $\psi$ and the initial data of (\ref{eq:intro-reflected-model}), we impose two kinds of conditions, which are employed separately for the existence and uniqueness.
\begin{description}
  \item[\textbf{Hypothesis (H2)}]
The obstacle satisfies $\psi\in C([0,T]\times\mathbb{T}^N)$ and $u^0\in L^\infty(\mathbb{T}^N)$ with
\begin{equation*}
u^0(x)\geq\psi(x,0)\quad\text{for a.e. }x\in\mathbb{T}^N.
\end{equation*}
\end{description}
A stronger condition on the barrier $\psi$ is
\begin{description}
  \item[\textbf{Hypothesis (H2)$'$}]
The obstacle satisfies $\psi\in C^{0,2}([0,T]\times\mathbb{T}^N)$ and the initial datum $u^0\in L^\infty(\mathbb{T}^N)$ satisfies the same compatibility condition as
in \textbf{Hypothesis (H2)}.
\end{description}

\subsection{Definition of kinetic solution }
We work throughout on the stochastic basis fixed above.
\begin{defn}[Kinetic measure]\label{def:dfn-3}
 A map $q$ from $\Omega$ to the set of nonnegative, finite measures over $\mathbb{T}^N\times [0,T]\times\mathbb{R}$ is said to be a kinetic measure, if
\begin{description}
  \item[1.] $ q$ is measurable, that is, for each $\phi\in C_b(\mathbb{T}^N\times [0,T]\times \mathbb{R}), \langle q, \phi \rangle: \Omega\rightarrow \mathbb{R}$ is measurable,
  \item[2.] $q$ vanishes for large $\xi$, i.e.,
\begin{equation}\label{eq:vanishing kinetic measure}
\lim_{R\rightarrow +\infty}\E[q(\mathbb{T}^N\times [0,T]\times B^c_R)]=0,
\end{equation}
where $B^c_R:=\{\xi\in \mathbb{R}: |\xi|\geq R\}$.
  \item[3.] for every $\phi\in C_b(\mathbb{T}^N\times \mathbb{R})$, the process
\[
(\omega,t)\in\Omega\times[0,T]\mapsto \int_{\mathbb{T}^N\times [0,t]\times \mathbb{R}}\phi(x,\xi)\dd q(x,s,\xi)\in\mathbb{R}
\]
is predictable.
\end{description}
\end{defn}

\begin{defn}[Kinetic solution]\label{def:dfn-1}
A pair $(u,\nu)$ is a kinetic solution of \eqref{eq:intro-reflected-model} with initial datum $u^0$ and obstacle $\psi$ if the following conditions hold.
\begin{enumerate}[label=\textbf{\arabic*}.]
  \item The process $u$ is predictable as an $L^1(\mathbb{T}^N)$-valued process.
  \item For any $p\geq1$, there exists $C_p\geq0$ such that
\begin{equation}\label{eq:usual moment bound}
\E\Big(\underset{0\leq t\leq T}{{\rm{ess\sup}}}\ \Vert u(t)\Vert ^p_{L^p(\mathbb{T}^N)}\Big)\leq C_p,
\end{equation}
\item \textbf{Obstacle constraint.} For $\mathbb{P}\otimes\dd x\otimes\dd t$-a.e. $(\omega,x,t)$,
\begin{equation*}
u(x,t)\geq\psi(x,t),
\end{equation*}
and $\nu$ is a nonnegative predictable Radon measure on $\mathbb{T}^N\times[0,T)$ satisfying $\E\nu(\mathbb{T}^N\times[0,T))<\infty$.
\item \textbf{Weak initial trace condition.} The process $u$ almost surely satisfies
\begin{equation}
\lim_{\tau\downarrow0}\frac{1}{\tau}\int_0^\tau\Vert u(t)-u^0\Vert _{L^1(\mathbb{T}^N)}\dd t=0.\label{eq:weak-initial}
\end{equation}
\item There exists a kinetic measure $q$ such that $f:= \one_{u>\xi}$ satisfies, for all $\varphi\in C^1_c(\mathbb{T}^N\times [0,T)\times \mathbb{R})$,
\begin{align}\notag
&\int^T_0\langle f(t), \partial_t \varphi(t)\rangle \dd t+\langle f^0, \varphi(0)\rangle +\int^T_0\langle f(t), a(\xi)\cdot \nabla \varphi (t)\rangle \dd t\\
&= -\sum_{k\geq 1}\int^T_0\int_{\mathbb{T}^N} \int_{\mathbb{R}}g_k(x,\xi)\varphi (x,t,\xi)\dd\mu_{x,t}(\xi)\dd x\dd\beta_k(t) \label{eq:kinetic eqution}\\ \notag
&\quad -\frac{1}{2}\sum_{k\geq1}\int^T_0\int_{\mathbb{T}^N}\int_{\mathbb{R}}\partial_{\xi}\varphi (x,t,\xi)G^2(x,\xi)\dd\mu_{x,t}(\xi)\dd x\dd t\\
&\quad + q(\partial_{\xi} \varphi)-\int_0^T\int_{\mathbb{T}^N}\varphi(x,t,\psi(x,t))\dd\nu(x,t), \quad\ a.s. ,\nonumber
\end{align}
 where $f^0:=\one_{u^0>\xi}$, $a(\xi)=A'(\xi)$ and  $\mu_{x,t}(\dd\xi)=\delta_{u(x,t)}(\dd\xi)$.
\end{enumerate}
\end{defn}

Compared with the obstacle-free case treated in \cite{DV10-publish}, the above definition has two major differences.
First, the obstacle constraint arises from the obstacle problem and constitutes a new ingredient.
Second, we introduce a weak initial trace condition to handle the problem that $\nu$ may have atoms.
Indeed, the obstacle problem yields an additional term involving the Radon measure $\nu$.
Crucially, the integral of $\varphi$ with respect to $\nu$ fails to vanish whenever $\varphi$ is independent of $\xi$.
The atoms of the measure $\nu$ can therefore cause discontinuities in $u$ at a random (almost surely countable) set of points. 
In Lemma \ref{lem:no atom of q nu}, we show that the weak initial trace condition excludes atoms of the kinetic measure $q$ and of the reflection measure $\nu$ at time $t=0$.

The following proposition states that, almost surely, the left and right weak limits of a kinetic function $f(x,t,\xi)=\one_{u(x,t)>\xi}$ exist at every $t\in [0,T]$. 
This property derive a formulation at fixed times that is weak only with respect to $x$ and $\xi$.
The proof is unchanged except that, for each fixed kinetic test function, the kinetic measure and the reflection measure together form the finite-variation part in
time. 
We record this difference below.
\begin{prop}\label{prop:trace-obstacle-scl}
Let $(u,\nu)$ be a kinetic solution in the sense of Definition
\ref{def:dfn-1}.
Let
\[
f(x,t,\xi)=\one_{u(x,t)>\xi}.
\]
Then, almost surely, $f$ admits a right weak limit $f^{+,t}$ for every
$t\in[0,T)$ and a left weak limit $f^{-,t}$ for every $t\in(0,T]$.
These limits are kinetic functions and, for every
$\varphi\in C_c^1(\mathbb T^N\times\mathbb R)$,
\[
\langle f(t+\tau),\varphi\rangle\rightarrow\langle f^{+,t},\varphi\rangle,\qquad\langle f(t-\tau),\varphi\rangle\rightarrow\langle f^{-,t},\varphi\rangle
\]
as $\tau\downarrow0$, whenever the corresponding one-sided limit is defined. 
With the convention $f^{-,0}:=f^0$, for every $t\in[0,T)$, we have
\begin{align}\label{eq:trace-jump-obstacle-scl}
\langle f^{+,t}-f^{-,t},\varphi\rangle
&=-\int_{\mathbb{T}^N\times\{t\}\times\mathbb{R}}\partial_\xi\varphi(x,\xi)\,\dd q(x,s,\xi)  \notag\\
&\quad+\int_{\mathbb{T}^N\times\{t\}}\varphi(x,\psi(x,s))\,\dd\nu(x,s).
\end{align}
In particular, the set of $t\in[0,T)$ at which $f^{+,t}\neq f^{-,t}$ is almost surely at most countable.
We denote the corresponding predictable representatives by $f^+$ and $f^-$.
\end{prop}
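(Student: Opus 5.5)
We follow the proof of the corresponding statement in the obstacle-free setting of Debussche and Vovelle \cite{DV10-publish}, using the fact that for each fixed test function the only new term, the one coming from $\nu$, is again of finite variation in time.

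\textbf{Step 1: a càdlàg representative of $t\mapsto\langle f(t),\varphi\rangle$.} Fix $\varphi\in C_c^1(\mathbb T^N\times\mathbb R)$. Test \eqref{eq:kinetic eqution} with $\varphi(x,\xi)\theta(t)$, where $\theta\in C^1_c([0,T))$. This gives, in $\mathcal D'(0,T)$,
\[
\frac{\dd}{\dd t}\langle f(t),\varphi\rangle=\langle f(t),a\cdot\nabla\varphi\rangle+\sum_k\int g_k\varphi\,\dd\mu_{x,t}\,\dot\beta_k+\frac12\int\partial_\xi\varphi\, G^2\dd\mu_{x,t}-\frac{\dd}{\dd t}m_\varphi(t),
\]
where
\[
m_\varphi(t)=\int_{\mathbb T^N\times[0,t]\times\mathbb R}\partial_\xi\varphi\,\dd q-\int_{\mathbb T^N\times[0,t]}\varphi(x,\psi(x,s))\,\dd\nu(x,s).
\]
The function $m_\varphi$ is right-continuous and of bounded variation almost surely, because $q$ and $\nu$ are finite. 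The drift terms are integrable in time by \eqref{eq:usual moment bound}. The stochastic integral has a continuous version. Hence there is a càdlàg function
\[
J_\varphi(t)=\langle f^0,\varphi\rangle+\int_0^t(\dots)\,\dd s+M_\varphi(t)-m_\varphi(t)
\]
such that $\langle f(t),\varphi\rangle=J_\varphi(t)$ for a.e.\ $t$. The value $J_\varphi(0^-):=\langle f^0,\varphi\rangle$ comes from the initial term in \eqref{eq:kinetic eqution}. The jump of $J_\varphi$ at $t$ equals $-\Delta m_\varphi(t)$, which is exactly \eqref{eq:trace-jump-obstacle-scl}. This includes $t=0$: an atom of $q$ or $\nu$ at time $0$ is counted inside $m_\varphi(0)$.

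\textbf{Step 2: from one test function to all of them.} Take a countable set $D\subset C_c^1$ that is dense in $C_c(\mathbb T^N\times\mathbb R)$. Choose a single null set outside of which Step 1 holds simultaneously for all $\varphi\in D$ and for all rational linear combinations of elements of $D$. Then, for every $t$, the limits of $\langle f(s),\varphi\rangle$ as $s\downarrow t$ and as $s\uparrow t$, with $s$ ranging over the full-measure set of good times, exist for $\varphi\in D$.

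\textbf{Step 3: the limits are kinetic functions.} Since $0\le f\le1$, Banach--Alaoglu in $L^\infty(\mathbb T^N\times\mathbb R)$ weak-$*$ gives subsequential limits $f^{\pm,t}$. By Step 2 these limits are unique on $D$, hence unique, and convergence along the full sequence follows by density. To show that the limits are kinetic functions, i.e.\ of the form $\one_{v>\xi}$ or at least Young-measure kinetic functions with $-\partial_\xi f^{\pm,t}$ a probability measure in $\xi$, we argue as in \cite{DV10-publish}. By \eqref{eq:usual moment bound}, the Young measures $\delta_{u(s)}$ along good times have uniformly bounded $p$-moments, so they are tight; hence the limit $-\partial_\xi f^{\pm,t}$ is a Young measure with vanishing tails.

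\textbf{Step 4: countability and predictable representatives.} Since $m_\varphi$ is of bounded variation, its jump set is countable. Taking the union over $\varphi\in D$, the set of $t$ with $f^{+,t}\neq f^{-,t}$ is almost surely countable. The predictable representatives $f^\pm$ are then chosen exactly as in \cite{DV10-publish}, using left limits of adapted càdlàg processes.

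The main obstacle is Step 3: proving that the weak limits are genuine kinetic functions, i.e.\ that $f^{\pm,t}=\one_{u^\pm(t)>\xi}$ rather than merely generalized kinetic functions. We would not try to prove the equilibrium property at every time. Instead we would rely on the quoted argument of \cite{DV10-publish}, which states the result in the generalized sense and recovers equilibrium for a.e.\ $t$ through the identification $f^+=f^-=f$ almost everywhere. Here the $\nu$-term changes nothing beyond its contribution to $m_\varphi$.
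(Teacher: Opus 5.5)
Your proposal is correct and follows essentially the same route as the paper: you isolate the continuous part of $t\mapsto\langle f(t),\varphi\rangle$ (flux, It\^o correction, stochastic integral), put $q$ and $\nu$ into a single finite-variation term whose atoms give \eqref{eq:trace-jump-obstacle-scl}, and then invoke the separability/compactness argument of \cite[Proposition~10]{DV10-publish}. The ``main obstacle'' you flag in Step 3 is not actually required: ``kinetic function'' is meant in the generalized (Young-measure) sense of \cite{DV10-publish}, which your tightness argument already gives, and the paper only uses equilibrium for a.e.\ $t$.
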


\begin{proof}
We follow \cite[Proposition~10]{DV10-publish} and only explain the additional reflection term. 
Fix $\varphi\in C_c^1(\mathbb T^N\times\mathbb R)$ and let
\begin{align*}
J_\varphi(t)&:=\int_0^t\langle f(s),a(\xi)\cdot\nabla_x\varphi\rangle\dd s\\
&\quad+\sum_{k\geq1}\int_0^t\int_{\mathbb T^N}\int_{\mathbb R}g_k(x,\xi)\varphi(x,\xi)\,\dd\mu_{x,s}(\xi)\dd x\dd\beta_k(s)\\
&\quad+\frac12\int_0^t\int_{\mathbb T^N}\int_{\mathbb R}G^2(x,\xi)\partial_\xi\varphi(x,\xi)\dd\mu_{x,s}(\xi)\dd x\dd s.
\end{align*}
As in \cite[Proposition~10]{DV10-publish}, $J_\varphi$ has a continuous modification.
Define the finite signed measure on $[0,T)$ by
\begin{align*}
\mathcal R_\varphi(B)&:=\int_{\mathbb T^N\times B\times\mathbb R}\partial_\xi\varphi(x,\xi)\dd q(x,s,\xi)-\int_{\mathbb T^N\times B}\varphi(x,\psi(x,s))\,\dd\nu(x,s),\quad B\in\mathcal B([0,T)).
\end{align*}
It is finite because $q$ and $\nu$ are finite and $\psi$ is continuous. 
Testing \eqref{eq:kinetic eqution} with $\alpha(t)\varphi(x,\xi)$, $\alpha\in C_c^1([0,T))$, gives almost surely
\[
\dd\bigl(\langle f,\varphi\rangle-J_\varphi\bigr)=-\dd\mathcal R_\varphi
\]
in the sense of distributions in time. 
Thus $t\mapsto\langle f(t),\varphi\rangle$ has bounded variation up to the continuous process $J_\varphi$.

The separability and time-averaging compactness argument in \cite[Proposition~10]{DV10-publish} now applies and yields the kinetic functions $f^{+,t}$ and $f^{-,t}$. 
Since $J_\varphi$ is continuous, their jump is the atom of the finite-variation part:
\[
\langle f^{+,t}-f^{-,t},\varphi\rangle=-\mathcal R_\varphi(\{t\}),
\]
which is exactly \eqref{eq:trace-jump-obstacle-scl}. 
If $t$ is an atom of neither the time marginal of $q$ nor that of $\nu$, this jump
vanishes for every $\varphi$. 
Since finite measures have at most countably many atoms, the set of jump times is at most countable, which completes the proof.
\end{proof}

\begin{rem}\label{rem:remark for intergral of f^+}
For a countable dense family of test functions, the scalar representatives are
\[
\langle f^+(t),\varphi\rangle=\langle f^0,\varphi\rangle+J_\varphi(t)-\mathcal R_\varphi([0,t]),\qquad\langle f^-(t),\varphi\rangle=\langle f^0,\varphi\rangle+J_\varphi(t)-\mathcal R_\varphi([0,t)).
\]
The first process is adapted and c\`adl\`ag, while the second is adapted and left-continuous. 
Since the filtration is the usual augmented Brownian filtration, it is continuous and the optional and predictable $\sigma$-fields coincide. 
Hence both representatives can be chosen predictable, exactly as in \cite[Proposition 10]{DV10-publish}. 
An atom of $q$ or $\nu$ only produces the predictable jump displayed in \eqref{eq:trace-jump-obstacle-scl}.
Moreover, we have $f=f^{+}=f^{-}$ almost everywhere with respect to the Lebesgue measure on the time interval $[0,T]$.
Hence, the choice among $f$, $f^{+}$, and $f^{-}$ is immaterial in integrals with respect to the Lebesgue measure, as well as in stochastic integrals.
The same holds for the Young measure $\mu_{x,t}$ (see \cite[Definition 4]{DV10-publish}).
However, for integrals with respect to measures $\nu$ and $q$, one must explicitly specify $f^{+}$ or $f^{-}$; the two choices may lead to different values.
\end{rem}

\subsection{No initial atoms of the kinetic and reflection measures}
We show that \eqref{eq:weak-initial} in Definition \ref{def:dfn-1} guarantees that $t=0$ is not an atom of the kinetic measure $q$ and the reflection measure $\nu$. This property enables us to apply the doubling variables method on $[0,t]$ to obtain uniqueness.
\begin{lem}\label{lem:no atom of q nu} Let $(u,\nu)$ be a kinetic solution under Definition \ref{def:dfn-1}, and let $q$ be the corresponding kinetic measure. Then, almost surely
\[
q(\mathbb{T}^N\times\{0\}\times\mathbb{R})=0,\qquad\nu(\mathbb{T}^N\times\{0\})=0.
\]
\end{lem}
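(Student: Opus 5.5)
Our plan is to combine the jump formula \eqref{eq:trace-jump-obstacle-scl} of Proposition \ref{prop:trace-obstacle-scl} at $t=0$ with the weak initial trace condition \eqref{eq:weak-initial}.

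\emph{Step 1: the right trace at $0$ equals $f^0$.} By Proposition \ref{prop:trace-obstacle-scl}, for every $\varphi\in C^1_c(\mathbb T^N\times\mathbb R)$ we have $\langle f(\tau),\varphi\rangle\to\langle f^{+,0},\varphi\rangle$ as $\tau\downarrow0$, along the full-measure set of $\tau$ where $f=f^+$. Hence the Ces\`aro averages $\frac1\tau\int_0^\tau\langle f(t),\varphi\rangle\dd t$ also converge to $\langle f^{+,0},\varphi\rangle$. On the other hand, $|\langle f(t)-f^0,\varphi\rangle|\le\|\varphi\|_\infty\int_{\mathbb T^N}\int_{\mathbb R}|\one_{u(t)>\xi}-\one_{u^0>\xi}|\,\dd\xi\dd x=\|\varphi\|_\infty\|u(t)-u^0\|_{L^1(\mathbb T^N)}$. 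By \eqref{eq:weak-initial}, the same averages therefore converge to $\langle f^0,\varphi\rangle$ almost surely. Taking a countable dense family of $\varphi$, we conclude that $f^{+,0}=f^0=f^{-,0}$ almost surely.

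\emph{Step 2: the jump identity forces the atoms to vanish.} By \eqref{eq:trace-jump-obstacle-scl} at $t=0$, almost surely for all $\varphi$ in the countable family, and then by density for all $\varphi\in C^1_c(\mathbb T^N\times\mathbb R)$,
\[
\int_{\mathbb T^N\times\{0\}\times\mathbb R}\partial_\xi\varphi\,\dd q=\int_{\mathbb T^N\times\{0\}}\varphi(x,\psi(x,0))\,\dd\nu .
\]
Denote by $q_0$ the restriction of $q$ to $\{t=0\}$, a finite nonnegative measure on $\mathbb T^N\times\mathbb R$, and by $\nu_0$ the restriction of $\nu$ to $\{t=0\}$. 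The identity then says that, in $\mathcal D'(\mathbb T^N\times\mathbb R)$,
\[
-\partial_\xi q_0=\nu_0\otimes\delta_{\xi=\psi(x,0)} .
\]

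\emph{Step 3: exploit the sign structure.} Choose $\varphi(x,\xi)=\theta(x)\chi_R(\xi)$, where $\theta\ge0$ and $\chi_R$ is a cutoff equal to $1$ on $[-R,R]$ with $|\chi_R'|\le 2/R$ supported in $R\le|\xi|\le 2R$. Since $\psi$ is continuous on a compact set, it is bounded, so for large $R$ the right side equals $\int\theta\,\dd\nu_0$. The left side is bounded by $\frac{2}{R}\|\theta\|_\infty\,q_0(\mathbb T^N\times\{R\le|\xi|\le2R\})\le\frac{2}{R}\|\theta\|_\infty\,q(\mathbb T^N\times[0,T]\times\mathbb R)\to0$, because $q$ is finite almost surely. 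Hence $\nu_0=0$.

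It follows that $\partial_\xi q_0=0$, so for a.e. $x$ (disintegrating) $q_0$ is translation-invariant in $\xi$. A finite nonnegative measure with this property must vanish. To verify this directly, test with $\varphi(x,\xi)=\theta(x)\int_{-\infty}^\xi\rho$ minus a cutoff correction; more simply, use $\varphi=\theta(x)\eta(\xi)$ with $\eta'=\rho$ on $[-R,R]$, where $\rho\ge0$ is arbitrary and compactly supported, and $\eta$ decreases back to $0$ over an interval of length $R$. This gives $\int\theta\rho\,\dd q_0=O(\|\rho\|_{L^1}/R)\cdot q_0(\text{total})\to0$. Therefore $q_0=0$, which is the claim.

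The main care point is Step 1. There we need $f=f^+$ for a.e. $t$ (Remark \ref{rem:remark for intergral of f^+}), so that the time averages in \eqref{eq:weak-initial} genuinely identify $f^{+,0}$. We also need the convention $f^{-,0}=f^0$ together with the inclusion of the closed time point $\{0\}$ in the finite-variation measure, so that \eqref{eq:trace-jump-obstacle-scl} indeed records $q$ and $\nu$ at $t=0$. Once this is settled, Steps 2 and 3 are elementary distributional arguments.
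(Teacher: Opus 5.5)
Your proposal is correct and follows essentially the same route as the paper: you identify $f^{+,0}=f^0$ from the weak initial trace, deduce $\int\partial_\xi\varphi\,\dd q^0=\int\varphi(x,\psi(x,0))\,\dd\nu^0$, kill $\nu^0$ with a cutoff in $\xi$, and then kill $q^0$ with a compensated primitive whose correction sits on $\{R\le|\xi|\le 2R\}$. The differences are cosmetic: you read the $t=0$ identity off the jump formula \eqref{eq:trace-jump-obstacle-scl} rather than re-deriving it with $\Upsilon_\varepsilon$, and you control the $O(1/R)$ tail terms by the finiteness of $q(\omega)$ instead of the vanishing property \eqref{eq:vanishing kinetic measure}, which is slightly cleaner for an almost-sure statement.
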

\begin{proof}
Let $f^0(x,\xi):=\one_{u^0(x)>\xi}$, and $f^{+,0}$ be the right weak limit of $f$ at $t=0$.
We first show that almost surely $f^{+,0}=f^0$ as a distribution on $\mathbb{T}^N\times\mathbb{R}$.
For every $\phi\in C_c^1(\mathbb{T}^N\times\mathbb{R})$, the kinetic function $f(x,t,\xi)=\one_{u(x,t)>\xi}$ satisfies
\begin{align*}
&\bigg|\frac{1}{\tau}\int_0^\tau\int_{\mathbb{T}^N}\int_\mathbb{R}\big(f(x,t,\xi)-f^0(x,\xi)\big)\phi(x,\xi)\dd \xi\dd x\dd t\bigg|\\
&\leq\Vert\phi\Vert_{L^\infty(\mathbb{T}^N\times\mathbb{R})}\frac{1}{\tau}\int_0^\tau\int_{\mathbb{T}^N}\int_\mathbb{R}|f(x,t,\xi)-f^0(x,\xi)|\dd \xi\dd x\dd t\\
&=\Vert\phi\Vert_{L^\infty(\mathbb{T}^N\times\mathbb{R})}\frac{1}{\tau}\int_0^\tau\int_{\mathbb{T}^N}|u(x,t)-u^0(x)|\dd x\dd t\\
&\rightarrow 0,
\end{align*}
almost surely, when $\tau\to0$. In the penultimate step, we employ the identity $\int_{\mathbb{R}}|\one_{a>\xi}-\one_{b>\xi}|\dd\xi=|a-b|$, while the final step relies on \eqref{eq:weak-initial} from Definition \ref{def:dfn-1}.

Therefore, by the definition of right weak limit given by Proposition \ref{prop:trace-obstacle-scl}, we have almost surely
\begin{align}\label{eq:f^+_0=f0-1}
\langle f^{+,0},\phi\rangle=\lim_{\tau\to0}\frac{1}{\tau}\int_0^\tau\langle f(t),\phi\rangle\dd t=\langle f^0,\phi\rangle,
\end{align}
which yields almost surely
\begin{equation}\label{eq:f^+_0=f0}
f^{+,0}=f^{0} \qquad\text{ as a distribution on }\mathbb{T}^N\times\mathbb{R}.
\end{equation}
Let $q^0$ be the restriction of $q$ to $\mathbb{T}^N\times\{0\}\times\mathbb{R}$ and $\nu^0$ be the restriction of $\nu$ to $\mathbb{T}^N\times\{0\}$.

Let $\Upsilon_\varepsilon\in C_c^1([0,T))$ fulfill
\[
0\leq\Upsilon_\varepsilon\leq1,\qquad\Upsilon_\varepsilon(0)=1,\qquad\Upsilon_\varepsilon(r)=0\quad\text{when }r\geq\varepsilon,
\]
and $\Upsilon_\varepsilon(r)$ converges pointwise to $\one_{\{0\}}(r)$ as $\varepsilon\to0$ on $[0,T)$.

Taking the test function  $\varphi(x,r,\xi)=\Upsilon_\varepsilon(r)\phi(x,\xi)$ for $\phi\in C_c^1(\mathbb{T}^N\times\mathbb{R})$ in \eqref{eq:kinetic eqution},
passing to the limit as  $\varepsilon\to0$ and using Burkholder-Davis-Gundy inequality together with the dominated convergence theorem, we have, almost surely
\begin{align}\notag
&-\langle f^{+,0},  \phi\rangle +\langle f^0, \phi\rangle =  \int_{\mathbb{T}^N\times\mathbb{R}}\partial_{\xi} \phi(x,\xi)\dd q^0(x,\xi)-\int_{\mathbb{T}^N}\phi(x,\psi(x,0))\dd\nu^0(x).
\end{align}
In view of \eqref{eq:f^+_0=f0-1}, we have almost surely,
\begin{equation}\label{eq:equal for qnu}
\int_{\mathbb{T}^N\times\mathbb{R}}\partial_{\xi} \phi(x,\xi)\dd q^0(x,\xi)=\int_{\mathbb{T}^N}\phi(x,\psi(x,0))\dd\nu^0(x).
\end{equation}
We first prove that $\nu^0(\mathbb{T}^N)=0$ almost surely.
For each $R>0$, let $\theta_R\in C_c^1(\mathbb{R})$ be a function such that
\[
0\leq\theta_R\leq1,\qquad\theta_R(\xi)=1\quad\text{for }|\xi|\leq R,\qquad\theta_R(\xi)=0\quad\text{for }|\xi|\geq2R,\qquad|\theta^\prime_R|\leq \frac{C}{R}.
\]
Since $\psi\in C(\mathbb{T}^N\times[0,T])$, we have $\theta_R(\psi(x,0))\equiv1$ for all $x\in\mathbb{T}^N$ when $R$ is large enough.
Taking the test function in \eqref{eq:equal for qnu} as $\phi(x,\xi)=\theta_R(\xi)$, we have almost surely
\begin{equation*}
\nu^0(\mathbb{T}^N)=\int_{\mathbb{T}^N\times\mathbb{R}}\theta^\prime_R(\xi)\dd q^0(x,\xi)\leq \frac{C}{R}\, q^0(\mathbb{T}^N\times\{R\leq|\xi|\leq2R\}).
\end{equation*}
Taking the limit $R\to\infty$ and using \eqref{eq:vanishing kinetic measure}, we have almost surely $\nu^0(\mathbb{T}^N)=0$.

Now, we prove that $q^0(\mathbb{T}^N\times\mathbb{R})=0$. Combining \eqref{eq:equal for qnu} and the nonnegativity of $\nu$, we have almost surely
\begin{equation}\label{eq:testq_0=0}
\int_{\mathbb{T}^N\times\mathbb{R}}\partial_{\xi} \phi(x,\xi)\dd q^0(x,\xi)=0,\qquad\phi\in C_c^1(\mathbb{T}^N\times\mathbb{R}).
\end{equation}
For a function $\Phi\in C_c^1(\mathbb{T}^N\times\mathbb{R})$, define
\[
c_\Phi(x):=\int_\mathbb{R}\Phi(x,\xi)\dd\xi.
\]
For each $R>0$, choose $\tilde{\theta}_R\in C_c^1(\mathbb{R})$ such that
\[
\int_{\mathbb{R}}\tilde{\theta}_R(\xi)\dd\xi=1,\qquad\text{supp}\,\tilde{\theta}_R\subset\{R\leq|\xi|\leq2R\},\qquad\Vert\tilde{\theta}_R\Vert_{L^\infty(\mathbb{R})}\leq\frac{C}{R}.
\]
Define
\[
\Phi_R(x,\xi):=\Phi(x,\xi)-c_{\Phi}(x)\tilde{\theta}_R(\xi).
\]
Since $\int_{\mathbb{R}}\Phi_{R}(x,\xi)\dd\xi=0$ for every $x\in\mathbb{T}^N$, the function
\[
\phi_R(x,\xi):=\int_{-\infty}^\xi\Phi_R(x,\zeta)\dd\zeta
\]
belongs to $C_c^1(\mathbb{T}^N\times\mathbb{R})$. Taking $\phi=\phi_R$  in \eqref{eq:testq_0=0}, we have
\[
0=\int_{\mathbb{T}^N\times\mathbb{R}}\Phi_R(x,\xi)\dd q^0(x,\xi)=\int_{\mathbb{T}^N\times{\mathbb{R}}}\Phi(x,\xi)\dd q^0(x,\xi)-\int_{\mathbb{T}^N\times\mathbb{R}}c_\Phi(x)\tilde{\theta}_R(\xi)\dd q^0(x,\xi).
\]
Therefore, based on the definition of $\tilde{\theta}_R$, we have
\[
\bigg|\int_{\mathbb{T}^N\times{\mathbb{R}}}\Phi(x,\xi)\dd q^0(x,\xi)\bigg|\leq\frac{C}{R}\Vert{c_\Phi}\Vert_{L^\infty(\mathbb{T}^N)}q^0(\mathbb{T}^N\times\{R\leq|\xi|\leq2R\}).
\]
Taking the limit $R\to\infty$ and using \eqref{eq:vanishing kinetic measure}, we have
\[
\int_{\mathbb{T}^N\times{\mathbb{R}}}\Phi(x,\xi)\dd q^0(x,\xi)=0.
\]
By the arbitrariness of $\Phi\in C_c^1(\mathbb{T}^N\times{\mathbb{R}})$, we have $q^0=0$, which completes the proof.
\end{proof}

In view of Proposition \ref{prop:trace-obstacle-scl} and Lemma \ref{lem:no atom of q nu}, we can derive a kinetic formulation at a fixed time $t$, which is weak only in $(x,\xi)$. 
Specifically, fix $t\in(0,T)$ and $\bar\epsilon\in(0,(T-t)\wedge t)$. 
Test the kinetic formulations \eqref{eq:kinetic eqution} of $f$ with functions converging to $\Upsilon_{\bar\epsilon,t}(r)\varphi(x,\xi)$, where $\varphi\in C_c^1(\mathbb{T}^N\times\mathbb{R})$, and $\Upsilon_{\bar\epsilon,t}$ is defined by
\begin{equation*}
\Upsilon_{\bar\epsilon,t}(r):=\begin{cases}
1,&\bar\epsilon\leq r\leq t,\\
r/\bar\epsilon,&0\leq r\leq \bar\epsilon,\\
1-(r-t)/\bar\epsilon,&t\leq r\leq t+\bar\epsilon,\\
0,&r\geq t+\bar\epsilon\text{ or } r\leq 0.
\end{cases}
\end{equation*}
Then, taking the limit $\bar\epsilon\to0$ and using Proposition \ref{prop:trace-obstacle-scl}, Remark \ref{rem:remark for intergral of f^+}, \eqref{eq:f^+_0=f0}, Lemma \ref{lem:no atom of q nu}, Burkholder-Davis-Gundy inequality and the dominated convergence theorem, we get for $\varphi\in C_c^1(\mathbb{T}^N\times\mathbb{R})$,
\begin{equation}
\begin{aligned}
&\langle f^+(t),\varphi\rangle -\langle f^0,\varphi\rangle
 -\int_0^t\!\langle f(r),a(\xi)\cdot\nabla_x\varphi\rangle\dd r \\
&= \sum_{k\geq1}\int_0^t\int_{\mathbb{T}^N}\int_{\mathbb{R}}g_k(x,\xi)\varphi(x,\xi)\dd \mu_{x,r}(\xi)\dd x\dd\beta_k(r)\\
&\quad +\frac12\int_0^t\int_{\mathbb{T}^N}\int_{\mathbb{R}}G^2(x,\xi)\partial_\xi\varphi(x,\xi)\dd \mu_{x,r}(\xi)\dd x\dd r\\
&\quad -\int_{\mathbb{T}^N\times[0,t]\times{\mathbb{R}}}\partial_{\xi}\varphi(x,\xi)\dd q(x,r,\xi)
+\int_{\mathbb{T}^N\times[0,t]}\varphi(x,\psi(x,r))\dd\nu(x,r).
\end{aligned}\label{eq:weak-kinetic on [0,t]}
\end{equation}
By slightly modifying the definition of $\Upsilon_{\bar\epsilon,t}$, we obtain the equation of $f^-$ with $f^-(0):=\one_{u^0>\xi}$.

\section{Uniqueness of kinetic solutions to the obstacle problem}

In this section, we employ the doubling variables method to show the uniqueness.

Let $(\rho_{\gamma})$ and $(\kappa_\delta)$ be standard  nonnegative, even mollifier sequences on $\mathbb{T}^N$ and $\mathbb{R}$, respectively.
 For each $R>0$, let $\theta_R\in C_c^\infty(\mathbb{R})$ be a nonnegative localized function such that
\begin{align}\label{theta}
0\leq\theta_R\leq1,\qquad\theta_R(\xi)=1\quad\text{for }|\xi|\leq R,\qquad\theta_R(\xi)=0\quad\text{for }|\xi|\geq R+1,\qquad|\theta^\prime_R|\leq C.
\end{align}

\begin{thm}[$L^1$-stability and uniqueness]\label{thm:uniqueness}
Let \textbf{Hypotheses (H1) and (H2)$'$} hold.
Let $(u_1,\nu_1)$ and $(u_2,\nu_2)$ be two kinetic solutions of the lower obstacle problem for \eqref{eq:intro-reflected-model} in the sense of Definition \ref{def:dfn-1},
with corresponding initial values satisfying
\[
u^0_{1},u^0_{2}\in L^\infty(\mathbb{T}^N),
\qquad
u^0_{i}\geq \psi(\cdot,0)\quad\text{a.e. on }\mathbb{T}^N.
\]
Then, for almost every $t\in[0,T]$,
\begin{equation}\label{eq:l1-contraction}
\E\Vert u_1(t)-u_2(t)\Vert _{L^1(\mathbb{T}^N)}
\leq
\Vert u^0_{1}-u^0_{2}\Vert _{L^1(\mathbb{T}^N)}.
\end{equation}
In particular, if $u^0_{1}=u^0_{2}$ a.e. on $\mathbb{T}^N$, then
\[
u_1=u_2
\quad\text{in }L^1(\Omega\times\mathbb{T}^N\times[0,T]),
\]
and the reflection measures coincide:
\[
\nu_1=\nu_2
\quad\text{as Radon measures on }\mathbb{T}^N\times[0,T),
\quad \mathbb{P}\text{-a.s.}
\]
Thus, the kinetic solution pair $(u,\nu)$ is unique.
\end{thm}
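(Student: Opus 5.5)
We follow the doubling-of-variables method of \cite[Theorem 15]{DV10-publish}, applied to $f_1=\one_{u_1>\xi}$ and $\bar f_2=1-\one_{u_2>\zeta}$. The only new terms come from the reflection measures $\nu_1,\nu_2$. For $\varphi_1\in C_c^1(\mathbb T^N_x\times\mathbb R_\xi)$ and $\varphi_2\in C_c^1(\mathbb T^N_y\times\mathbb R_\zeta)$, we start from the fixed-time formulation \eqref{eq:weak-kinetic on [0,t]} for $f_1^+$ and for $\bar f_2^+$; the latter is obtained by subtracting the equation of $f_2$ from the trivial one for $1$. Since Lemma~\ref{lem:no atom of q nu} excludes atoms of $q_i$ and $\nu_i$ at $t=0$, we may apply the It\^o product formula on $[0,t]$ starting exactly from the initial data $f^0_1,\bar f^0_2$. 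The cross term between the two bounded-variation parts is handled with the $f^\pm$ bookkeeping of Remark~\ref{rem:remark for intergral of f^+}, as in \cite{DV10-publish}. Extending to test functions $\alpha(x,\xi,y,\zeta)=\theta_R(\xi)\rho_\gamma(x-y)\kappa_\delta(\xi-\zeta)$ and taking expectations gives
\[
\E\int f_1^+(x,t,\xi)\bar f_2^+(y,t,\zeta)\alpha
\leq \int f^0_1\bar f^0_2\alpha+I_{\rm flux}+I_{\rm noise}+I_{q}+I_{\nu}+I_R .
\]
The terms $I_{\rm flux}$, $I_{\rm noise}$ and $I_q$ are estimated exactly as in the obstacle-free case. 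The flux term uses Hypothesis (H1) and gives $C\delta\gamma^{-1}$ times moments. The noise term uses \eqref{eq:assumption for g Lip} and gives $C(\gamma^2\delta^{-1}+\delta)$. The $q$-terms are nonpositive because $\partial_\xi\kappa_\delta(\xi-\zeta)=-\partial_\zeta\kappa_\delta(\xi-\zeta)$, with $f_1\geq0$ and $\bar f_2\ge0$. The cutoff remainder $I_R$ involves $\theta_R'$ and is sent to zero as $R\to\infty$ using \eqref{eq:vanishing kinetic measure} and \eqref{eq:usual moment bound}.

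The main obstacle is the reflection term. Ignoring the cutoff,
\[
I_\nu=\E\int f_1^{-}\,\rho_\gamma(x-y)\kappa_\delta(\xi-\psi(y,s))\,\dd\xi\dd x\,\dd\nu_2(y,s)\;+\;\E\int \bar f_2^{-}\,\rho_\gamma(x-y)\kappa_\delta(\psi(x,s)-\zeta)\,\dd\zeta\dd y\,\dd\nu_1(x,s),
\]
and the second term enters with the sign that comes from $\bar f_2=1-f_2$. The plan is to show that the limit of $I_\nu$ as $\delta,\gamma\to0$ is $\leq0$ by using the constraint $u_i\geq\psi$ a.e., whose left traces also satisfy $u_i^{-}\ge\psi$ in the kinetic sense, so that $f_i^-(\xi)=1$ for $\xi<\psi$. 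We first integrate out $\xi$, which gives, for instance,
\[
\int f_1^-(x,\xi)\kappa_\delta(\xi-\psi(y,s))\dd\xi\geq\int\one_{\psi(x,s)>\xi}\kappa_\delta(\xi-\psi(y,s))\dd\xi .
\]
The two $\nu$-contributions then combine, with opposite signs, into terms of the form
\[
\pm\int\rho_\gamma(x-y)\Big[\int\one_{\psi(x)>\xi}\kappa_\delta(\xi-\psi(y))\dd\xi-\tfrac12\Big]\dd x\,\dd\nu_i(y,s).
\]
Their size is bounded by $\omega_\psi(\gamma)/\delta$ times $\E\nu_i(\mathbb T^N\times[0,T))$. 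Continuity of $\psi$ alone would force an unfavourable coupling of $\gamma$ and $\delta$. This is where (H2)$'$ enters. Because $\rho_\gamma$ is even, the first-order Taylor term $\nabla\psi(y)\cdot(x-y)$ integrates to zero against $\rho_\gamma$. The second-order remainder is $O(\|\nabla^2\psi\|_\infty\gamma^2)$, so the error becomes $C\gamma^2/\delta$. Since the noise term already forces $\gamma^2/\delta\to0$, this is compatible with the other choices. Getting this symmetric expansion and the signs right, using the even mollifiers $\kappa_\delta$ and $\rho_\gamma$, is the step I expect to be hardest; I would first try writing the difference as $\int\rho_\gamma(x-y)\big[K_\delta(\psi(x)-\psi(y))-K_\delta(0)\big]$ with $K_\delta(r)=\int_{-\infty}^r\kappa_\delta$ and expanding $K_\delta$ to second order.

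Once $I_\nu\leq C\gamma^2\delta^{-1}$ is established, we choose $\delta=\gamma^{4/3}$, let $\gamma\to0$ and then $R\to\infty$. Using the identity $\int_{\mathbb R}\one_{u_1>\xi}(1-\one_{u_2>\xi})\dd\xi=(u_1-u_2)^+$ and the fact that $f^+=f$ for a.e.\ $t$, we obtain $\E\|(u_1-u_2)^+(t)\|_{L^1}\leq\|(u^0_1-u^0_2)^+\|_{L^1}$. Exchanging the roles of $u_1$ and $u_2$ and adding the two estimates gives \eqref{eq:l1-contraction}. If $u^0_1=u^0_2$, then $u_1=u_2$ a.e. Substituting this into \eqref{eq:kinetic eqution}, the measures $\nu_1$ and $\nu_2$ differ only through $q_1-q_2$. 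Testing with $\varphi=\chi(x,t)\theta_R(\xi)$ and letting $R\to\infty$, as in the proof of Lemma~\ref{lem:no atom of q nu}, shows that $\int\chi\,\dd\nu_1=\int\chi\,\dd\nu_2$ for every $\chi\in C_c^1(\mathbb T^N\times[0,T))$. Hence $\nu_1=\nu_2$ almost surely.
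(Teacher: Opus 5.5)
There is a genuine gap in the reflection term, at the claim $I_\nu\le C\gamma^2\delta^{-1}$. With the correct signs, $\nu_1$ enters positively, paired with $\bar f_2^+$, and $\nu_2$ enters negatively, paired with $f_1^-$. Use $\bar f_2^+\le 1-\one_{\psi>\zeta}$ and $f_1^-\ge\one_{\psi>\xi}$, then integrate out the kinetic variable. You obtain
\[
\begin{aligned}
I_\nu\le{}&\E\int\rho_\gamma(x-y)\Bigl[\tfrac12+\llbracket\kappa_\delta\rrbracket\bigl(\psi(x,r)-\psi(y,r)\bigr)\Bigr]\dd y\,\dd\nu_1(x,r)\\
&-\E\int\rho_\gamma(x-y)\Bigl[\tfrac12+\llbracket\kappa_\delta\rrbracket\bigl(\psi(x,r)-\psi(y,r)\bigr)\Bigr]\dd x\,\dd\nu_2(y,r).
\end{aligned}
\]
The $\llbracket\kappa_\delta\rrbracket$-parts are the $O(\gamma^2/\delta)$ terms you describe. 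However, your expression ``$\int\one_{\psi(x)>\xi}\kappa_\delta(\xi-\psi(y))\,\dd\xi-\tfrac12$'' silently discards the constant halves. These leave $\tfrac12\bigl[\E\nu_1(\mathbb T^N\times[0,t])-\E\nu_2(\mathbb T^N\times[0,t])\bigr]$, which integrates against two different measures, has no sign, and does not depend on $\gamma,\delta$.

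These halves are genuinely present. The mass of $\nu_1$ is placed at $\xi=\psi(x)$, and wherever $u_2$ sits on the obstacle, $\bar f_2(y,\cdot)$ jumps at $\psi(y)\approx\psi(x)$. The symmetric mollifier $\kappa_\delta$ therefore sees exactly half of that jump, and the same happens for $\nu_2$. Consequently, your one-sided bound $\E\|(u_1-u_2)^+(t)\|_{L^1}\le\|(u^0_1-u^0_2)^+\|_{L^1}$, which would be a comparison principle, is not obtained. The argument only yields it with the additional term $+\tfrac12[\E\nu_1-\E\nu_2]$.

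The repair is the paper's. The corresponding estimate for $(u_1-u_2)^-$ carries $+\tfrac12[\E\nu_2-\E\nu_1]$, and adding the two inequalities cancels the masses, which gives the $L^1$-contraction. Your final ``exchange and add'' step is therefore the essential mechanism, not a cosmetic symmetrization.

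On the step you flagged as hardest: do not expand $K_\delta$ to second order. With $\delta=\gamma^{4/3}\ll\gamma$, the argument $\psi(x)-\psi(y)=O(\gamma)$ lies far outside the scale on which $K_\delta$ varies. The second-order term costs $\|\kappa_\delta'\|_\infty\gamma^2\sim\gamma^2/\delta^2$, which diverges under any coupling with $\delta/\gamma\to0$, and that limit is required by the flux term. What works is the exact identity $\int\rho_\gamma(h)\llbracket\kappa_\delta\rrbracket(\nabla_x\psi(x,r)\cdot h)\,\dd h=0$, which holds because $\rho_\gamma$ is even and $\llbracket\kappa_\delta\rrbracket$ is odd. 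You then apply only the Lipschitz bound $\|\kappa_\delta\|_\infty\le C/\delta$ to the $O(|h|^2)$ Taylor remainder of $\psi$.

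Two smaller points. First, the sign of the $q$-terms comes from moving the derivative onto the traces and using $\partial_\xi f_1^-\le0$ and $\partial_\zeta\bar f_2^+\ge0$, which must be verified for the traces; it does not come from $f_1,\bar f_2\ge0$. Second, pairing $\nu_1$ and $q_1$ with $\bar f_2^+$ rather than $\bar f_2^-$ makes the product formula exact, without a separate jump cross term.
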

\begin{proof}
For $i=1,2$, set
\[
f_i(x,t,\xi)=\one_{u_i(x,t)>\xi},
\qquad
\bar f_i=1-f_i,\qquad \mu^i_{x,t}(\dd\xi):=\delta_{u_i(x,t)}(\dd\xi),
\]
and $f_i(0,x,\xi)=\one_{\{u^0_{i}(x)>\xi\}}$. 
The corresponding kinetic and Radon measures are denoted by $(q_i, \nu_i)$, $i=1,2$.

By Proposition \ref{prop:trace-obstacle-scl}, each $f_i$ admits left and right representatives $f_i^-$ and $f_i^+$ at every $t\in[0,T]$, and the set of jump times is at most countable.
Proceeding similarly to \eqref{eq:weak-kinetic on [0,t]}, with test function  $\varphi_1(x,\xi)$, we have
\begin{equation*}
\begin{aligned}
&\langle f_1^+(t),\varphi_1\rangle -\langle f_1(0),\varphi_1\rangle
 -\int_0^t\!\langle f_1(r),a(\xi)\cdot\nabla_x\varphi_1\rangle\dd r \\
&= \sum_{k\geq1}\int_0^t\int_{\mathbb{T}^N}g_k(x,u_1(x,r))\varphi_1(x,u_1(x,r))\dd x\dd\beta_k(r)\\
&\quad +\frac12\int_0^t\int_{\mathbb{T}^N}G^2(x,u_1(x,r))\partial_\xi\varphi_1(x,u_1(x,r))\dd x\dd r\\
&\quad -\int_{\mathbb{T}^N\times[0,t]\times{\mathbb{R}}}\partial_{\xi}\varphi_1(x,\xi)\dd q_1(x,r,\xi)
+\int_{\mathbb{T}^N\times[0,t]}\varphi_1(x,\psi(x,r))\dd\nu_1(x,r).
\end{aligned}
\end{equation*}
Similarly, for $\bar{f}_2(y,t,\zeta)$, with the test function $\varphi_2(y,\zeta)$, it follows that
\begin{equation*}
\begin{aligned}
&\langle \bar{f}_2^+(t),\varphi_2\rangle -\langle \bar{f}_2(0),\varphi_2\rangle
 -\int_0^t\!\langle \bar{f}_2(r),a(\zeta)\cdot\nabla_y\varphi_2\rangle\dd r \\
&= -\sum_{k\geq1}\int_0^t\int_{\mathbb{T}^N}g_k(y,u_2(y,r))\varphi_2(y,u_2(y,r))\dd y\dd\beta_k(r)\\
&\quad -\frac12\int_0^t\int_{\mathbb{T}^N}G^2(y,u_2(y,r))\partial_\zeta\varphi_2(y,u_2(y,r))\dd y\dd r\\
&\quad +\int_{\mathbb{T}^N\times[0,t]\times{\mathbb{R}}}\partial_{\zeta}\varphi_2(y,\zeta)\dd q_2(y,r,\zeta)
-\int_{\mathbb{T}^N\times[0,t]}\varphi_2(y,\psi(y,r))\dd\nu_2(y,r).
\end{aligned}
\end{equation*}
Then $\langle f^+_1(t),\varphi_1\rangle$ and $\langle\bar{f}_2^+(t),\varphi_2\rangle$ are c\`{a}dl\`{a}g semimartingales.
Their martingale parts are continuous, while the finite variation parts may have jumps caused by $q_i$ and $\nu_i$.
Let $\varphi_{\gamma,\delta}(x,\xi,y,\zeta):=\rho_{\gamma}(x-y)\kappa_{\delta}(\xi-\zeta)$. Using It\^o's product formula and the integration-by-parts formula for functions of finite variation, with the tensorization argument of \cite[proof of Proposition 9]{DV10-publish}, we reach
\begin{align*}
&\E\int_{(\mathbb{T}^N)^2}\int_{\mathbb{R}^2} f_1^+(t)\bar{f}_2^+(t)\rho_{\gamma}(x-y)\kappa_\delta(\xi-\zeta)\dd x\dd\xi\dd y \dd\zeta \\
&=\E\int_{(\mathbb{T}^N)^2}\int_{\mathbb{R}^2}f_1(0)\bar{f}_2(0)\rho_{\gamma}(x-y)\kappa_\delta(\xi-\zeta)\dd x\dd\xi\dd y \dd\zeta \nonumber\\
&\quad+\E\int_0^t\int_{(\mathbb{T}^N)^2}\int_{\mathbb{R}^2}f_1(x,r,\xi)\bar{f}_2(y,r,\zeta)\big(a(\xi)\cdot\nabla_x+a(\zeta)\cdot\nabla_y\big)\varphi_{\gamma,\delta}\, \dd\xi\dd\zeta\dd x\dd y\dd r\nonumber\\
&\quad+\bigg\{\frac{1}{2}\E\int_0^t\int_{(\mathbb{T}^N)^2}\int_{\mathbb{R}^2}G^2(x,\xi)\partial_\xi\varphi_{\gamma,\delta}(x,\xi,y,\zeta)\bar{f}_2(y,r,\zeta)\, \dd\zeta\dd \mu^1_{x,r}(\xi)\dd x\dd y\dd r\nonumber\\
&\quad-\frac{1}{2}\E\int_0^t\int_{(\mathbb{T}^N)^2}\int_{\mathbb{R}^2}G^2(y,\zeta)\partial_\zeta\varphi_{\gamma,\delta}(x,\xi,y,\zeta)f_1(x,r,\xi)\,\dd\xi\dd \mu^2_{y,r}(\zeta)\dd x\dd y\dd r\nonumber\\
&\quad-\E\int_0^t\int_{(\mathbb{T}^N)^2}\int_{\mathbb{R}^2}G_{1,2}(x,y,\xi,\zeta)\varphi_{\gamma,\delta}(x,\xi,y,\zeta)\dd x\dd y\dd \mu^1_{x,r}(\xi)\dd \mu^2_{y,r}(\zeta)\dd r\bigg\}\nonumber\\
&\quad+\bigg\{\E\int_{\mathbb{T}^N}\int_{\mathbb{R}}\int_{\mathbb{T}^N\times[0,t]\times\mathbb{R}}\partial_\zeta\varphi_{\gamma,\delta}(x,\xi,y,\zeta){f}_1^-(x,r,\xi)\dd\xi\dd q_2(y,r,\zeta)\dd x\nonumber\\
&\quad-\E\int_{\mathbb{T}^N}\int_{\mathbb{R}}\int_{\mathbb{T}^N\times[0,t]\times\mathbb{R}}\partial_\xi\varphi_{\gamma,\delta}(x,\xi,y,\zeta)\bar{f}_2^+(y,r,\zeta)\dd q_1(x,r,\xi)\dd\zeta\dd y\bigg\}\nonumber\\
&\quad+\bigg\{\E\int_{\mathbb{T}^N}\int_{\mathbb{R}}\int_{\mathbb{T}^N\times[0,t]}\varphi_{\gamma,\delta}(x,\psi(x,r),y,\zeta)\bar{f}_2^+(y,r,\zeta)\dd \nu_1(x,r)\dd y\dd \zeta\nonumber\\
&\quad-\E\int_{\mathbb{T}^N}\int_{\mathbb{R}}\int_{\mathbb{T}^N\times[0,t]}\varphi_{\gamma,\delta}(x,\xi,y,\psi(y,r))f_1^-(x,r,\xi)\dd \nu_2(y,r)\dd x\dd \xi\bigg\}\nonumber\\
&=: \E\int_{(\mathbb{T}^N)^2}\int_{\mathbb{R}^2}f_1(0)\bar{f}_2(0)\rho_{\gamma}(x-y)\kappa_\delta(\xi-\zeta)\dd x\dd\xi\dd y\dd\zeta+I_{\gamma,\delta}^{\rm{flux}}(t)+I_{\gamma,\delta}^{\rm{quad}}(t)+I_{\gamma,\delta}^{q}(t)+I_{\gamma,\delta}^{\nu}(t)\nonumber,
\end{align*}
where $G_{1,2}(x,y,\xi,\zeta):=\sum_{k\geq1}g_k(x,\xi)g_k(y,\zeta)$.
Due to
\begin{equation*}
(\nabla_x+\nabla_y)\varphi_{\gamma,\delta}=0,\qquad(\partial_\xi+\partial_\zeta)\varphi_{\gamma,\delta}=0,
\end{equation*}
we have
\begin{align*}
I_{\gamma,\delta}^{\rm{flux}}(t)=\E\int_0^t\int_{(\mathbb{T}^N)^2}\int_{\mathbb{R}^2}f_1(x,r,\xi)\bar{f}_2(y,r,\zeta)\big(a(\xi)-a(\zeta)\big)\cdot\nabla_x\rho_{\gamma}(x-y)\kappa_{\delta}(\xi-\zeta)\dd x\dd\xi\dd y\dd\zeta\dd r.
\end{align*}
By integration by parts w.r.t. $\xi$ and $\zeta$, using $\partial_{\zeta}\bar{f}_2(y,r,\zeta)\dd\zeta=\mu^2_{y,r}(\dd\zeta)=\delta_{u_2(y,r)}(\dd\zeta)$ and $-\partial_{\xi}f_1(x,r,\xi)\dd\xi=\mu^1_{x,r}(\dd\xi)=\delta_{u_1(x,r)}(\dd\xi)$, it follows that
\begin{align}
I_{\gamma,\delta}^{\rm{quad}}(t)=\frac{1}{2}\sum_{k\geq 1}\E\int_0^t\int_{(\mathbb{T}^N)^2}\int_{\mathbb{R}^2}
\rho_{\gamma}(x-y)\kappa_{\delta}(\xi-\zeta)|g_k(x,\xi)-g_k(y,\zeta)|^2\dd \mu^1_{x,r}\otimes\mu^2_{y,r}(\xi,\zeta)\dd x \dd y \dd r.\nonumber
\end{align}
Note that the terms $I_{\gamma,\delta}^{\rm{flux}} $ and $I_{\gamma,\delta}^{\rm{quad}}$ are the same as those in \cite[Proposition 13]{DV10}, so we deduce from the results in \cite[(31) and (35)]{DV10} that
\begin{align}\label{eq:estimates for Ia-1}
|I_{\gamma,\delta}^{\rm{flux}}(t)|&\leq
C\delta \E \int^t_0\int_{(\mathbb{T}^N)^2}|\nabla_x \rho_{\gamma}(x-y)|(1+|u_1(x,r)|^p+|u_2(y,r)|^p)\dd x\dd y\dd r\\ \notag
&\leq C\delta \gamma^{-1}\E\int^t_0(1+\|u_1\|^p_{L^p(\mathbb{T}^N)}+\|u_2\|^p_{L^p(\mathbb{T}^N)})\dd r\\
&\leq C_pT\delta\gamma^{-1},\notag
\end{align}
and
\begin{align}\label{eq:estimates for Ia-2}
|I_{\gamma,\delta}^{\rm{quad}}(t)|\leq D_0T\gamma^2\delta^{-1}+D_0T\delta.
\end{align}

In the sequel, we focus on the terms $I_{\gamma,\delta}^{\nu}(t)$ and $I_{\gamma,\delta}^{q}(t)$, associated with the reflection measure $\nu$ and the kinetic measure $q$, respectively.

Let us start with $I_{\gamma,\delta}^\nu(t)$.
Define $f_\psi(x,t,\xi):=\one_{\psi(x,t)>\xi}$.
From the obstacle constraint in Definition \ref{def:dfn-1}, we have $f_{i}\geq f_\psi$ for $\mathbb P\otimes\dd x\otimes\dd t\otimes\dd\xi$-a.e.
$(\omega,x,t,\xi)$, where $i\in\{1,2\}$.
Fix $i\in\{1,2\}$.
Note that $f^{+}_{i}=f^{-}_{i}=f_{i}$ for $\mathbb P\otimes\dd t$-a.e. $(\omega,t)$, as distributions on
$\mathbb T^N\times\mathbb R$.
Then, for all $\tau\in(0,T)$, there exist sequences $(\tau_k^{-})_{k\in{\mathbb{N}}}$ and $(\tau_k^{+})_{k\in{\mathbb{N}}}$ of positive numbers satisfying $\tau_k^{-}<\tau$, $\tau_k^{+}<T-\tau$ for every $k\in\mathbb{N}$, and $\tau_k^{-},\tau_k^{+}\to0$ as $k\to\infty$ such that $f^{\pm}_{i}(\cdot,\tau\pm \tau_k^{\pm},\cdot)=f_{i}(\cdot,\tau\pm \tau_k^{\pm},\cdot)$ as a distribution on $\mathbb{T}^N\times \mathbb{R}$ for every $k\in\mathbb{N}$.
Owing to the time-continuity of $\psi$ and the definition of $f_i^{+}$ and using Proposition \ref{prop:trace-obstacle-scl}, we obtain almost surely for all nonnegative $\phi\in C_c^\infty(\mathbb{T}^N\times \mathbb R)$,
\begin{align}\label{eq:fi geq fpsi}
&\int_{\mathbb{T}^N}\int_{\mathbb{R}}(f_{i}^{\pm}(x,\tau,\xi)-f_{\psi}(x,\tau,\xi))\phi(x,\xi)\dd x\dd\xi\\
&\geq \liminf_{k\to\infty}\int_{\mathbb{T}^N}\int_{\mathbb{R}}(f_{i}(x,\tau\pm \tau_k^{\pm},\xi)-f_{\psi}(x,\tau\pm \tau_k^{\pm},\xi))\phi(x,\xi)\dd x\dd\xi\geq 0.\nonumber
\end{align}
Using a density argument and combining the fact that $f^+_i=f_i=f^-_i$ when $t=0$, we have $f^{\pm}_{i}\geq f_\psi$ almost surely on $\Omega$ for all $t\in[0,T)$ in $L^\infty(\mathbb{T}^N\times\mathbb{R})$.
Therefore, we have
\begin{align}\label{eq:I obstacle with psi}
I_{\gamma,\delta}^\nu(t)
&\leq\E\int_{\mathbb{T}^N}\int_{\mathbb{R}}\int_{\mathbb{T}^N\times[0,t]}\varphi_{\gamma,\delta}(x,\psi(x,r),y,\zeta)\big(1-{f}_\psi(y,r,\zeta)\big)\dd \nu_1(x,r)\dd y\dd \zeta\\
&\quad-\E\int_{\mathbb{T}^N}\int_{\mathbb{R}}\int_{\mathbb{T}^N\times[0,t]}\varphi_{\gamma,\delta}(x,\xi,y,\psi(y,r))f_\psi(x,r,\xi)\dd \nu_2(y,r)\dd x\dd \xi\nonumber\\
&=\E\int_{\mathbb{T}^N}\int_{\mathbb{R}}\int_{\mathbb{T}^N\times[0,t]}\rho_{\gamma}(x-y)\kappa_\delta(\psi(x,r)-\zeta)\big(1-{f}_\psi(y,r,\zeta)\big)\dd \nu_1(x,r)\dd y\dd \zeta\nonumber\\
&\quad-\E\int_{\mathbb{T}^N}\int_{\mathbb{R}}\int_{\mathbb{T}^N\times[0,t]}\rho_{\gamma}(x-y)\kappa_{\delta}(\xi-\psi(y,r))f_\psi(x,r,\xi)\dd \nu_2(y,r)\dd x\dd \xi.\nonumber
\end{align}

For any $z\in \mathbb{R}$, define
\begin{align*}
\llbracket g\rrbracket(z):=\int_{0}^{z}g(s)\dd s,\qquad\text{for each }g\in L^1(\mathbb{R}).
\end{align*}
Then $\llbracket\kappa_\delta\rrbracket$ is a non-decreasing odd function with
\[
\kappa_\delta(\xi-\zeta)=\partial_\xi\llbracket\kappa_\delta\rrbracket(\xi-\zeta),\qquad\llbracket\kappa_\delta\rrbracket(-\infty)=-\frac{1}{2},\qquad\llbracket\kappa_\delta\rrbracket(+\infty)=\frac{1}{2},\qquad\Vert\kappa_\delta\Vert_{L^\infty(\mathbb{R})}\leq\frac{C}{\delta}.
\]
Therefore, we have
\begin{align*}
\int_{\mathbb R}\kappa_\delta(\psi(x,r)-\zeta)\big(1-{f}_\psi(y,r,\zeta)\big)\dd \zeta&=\int_{\mathbb R}-\partial_\zeta\llbracket\kappa_\delta\rrbracket(\psi(x,r)-\zeta)\big(1-{f}_\psi(y,r,\zeta)\big)\dd \zeta\\
&=\frac{1}{2}+\llbracket\kappa_\delta\rrbracket(\psi(x,r)-\psi(y,r)),
\end{align*}
and
\begin{align*}
\int_{\mathbb R}\kappa_\delta(\xi-\psi(y,r)){f}_\psi(x,r,\xi)\dd \xi&=\int_{\mathbb R}\partial_\xi\llbracket\kappa_\delta\rrbracket(\xi-\psi(y,r)){f}_\psi(x,r,\xi)\dd \xi\\
&=\frac{1}{2}+\llbracket\kappa_\delta\rrbracket(\psi(x,r)-\psi(y,r)),
\end{align*}
Combining the definition of $f_\psi$, \eqref{eq:I obstacle with psi} can be estimated as
\begin{align*}
I_{\gamma,\delta}^\nu(t)&\leq\E\int_{\mathbb{T}^N}\int_{\mathbb{T}^N\times[0,t]}\rho_{\gamma}(x-y)\llbracket\kappa_\delta\rrbracket(\psi(x,r)-\psi(y,r))\dd \nu_1(x,r)\dd y\nonumber\\
&\quad-\E\int_{\mathbb{T}^N}\int_{\mathbb{T}^N\times[0,t]}\rho_{\gamma}(x-y)\llbracket\kappa_\delta\rrbracket(\psi(x,r)-\psi(y,r))\dd \nu_2(y,r)\dd x\nonumber\\
&\quad +\frac{1}{2}\big[\E\nu_1(\mathbb{T}^N\times[0,t])-\E\nu_2(\mathbb{T}^N\times[0,t])\big].
\end{align*}
Under the assumption that $\psi\in C([0,T];C^2(\mathbb{T}^N))$, for fixed $(x,r)$, write $y=x-h$. 
Then, Taylor's formula gives
\begin{equation}\label{eq:psi-Taylor}
\psi(x,r)-\psi(x-h,r)=\nabla_x\psi(x,r)\cdot h+\mathcal R_\psi(x,h,r),\qquad|\mathcal R_\psi(x,h,r)|\leq C_\psi|h|^2.
\end{equation}
Since $\rho_\gamma$ is even and $\llbracket\kappa_\delta\rrbracket$ is odd, we have
\begin{equation*}
\int_{\mathbb T^N}\rho_\gamma(h)\llbracket\kappa_\delta\rrbracket(\nabla\psi(x,r)\cdot h)\dd h=0.
\end{equation*}
In view of $\Vert\kappa_\delta\Vert_{L^\infty(\mathbb{R})}\leq{C}/{\delta}$ and \eqref{eq:psi-Taylor}, we obtain
\begin{align*}
&\bigg|\int_{\mathbb T^N}\rho_\gamma(x-y)\llbracket\kappa_\delta\rrbracket\big(\psi(x,r)-\psi(y,r)\big)\dd y
\bigg|\leq\frac{C}{\delta}\int_{\mathbb T^N}\rho_\gamma(h)|h|^2\dd h\leq C\frac{\gamma^2}{\delta}.
\end{align*}
The same estimate holds with $x$ and $y$ interchanged. Hence
\begin{equation}\label{eq:I-nu-final-estimate}
I_{\gamma,\delta}^{\nu}(t)\leq C\frac{\gamma^2}{\delta}\E\Bigl[\nu_1(\mathbb T^N\times[0,t])+\nu_2(\mathbb T^N\times[0,t])\Bigr]+\frac{1}{2}\big[\E\nu_1(\mathbb{T}^N\times[0,t])-\E\nu_2(\mathbb{T}^N\times[0,t])\big].
\end{equation}

For the term $I^q_{\gamma,\delta}$, we first prove that $\partial_\xi{f}^{\pm}_i\leq 0$ in the sense of distribution.
Analogously to the proof of \eqref{eq:fi geq fpsi}, we will use the fact that almost surely $f_{i}^{\pm}(x,\tau,\xi)=\lim_{k\to\infty} f_{i}(x,\tau\pm \tau_k^{\pm},\xi)$ as a distribution on $\mathbb{T}^N\times\mathbb{R}$.
Since $f_i\in \{0,1\}$ and $\partial_\xi f_i\leq0$ on $\Omega\times\mathbb{T}^N\times[0,T]\times\mathbb{R}$, we have for all nonnegative functions $\varphi\in C_c^\infty(\mathbb R)$ and $\phi\in C^\infty(\mathbb{T}^N)$ and $\tau\in(0,T)$, almost surely
\begin{align*}
\int_{\mathbb{T}^N}\int_{\mathbb{R}}f_{i}^{\pm}(x,\tau,\xi)\phi(x)\varphi^\prime(\xi)\dd x \dd \xi
&=\lim_{k\to\infty}\int_{\mathbb{T}^N}\int_{\mathbb{R}}f_{i}(x,\tau\pm \tau_k^{\pm},\xi)\phi(x)\varphi^\prime(\xi)\dd x \dd \xi\\
&=\lim_{k\to\infty}\int_{\mathbb{T}^N}\int_{\mathbb{R}}\big(-\partial_{\xi}f_{i}(x,\tau\pm \tau_k^{\pm},\xi)\big)\phi(x)\varphi(\xi)\dd x \dd \xi\geq 0.
\end{align*}
Using a density argument and combining the fact that $f^+_i=f_i=f^-_i$ when $t=0$, we have for all $\tau\in[0,T)$ almost surely
\begin{align*}
\partial_\xi f^{\pm}_i\leq0,\quad\text{as a distribution on }\mathbb{T}^N\times\mathbb{R}.
\end{align*}
Therefore, we have
\begin{align}\label{eq:estimates for Iqvar}
I_{\gamma,\delta}^q(t)&=\E\int_{\mathbb{T}^N}\int_{\mathbb{R}}\int_{\mathbb{T}^N\times[0,t]\times\mathbb{R}}\partial_\zeta\varphi_{\gamma,\delta}(x,\xi,y,\zeta)\bar{f}_2^+(y,r,\zeta)\dd q_1(x,r,\xi)\dd y\dd\zeta\\
&\quad-\E\int_{\mathbb{T}^N}\int_{\mathbb{R}}\int_{\mathbb{T}^N\times[0,t]\times\mathbb{R}}\partial_\xi\varphi_{\gamma,\delta}(x,\xi,y,\zeta){f}_1^-(x,r,\xi)\dd q_2(y,r,\zeta)\dd x\dd\xi\nonumber\\
&=-\E\int_{\mathbb{T}^N}\int_{\mathbb{R}}\int_{\mathbb{T}^N\times[0,t]\times\mathbb{R}}\varphi_{\gamma,\delta}(x,\xi,y,\zeta)\partial_\zeta\bar{f}_2^+(y,r,\zeta)\dd q_1(x,r,\xi)\dd y\dd\zeta\nonumber\\
&\quad+\E\int_{\mathbb{T}^N}\int_{\mathbb{R}}\int_{\mathbb{T}^N\times[0,t]\times\mathbb{R}}\varphi_{\gamma,\delta}(x,\xi,y,\zeta)\partial_\xi{f}_1^-(x,r,\xi)\dd q_2(y,r,\zeta)\dd x\dd\xi\leq0.\nonumber
\end{align}

Choose $\delta=\gamma^{4/3}$. Then as $\gamma$ tends to 0, it follows that
\[
\frac{\delta}{\gamma}=\gamma^{1/3}\rightarrow0,\qquad\frac{\gamma^2}{\delta}
=\gamma^{2/3}\rightarrow0,\qquad\delta=\gamma^{4/3}\rightarrow0.
\]
Combining \eqref{eq:estimates for Ia-1}, \eqref{eq:estimates for Ia-2}, \eqref{eq:I-nu-final-estimate} and \eqref{eq:estimates for Iqvar}, we have
\begin{align}
&\E \int_{\mathbb{T}^N}\int_{\mathbb{R}}f_1^+(t)\bar{f}_2^+(t)\dd x \dd\xi-\int_{\mathbb{T}^N}\int_{\mathbb{R}} f_1(0)\bar{f}_2(0)\dd x \dd\xi\nonumber\\
&=\limsup_{\gamma\downarrow0}\Big[\E\int_{(\mathbb{T}^N)^2}\int_{\mathbb{R}^2} \big(f_1^+(t)\bar{f}_2^+(t)-f_1(0)\bar{f}_2(0)\big)\varphi_{\gamma,\delta}\dd x\dd\xi\dd y\dd\zeta\Big]\nonumber\\
&=
\limsup_{\gamma\downarrow0}\Big[I_{\gamma,\delta}^\nu(t)+I_{\gamma,\delta}^q(t)+
I_{\gamma,\delta}^{\rm{flux}}(t)+I_{\gamma,\delta}^{\rm{quad}}(t)\Big]\nonumber\\
&\leq\frac{1}{2}\big[\E\nu_1(\mathbb{T}^N\times[0,t])-\E\nu_2(\mathbb{T}^N\times[0,t])\big].\nonumber
\end{align}
Using Proposition \ref{prop:trace-obstacle-scl} and the joint measurability of $f^+_i$ and $f_i$, we have
\[
f^+_i=f_i\qquad\text{in }\mathcal{D}'(\mathbb{T}^N\times\mathbb{R}),\qquad\text{for }\mathbb{P}\otimes\dd t\text{-a.s.}
\]
Hence, by Fubini's theorem, there exists a set $T_0\subset(0,T)$ of full Lebesgue measure such that, for every $t\in T_0$,
\begin{equation}\label{eq:equal-for-f+f-aet}
f^+_i(t)=f_i(t),\qquad i=1,2,\qquad\mathbb{P}\text{-a.s.}
\end{equation}
Consequently, for every $t\in T_0$,
\[
\E\int_{\mathbb{T}^N\times\mathbb{R}}f^+_1(t)\bar{f}^+_2(t)\dd x\dd \xi=\E\Vert(u_1(t)-u_2(t))^+\Vert_{L^1(\mathbb{T}^N)}.
\]
Then, we have
\begin{align*}
&\E\Vert (u_1(t)-u_2(t))^+\Vert_{L^1(\mathbb{T}^N)}\\
&\leq\Vert (u^0_{1}-u^0_{2})^+\Vert_{L^1(\mathbb{T}^N)}+\frac{1}{2}\big[\E\nu_1(\mathbb{T}^N\times[0,t])-\E\nu_2(\mathbb{T}^N\times[0,t])\big],\qquad\text{a.e. }t\in(0,T).
\end{align*}
Similarly, we obtain
\begin{align*}
&\E\Vert (u_1(t)-u_2(t))^-\Vert_{L^1(\mathbb{T}^N)}\\
&\leq\Vert (u^0_{1}-u^0_{2})^-\Vert_{L^1(\mathbb{T}^N)}+\frac{1}{2}\big[\E\nu_2(\mathbb{T}^N\times[0,t])-\E\nu_1(\mathbb{T}^N\times[0,t])\big],\qquad\text{a.e. }t\in(0,T).
\end{align*}
Then \eqref{eq:l1-contraction} follows by adding the above two inequalities.

If $u^0_{1}=u^0_{2}$, we infer from \eqref{eq:l1-contraction} that $u_1=u_2$ in $L^1(\Omega\times\mathbb{T}^N)$ for almost every $t\in[0,T]$. This further yields that
 $\nu_1$ and $\nu_2$ coincide.
Indeed, we deduce from \eqref{eq:kinetic eqution} for $f_i$, $i=1,2$, that almost surely for $\varphi\in C_c^1(\mathbb{T}^N\times[0,T)\times\mathbb{R})$, 
\begin{align*}
&\int_0^T\int_{\mathbb{T}^N}\int_{\mathbb{R}}\partial_\xi\varphi(x,t,\xi)\dd q_1(x,t,\xi)-\int_0^T\int_{\mathbb{T}^N}\varphi(x,t,\psi(x,t))\dd\nu_1(x,t)\\
&=\int_0^T\int_{\mathbb{T}^N}\int_{\mathbb{R}}\partial_\xi\varphi(x,t,\xi)\dd q_2(x,t,\xi)-\int_0^T\int_{\mathbb{T}^N}\varphi(x,t,\psi(x,t))\dd\nu_2(x,t).
\end{align*}
Let $\varphi(x,t,\xi):=\theta_R(\xi)\phi(x,t)$, where $\phi\in C_c^1(\mathbb{T}^N\times[0,T))$ and $\theta_R\in C_c^\infty(\mathbb{R})$ are given by \eqref{theta}.
Using \eqref{eq:vanishing kinetic measure} and taking the limit $R\to\infty$, we have almost surely
\begin{align*}
\int_0^T\int_{\mathbb{T}^N}\phi(x,t)\dd\nu_1(x,t)=\int_0^T\int_{\mathbb{T}^N}\phi(x,t)\dd\nu_2(x,t).
\end{align*}
This completes the proof with a density argument of $\phi$ in $C_c(\mathbb{T}^N\times[0,T))$.
\end{proof}

\section{Existence of kinetic solutions to the obstacle problem}\label{sec-app}
In this section, we aim to prove the existence of a kinetic solution to the lower obstacle problem
\begin{equation}\label{eq:obstacle-problem-existence}
\left\{
\begin{aligned}
\dd u+\Div A(u)\dd t
&=
\Phi(u)\dd W(t)+\dd\nu,&&\text{in }\mathbb T^N\times(0,T),\\
u&\geq\psi,
&&\text{a.e. in }\Omega\times\mathbb T^N\times(0,T),\\
u(0)&=u^0.
\end{aligned}
\right.
\end{equation}
To achieve it, we introduce two approximation equations associated with \eqref{eq:obstacle-problem-existence}.
For the first one, we regularize the coefficient $\Phi$ and the initial data $u^0$ as described below.
For notational simplicity, in this section the same parameter $\alpha$ is used for the viscosity and the regularization of the noise coefficient; both approximations are sent to zero simultaneously.
\begin{itemize}
  \item The sequence $\{\Phi_\alpha\}_{\alpha>0}$ is a family of smooth approximations of $\Phi$ satisfying
\begin{equation*}
\sup_{x\in\mathbb{T}^N,|\xi|\leq R}\sum_{k\geq1}|g_{k,\alpha}(x,\xi)-g_k(x,\xi)|^2\rightarrow 0\quad\text{for every }R>0,
\end{equation*}
where $
g_{k,\alpha}(\cdot,r):=\Phi_\alpha(r)e_k
$. The growth and continuity properties in \textbf{Hypotheses (H1)} hold for $\Phi_\alpha$ uniformly in $\alpha$.
  \item The initial data $u^0_{\alpha}\in C^\infty(\mathbb T^N)$ satisfy
\begin{equation}\label{eq:initial-approximation-existence}
 u^0_{\alpha}\geq\psi(\cdot,0),\quad \text{a.e. on}\ \mathbb{T}^N,
\end{equation}
and
\begin{equation*}
u^0_{\alpha}\rightarrow u^0\quad\text{in }L^p(\mathbb T^N)\quad\text{for every }1\leq p<\infty,\qquad\sup_\alpha\Vert u^0_\alpha\Vert_{L^\infty(\mathbb{T}^N)}<\infty.
\end{equation*}
  \item For every $n\in\mathbb{N}$, the penalization function is defined by
\begin{equation*}
b_n(r):=nr^-=n\max\{-r,0\},\qquad r\in\mathbb R.
\end{equation*}

\end{itemize}
It follows that $b_n$ is globally Lipschitz, nonnegative, and non-increasing with respect to $r$.
Moreover, for every fixed $r<0$, it holds that $b_n(r)\rightarrow+\infty$ as $n\to\infty$.

\begin{rem}
We provide an example of $u^0_{\alpha}$ satisfying the condition \eqref{eq:initial-approximation-existence}. Let $(\rho_\alpha)_{\alpha>0}$ be a family of standard spatial mollifiers.
Define
\begin{equation*}
u^0_{\alpha}:=\rho_\alpha*u^0+\Vert\rho_\alpha*\psi(\cdot,0)-\psi(\cdot,0)\Vert_{L^\infty(\mathbb T^N)}+\alpha.
\end{equation*}
Since $u^0\geq\psi(\cdot,0)$ a.e., we have $
\rho_\alpha*u^0\geq\rho_\alpha*\psi(\cdot,0)$, a.e.,
and then \eqref{eq:initial-approximation-existence} follows.
\end{rem}

With the above $\Phi_\alpha$, $b_n$ and $u_\alpha^0$, the penalized viscous equation can be written as
\begin{equation}\label{eq:penalized-viscous}
\text{\textbf{App Equ (1)}}\quad
\left\{
\begin{aligned}
\dd u_{\alpha,n} + \Div A(u_{\alpha,n})\dd t - \alpha\Delta u_{\alpha,n}\dd t
&= \Phi_\alpha(u_{\alpha,n})\dd W(t) + b_n(u_{\alpha,n}-\psi)\dd t,\\
u_{\alpha,n}(0) &= u_\alpha^0.
\end{aligned}
\right.
\end{equation}
We note that the term $b_n(u_{\alpha,n}-\psi)=n(u_{\alpha,n}-\psi)^-$
is nonnegative and acts only on the set $\{u_{\alpha,n}<\psi\}$,
thereby pushing the approximate solution upward towards the obstacle.

The second approximating equation is the penalized equation, stated as follows:
for any $n\in\mathbb N$, 
\begin{equation}\label{eq:penalized-hyperbolic}
\text{\textbf{App Equ (2)}}\quad
\left\{
\begin{aligned}
\dd u_n+\Div A(u_n)\dd t
&=\Phi(u_n)\dd W(t)+b_n(u_n-\psi)\dd t,\\
u_n(0) &= u^0.
\end{aligned}
\right.
\end{equation}
The notation \(\textbf{App Equ}\) is shorthand for the approximation equation.

In the sequel, we first show the well-posedness of solutions to the penalized viscous equation \eqref{eq:penalized-viscous}. We then fix $n$ and let $\alpha$ tend to 0 to obtain well-posedness of kinetic solutions to the penalized equation \eqref{eq:penalized-hyperbolic}. 
Finally, we pass to the limit $n\rightarrow \infty$ to establish well-posedness of kinetic solutions to the original obstacle problem (\ref{eq:obstacle-problem-existence}).

\subsection{A priori estimates for the penalized viscous equation}
In the following, we give the definition of solutions to the penalized viscous equation \eqref{eq:penalized-viscous}.
\begin{defn}\label{def:penalized-viscous-solution}
Let $\alpha\in(0,1)$ and $n\in\mathbb N$.
An $L^p$-valued continuous $\mathcal{F}_t$-adapted random field $u_{\alpha,n}$ is called a solution of \eqref{eq:penalized-viscous} if $u_{\alpha,n}\in L^p\bigl(\Omega;C([0,T];L^p(\mathbb T^N))\bigr)$, and, for every $\phi\in C^2(\mathbb T^N)$ and every $t\in[0,T]$, almost surely
\begin{align}\label{eq:weak-penalized-viscous}
\int_{\mathbb{T}^N}u_{\alpha,n}(t)\phi\dd x&=\int_{\mathbb{T}^N}u^0_{\alpha}\phi\dd x +\int_0^t\int_{\mathbb T^N}A(u_{\alpha,n})\cdot\nabla\phi\dd x\dd s\\
&\quad+\alpha\int_0^t\int_{\mathbb T^N}u_{\alpha,n}\Delta\phi\dd x\dd s+\int_0^t\int_{\mathbb T^N}b_n(u_{\alpha,n}-\psi)\phi\dd x\dd s\nonumber\\
&\quad+\sum_{k\geq1}\int_0^t\int_{\mathbb T^N}g_{k,\alpha}(x,u_{\alpha,n})\phi\dd x\dd\beta_{k}(s).\nonumber
\end{align}
\end{defn}
It is shown in \cite[Theorem 2.1]{GR00} that \eqref{eq:penalized-viscous} has a unique $L^{p}(\mathbb{T}^N)$ solution under Definition \ref{def:penalized-viscous-solution} provided $p$ is large enough.
\begin{lem}\label{lem:priori estimates}
Let $M_\psi:=1+\Vert\psi\Vert_{L^\infty(Q_T)}$. For every $p\geq2$, there exists $C_p>0$ independent of $\alpha$ and $n\in\mathbb N$ such that
\begin{align}\label{eq:priori-p}
&\E\sup_{t\in[0,T]}\int_{\mathbb T^N}|u_{\alpha,n}(t)-M_\psi|^p\dd x+\alpha\E\int_0^T\int_{\mathbb T^N}|u_{\alpha,n}-M_\psi|^{p-2}|\nabla u_{\alpha,n}|^2\dd x\dd s\\
&+\E\int_0^T\int_{\mathbb T^N}|u_{\alpha,n}-M_\psi|^{p-2}n|(u_{\alpha,n}-\psi)^-|^2\dd x\dd s +\E\int_0^T\int_{\mathbb T^N}|u_{\alpha,n}-M_\psi|^{p-2}n(u_{\alpha,n}-\psi)^-\dd x\dd s \nonumber \\
&\leq C_p \Big(1+\int_{\mathbb T^N}|u^0_{\alpha}-M_\psi|^p\dd x\Big),\nonumber
\end{align}
\begin{align}\label{eq:priori-p_L2omega}
&\E\bigg[\bigg|\alpha\int_0^T\int_{\mathbb T^N}|u_{\alpha,n}-M_\psi|^{p-2}|\nabla u_{\alpha,n}|^2\dd x\dd s\bigg|^2\bigg]+\E\bigg[\bigg|\int_0^T\int_{\mathbb T^N}|u_{\alpha,n}-M_\psi|^{p-2}n|(u_{\alpha,n}-\psi)^-|^2\dd x\dd s\bigg|^2\bigg]\nonumber\\
&\leq C_p \Big(1+\int_{\mathbb T^N}|u^0_{\alpha}-M_\psi|^{2p}\dd x\Big),
\end{align}
and
\begin{equation}\label{eq:uniform Lp nu}
\E\bigg[\bigg|\int_0^T\int_{\mathbb{T}^N}n(u_{\alpha,n}-\psi)^-\dd x\dd s\bigg|^p\bigg]\leq C_p.
\end{equation}
\end{lem}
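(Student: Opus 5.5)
The plan is to apply It\^o's formula to $F(u_{\alpha,n})$ with $F(r):=|r-M_\psi|^p$, $p\geq2$, for the strong solution $u_{\alpha,n}$ of \eqref{eq:penalized-viscous}. If the regularity from \cite{GR00} is not enough, we first approximate $F$ by functions with bounded second derivative. Writing $v:=u_{\alpha,n}-M_\psi$, we get
\begin{align*}
\int_{\mathbb T^N}|v(t)|^p\dd x&=\int_{\mathbb T^N}|u^0_\alpha-M_\psi|^p\dd x-p\int_0^t\!\int_{\mathbb T^N}|v|^{p-2}v\,\Div A(u_{\alpha,n})\dd x\dd s-\alpha p(p-1)\int_0^t\!\int_{\mathbb T^N}|v|^{p-2}|\nabla u_{\alpha,n}|^2\dd x\dd s\\
&\quad+p\int_0^t\!\int_{\mathbb T^N}|v|^{p-2}v\,n(u_{\alpha,n}-\psi)^-\dd x\dd s+\frac{p(p-1)}2\int_0^t\!\int_{\mathbb T^N}|v|^{p-2}G_\alpha^2(x,u_{\alpha,n})\dd x\dd s+\mathcal M_t,
\end{align*}
where $\mathcal M_t:=p\sum_k\int_0^t\int|v|^{p-2}v\,g_{k,\alpha}\dd x\dd\beta_k$. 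The flux term vanishes. Indeed, $|v|^{p-2}v\,\Div A(u_{\alpha,n})=\Div\big(\int_{M_\psi}^{u_{\alpha,n}}|r-M_\psi|^{p-2}(r-M_\psi)a(r)\dd r\big)$, which integrates to zero on the torus.

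The key sign observation is the choice of $M_\psi$. On the support of $(u_{\alpha,n}-\psi)^-$ we have $u_{\alpha,n}<\psi\leq M_\psi-1$, so $v<0$. Moreover,
\[
-v=M_\psi-u_{\alpha,n}=(M_\psi-\psi)+(\psi-u_{\alpha,n})\geq 1+(u_{\alpha,n}-\psi)^-.
\]
Hence the penalty term equals $-p\int|v|^{p-2}(-v)\,n(u_{\alpha,n}-\psi)^-$. It is nonpositive and bounded above by
\[
-p\int|v|^{p-2}\Big(n|(u_{\alpha,n}-\psi)^-|^2+n(u_{\alpha,n}-\psi)^-\Big).
\]
Moving it to the left-hand side produces both penalty quantities in \eqref{eq:priori-p}. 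The It\^o correction is bounded by $C\int|v|^{p-2}(1+|u_{\alpha,n}|^2)\leq C(1+\int|v|^p)$ using the uniform growth of $G_\alpha^2$ and $|u_{\alpha,n}|\leq|v|+M_\psi$.

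We then take the supremum in time and the expectation. The BDG inequality gives
\[
\E\sup_t|\mathcal M_t|\leq C\,\E\Big(\int_0^T\Big(\int|v|^{p-1}(1+|v|)\Big)^2\Big)^{1/2}\leq \tfrac12\E\sup_t\int|v|^p+C\int_0^T\big(1+\E\textstyle\int|v|^p\big).
\]
Gronwall's lemma then yields \eqref{eq:priori-p}, with constants depending only on $p$, $D_0$, $T$ and $M_\psi$. This step is standard; the only care needed is the localization by stopping times to justify taking expectations.

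For \eqref{eq:priori-p_L2omega}, we rearrange the It\^o identity at $t=T$ so that the dissipative terms are isolated on the left. We square, take expectations, and bound each piece. The initial term gives the squared initial contribution. The It\^o correction term is at most $C\E\sup_t(\int(1+|v|^p))^2$, which is controlled by the $2p$-version of \eqref{eq:priori-p} through Jensen's inequality $\big(\int|v|^p\big)^2\leq\int|v|^{2p}$. For the martingale, $\E\mathcal M_T^2=\E\langle\mathcal M\rangle_T$ is bounded by the same quantity. This is why the right-hand side involves $\int|u^0_\alpha-M_\psi|^{2p}$.

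Finally, \eqref{eq:uniform Lp nu} follows from \eqref{eq:priori-p_L2omega} and the $L^p$ bounds. The main subtlety is that \eqref{eq:priori-p} involves the weight $|v|^{p-2}$ rather than the bare penalty mass, and the bound is required in $L^p(\Omega)$. Since $|v|\geq1$ on the penalty support, taking $p=2$ in the penalty part already bounds the mass in $L^1(\Omega)$ and $L^2(\Omega)$. For general $p$, we would instead integrate the equation against the test function $\phi\equiv1$:
\[
\int_0^T\!\int n(u_{\alpha,n}-\psi)^-=\int u_{\alpha,n}(T)-\int u^0_\alpha-\sum_k\int_0^T\!\int g_{k,\alpha}(x,u_{\alpha,n})\dd x\dd\beta_k.
\]
Here the flux and viscous terms integrate to zero. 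Taking the $p$-th moment and using BDG together with \eqref{eq:priori-p} then gives a bound independent of $\alpha$ and $n$, since $\sup_\alpha\|u^0_\alpha\|_{L^\infty}<\infty$. We expect this identity-based argument to be the cleanest route, and the sign and $M_\psi$-shift computation above to be the only genuinely nontrivial point.
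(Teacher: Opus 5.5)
Your proposal is correct and follows the paper's proof essentially step by step: It\^o's formula for $|u_{\alpha,n}-M_\psi|^p$ with the vanishing flux term, the sign bound $u_{\alpha,n}-M_\psi\leq u_{\alpha,n}-\psi-1$ on the penalty support, BDG plus Gr\"onwall, and testing with $\phi\equiv1$ for \eqref{eq:uniform Lp nu}. The one small variation is in \eqref{eq:priori-p_L2omega}: you obtain the squared bound directly from the $2p$-version of \eqref{eq:priori-p} via Jensen, whereas the paper runs a separate Gr\"onwall argument on the squared quantity, and both give the same right-hand side.
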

\begin{proof}
Applying It\^{o}'s formula (cf. \cite[Theorem 3.1]{krylov2013relatively}), we have almost surely for every $t\in[0,T]$,
\begin{align*}
&\int_{\mathbb T^N}|u_{\alpha,n}(t)-M_\psi|^p\dd x-p(p-1)\int_0^t\int_{\mathbb T^N}|u_{\alpha,n}-M_\psi|^{p-2}A(u_{\alpha,n})\cdot\nabla u_{\alpha,n}\dd x\dd s\nonumber\\
&\quad+p(p-1)\alpha\int_0^t\int_{\mathbb T^N}|u_{\alpha,n}-M_\psi|^{p-2}|\nabla u_{\alpha,n}|^2\dd x\dd s\nonumber\\
&=\int_{\mathbb T^N}|u^0_{\alpha}-M_\psi|^p\dd x+p\int_0^t\int_{\mathbb T^N}|u_{\alpha,n}-M_\psi|^{p-2}(u_{\alpha,n}-M_\psi)
b_n(u_{\alpha,n}-\psi)\dd x\dd s\nonumber\\
&\quad+p\sum_{k\geq1}\int_0^t\int_{\mathbb T^N}|u_{\alpha,n}-M_\psi|^{p-2}(u_{\alpha,n}-M_\psi)g_{k,\alpha}(x,u_{\alpha,n})\dd x\dd\beta_k(s)\nonumber\\
&\quad+\frac{p(p-1)}{2}\int_0^t\int_{\mathbb T^N}|u_{\alpha,n}-M_\psi|^{p-2}G_\alpha^2(x,u_{\alpha,n})\dd x\dd s\nonumber,
\end{align*}
where $
G_\alpha^2(x,\xi):=\sum_{k\geq1}|g_{k,\alpha}(x,\xi)|^2$.
Note that
\[
\int_{\mathbb T^N}|u_{\alpha,n}-M_\psi|^{p-2}A(u_{\alpha,n})\cdot\nabla u_{\alpha,n}\dd x=\int_{\mathbb T^N}\nabla\cdot\int_0^{u_{\alpha,n}}|\tilde{r}-M_\psi|^{p-2}A(\tilde{r})\dd \tilde{r}\dd x=0.
\]
For the  penalized term, due to the definition of $M_\psi$, we have
\begin{align*}
&\int_0^t\int_{\mathbb T^N}|u_{\alpha,n}-M_\psi|^{p-2}(u_{\alpha,n}-M_\psi)b_n(u_{\alpha,n}-\psi)\dd x\dd s\\
&\leq\int_0^t\int_{\mathbb T^N}|u_{\alpha,n}-M_\psi|^{p-2}(u_{\alpha,n}-\psi-1)b_n(u_{\alpha,n}-\psi)\dd x\dd s\\
&=-\int_0^t\int_{\mathbb T^N}|u_{\alpha,n}-M_\psi|^{p-2}n|(u_{\alpha,n}-\psi)^-|^2\dd x\dd s-\int_0^t\int_{\mathbb T^N}|u_{\alpha,n}-M_\psi|^{p-2}n(u_{\alpha,n}-\psi)^-\dd x\dd s.
\end{align*}
For the term involving $G_\alpha$, using \textbf{Hypothesis (H1)}, we have
\begin{align*}
&\int_0^t\int_{\mathbb T^N}|u_{\alpha,n}-M_\psi|^{p-2}G_\alpha^2(x,u_{\alpha,n})\dd x\dd s\leq C_p\int_0^t\int_{\mathbb T^N}[1+|u_{\alpha,n}-M_\psi|^{p}]\dd x\dd s.
\end{align*}
As a result, we reach
\begin{align}\notag
&\int_{\mathbb T^N}|u_{\alpha,n}(t)-M_\psi|^p\dd x
+\int_0^t\int_{\mathbb T^N}|u_{\alpha,n}-M_\psi|^{p-2}n|(u_{\alpha,n}-\psi)^-|^2\dd x\dd s\\
\notag
&\quad+\int_0^t\int_{\mathbb T^N}|u_{\alpha,n}-M_\psi|^{p-2}n(u_{\alpha,n}-\psi)^-\dd x\dd s
+p(p-1)\alpha\int_0^t\int_{\mathbb T^N}|u_{\alpha,n}-M_\psi|^{p-2}|\nabla u_{\alpha,n}|^2\dd x\dd s\\
\notag
&\leq\int_{\mathbb T^N}|u^0_{\alpha}-M_\psi|^p\dd x
+p\sum_{k\geq1}\int_0^t\int_{\mathbb T^N}|u_{\alpha,n}-M_\psi|^{p-2}(u_{\alpha,n}-M_\psi)g_{k,\alpha}(x,u_{\alpha,n})\dd x\dd\beta_k(s)\\
\label{estimate}
&\quad+C_p\int_0^t\int_{\mathbb T^N}[1+|u_{\alpha,n}-M_\psi|^{p}]\dd x\dd s.
\end{align}

Applying Burkholder-Davis-Gundy inequality and Young inequality, we have for $\tau\in(0,T]$ and $\bar{\epsilon}\in(0,1)$,
\begin{align*}
&\E\sup_{t\in[0,\tau]}\bigg|\sum_{k\geq1}\int_0^t\int_{\mathbb T^N}|u_{\alpha,n}-M_\psi|^{p-2}(u_{\alpha,n}-M_\psi)g_{k,\alpha}(x,u_{\alpha,n})\dd x\dd\beta_k(s)\bigg|\\
&\leq C\E\bigg[\int_0^\tau\sum_{k\geq1}\bigg|\int_{\mathbb T^N}|u_{\alpha,n}-M_\psi|^{p-1}|g_{k,\alpha}(x,u_{\alpha,n})|\dd x\bigg|^2\dd s\bigg]^{\frac{1}{2}}\\
&\leq C(D_0)\E\bigg[\int_0^\tau\Big(\int_{\mathbb T^N}|u_{\alpha,n}-M_\psi|^{p}\dd x\Big)\sum_{k\geq1}\Big(\int_{\mathbb T^N}|u_{\alpha,n}-M_\psi|^{p-2}|g_{k,\alpha}(x,u_{\alpha,n})|^2\dd x\Big)\dd s\bigg]^{\frac{1}{2}}\\
&\leq C(D_0)\E\bigg[\sup_{t\in[0,\tau]}\Big(\int_{\mathbb T^N}|u_{\alpha,n}-M_\psi|^{p}\dd x\Big)\cdot\int_0^\tau\int_{\mathbb T^N}[1+|u_{\alpha,n}-M_\psi|^{p}]\dd x\dd s\bigg]^{\frac{1}{2}}\\
&\leq \bar{\epsilon}\E\bigg[\sup_{t\in[0,\tau]}\Big(\int_{\mathbb T^N}|u_{\alpha,n}-M_\psi|^{p}\dd x\Big)\bigg]+\frac{C(D_0)}{\bar{\epsilon}}\E\bigg[\int_0^\tau\int_{\mathbb T^N}[1+|u_{\alpha,n}-M_\psi|^{p}]\dd x\dd s\bigg].
\end{align*}
Collecting all the above estimates and taking $\bar{\epsilon}$ small enough, then Gr\"onwall's inequality yields \eqref{eq:priori-p}.

To derive \eqref{eq:priori-p_L2omega}, we only retain the first term on the left-hand side of \eqref{estimate}, and square both sides. 
Note that
\begin{align*}
&\E\sup_{t\in[0,\tau]}\bigg|\sum_{k\geq1}\int_0^t\int_{\mathbb T^N}|u_{\alpha,n}-M_\psi|^{p-2}(u_{\alpha,n}-M_\psi)g_{k,\alpha}(x,u_{\alpha,n})\dd x\dd\beta_k(s)\bigg|^2\\
&\leq C\E\bigg[\sup_{t\in[0,\tau]}\Big(\int_{\mathbb T^N}|u_{\alpha,n}-M_\psi|^{p}\dd x\Big)\cdot\int_0^\tau\int_{\mathbb T^N}1+|u_{\alpha,n}-M_\psi|^{p}\dd x\dd s\bigg]\\
&\leq \bar{\epsilon}\E\sup_{t\in[0,\tau]}\Big(\int_{\mathbb T^N}|u_{\alpha,n}-M_\psi|^{p}\dd x\Big)^2+C(\bar{\epsilon},T)+C(\bar{\epsilon},T)\E \int^{\tau}_0\Big(\int_{\mathbb T^N}|u_{\alpha,n}-M_\psi|^{p}\dd x\Big)^2 \dd s,
\end{align*}
by Gr\"onwall's inequality,
we deduce that
\begin{align}\notag
\E \sup_{t\in [0,T]}\Big|\int_{\mathbb T^N}|u_{\alpha,n}(t)-M_\psi|^p\dd x\Big|^2
\leq C(p,T,D_0)\Big(1+\int_{\mathbb T^N}|u^0_{\alpha}-M_\psi|^{2p}\dd x\Big).
\end{align}
Arguing as above, we obtain \eqref{eq:priori-p_L2omega}.

 To prove \eqref{eq:uniform Lp nu}, we take the test function $\phi\equiv1$ in \eqref{eq:weak-penalized-viscous}. This yields that for any $t\in[0,T]$,
\begin{align*}
\int_0^t\int_{\mathbb T^N} n(u_{\alpha,n}-\psi)^-\dd x\dd s
&=\int_{\mathbb T^N}u_{\alpha,n}(t)\dd x-\int_{\mathbb T^N}u^0_{\alpha}\dd x
\nonumber\\
&\quad-\sum_{k\geq1}\int_0^t\int_{\mathbb T^N}g_{k,\alpha}(x,u_{\alpha,n})\dd x\dd\beta_k(s).
\end{align*}
Using Burkholder-Davis-Gundy inequality and \eqref{eq:priori-p}, we obtain \eqref{eq:uniform Lp nu}.
\end{proof}
Now, we derive the kinetic formulation for the penalized viscous equation \eqref{eq:penalized-viscous}. We need to define some measures. Let
\begin{equation}\label{rs-33}
\dd \nu_{\alpha,n}(x,t):= n(u_{\alpha,n}-\psi)^-\dd x\dd t,
\end{equation}
\begin{equation}\label{rs-34}
\dd m_{\alpha,n}(x,t,\xi):= \alpha|\nabla u_{\alpha,n}(x,t)|^2\delta_{u_{\alpha,n}(x,t)}(\dd\xi)\dd x\dd t,
\end{equation}
and
\begin{equation}\label{eq:def-lambda-eta-epsilon}
\dd\lambda_{\alpha,n}(x,t,\xi)
:=
n\bigl[(u_{\alpha,n}(x,t)-\psi(x,t))^-\bigr]^2\int_0^1\delta_{\psi(x,t)+\theta(u_{\alpha,n}(x,t)-\psi(x,t))}(\dd\xi)\dd\theta\dd x\dd t.
\end{equation}
Set
\begin{equation}\label{eq:def-q-eta-epsilon}
q_{\alpha,n}:=m_{\alpha,n}+\lambda_{\alpha,n}.
\end{equation}
\begin{rem}[The barrier-substitution identity]\label{rem:barrier-substitution-identity}
For every $\phi\in C_c^1(\mathbb R)$,
\begin{align*}
\big\langle \delta_{\xi=u_{\alpha,n}}\nu_{\alpha,n},\phi\big\rangle&=\phi(u_{\alpha,n})\nu_{\alpha,n},\\
\big\langle\delta_{\xi=\psi}\nu_{\alpha,n}+\partial_\xi\lambda_{\alpha,n},\phi\big\rangle&=\phi(\psi)\nu_{\alpha,n}-\left(\int_{u_{\alpha,n}}^{\psi}\phi'(\xi)\,\dd\xi\right)\nu_{\alpha,n}=\phi(u_{\alpha,n})\nu_{\alpha,n}.
\end{align*}
Here the equalities are understood as identities of Radon measures
in $(x,t)$. 
Thus
\[
\delta_{\xi=u_{\alpha,n}}\nu_{\alpha,n}=\delta_{\xi=\psi}\nu_{\alpha,n}+\partial_\xi\lambda_{\alpha,n}
\]
in the sense of distributions in the kinetic variable. 
The sign of the term $\partial_\xi\lambda_{\alpha,n}$ follows from the distributional convention
\[
\langle \partial_\xi\lambda_{\alpha,n},\phi\rangle=-\langle\lambda_{\alpha,n},\partial_\xi\phi\rangle.
\]
Moreover,
\[
\dd\lambda_{\alpha,n}(x,t,\xi)=\one_{\{u_{\alpha,n}(x,t)<\xi<\psi(x,t)\}}\,\dd\nu_{\alpha,n}(x,t)\dd\xi.
\]
This measure is nonnegative because $\nu_{\alpha,n}$ is supported on the set $\{(x,t):u_{\alpha,n}(x,t)<\psi(x,t)\}$.
\end{rem}

\begin{prop}[Kinetic formulation]\label{prop:kinetic-viscous-approximation} Let $u_{\alpha,n}$ be the weak solution of \eqref{eq:penalized-viscous} and $f_{\alpha,n}:=\one_{\{u_{\alpha,n}(x,t)>\xi\}}$. Then $f_{\alpha,n}$ satisfies, for every $\varphi\in C_c^2(\mathbb T^N\times[0,T)\times\mathbb R)$,
\begin{align}\label{eq:kinetic-eta-epsilon}
&\int_0^T \langle f_{\alpha,n}(t),\partial_t\varphi(t)\rangle\dd t+\langle f^0_{\alpha},\varphi(0)\rangle+\int_0^T\langle f_{\alpha,n}(t),a(\xi)\cdot\nabla_x\varphi(t)+\alpha\Delta_x\varphi(t)\rangle\dd t\\
&=-\sum_{k\geq1}\int_0^T\int_{\mathbb T^N}g_{k,\alpha}(x,u_{\alpha,n})\varphi(x,t,u_{\alpha,n})\dd x\dd\beta_k(t)\nonumber\\
&\quad-\frac{1}{2}\int_0^T\int_{\mathbb{T}^N}G_\alpha^2(x,u_{\alpha,n})\partial_\xi\varphi(x,t,u_{\alpha,n})\dd x\dd t\nonumber\\
&\quad+q_{\alpha,n}(\partial_\xi\varphi)-\int_0^T\int_{\mathbb T^N}\varphi(x,t,\psi(x,t))
\dd\nu_{\alpha,n}(x,t),\qquad\text{a.s.},\nonumber
\end{align}
where $f^0_{\alpha}(x,\xi)=\one_{\{u^0_{\alpha}(x)>\xi\}}
$, $\nu_{\alpha,n}$ is defined by \eqref{rs-33} and $q_{\alpha,n}$ is defined by \eqref{eq:def-q-eta-epsilon}.
\end{prop}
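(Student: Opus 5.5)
The plan is to derive the kinetic formulation from It\^o's formula applied to $\int_{\mathbb T^N}\theta(u_{\alpha,n}(x,t))\phi(x)\,\dd x$ and then to rewrite the penalization term with the barrier-substitution identity of Remark \ref{rem:barrier-substitution-identity}.

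First I would work with test functions of tensor form $\varphi(x,t,\xi)=\phi(x)\theta'(\xi)\gamma(t)$, with $\phi\in C^2(\mathbb T^N)$, $\theta'\in C_c^2(\mathbb R)$ and $\gamma\in C_c^1([0,T))$. I would apply the generalized It\^o formula (\cite[Theorem 3.1]{krylov2013relatively}, as in the proof of Lemma \ref{lem:priori estimates}) to $\int_{\mathbb T^N}\theta(u_{\alpha,n}(t))\phi\,\dd x$. This is legitimate because $u_{\alpha,n}$ is a weak solution lying in $L^2(\Omega;L^2(0,T;H^1))$ by \eqref{eq:priori-p}. It yields the following terms:
\begin{itemize}
\item from the drift $-\Div A(u)$, after integrating by parts in $x$ and writing $\theta'(u)a(u)\cdot\nabla u=\Div\int_0^{u}\theta'(r)a(r)\,\dd r$, the contribution $\int\!\!\int f\,a(\xi)\cdot\nabla\phi\,\theta'\,\dd\xi\,\dd x$;
\item from $\alpha\Delta u$, the term $\alpha\int\theta(u)\Delta\phi=\langle f,\alpha\Delta\phi\,\theta'\rangle$ together with $-\alpha\int\theta''(u)|\nabla u|^2\phi=-m_{\alpha,n}(\partial_\xi\varphi)$;
\item the stochastic integral and the It\^o correction $\tfrac12\int G_\alpha^2\theta''(u)\phi$;
\item the penalization term $\int\theta'(u_{\alpha,n})\phi\,\dd\nu_{\alpha,n}$.
\end{itemize}
Throughout, $\int\theta(u)\phi=\langle f,\theta'\phi\rangle$ up to a constant, and that constant cancels in time differences.

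Next, by Remark \ref{rem:barrier-substitution-identity}, I would replace $\theta'(u_{\alpha,n})\nu_{\alpha,n}$ by $\theta'(\psi)\nu_{\alpha,n}-\lambda_{\alpha,n}(\theta'')$. This is exactly where $\lambda_{\alpha,n}$ merges with $m_{\alpha,n}$ into $q_{\alpha,n}$. It remains to check that the representation \eqref{eq:def-lambda-eta-epsilon} agrees with $\one_{\{u<\xi<\psi\}}\,\dd\nu\,\dd\xi$. This follows from the change of variables $\xi=\psi+\theta(u-\psi)$, whose Jacobian is $(\psi-u)$; together with one factor of $(u-\psi)^-$ it reproduces the squared factor.

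I would then multiply the resulting identity by $\gamma'(t)$, integrate in time, and use the product rule (stochastic Fubini for the martingale term) to obtain \eqref{eq:kinetic-eta-epsilon} for tensor test functions. To extend to general $\varphi\in C_c^2(\mathbb T^N\times[0,T)\times\mathbb R)$, I would use linear combinations of tensor products, which are dense in the $C^2$ topology on compact supports. Passing to the limit in every term is justified because $m_{\alpha,n}$, $\lambda_{\alpha,n}$ and $\nu_{\alpha,n}$ have finite expected mass by Lemma \ref{lem:priori estimates}, and the stochastic integral converges by the BDG inequality.

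The main technical point is justifying the It\^o formula for the nonlinear functional with the merely Lipschitz penalization and a $\psi$ that is only continuous. The expected route is that $b_n(u-\psi)$ is an $L^2$ drift, so Krylov's formula applies directly; a time-dependent $\phi$ is then handled via the product rule rather than by putting $t$ inside $\varphi$.
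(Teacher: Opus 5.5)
Your proposal is correct and follows essentially the paper's argument: It\^o's formula applied to the kinetic primitive of $u_{\alpha,n}$, integration by parts of the viscous term to produce $m_{\alpha,n}$, and the barrier-substitution identity \eqref{eq:penalty-decomposition-eta-epsilon}, which trades $\varphi(u_{\alpha,n})\,\nu_{\alpha,n}$ for $\varphi(\psi)\,\nu_{\alpha,n}-\lambda_{\alpha,n}(\partial_\xi\varphi)$. The only difference is organizational. The paper applies It\^o's formula directly to $\int_{-\infty}^{u_{\alpha,n}}\varphi(x,t,\xi)\,\dd\xi$ with a general $(x,t)$-dependent $\varphi$, whereas you use tensor-product test functions, handle the time dependence by the product rule, and conclude by density; this is a valid and somewhat more explicit way of carrying out the same computation.
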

\begin{proof}
Apply It\^o's formula (cf. \cite[Theorem 3.1]{krylov2013relatively}) to $\int_{-\infty}^{u_{\alpha,n}}\varphi(x,t,\xi)\dd \xi$.
The viscous term satisfies
\begin{align*}
&\int_{\mathbb{T}^N}\varphi(x,t,u_{\alpha,n})\Delta u_{\alpha,n}\dd x\\
&=-\int_{\mathbb{T}^N}\nabla_x\varphi(x,t,u_{\alpha,n})\cdot\nabla u_{\alpha,n}\dd x-\int_{\mathbb{T}^N}\partial_\xi\varphi(x,t,u_{\alpha,n})|\nabla u_{\alpha,n}|^2\dd x\\
&=-\int_{\mathbb{T}^N}\int_{-\infty}^{u_{\alpha,n}}\nabla_x\partial_\xi\varphi(x,t,\xi)\dd \xi\cdot\nabla_x u_{\alpha,n}\dd x-\int_{\mathbb{T}^N}\partial_\xi\varphi(x,t,u_{\alpha,n})|\nabla u_{\alpha,n}|^2\dd x\\
&=\int_{\mathbb{T}^N}\int_{-\infty}^{u_{\alpha,n}}\int_{-\infty}^{r}\Delta_x\partial_\xi\varphi(x,t,\xi)\dd \xi\dd r\dd x-\int_{\mathbb{T}^N}\partial_\xi\varphi(x,t,u_{\alpha,n})|\nabla u_{\alpha,n}|^2\dd x\\
&=\int_{\mathbb{T}^N}\int_{-\infty}^{u_{\alpha,n}}\Delta_x\varphi(x,t,\xi)\dd \xi\dd x-\int_{\mathbb{T}^N}\partial_\xi\varphi(x,t,u_{\alpha,n})|\nabla u_{\alpha,n}|^2\dd x\\
&=\int_{\mathbb{T}^N}\int_{\mathbb{R}}f_{\alpha,n}(x,t,\xi)\Delta_x\varphi(x,t,\xi)\dd \xi\dd x-\int_{\mathbb{T}^N}\partial_\xi\varphi(x,t,u_{\alpha,n})|\nabla u_{\alpha,n}|^2\dd x.
\end{align*}
The second term produces $m_{\alpha,n}(\partial_\xi\varphi)$.
Note that for every $\varphi\in C_c^2(\mathbb{T}^N\times[0,T)\times\mathbb{R})$, based on the definition of $\nu_{\alpha,n}$, we have
\begin{align}\label{eq:penalty-decomposition-eta-epsilon}
&\int_0^T\int_{\mathbb T^N}\Big(\varphi(x,t,\psi(x,t))-\varphi(x,t,u_{\alpha,n})\Big)\dd\nu_{\alpha,n}(x,t)\\
&=\int_0^T\int_{\mathbb{T}^N}\int_0^1n[(u_{\alpha,n}(x,t)-\psi(x,t))^-]^2\partial_\xi\varphi(x,t,\psi+\theta(u_{\alpha,n}-\psi))\dd\theta\,\dd x\dd t\nonumber
\\
&=\lambda_{\alpha,n}(\partial_\xi\varphi).\nonumber
\end{align}
This, together with \eqref{eq:def-q-eta-epsilon}, implies the penalization term and the kinetic measure term.
Based on the definition of $f_{\alpha,n}$, we have \eqref{eq:kinetic-eta-epsilon}.
\end{proof}

\begin{cor}[Uniform bounds for the defect measures]\label{cor:uniform-bounds-defect-measure}
For every $p\geq0$, there exists $C_p>0$, independent of $\alpha\in(0,1)$ and $n\in\mathbb N$, such that
\begin{align}
&\E \bigg[\bigg|\int_{\mathbb{T}^N\times[0,T]\times\mathbb{R}}(1+|\xi|^p)\dd m_{\alpha,n}\bigg|^2\bigg]\\
&+\E\bigg[\bigg|\int_{\mathbb{T}^N\times[0,T]\times\mathbb{R}}(1+|\xi|^p)\dd \lambda_{\alpha,n}\bigg|^2\bigg]\leq C(p,D_0,M_\psi,T),\label{eq:bounds for m}
\end{align}
where $m_{\alpha,n}$ is defined by \eqref{rs-34} and $\lambda_{\alpha,n}$ is given by \eqref{eq:def-lambda-eta-epsilon}.
\end{cor}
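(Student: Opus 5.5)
The plan is to reduce both bounds to the weighted estimates of Lemma \ref{lem:priori estimates}, in particular \eqref{eq:priori-p_L2omega}, by computing the $\xi$-moments of $m_{\alpha,n}$ and $\lambda_{\alpha,n}$ explicitly from their definitions \eqref{rs-34} and \eqref{eq:def-lambda-eta-epsilon}. We use the uniform bound $\sup_\alpha\Vert u^0_\alpha\Vert_{L^\infty(\mathbb T^N)}<\infty$ to make the right-hand sides independent of $\alpha$ and $n$. It suffices to treat even integers $p\geq2$, since $1+|\xi|^p\leq 2(1+|\xi|^{p'})$ for $p'\geq p$ even. We also note that $1+|\xi|^p\leq C_p(1+|\xi-M_\psi|^p)$, with $C_p$ depending on $M_\psi$.

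For $m_{\alpha,n}$, integrating $1+|\xi|^p$ against $\delta_{u_{\alpha,n}}$ gives
\[
\int(1+|\xi|^p)\dd m_{\alpha,n}=\alpha\int_0^T\int_{\mathbb T^N}(1+|u_{\alpha,n}|^p)|\nabla u_{\alpha,n}|^2\dd x\dd t
\leq C\alpha\int_0^T\int_{\mathbb T^N}\bigl(1+|u_{\alpha,n}-M_\psi|^{p}\bigr)|\nabla u_{\alpha,n}|^2 .
\]
The piece with weight $|u_{\alpha,n}-M_\psi|^{p}$ is exactly the dissipation term of \eqref{eq:priori-p_L2omega} with exponent $p+2$. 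For the piece with constant weight we apply \eqref{eq:priori-p_L2omega} with exponent $2$, where the weight $|u-M_\psi|^0$ equals $1$. Squaring and taking expectations then gives the bound for $m_{\alpha,n}$.

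For $\lambda_{\alpha,n}$, the measure charges levels $\xi=\psi+\theta(u_{\alpha,n}-\psi)$ with $\theta\in[0,1]$, and only where $u_{\alpha,n}<\psi$. On this set such $\xi$ lies between $u_{\alpha,n}$ and $\psi$, so $|\xi|\leq|u_{\alpha,n}|+\Vert\psi\Vert_\infty$, and hence $1+|\xi|^p\leq C(1+|u_{\alpha,n}-M_\psi|^p)$. Therefore
\[
\int(1+|\xi|^p)\dd\lambda_{\alpha,n}\leq C\int_0^T\int_{\mathbb T^N}\bigl(1+|u_{\alpha,n}-M_\psi|^{p}\bigr)n|(u_{\alpha,n}-\psi)^-|^2\dd x\dd t .
\]
We bound this in the same way, using the penalty term of \eqref{eq:priori-p_L2omega} with exponents $p+2$ and $2$. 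Here it matters that $M_\psi\geq 1+\psi$, so $|u_{\alpha,n}-M_\psi|\geq 1$ on the support; this makes the weights comparable, although it is not strictly needed.

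The only point requiring care is that \eqref{eq:priori-p_L2omega} is stated with weight exponent $p-2$, and we must apply it at shifted exponents; since it holds for every $p\geq2$, this is harmless. Independence of $\alpha$ and $n$ comes from the right-hand side $C_p(1+\Vert u^0_\alpha-M_\psi\Vert^{2p}_{L^{2p}})$, which is uniformly bounded. The case $0\leq p<2$ follows from the case $p=2$.
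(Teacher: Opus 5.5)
Your proof is correct and follows the paper's argument. Like the paper, you compute the $\xi$-moments of $m_{\alpha,n}$ and $\lambda_{\alpha,n}$ explicitly and reduce them to the weighted dissipation and penalty terms of Lemma \ref{lem:priori estimates} at exponent $p+2$; your version is slightly more careful, since you invoke the second-moment bound \eqref{eq:priori-p_L2omega} (the paper cites \eqref{eq:priori-p}, which only controls first moments) and you handle the constant-weight part of the $m_{\alpha,n}$ term separately via exponent $2$.
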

\begin{proof}
Note that
\[
\int(1+|\xi|^p)\dd m_{\alpha,n}=\alpha\int_0^T\int_{\mathbb T^N}(1+|u_{\alpha,n}|^p)|\nabla u_{\alpha,n}|^2\dd x\dd t,
\]
and
\begin{align}
\int_{\mathbb T^N\times[0,T]\times\mathbb R}(1+|\xi|^p)\dd\lambda_{\alpha,n}
&=n\int_0^T\int_{\mathbb{T}^N}|(u_{\alpha,n}-\psi)^-|^2\int_0^1(1+|\psi+\theta(u_{\alpha,n}-\psi)|^p)\dd\theta\dd x\dd t\nonumber\\
&\quad\leq nC\int_0^T\int_{\mathbb{T}^N}|(u_{\alpha,n}-\psi)^-|^2(1+|\psi|^p+|u_{\alpha,n}-\psi|^p)\dd x\dd t\nonumber\\
&\quad\leq nC\int_0^T\int_{\mathbb{T}^N}|(u_{\alpha,n}-\psi)^-|^2(1+|u_{\alpha,n}-M_\psi|^p)\dd x\dd t.\nonumber
\end{align}
The estimate \eqref{eq:bounds for m} follows from \eqref{eq:priori-p} with exponent $p+2$.
\end{proof}

\subsection{Well-posedness of kinetic solutions to the penalized equation}

\begin{defn}[Kinetic solution of the penalized equation]
\label{def:kinetic-penalized}
A predictable process $u_n$ is called a kinetic solution of \eqref{eq:penalized-hyperbolic}, if it satisfies \eqref{eq:usual moment bound} and there exists a kinetic measure $m_n$ such that $f_n=\one_{\{u_n>\xi\}}$ satisfies, for every $\varphi\in C_c^1\big(\mathbb T^N\times[0,T)\times\mathbb R\big)$,
\begin{align}\label{eq:kinetic-penalized-direct}
&\int_0^T\langle f_n(t),\partial_t\varphi(t)\rangle\dd t+\langle f^0,\varphi(0)\rangle+\int_0^T\langle f_n(t),a(\xi)\cdot\nabla_x\varphi(t)\rangle\,\dd t\nonumber\\
&=-\sum_{k\geq1}\int_0^T\int_{\mathbb T^N}g_k(x,u_n)\varphi(x,t,u_n)\dd x\dd\beta_k(t)\nonumber\\
&\quad-\frac12\int_0^T\int_{\mathbb{T}^N}G^2(x,u_n)\partial_\xi\varphi(x,t,u_n)\dd x\dd t\nonumber\\
&\quad+m_n(\partial_\xi\varphi)-\int_0^T\int_{\mathbb T^N}b_n(u_n-\psi)\varphi(x,t,u_n)\dd x\dd t,
\qquad\text{a.s.},
\end{align}
where
$f^0=\one_{\{u^0>\xi\}}$.

\end{defn}

Define
\begin{align*}
\dd\nu_n(x,t)&:=b_n(u_n-\psi)\dd x\dd t=n(u_n-\psi)^-\dd x\dd t,\\
\dd\lambda_{n}(x,t,\xi)&:=\int_0^1n[(u_{n}(x,t)-\psi(x,t))^-]^2\delta_{\psi+\theta(u_{n}-\psi)}(\dd\xi)\dd\theta\,\dd x\dd t.\\
q_n&:=m_n+\lambda_n.\nonumber
\end{align*}
Note that \eqref{eq:penalty-decomposition-eta-epsilon} also holds for $\nu_n$ and $\lambda_{n}$, hence \eqref{eq:kinetic-penalized-direct} is equivalent to
\begin{align}\label{eq:kinetic-penalized-obstacle-form}
&\int_0^T\langle f_n(t),\partial_t\varphi(t)\rangle\dd t+\langle f^0,\varphi(0)\rangle+\int_0^T\langle f_n(t),a(\xi)\cdot\nabla_x\varphi(t)\rangle\dd t
\nonumber\\
&=-\sum_{k\geq1}\int_0^T\int_{\mathbb T^N}g_k(x,u_n)\varphi(x,t,u_n)\dd x\dd\beta_k(t)\nonumber\\
&\quad-\frac12\int_0^T\int_{\mathbb{T}^N}G^2(x,u_n)\partial_\xi\varphi(x,t,u_n)\dd x\dd t\nonumber\\
&\quad +q_n(\partial_\xi\varphi)-\int_0^T\int_{\mathbb T^N}\varphi(x,t,\psi(x,t))\dd\nu_n(x,t).
\end{align}

\begin{lem}[Vanishing viscosity limit]
\label{lem:vanishing-viscosity-fixed-epsilon}
For fixed $n\in\mathbb N$, equation \eqref{eq:penalized-hyperbolic} admits a unique kinetic solution $u_n$ in the sense of Definition \ref{def:kinetic-penalized}.
Moreover, as $\alpha\downarrow0$,
\begin{equation*}
u_{\alpha,n}\rightarrow u_n\quad\text{strongly in }L^p(\Omega\times Q_T)
\end{equation*}
for every $1\leq p<\infty$, and
\begin{align*}
\nu_{\alpha,n}&\rightarrow\nu_n \quad\text{strongly in }L^1(\Omega\times Q_T),\\
f_{\alpha,n}&\rightarrow f_n\quad\text{strongly in }L^1_{\mathrm{loc}}(\Omega\times Q_T\times\mathbb R),\nonumber
\end{align*}
and for every $\phi\in C_c^1(\mathbb T^N\times[0,T]\times\mathbb R)$, 
\begin{equation*}
\lambda_{\alpha,n}(\phi)\rightarrow\lambda_n(\phi)\quad\text{in }L^1(\Omega).
\end{equation*}
Moreover, we have
\begin{equation}
\lim_{\tau\downarrow0}\frac{1}{\tau}\int_0^\tau\Vert u_{n}(t)-u^0\Vert _{L^1(\mathbb{T}^N)}\dd t=0.\label{eq:weak-initial for u_varepsilon}
\end{equation}
\end{lem}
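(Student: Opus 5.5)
For fixed $n$, the penalization $b_n(\cdot-\psi)$ is a globally Lipschitz, non-increasing source term in $u$ with coefficients continuous in $(x,t)$. So \eqref{eq:penalized-hyperbolic} is an obstacle-free stochastic conservation law with an additional Lipschitz zero-order source. I would follow the Debussche--Vovelle strategy \cite{DV10-publish}, in the form used for viscous approximations: uniqueness and $L^1$-contraction by doubling variables, then a Cauchy argument in $\alpha$ for the vanishing viscosity limit.

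\textbf{Step 1: uniqueness and comparison for \eqref{eq:penalized-hyperbolic}.} Take two kinetic solutions $u_n,\tilde u_n$ of Definition~\ref{def:kinetic-penalized}. I would repeat the doubling-of-variables computation of Theorem~\ref{thm:uniqueness}, using the formulation \eqref{eq:kinetic-penalized-direct}, in which the source enters as $-\int b_n(u_n-\psi)\varphi(x,t,u_n)$. There is now no $\nu$-term. The flux, quadratic and kinetic-measure terms are handled exactly as before. The only new contribution is the source term. After letting $\gamma,\delta\to0$ it becomes
\[
\E\int_0^t\!\int_{\mathbb T^N}\bigl(b_n(u_n-\psi)-b_n(\tilde u_n-\psi)\bigr)\one_{u_n>\tilde u_n}\dd x\dd r\leq0,
\]
which is nonpositive since $b_n$ is non-increasing. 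If one prefers to avoid the sign argument, Lipschitz continuity bounds it by $n\E\int_0^t\|(u_n-\tilde u_n)^+\|_{L^1}$, and Gronwall closes the estimate. This gives the $L^1$-contraction, and in particular uniqueness. Initial atoms are excluded here because $m_n$ is an ordinary kinetic measure, so the argument of Lemma~\ref{lem:no atom of q nu} applies.

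\textbf{Step 2: the vanishing viscosity limit.} Take $\alpha,\beta\to0$. I would double variables between the kinetic formulations \eqref{eq:kinetic-eta-epsilon} for $u_{\alpha,n}$ and $u_{\beta,n}$, written in the direct form with the source $b_n(u-\psi)$ at level $\xi=u$. The additional terms are controlled as follows.
\begin{itemize}
\item The viscous terms are $\alpha\Delta_x$ acting on $\varphi_{\gamma,\delta}$, of size $(\alpha+\beta)\gamma^{-2}$. Equivalently, they can be controlled by the gradient bound in \eqref{eq:priori-p}, which gives $O(\sqrt{\alpha}+\sqrt{\beta})\gamma^{-1}$.
\item The noise mismatch between $\Phi_\alpha$ and $\Phi_\beta$ is $o(1)$. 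This uses the locally uniform convergence of $g_{k,\alpha}$ together with the uniform moment bounds and the tail estimate from Lemma~\ref{lem:priori estimates}.
\item The initial mismatch is $\|u^0_\alpha-u^0_\beta\|_{L^1}$.
\item The source term is handled by monotonicity as in Step 1.
\end{itemize}
Optimizing $\gamma,\delta$ against $\alpha,\beta$ then shows that $(u_{\alpha,n})_\alpha$ is Cauchy in $L^1(\Omega\times Q_T)$. The uniform $L^p$ bounds of Lemma~\ref{lem:priori estimates}, combined with interpolation, upgrade this to convergence in every $L^p$ with $p<\infty$.

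\textbf{Step 3: convergence of the associated quantities and identification of the limit.}
\begin{itemize}
\item Convergence of $f_{\alpha,n}$ in $L^1_{\rm loc}$ follows from $\int|\one_{a>\xi}-\one_{b>\xi}|\dd\xi=|a-b|$.
\item Convergence of $\nu_{\alpha,n}$ in $L^1$ follows because $b_n$ is Lipschitz.
\item For $\lambda_{\alpha,n}(\phi)$, write it as $\int n[(u-\psi)^-]^2\int_0^1\phi(\psi+\theta(u-\psi))\dd\theta$. This converges in $L^1(\Omega)$ by Vitali's theorem, using pointwise a.e. convergence along subsequences and the uniform integrability supplied by \eqref{eq:priori-p_L2omega}.
\item The measures $m_{\alpha,n}$ are bounded in $L^2(\Omega;\mathcal M_b)$ with vanishing tails by Corollary~\ref{cor:uniform-bounds-defect-measure}. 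Hence they have a weak-$*$ limit $m_n$ in $L^2_w(\Omega;\mathcal M_b)$, and $m_n$ is a kinetic measure; predictability passes to the limit as in \cite{DV10-publish}.
\end{itemize}
Passing to the limit in \eqref{eq:kinetic-eta-epsilon} yields \eqref{eq:kinetic-penalized-direct}. Uniqueness from Step 1 then implies that the whole family converges, not just a subsequence.

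\textbf{Step 4: the weak initial trace \eqref{eq:weak-initial for u_varepsilon}.} This is the main obstacle, since $u_n$ has no time continuity. I would first derive a uniform-in-$\alpha$ estimate of the form
\[
\E\frac1\tau\int_0^\tau\|u_{\alpha,n}(t)-u^0_\alpha\|_{L^1}\dd t\leq\omega(\tau)\quad\text{with }\omega(\tau)\to0.
\]
To get it, double variables between $u_{\alpha,n}(t)$ and the time-independent $u^0_\alpha$, or test against $\varphi_{\gamma,\delta}$ on $[0,t]$. The right-hand side is then controlled by the following quantities:
\begin{itemize}
\item $\int_0^t$ of the bounded integrands;
\item the smallness of the $L^1$ modulus of continuity of $u^0_\alpha$, obtained uniformly by choosing $\gamma$ small and using the $L^1$-compactness of $\{u^0_\alpha\}$;
\item $\E m_{\alpha,n}(\mathbb T^N\times[0,t]\times\mathbb R)$, which must be shown to vanish as $t\to0$ uniformly in $\alpha$.
\end{itemize}
For the last point I would use the entropy identity from the proof of Lemma~\ref{lem:priori estimates} with $p=2$ on $[0,t]$. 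It bounds the dissipation by $\|u^0_\alpha\|^2-\E\|u_{\alpha,n}(t)\|^2+Ct$. Since $L^2$-lower semicontinuity pins $\liminf_{t\to 0}\E\|u_{\alpha,n}(t)\|^2\ge\|u^0_\alpha\|^2$, the strong $L^1$ convergence of Step 2 passes the estimate to $u_n$. This gives the limit in expectation, and hence almost surely along the averaged quantity by monotonicity in $\tau$ via Fatou. If the almost sure statement resists this approach, I would instead obtain it by identifying $f_n^{+,0}=f^0$ from the limiting equation and using that $f^{+,0}$ is an equilibrium function.
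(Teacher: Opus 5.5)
Your route differs from the paper's. The paper follows \cite[Theorem~19]{DV10-publish}: it takes a weak-$*$ limit of $f_{\alpha,n}$ as a generalized kinetic solution and reduces it to an equilibrium by doubling variables, with the penalization costing at most $n|\psi(x,t)-\psi(y,t)|\,|\xi-\zeta|$. It then quotes the initial trace from \cite[Corollary~16]{DV10}. A Cauchy argument in $\alpha$ would avoid Young measures and the reduction, but Step~2 as written does not close. Doubling between $u_{\alpha,n}$ and $u_{\beta,n}$ only controls $\E\int\rho_\gamma(x-y)\kappa_\delta(\xi-\zeta)f_{\alpha,n}(x,t,\xi)\bar f_{\beta,n}(y,t,\zeta)$. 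To reach $\E\|(u_{\alpha,n}(t)-u_{\beta,n}(t))^+\|_{L^1(\mathbb T^N)}$ you must also pay $\E\int_{(\mathbb T^N)^2}\rho_\gamma(x-y)|u_{\beta,n}(x,t)-u_{\beta,n}(y,t)|\,\dd x\,\dd y$, and this must be small uniformly in $\beta$. In Theorem~\ref{thm:uniqueness} the solutions are fixed, so this error is harmless; here it is not. The only a priori handle from Lemma~\ref{lem:priori estimates} is $\beta\E\int_0^T\|\nabla u_{\beta,n}\|_{L^2}^2\,\dd t\leq C$. It bounds the error by $C\gamma\beta^{-1/2}$, which is exactly the reciprocal of your viscous error $\sqrt\beta\,\gamma^{-1}$, so no choice of $\gamma,\delta$ in terms of $\alpha,\beta$ makes both small. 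The missing ingredient is a uniform spatial $L^1$-modulus of continuity. You get it by running the doubling estimate for $u_{\beta,n}$ against itself, as the paper does for $H_1$ in the proof of Lemma~\ref{lem-1}. That bounds the error by $\int\rho_\gamma(x-y)|u^0_\beta(x)-u^0_\beta(y)|\,\dd x\,\dd y+C(\gamma^2\delta^{-1}+\delta\gamma^{-1}+\delta+\beta\gamma^{-2})+Cn\bigl(\delta+\omega_\psi(\gamma)\bigr)$, which is small uniformly for small $\beta$ since $\{u^0_\beta\}$ is compact in $L^1$. Also, the noise-mismatch term carries a factor $\delta^{-1}$, so you must fix $\gamma,\delta,R$ before letting $\alpha,\beta\to0$.

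Step~4 has a second gap. Set $X_\tau:=\tau^{-1}\int_0^\tau\|u_n(t)-u^0\|_{L^1(\mathbb T^N)}\,\dd t$. From $\E X_\tau\to0$ you cannot conclude $X_\tau\to0$ almost surely. $X_\tau$ is not monotone, and Fatou only gives $\liminf_{\tau\downarrow0}X_\tau=0$. The dyadic bound $X_\tau\leq2X_{2^{-k}}$ on $[2^{-k-1},2^{-k}]$ would need $\sum_k\E X_{2^{-k}}<\infty$, i.e.\ a rate, which you do not have in general. Separately, uniform-in-$\alpha$ smallness of $\E m_{\alpha,n}(\mathbb T^N\times[0,t]\times\mathbb R)$ does not follow from lower semicontinuity along a single trajectory. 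Your fallback is the correct argument and is essentially what the quoted corollary contains. Testing \eqref{eq:kinetic-penalized-direct} with $\Upsilon_\varepsilon(t)\phi(x,\xi)$, as in Lemma~\ref{lem:no atom of q nu}, gives $f_n^{+,0}-f^0=\partial_\xi m_n^0$, where $m_n^0\geq0$ is the restriction of $m_n$ to $\{t=0\}$. There is no reflection atom, because the source is absolutely continuous in time. Integrate from $-\infty$ where $\xi<u^0(x)$ and from $+\infty$ where $\xi>u^0(x)$. Using only $0\leq f_n^{+,0}\leq1$, this forces $m_n^0=0$ and $f_n^{+,0}=f^0$; the equilibrium property is a consequence, not an input. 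Weak convergence of the equilibria $f_n(t)$ to the equilibrium $f^0$ then upgrades pathwise to $\|u_n(t)-u^0\|_{L^1}\to0$ as $t\downarrow0$ outside a null set of times, with tails controlled by the moment bound. This gives \eqref{eq:weak-initial for u_varepsilon} almost surely. Step~1 needs this same argument: Lemma~\ref{lem:no atom of q nu} cannot be invoked there, because it assumes the weak initial trace, which is not part of Definition~\ref{def:kinetic-penalized}.
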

\begin{proof}
The proof is similar to \cite[the proof of Theorem 19]{DV10-publish}, and we only focus on the new obstacle term $b_{n}(u_{\alpha,n}-\psi)$.
Fix $n\in\mathbb N$ and let $\alpha_j\downarrow0$.
The uniform moment estimates imply that the Young measures $\delta_{u_{\alpha_j,n}(x,t)}$ are tight.
Thus, along a subsequence,
\[
f_{\alpha_j,n}\stackrel{\ast}{\rightharpoonup} f_n\quad\text{in }L^\infty(\Omega\times Q_T\times\mathbb R),
\]
where $f_n$ is a kinetic function associated with a Young measure $\mu^n_{x,t}$.
The viscous term converges to zero, while the transport, stochastic, It\^o correction, and kinetic measure term $m_n$ pass to the limit exactly as in \cite[Theorem~19]{DV10-publish}. 
The penalization term also passes to the limit, since
\[
(x,t,\xi)\longmapsto b_n(\xi-\psi(x,t))\varphi(x,t,\xi)
\]
is continuous and bounded for every compactly supported test function $\varphi$. 
Hence $f_n$ satisfies the generalized kinetic formulation associated with \eqref{eq:penalized-hyperbolic}.

We next apply the doubling-of-variables argument. 
The only additional term is generated by the penalization. 
Since $b_n$ is non-increasing and $n$-Lipschitz, we have
\begin{align*}
&\bigl[b_n(\xi-\psi(x,t))-b_n(\zeta-\psi(y,t))\bigr](\xi-\zeta)\\
&\qquad\leq n|\psi(x,t)-\psi(y,t)|\,|\xi-\zeta|.
\end{align*}
After spatial convolution, the right-hand side tends to zero as the mollification parameter tends to zero, by the uniform continuity of $\psi$. 
Therefore, the reduction argument of \cite{DV10-publish} remains valid and shows that $f_n$ is an equilibrium:
\[
\mu^n_{x,t}=\delta_{u_n(x,t)},\qquad f_n=\one_{\{u_n>\xi\}}.
\]
The same $L^1$-contraction principle gives uniqueness.
Since the limiting Young measure is a Dirac measure and the approximate solutions satisfy uniform moment estimates,
\[
u_{\alpha,n}\rightarrow u_n \quad\text{strongly in }L^p(\Omega\times Q_T)
\]
for every finite $1\leq p<\infty$.
Moreover, for the convergence of $\nu_{\alpha,n}$, note that $b_n$ is Lipschitz with constant $n$, and hence
\begin{align*}
\Vert b_n(u_{\alpha,n}-\psi)-b_n(u_n-\psi)\Vert _{L^1(\Omega\times Q_T)}\leq n\Vert u_{\alpha,n}-u_n\Vert _{L^1(\Omega\times Q_T)}\rightarrow0.
\end{align*}
This proves the convergence of the penalization measures.  
It remains to prove the convergence of the obstacle defect measures $\lambda_{\alpha,n}$.
For $\phi\in C_c^1(\mathbb T^N\times[0,T]\times\mathbb R)$, set
\[
F_\phi(x,t,r):=n\bigl[(r-\psi(x,t))^-\bigr]^2\int_0^1\phi\bigl(x,t,\psi(x,t)+\theta(r-\psi(x,t))\bigr)\,\dd\theta .
\]
Then
\[
\lambda_{\alpha,n}(\phi)=\int_{Q_T}F_\phi(x,t,u_{\alpha,n}(x,t))\,\dd x\,\dd t,\qquad\lambda_n(\phi)=\int_{Q_T}F_\phi(x,t,u_n(x,t))\,\dd x\,\dd t.
\]
The function $F_\phi$ is continuous in $r$ and has at most quadratic growth, uniformly in $(x,t)$. 
Since $u_{\alpha,n}\to u_n$ strongly in $L^2(\Omega\times Q_T)$ and the family $(u_{\alpha,n})_\alpha$ has uniformly bounded higher moments, the family $F_\phi(\cdot,\cdot,u_{\alpha,n})$ is uniformly integrable.
Vitali's theorem therefore gives
\[
\lambda_{\alpha,n}(\phi)\longrightarrow\lambda_n(\phi)\qquad\text{in }L^1(\Omega).
\]
Moreover, the weak initial trace \eqref{eq:weak-initial for u_varepsilon} is a direct consequence of \cite[Corollary 16]{DV10}, which completes the proof.
\end{proof}

We deduce from Lemma \ref{lem:priori estimates} and Lemma \ref{lem:vanishing-viscosity-fixed-epsilon} that the following estimates hold for $u_n$ uniformly with respect to $n$. For every $p\geq2$, there exists $C_p>0$ independent of $n\in\mathbb N$ such that
\begin{description}
  \item[(i)] \begin{align}\label{eq:priori-p-n}
&\E\sup_{t\in[0,T]}\int_{\mathbb T^N}|u_{n}(t)-M_\psi|^p\dd x+\E\int_0^T\int_{\mathbb T^N}|u_{n}-M_\psi|^{p-2}n|(u_{n}-\psi)^-|^2\dd x\dd s\\
&\quad+\E\int_0^T\int_{\mathbb T^N}|u_{n}-M_\psi|^{p-2}n(u_{n}-\psi)^-\dd x\dd s\leq C_p \Big(1+\int_{\mathbb T^N}|u^0-M_\psi|^p\dd x\Big),\nonumber
\end{align}
  \item[(ii)] \begin{align*}
\E\bigg[\bigg|\int_0^T\int_{\mathbb T^N}|u_{n}-M_\psi|^{p-2}n|(u_{n}-\psi)^-|^2\dd x\dd s\bigg|^2\bigg]
\leq C_p \Big(1+\int_{\mathbb T^N}|u^0-M_\psi|^{2p}\dd x\Big),
\end{align*}
  \item[(iii)] \begin{equation}\label{eq:uniform Lp nu-n}
\E\bigg[\bigg|\int_0^T\int_{\mathbb{T}^N}n(u_{n}-\psi)^-\dd x\dd t\bigg|^p\bigg]\leq C_p.
\end{equation}
\end{description}
Moreover, by the estimates obtained before passing to the limit $\alpha\downarrow0$ and by lower semicontinuity, for every $p\geq0$,
\begin{equation*}
\sup_{n\geq1}\E\bigg[\nu_n\bigl(\mathbb T^N\times[0,T]\bigr)^2+\bigg(\int_{\mathbb T^N\times[0,T]\times\mathbb R}(1+|\xi|^p)\,\dd q_n
\bigg)^2\bigg]\leq C_p.
\end{equation*}
\begin{lem}[Comparison for the penalized equation]
\label{lem:comparison-penalty-parameter}
Let $1\leq n\leq m$, and let $u_n$ and $u_m$ be kinetic solutions of the corresponding penalized equations, with initial data $u_n^0$ and $u_m^0$, respectively.
Then, for almost every $t\in(0,T)$,
\begin{align*}
\E\bigl\|(u_n(t)-u_m(t))^+\bigr\|_{L^1(\mathbb T^N)}\leq\bigl\|(u_n^0-u_m^0)^+\bigr\|_{L^1(\mathbb T^N)}.
\end{align*}
If $u_n^0=u_m^0=u^0$, then we have $u_n\leq u_m$ a.s.
Consequently, the family $(u_n)_{n\geq1}$ is non-decreasing.
\end{lem}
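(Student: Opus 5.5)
We run the doubling-of-variables argument of Theorem \ref{thm:uniqueness} directly on the kinetic formulations \eqref{eq:kinetic-penalized-direct} of $u_n$ and $u_m$. Here the penalization enters as a term $-b(u-\psi)\varphi(x,t,u)\,\dd x\dd t$, which is absolutely continuous in time, so no reflection measure and no obstacle substitution are needed. The proof has three steps.

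\textbf{Step 1: the doubled identity.} Set $f_n=\one_{u_n>\xi}$ and $\bar f_m=1-\one_{u_m>\zeta}$, with $\varphi_{\gamma,\delta}=\rho_\gamma(x-y)\kappa_\delta(\xi-\zeta)$. Using the fixed-time formulation (the analogue of \eqref{eq:weak-kinetic on [0,t]}) and It\^o's product formula as in the proof of Theorem \ref{thm:uniqueness}, we write $\E\int f_n^+(t)\bar f_m^+(t)\varphi_{\gamma,\delta}$ as the initial term plus $I^{\rm flux}+I^{\rm quad}+I^{q}+I^{b}$.
\begin{itemize}
\item $I^{\rm flux}$ and $I^{\rm quad}$ are estimated exactly as in \eqref{eq:estimates for Ia-1}--\eqref{eq:estimates for Ia-2}, using the uniform $L^p$ bounds \eqref{eq:priori-p-n}.
\item $I^q\le 0$ by the monotonicity argument of \eqref{eq:estimates for Iqvar}, applied to $m_n,m_m$.
\item The weak initial trace \eqref{eq:weak-initial for u_varepsilon} excludes initial atoms of $m_n,m_m$, as in Lemma \ref{lem:no atom of q nu}.
\end{itemize}
The only new contribution is
\[
I^{b}=\E\int_0^t\!\!\int\rho_\gamma(x-y)\Big[b_n(u_n-\psi(x))\kappa_\delta(u_n-\zeta)\bar f_m(\zeta)-b_m(u_m-\psi(y))\kappa_\delta(\xi-u_m)f_n(\xi)\Big],
\]
with the $\zeta$-integral taken in the first summand and the $\xi$-integral in the second.

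\textbf{Step 2: the penalization term (main obstacle).} As $\delta\to0$ the $\kappa_\delta$ factors localize, and the bracket becomes $\bigl(b_n(u_n(x)-\psi(x))-b_m(u_m(y)-\psi(y))\bigr)\one_{u_n(x)>u_m(y)}$, up to $O(\delta)$ errors controlled by moments. On the set $\{u_n(x)>u_m(y)\}$ we argue as follows:
\begin{itemize}
\item $b_n\le b_m$ because $n\le m$ and $b_n(r)=nr^-$;
\item $b_m$ is non-increasing, so $b_m(u_m(y)-\psi(y))\ge b_m(u_n(x)-\psi(y))$.
\end{itemize}
Hence
\[
b_n(u_n(x)-\psi(x))-b_m(u_m(y)-\psi(y))\le b_m(u_n(x)-\psi(x))-b_m(u_n(x)-\psi(y))\le m|\psi(x)-\psi(y)|.
\]
After integration against $\rho_\gamma$ this is bounded by $mT\,\omega_\psi(\gamma)$, where $\omega_\psi$ is the modulus of continuity of $\psi$. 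This tends to $0$ as $\gamma\to0$ for fixed $m$, by the uniform continuity of $\psi$. The delicate point is to carry out the $\delta$-localization rigorously before using the sign. I would first bound $I^b$ by inserting $\one_{u_n(x)>\zeta}$ versus $\one_{\xi>u_m(y)}$ through the identity $\int\kappa_\delta(u-\zeta)\one_{\zeta\ge v}\,\dd\zeta=\tfrac12+\llbracket\kappa_\delta\rrbracket(u-v)$, used as in the estimate of $I^\nu$. I would then use the Lipschitz bound on $b_m$ to absorb the $O(m\delta)$ error created by the smoothed indicator.

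\textbf{Step 3: the limit and monotonicity.} Choose $\delta=\gamma^{4/3}$ and let $\gamma\to0$. At Lebesgue points where $f^+=f$, as in \eqref{eq:equal-for-f+f-aet}, this gives for a.e.\ $t$
\[
\E\|(u_n(t)-u_m(t))^+\|_{L^1}\le \|(u_n^0-u_m^0)^+\|_{L^1}.
\]
If the initial data coincide, the right-hand side vanishes, so $u_n\le u_m$ a.e.\ in $\Omega\times Q_T$. Since this holds for every pair $n\le m$, the family $(u_n)_n$ is non-decreasing.
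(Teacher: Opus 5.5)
Your proposal is correct. It differs from the paper only in how the penalization term is closed.

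\textbf{Common part.} Both arguments first use $b_n\le b_m$ and then evaluate the velocity integrals exactly. The penalization contribution then carries the weight $K_\delta(u_n(x,s)-u_m(y,s))$ with $K_\delta(z)=\int_{-\infty}^z\kappa_\delta$; this is your $\tfrac12+\llbracket\kappa_\delta\rrbracket$. Note that this identity is exact, so there is no moment-controlled localization error. Replacing $K_\delta$ by $\one_{\{u_n(x)>u_m(y)\}}$ would not be an $O(\delta)$ error in general, which is why your final formulation through the Lipschitz bound is the right one.

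\textbf{Paper's route.} The paper proves the qualitative limit $I^B_{\gamma,\delta}\to\E\int_0^t\!\int\one_{\{u_n>u_m\}}[b_m(u_n-\psi)-b_m(u_m-\psi)]$. It splits off a translation term, controlled by $L^1$-continuity of translations of $b_m(u_m-\psi)$, and treats the remainder by a Lebesgue-point case analysis and dominated convergence. The monotonicity of $b_m$ is used only after the limit.

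\textbf{Your route.} You estimate pointwise before any limit. On $\operatorname{supp}K_\delta(u_n(x)-u_m(y))\subset\{u_n(x)>u_m(y)-\delta\}$, monotonicity and the $m$-Lipschitz bound give $b_n(u_n(x)-\psi(x))-b_m(u_m(y)-\psi(y))\le m(|\psi(x)-\psi(y)|+\delta)$. Hence $I^b\le C\,mT(\omega_\psi(\gamma)+\delta)\to0$ for fixed $m$.

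\textbf{Comparison.} Your argument is shorter and quantitative. It is the same device the paper itself uses for the penalization term in Lemma \ref{lem:vanishing-viscosity-fixed-epsilon}. It relies on the uniform continuity of $\psi$ and on the Lipschitz constant of $b_m$, both available here. The paper's route needs neither, only integrability of $b_m(u_n-\psi)$ and $b_m(u_m-\psi)$. So it would survive for merely measurable obstacles or non-Lipschitz monotone penalizations. It also yields the intermediate inequality with $\one_{\{u_n>u_m\}}$ in the form quoted again in \eqref{eq:comparison-epsilon-local_b_epsilon-h}.
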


\begin{proof}
We carry out the proof via the doubling variables method, adopting an approach similar to that of Theorem \ref{thm:uniqueness}, based on the kinetic equation \eqref{eq:kinetic-penalized-direct}.
Unlike the aforementioned theorem, which involves the Radon reflection measure $\nu$, here we work with the penalization terms $b_n(u_n-\psi)$ and $b_m(u_m-\psi)$. Accordingly, our analysis focuses on the contributions originating from these penalization terms.

Fix $n\leq m$. 
Let $f_n(x,s,\xi)=\one_{\{\xi<u_n(x,s)\}}$, $f_m(y,s,\zeta)=\one_{\{\zeta<u_m(y,s)\}}$, and $\bar f_n:=1-f_n$, $\bar f_m:=1-f_m$.
Taking the same test function
\[
\varphi_{\gamma,\delta}(x,\xi,y,\zeta)=\rho_\gamma(x-y)\kappa_\delta(\xi-\zeta)
\]
as in Theorem \ref{thm:uniqueness}, the term associated with the penalization terms, also denoted by $I^\nu_{\gamma,\delta}(t)$ is given by
\begin{align}
&I^\nu_{\gamma,\delta}(t)\nonumber\\
&=\E\int_{\mathbb T^N}\int_{\mathbb R}\int_{\mathbb T^N\times[0,t]}\varphi_{\gamma,\delta}\bigl(x,u_n(x,s),y,\zeta\bigr)\bar f_m(y,s,\zeta)b_n\bigl(u_n(x,s)-\psi(x,s)\bigr)\dd x\dd s\dd y\dd\zeta\nonumber\\
&\quad-\E\int_{\mathbb T^N}\int_{\mathbb R}\int_{\mathbb T^N\times[0,t]}\varphi_{\gamma,\delta}\bigl(x,\xi,y,u_m(y,s)\bigr)f_n(x,s,\xi)
b_m\bigl(u_m(y,s)-\psi(y,s)\bigr)\dd y\dd s\dd x\dd\xi.\label{eq:penalty-term-nm}
\end{align}
Here, we use $f_n$ and $f_m$ instead of their left- or right-continuous representatives, since the corresponding integrations are taken with respect to the Lebesgue measure.

Since $b_k(z)=kz^-$ and $n\leq m$, we have $b_n(z)\leq b_m(z)$ for every $z\in\mathbb R$.
Moreover, all the remaining factors in the first integral of \eqref{eq:penalty-term-nm} are nonnegative.
Therefore, we reach
\begin{align}
&I^\nu_{\gamma,\delta}(t)\nonumber\\
&\leq\E\int_{\mathbb T^N}\int_{\mathbb R}\int_{\mathbb T^N\times[0,t]}\varphi_{\gamma,\delta}\bigl(x,u_n(x,s),y,\zeta\bigr)\bar f_m(y,s,\zeta)b_m\bigl(u_n(x,s)-\psi(x,s)\bigr)\dd x\dd s\dd y\dd\zeta\nonumber\\
&\quad-\E\int_{\mathbb T^N}\int_{\mathbb R}\int_{\mathbb T^N\times[0,t]}\varphi_{\gamma,\delta}\bigl(x,\xi,y,u_m(y,s)\bigr)f_n(x,s,\xi)
b_m\bigl(u_m(y,s)-\psi(y,s)\bigr)\dd y\dd s\dd x\dd\xi.\label{eq:penalty-term-nm-upper}
\end{align}

Define
\begin{equation*}
K_\delta(z):=\int_{-\infty}^{z}\kappa_\delta(r)\dd r,\qquad z\in\mathbb R.
\end{equation*}
Since $\kappa_\delta$ is even, nonnegative, and has total mass one, it follows that $0\leq K_\delta(z)\leq1$ is non-decreasing and satisfies
\[
K_\delta(-\infty)=0,\qquad K_\delta(+\infty)=1,\qquad K_\delta(0)=\frac12,\qquad K_\delta(-z)=1-K_\delta(z).
\]
Owing to the definition of $K_{\delta}$,
the velocity integrals can be written as
\begin{align}
&\int_{\mathbb R}\kappa_\delta\bigl(u_n(x,s)-\zeta\bigr)\bar f_m(y,s,\zeta)\dd\zeta
=K_\delta\bigl(u_n(x,s)-u_m(y,s)\bigr),\label{eq:first-velocity-integral-K}
\end{align}
and
\begin{align}
&\int_{\mathbb R}\kappa_\delta\bigl(\xi-u_m(y,s)\bigr)f_n(x,s,\xi)\dd\xi=K_\delta\bigl(u_n(x,s)-u_m(y,s)\bigr).\label{eq:second-velocity-integral-K}
\end{align}
It follows from \eqref{eq:penalty-term-nm-upper}, \eqref{eq:first-velocity-integral-K}, and \eqref{eq:second-velocity-integral-K} that
\begin{align}
I^\nu_{\gamma,\delta}(t)&\leq\E\int_{\mathbb T^N}\int_{\mathbb T^N\times[0,t]}\rho_\gamma(x-y)K_\delta\bigl(u_n(x,s)-u_m(y,s)\bigr)\bigl[\widetilde B_n^{(m)}(x,s)-B_m(y,s)\bigr]\dd x\dd s\dd y\nonumber\\
&=:I^B_{\gamma,\delta}(t),\label{eq:def-I-B-nm}
\end{align}
where
\[
\widetilde B_n^{(m)}(x,s):=b_m\bigl(u_n(x,s)-\psi(x,s)\bigr),\qquad B_m(x,s):=b_m\bigl(u_m(x,s)-\psi(x,s)\bigr).
\]

In the following, we aim to show that
\begin{align}\label{eq:limit-I-B-nm}
I^B_{\gamma,\delta}(t)\rightarrow I^B(t):=\E\int_0^t\int_{\mathbb T^N}\one_{\{u_n(x,s)>u_m(x,s)\}}\bigl[\widetilde B_n^{(m)}(x,s)-B_m(x,s)\bigr]\dd x\dd s
\end{align}
when $\delta=\gamma^{4/3}$ and $\gamma\downarrow0$.

Since $\int_{\mathbb T^N}\rho_\gamma(x-y)\dd y=1$,
we have
\begin{align*}
&I^B_{\gamma,\delta}(t)-I^B(t)\\
&=\E\int_0^t\int_{\mathbb T^N}\int_{\mathbb T^N}\rho_\gamma(x-y)\bigg\{K_\delta\bigl(u_n(x,s)-u_m(y,s)\bigr)\bigl[\widetilde B_n^{(m)}(x,s)-B_m(y,s)\bigr]\\
&\hspace{4cm}-\one_{\{u_n(x,s)>u_m(x,s)\}}\bigl[\widetilde B_n^{(m)}(x,s)-B_m(x,s)\bigr]\bigg\}\dd x\dd y\dd s\\
&=\E\int_0^t\int_{\mathbb T^N}\int_{\mathbb T^N}\rho_\gamma(x-y)K_\delta\bigl(u_n(x,s)-u_m(y,s)\bigr)\bigl[B_m(x,s)-B_m(y,s)\bigr]\dd x\dd y\dd s\\
&\quad+\E\int_0^t\int_{\mathbb T^N}\int_{\mathbb T^N}\rho_\gamma(x-y)\bigg[K_\delta\bigl(u_n(x,s)-u_m(y,s)\bigr)\\
&\hspace{4cm}-\one_{\{u_n(x,s)>u_m(x,s)\}}\bigg]\bigl[\widetilde B_n^{(m)}(x,s)-B_m(x,s)\bigr]\dd x\dd y\dd s\\
&=:A_{\gamma,\delta}(t)+R_{\gamma,\delta}(t).
\end{align*}

For $A_{\gamma,\delta}(t)$, due to
$
0\leq K_\delta(z)\leq1$,
we have
\begin{align}
|A_{\gamma,\delta}(t)|&\leq\E\int_0^t\int_{\mathbb T^N}\int_{\mathbb T^N}\rho_\gamma(x-y)|B_m(x,s)-B_m(y,s)|\dd x\dd y\dd s.
\label{eq:A-delta-epsilon-bound}
\end{align}
The right-hand side of \eqref{eq:A-delta-epsilon-bound} vanishes as $\gamma\downarrow0$ due to the continuity of spatial translations of $B_m$ in $L^1(\Omega\times Q_T)$.
This yields
\begin{equation}
\label{eq:A-delta-epsilon-limit}
A_{\gamma,\delta}(t)\rightarrow0\qquad\text{as }\gamma\downarrow0.
\end{equation}

Let us now focus on the term $R_{\gamma,\delta}(t)$.
Set
\begin{align*}
Q_{\gamma,\delta}(x,s):=\int_{\mathbb T^N}\rho_\gamma(x-y)\bigg|K_\delta\bigl(u_n(x,s)-u_m(y,s)\bigr)-\one_{\{u_n(x,s)>u_m(x,s)\}}\bigg|\dd y.
\end{align*}
Then
\begin{align}
|R_{\gamma,\delta}(t)|&\leq\E\int_0^t\int_{\mathbb T^N}Q_{\gamma,\delta}(x,s)|\widetilde B_n^{(m)}(x,s)-B_m(x,s)|\dd x\dd s.\label{eq:R-delta-epsilon-bound}
\end{align}

Since $u_m(\omega,\cdot,s)\in L^1(\mathbb T^N)$, we fix $(\omega,x,s)$ in a set of full measure in $\Omega\times\mathbb T^N\times[0,T]$ such that $x$ is a Lebesgue point of $u_m(\omega,\cdot,s)$.
We consider the limit of $Q_{\gamma,\delta}(x,s)$ in the following three cases.

\medskip

\noindent\textbf{Case 1:}
Suppose that $u_n(x,s)-u_m(x,s)>0$. 
We set
\[
\eta:=\frac{u_n(x,s)-u_m(x,s)}{2}>0.
\]
Then, when $|u_m(y,s)-u_m(x,s)|\leq\eta$, we have
\begin{align}
u_n(x,s)-u_m(y,s)&=u_n(x,s)-u_m(x,s)+u_m(x,s)-u_m(y,s)\geq\eta.
\label{eq:case-one-lower-bound}
\end{align}
Therefore, the monotonicity of $K_\delta$ and \eqref{eq:case-one-lower-bound} imply
\begin{align*}
Q_{\gamma,\delta}(x,s)&=\int_{\mathbb T^N}\one_{\{|u_m(y,s)-u_m(x,s)|\leq\eta\}}\rho_\gamma(x-y)\left|K_\delta(u_n(x,s)-u_m(y,s))-1\right|\dd y\\
&\quad+\int_{\mathbb T^N}\one_{\{|u_m(y,s)-u_m(x,s)|>\eta\}}\rho_\gamma(x-y)\left|K_\delta(u_n(x,s)-u_m(y,s))-1\right|\dd y\\
&\leq\int_{\mathbb T^N}\one_{\{|u_m(y,s)-u_m(x,s)|\leq\eta\}}\rho_\gamma(x-y)\left[1-K_\delta(u_n(x,s)-u_m(y,s))\right]\dd y\\
&\quad+\int_{\mathbb T^N}\one_{\{|u_m(y,s)-u_m(x,s)|>\eta\}}\rho_\gamma(x-y)\dd y\\
&\leq1-K_\delta(\eta)+\int_{\mathbb T^N}\one_{\{|u_m(y,s)-u_m(x,s)|>\eta\}}\rho_\gamma(x-y)\dd y\\
&=\int_\eta^{+\infty}\kappa_\delta(z)\dd z+\int_{\mathbb T^N}\one_{\{|u_m(y,s)-u_m(x,s)|>\eta\}}\rho_\gamma(x-y)\dd y.
\end{align*}
The first term tends to zero as $\delta\downarrow0$.
Moreover, the term
\begin{align*}
&\int_{\mathbb T^N}\one_{\{|u_m(y,s)-u_m(x,s)|>\eta\}}\rho_\gamma(x-y)\dd y\leq\frac{1}{\eta}\int_{\mathbb T^N}\rho_\gamma(x-y)|u_m(y,s)-u_m(x,s)|\dd y
\end{align*}
 tends to zero as $\gamma\downarrow0$, since $x$ is a Lebesgue point of $u_m(\omega,\cdot,s)$.
Consequently, we have
\begin{equation}\label{eq:limit-of-Qdeltaepsilon}
\lim_{\substack{ \delta=\gamma^{4/3},\gamma\downarrow0}}Q_{\gamma,\delta}(x,s)=0.
\end{equation}

\medskip

\noindent\textbf{Case 2:} Suppose that $u_n(x,s)-u_m(x,s)<0$. We set
\[
\eta:=-\frac{u_n(x,s)-u_m(x,s)}{2}=\frac{u_m(x,s)-u_n(x,s)}{2}>0.
\]
Following the procedure in Case 1, we obtain
\begin{align*}
Q_{\gamma,\delta}(x,s)\leq \int_\eta^{+\infty}\kappa_\delta(z)\dd z+\int_{\mathbb T^N}
\one_{\{|u_m(y,s)-u_m(x,s)|>\eta\}}\rho_\gamma(x-y)\dd y,
\end{align*}
where we use the evenness of $\kappa_\delta$.
Arguing as Case 1, both terms tend to zero. Therefore,
\eqref{eq:limit-of-Qdeltaepsilon} also holds in this case.

\medskip

\noindent\textbf{Case 3:}
Suppose that $u_n(x,s)=u_m(x,s)$.
Then we have
\[
\widetilde B_n^{(m)}(x,s)-B_m(x,s)=0.
\]
Moreover, since $0\leq K_\delta(z)\leq1$, we have
$
0\leq Q_{\gamma,\delta}(x,s)\leq2$.
Thus, 
\[
Q_{\gamma,\delta}(x,s)|\widetilde B_n^{(m)}(x,s)-B_m(x,s)|=0
\]

Combining the above Cases 1--3, we conclude that
\[
Q_{\gamma,\delta}(x,s)|\widetilde B_n^{(m)}(x,s)-B_m(x,s)|\rightarrow0
\]
for almost every $(\omega,x,s)$ as $\gamma\downarrow0$ and $\delta=\gamma^{4/3}$.

The a priori estimates imply that $|\widetilde B_n^{(m)}-B_m|\in L^1(\Omega\times Q_T)$.
Then the dominated convergence theorem and \eqref{eq:R-delta-epsilon-bound} yield
\begin{equation}\label{eq:R-delta-epsilon-limit}
\lim_{\substack{\gamma\downarrow0\\\delta=\gamma^{4/3}}}|R_{\gamma,\delta}(t)|=0.
\end{equation}

Combining \eqref{eq:A-delta-epsilon-limit} and
\eqref{eq:R-delta-epsilon-limit}, we obtain the desired convergence
\eqref{eq:limit-I-B-nm}:
\[
\lim_{\substack{\gamma\downarrow0\\\delta=\gamma^{4/3}}}I^B_{\gamma,\delta}(t)=I^B(t).
\]
We now estimate the limiting term $I^B(t)$.
On the set
\[
\{(x,s):u_n(x,s)>u_m(x,s)\},
\]
the fact that $b_m$ is non-increasing gives
\[
b_m\bigl(u_n(x,s)-\psi(x,s)\bigr)\leq b_m\bigl(u_m(x,s)-\psi(x,s)\bigr).
\]
Then, it follows that
\[
\widetilde B_n^{(m)}(x,s)-B_m(x,s)\leq0\qquad\text{on }\{u_n>u_m\},
\]
and consequently $I^B(t)\leq0$.
Thus, we deduce from \eqref{eq:def-I-B-nm} that
\[
\limsup_{\substack{\gamma\downarrow0\\\delta=\gamma^{4/3}}}I^\nu_{\gamma,\delta}(t)\leq0.
\]

Combining this estimate with the estimates of the flux, stochastic, and kinetic-measure terms in the standard doubling-of-variables inequality, we obtain
\[
\E\bigl\|(u_n(t)-u_m(t))^+\bigr\|_{L^1(\mathbb T^N)}\leq\bigl\|(u_n^0-u_m^0)^+\bigr\|_{L^1(\mathbb T^N)}
\]
for almost every $t\in(0,T)$. 
This completes the proof.
\end{proof}

\subsection{Proof of existence of kinetic solutions to the original obstacle problem }

\begin{thm}[Existence of a kinetic solution]
\label{thm:existence-obstacle-solution}
Let \textbf{Hypotheses (H1) and (H2)} hold. Then the lower obstacle problem \eqref{eq:obstacle-problem-existence} admits a kinetic solution $(u,\nu)$ in the sense of Definition \ref{def:dfn-1}.
\end{thm}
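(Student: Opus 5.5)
We pass to the limit $n\to\infty$ in the penalized kinetic formulation \eqref{eq:kinetic-penalized-obstacle-form}, using the monotonicity from Lemma \ref{lem:comparison-penalty-parameter} in place of a compactness argument for $u_n$. Since $(u_n)$ is non-decreasing and, by \eqref{eq:priori-p-n}, bounded in $L^p(\Omega\times Q_T)$ for every $p$, we define $u:=\lim_n u_n=\sup_n u_n$ pointwise. By monotone convergence and Fatou, $u_n\to u$ strongly in $L^p(\Omega\times Q_T)$ for every finite $p$, and $f_n=\one_{u_n>\xi}\to f=\one_{u>\xi}$ a.e. and in $L^1_{\rm loc}$. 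Predictability of $u$ follows as a limit of predictable processes. The moment bound \eqref{eq:usual moment bound} comes from the uniform ess-sup estimate in \eqref{eq:priori-p-n}, via lower semicontinuity along a subsequence with a.e. convergence in $t$.

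Next we identify the limits of the measures. The bound \eqref{eq:priori-p-n} gives $\E\int_{Q_T} n|(u_n-\psi)^-|^2\le C$, so $\E\|(u_n-\psi)^-\|^2_{L^2(Q_T)}\le C/n\to0$ and hence $u\ge\psi$ a.e. The moment bounds on $\nu_n(\mathbb T^N\times[0,T])$ and $\int(1+|\xi|^p)\dd q_n$ are uniform in $L^2(\Omega)$. Following \cite{DV10-publish}, we extract subsequences with $\nu_n\rightharpoonup\nu$ and $q_n\rightharpoonup q$ weak-$*$ in $L^2(\Omega;\mathcal M_b)$, viewing measures as elements of the dual of $L^2(\Omega;C_b)$. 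The $\xi$-moment bound then gives tightness in $\xi$, so \eqref{eq:vanishing kinetic measure} holds for $q$. Nonnegativity and finite expectation pass to the limit. Predictability of the time-integrated processes follows because weak limits of predictable processes in $L^2(\Omega\times[0,T])$ remain predictable. We then pass to the limit term by term in \eqref{eq:kinetic-penalized-obstacle-form}:
\begin{itemize}[label={}]
\item the transport and Itô correction terms converge by strong convergence of $u_n$ and the growth conditions in (H1);
\item the stochastic integral converges in $L^2(\Omega)$ by the Itô isometry and the Lipschitz bound \eqref{eq:assumption for g Lip};
\item the term $q_n(\partial_\xi\varphi)$ converges by weak convergence;
\item the reflection term $\int\varphi(x,t,\psi(x,t))\dd\nu_n$ converges because $(x,t)\mapsto\varphi(x,t,\psi(x,t))$ is continuous and bounded, $\psi$ being continuous.
\end{itemize}
The key point is that the barrier substitution already moved the reflection to the known level $\psi$, so no product of weakly converging objects arises. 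This yields \eqref{eq:kinetic eqution} on $\mathbb T^N\times[0,T)$.

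The main obstacle is the weak initial trace \eqref{eq:weak-initial}, which is not inherited from \eqref{eq:weak-initial for u_varepsilon} uniformly in $n$. I would argue via monotonicity: $u\ge u_n\ge u_1$, so $\|u(t)-u^0\|_{L^1}\le \|u(t)-u_1(t)\|_{L^1}+\|u_1(t)-u^0\|_{L^1}$, and the second term has vanishing time average by \eqref{eq:weak-initial for u_varepsilon} for $n=1$. For the first term, $\int_{\mathbb T^N}(u-u_1)(t)\dd x$ equals a mass difference. Testing the kinetic equations with $\theta_R(\xi)$, or using the $\phi\equiv1$ mass identity from Lemma \ref{lem:priori estimates}, gives
\[\int(u(t)-u_1(t))\dd x=\text{(stochastic integral difference)}+\nu([0,t])-\nu_1([0,t]).\]
The stochastic part is continuous and vanishes at $0$, so it suffices to show that $\nu$ has no atom at $t=0$, that is, $\nu(\mathbb T^N\times[0,t])\to0$ as $t\to0$. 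For this I would use a uniform-in-$n$ small-time estimate on $\E\nu_n(\mathbb T^N\times[0,t])$. Since $u^0\ge\psi(\cdot,0)$ and $\psi$ is uniformly continuous, the penalty only acts by an amount controlled by the oscillation $\omega_\psi(t)$ of $\psi$ plus the stochastic and flux displacement. Concretely, I would compare $u_n$ with a supersolution such as $u_1+\omega_\psi(t)$, or apply the mass identity to $(u_n-\psi(\cdot,0)+\omega_\psi(t))^-$, to obtain $\E\nu_n([0,t])\le C(\omega_\psi(t)+t^{1/2})$ uniformly in $n$. Weak lower semicontinuity on open sets then transfers this bound to $\nu$ on $[0,t)$, and the $L^1$ averaging yields \eqref{eq:weak-initial}. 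If this estimate fails, the fallback is to show the trace via Proposition \ref{prop:trace-obstacle-scl}: identify $f^{+,0}$ as the limit of the $f_n^{+,0}=f^0$ using monotonicity, and deduce strong $L^1$ convergence from the kinetic (indicator) structure of the limit.
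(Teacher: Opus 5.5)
Up to the kinetic formulation your proposal matches the paper's proof: the monotone limit $u=\sup_n u_n$, the obstacle constraint, the extraction of $\nu$ and $q$, and the term-by-term passage to the limit are all fine. The gap is the weak initial trace \eqref{eq:weak-initial}, which is the one genuinely new point of the existence proof.

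\textbf{The reduction does not prove the trace.} Your step via $u\ge u_1$ and the mass identities is correct. However, it only shows that the trace condition is \emph{equivalent} to $\nu(\mathbb T^N\times\{0\})=0$; it does not establish this.

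\textbf{The small-time bound is not derived.} The estimate $\E\nu_n(\mathbb T^N\times[0,t])\le C(\omega_\psi(t)+t^{1/2})$ is asserted, and the mechanisms you suggest do not give it.
\begin{itemize}
\item $u_1+\omega_\psi(t)$ is not an upper barrier for $u_n$. Take $\psi\equiv0$, $A=0$, a single noise mode $g_1\equiv1$, and $u^0=0$. Then $\omega_\psi\equiv0$, yet $u_n$ behaves like reflected Brownian motion and exceeds $u_1\approx\beta_1$ by order $\sqrt t$ with positive probability.
\item More generally, for nonlinear $A$ and multiplicative $\Phi$, a time shift of $u_1$ is not a supersolution. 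It also need not lie above $\psi$, so the penalty does not vanish on it.
\item Computing $\int(u_n-\psi(\cdot,0)+\omega_\psi(t))^-\dd x$ brings in $\nabla\psi$ through the flux, which is unavailable for merely continuous $\psi$. It also produces an uncontrolled local-time (It\^o-correction) term.
\item The route is circular. By the mass identity, $\E\nu_n(\mathbb T^N\times[0,t])=\E\int(u_n(t)-u^0)\dd x$. So a uniform small-time bound on $\nu_n$ is itself a uniform initial-trace statement, and you need an independent estimate to obtain it.
\end{itemize}

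\textbf{The fallback fails for the same reason.} Monotonicity gives only $f^{+,0}\ge f^0$, a lower bound. By \eqref{eq:trace-jump-obstacle-scl} with $\varphi=\theta_R(\xi)$ and $R\to\infty$, the mass excess of $f^{+,0}$ over $f^0$ equals $\nu(\mathbb T^N\times\{0\})$, which is exactly what you need to show vanishes. Right traces at $t=0$ do not commute with the limit $n\to\infty$, because $L^p(\Omega\times Q_T)$ convergence does not see concentration of $\nu_n$ near $t=0$.

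\textbf{The missing idea.} The paper proves Lemma \ref{lem:weak-initial-condition} with a convex-entropy argument at $t=0$. Test the limit equation with $\Upsilon_\tau(t)\theta_R(\xi)(\xi-u^0(y))\rho_\gamma(x-y)$ and integrate in $y$.
\begin{itemize}
\item Since $\partial_\xi(\xi-u^0(y))=1\ge0$, the $q$-term enters with a nonpositive sign, up to a tail in $\xi$.
\item Since $u^0\ge\psi(\cdot,0)$, the reflection term satisfies
\[
\int\Upsilon_\tau(t)\theta_R(\psi)\bigl(\psi(x,t)-u^0(y)\bigr)\rho_\gamma(x-y)\,\dd y\,\dd\nu(x,t)\le C\,\omega_\psi(C\gamma+3\tau)\,\nu\bigl(\mathbb T^N\times[0,T)\bigr).
\]
This is small whether or not $\nu$ charges $t=0$.
\item The spatial doubling makes the flux term $O(C_{\gamma,R}\tau)$.
\end{itemize}
Letting $\tau\downarrow0$, then $R\to\infty$, then $\gamma\downarrow0$ yields $\frac1\tau\int_0^\tau\|u(t)-u^0\|_{L^2}^2\dd t\to0$.

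The same computation can also be run uniformly in $n$ on \eqref{eq:penalized-viscous}, using $n(u-\psi)^-(u-u^0(y))\le n(u-\psi)^-(\psi(x,t)-u^0(y))$. That would supply the uniform small-time control your route needs. Without this sign structure, your argument does not close.
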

\begin{proof}
By Lemma \ref{lem:comparison-penalty-parameter}, the sequence $(u_n)_{n\geq1}$ is non-decreasing. 
We therefore define
\[
u:=\lim_{n\to\infty}u_n=\sup_{n\geq1}u_n\qquad\text{a.e. in }\Omega\times Q_T.
\]
Let $1\leq p<r<\infty$. 
The uniform estimate \eqref{eq:priori-p-n} gives
\[
\sup_{n\geq1}\|u_n\|_{L^r(\Omega\times Q_T)}<\infty.
\]
Hence $(|u_n|^p)_{n\geq1}$ is uniformly integrable, and Vitali's theorem yields
\begin{equation}\label{eq:strong-convergence-penalty-limit}
u_n\longrightarrow u\quad\text{strongly in }L^p(\Omega\times Q_T)
\end{equation}
for every $1\leq p<\infty$. 
Since every $u_n$ is predictable, the limit $u$ has a predictable representative. 
Moreover, Fatou's lemma and \eqref{eq:priori-p-n} give the moment bound required in Definition~\ref{def:dfn-1}.

The obstacle constraint follows from \eqref{eq:uniform Lp nu-n}. 
Indeed,
\begin{align*}
\E\int_{Q_T}(u-\psi)^-\dd x\dd t&\leq\E\int_{Q_T}|u-u_n|\dd x\dd t+\E\int_{Q_T}(u_n-\psi)^-\dd x\dd t\\
&=\E\int_{Q_T}|u-u_n|\dd x \dd t+\frac1n\E\nu_n(\mathbb T^N\times[0,T])\rightarrow0.
\end{align*}
Thus
\[
u\geq\psi\qquad\text{a.e. in }\Omega\times Q_T.
\]

We next extract the reflection and kinetic measures. 
Set
\[
K_r:=\mathbb T^N\times[0,T]\times[-r,r],\qquad\mathcal M_r:=C(K_r)^*.
\]
By \eqref{eq:uniform Lp nu-n} with $p=2$, the sequence $(\nu_n)_{n\geq1}$ is bounded in $L^2\bigl(\Omega;\mathcal M(\mathbb T^N\times[0,T])\bigr)$.
Moreover, the estimates obtained from \eqref{eq:priori-p_L2omega}, with a sufficiently large exponent and then passed through the vanishing-viscosity limit, give, for every $p\geq0$,
\begin{equation*}
\sup_{n\geq1}\E\bigg[\bigg(\int_{\mathbb T^N\times[0,T]\times\mathbb R}(1+|\xi|^p)\,\dd q_n\bigg)^2\bigg]<\infty.
\end{equation*}

By the uniform bounds for $\nu_{n_j}$ and $q_{n_j}$, the local weak-star compactness and diagonal argument of \cite[Subsection 4.1.2]{DV10-publish} yield, after extracting a subsequence, nonnegative random Radon measures $\nu$ and $q$ such
that
\[
\E\bigl[X\nu_{n_j}(\varphi)\bigr]\rightarrow\E\bigl[X\nu(\varphi)\bigr]
\]
for every $X\in L^2(\Omega)$ and $\varphi\in C(\mathbb T^N\times[0,T])$, and
\begin{equation}\label{eq:q-n-local-weak-star-limit}
\E\bigl[Xq_{n_j}(\Phi)\bigr]\rightarrow\E\bigl[Xq(\Phi)\bigr]
\end{equation}
for every $X\in L^2(\Omega)$ and $\Phi\in C_b(\mathbb T^N\times[0,T]\times\mathbb R)$.
Here, the convergence for $q_{n_j}$ follows first for test functions with compact support in $\xi$, and is extended to bounded continuous test functions by the weighted estimate, exactly as in \cite[Subsection 4.1.2]{DV10-publish}. 
The same estimate also implies that $q$ vanishes for large $\xi$.

We now verify the predictability of the limiting measures. 
We focus on $q$, because the proof for $\nu$ is identical, without the kinetic variable. 
Fix
$\phi\in C_b(\mathbb T^N\times\mathbb R)$ and set
\[
Q^\phi(t):=\int_{\mathbb T^N\times[0,t]\times\mathbb R}\phi(x,\xi)\,\dd q(x,s,\xi).
\]
We first show that $Q^\phi$ is adapted. 
Fix $t<T$ and choose $\chi_m\in C([0,T])$ such that
\[
0\leq\chi_m\leq1,\qquad\chi_m=1\ \text{on }[0,t],\qquad\chi_m=0\ \text{on }[t+m^{-1},T].
\]
Since $q_{n_j}$ is predictable, $q_{n_j}(\phi\chi_m)$ is $\mathcal F_{t+m^{-1}}$-measurable. 
The preceding weak convergence and the weak closedness of $L^2(\Omega,\mathcal F_{t+m^{-1}},\mathbb P)$ in $L^2(\Omega)$ imply that $q(\phi\chi_m)$ is also
$\mathcal F_{t+m^{-1}}$-measurable. 
Moreover,
\[
q(\phi\chi_m)\longrightarrow Q^\phi(t)\qquad\text{a.s.}
\]
by dominated convergence. 
Hence
\[
Q^\phi(t)\in\bigcap_{m\geq1}\mathcal F_{t+m^{-1}}=\mathcal F_t,
\]
where the last equality follows from the right-continuity of the filtration. 
Thus $Q^\phi$ is adapted.

For every $\omega$, the path $t\mapsto Q^\phi(t,\omega)$ is c\`adl\`ag, since it is the cumulative function of the finite signed measure $\phi q(\omega)$. 
It is therefore optional. 
Since $(\mathcal F_t)$ is the usual augmented Brownian filtration, it is a continuous filtration and its optional and predictable $\sigma$-fields coincide. 
Consequently, $Q^\phi$ is predictable.
The same argument shows that
\[
t\longmapsto\int_{\mathbb T^N\times[0,t]}\varphi(x)\,\dd\nu(x,s)
\]
is predictable for every $\varphi\in C_b(\mathbb T^N)$.
Therefore $q$ is a kinetic measure and $\nu$ is a predictable reflection measure.

It remains to pass to the limit in \eqref{eq:kinetic-penalized-obstacle-form}. 
If
\[
f_n=\one_{\{u_n>\xi\}},\qquad f=\one_{\{u>\xi\}},
\]
then \eqref{eq:strong-convergence-penalty-limit} and
\[
\int_{\mathbb R}|f_n-f|\,\dd\xi=|u_n-u|
\]
give
\[
f_n\rightarrow f\quad\text{strongly in }L^1(\Omega\times Q_T\times\mathbb R).
\]
This yields the convergence of the time and transport terms.

By \eqref{eq:assumption for g Lip},
\begin{align*}
&\E\int_0^T\|\Phi(u_n)-\Phi(u)\|_{\mathcal L_2(U;H)}^2\,\dd t\leq D_0\E\int_{Q_T}|u_n-u|^2\,\dd x\dd t\rightarrow0.
\end{align*}
Hence the stochastic integrals converge in $L^2(\Omega)$ by the It\^o isometry. 
The It\^o correction term converges by \eqref{eq:strong-convergence-penalty-limit}, the continuity and growth assumptions on $G^2$, and uniform integrability.

For the measure terms, since $\partial_\xi\varphi$ has compact support in $\xi$, \eqref{eq:q-n-local-weak-star-limit} gives
\[
q_n(\partial_\xi\varphi)\rightarrow q(\partial_\xi\varphi)\quad\text{weakly in }L^2(\Omega).
\]
Moreover, the continuity of $\psi$ implies that
\[
(x,t)\longmapsto\varphi(x,t,\psi(x,t))
\]
is continuous, and therefore
\[
\int_{\mathbb T^N\times[0,T)}\varphi(x,t,\psi(x,t))\,\dd\nu_n(x,t)
\rightarrow\int_{\mathbb T^N\times[0,T)}\varphi(x,t,\psi(x,t))\,\dd\nu(x,t)
\]
weakly in $L^2(\Omega)$.

Passing to the limit in \eqref{eq:kinetic-penalized-obstacle-form}, we obtain \eqref{eq:kinetic eqution}. 
Thus $(u,\nu)$ satisfies all the conditions of Definition~\ref{def:dfn-1}, except possibly the weak initial trace condition. 
This condition is established in Lemma~\ref{lem:weak-initial-condition}. 
Hence $(u,\nu)$ is a kinetic solution of the lower obstacle problem.
\end{proof}

\begin{lem}[Weak initial trace condition]\label{lem:weak-initial-condition}
Let $(u,\nu)$ be the pair constructed in the proof of Theorem \ref{thm:existence-obstacle-solution}.
Then, we have
\begin{equation*}
\lim_{\tau\downarrow0}\frac1{\tau}\int_0^\tau\Vert u(t)-u^0\Vert_{L^2(\mathbb T^N)}^2\dd t=0,\qquad \mathbb P\text{-a.s.,}
\end{equation*}
and therefore,
\begin{equation*}
\lim_{\tau\downarrow0}\frac1{\tau}\int_0^\tau\Vert u(t)-u^0\Vert_{L^1(\mathbb T^N)}\dd t=0,\qquad \mathbb P\text{-a.s.,}
\end{equation*}
which means that the pair $(u,\nu)$ satisfies the weak initial trace condition in Definition \ref{def:dfn-1}.
\end{lem}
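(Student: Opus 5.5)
We prove the $L^2$ statement; the $L^1$ statement then follows from Cauchy--Schwarz on $\mathbb T^N$, since $|\mathbb T^N|=1$.

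\textbf{Step 1: reduce to an almost sure limit.} We write
\[
\|u(t)-u^0\|_{L^2}^2=\|u(t)\|_{L^2}^2-\|u^0\|_{L^2}^2-2\langle u(t)-u^0,u^0\rangle .
\]
It therefore suffices to show two things almost surely:
\[
\limsup_{\tau\downarrow0}\frac1\tau\int_0^\tau\|u(t)\|_{L^2}^2\dd t\leq\|u^0\|_{L^2}^2,
\qquad
\lim_{\tau\downarrow0}\frac1\tau\int_0^\tau\langle u(t)-u^0,\phi\rangle\dd t=0\quad\text{for all }\phi\in L^2(\mathbb T^N).
\]

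\textbf{Step 2: the weak convergence.} The second statement will come from the kinetic formulation together with Lemma~\ref{lem:no atom of q nu}. That lemma is not yet available, because we have not yet verified \eqref{eq:weak-initial}. So we argue directly.

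Test \eqref{eq:kinetic eqution} with $\Upsilon_\varepsilon(t)\phi(x)\theta_R(\xi)$-type functions, using $\xi$-primitives so that we recover $u$ rather than $f$. This gives
\[
\frac1\tau\int_0^\tau\langle u(t)-u^0,\phi\rangle\dd t
\]
in terms of three contributions on $[0,\tau]$: a continuous-in-time martingale and drift part, which is $o(1)$; the $q$-contribution; and the $\nu$-contribution. The last two are controlled by $q(\mathbb T^N\times[0,\tau]\times\mathbb R)+\nu(\mathbb T^N\times[0,\tau])$.

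These masses tend to $q(\{t=0\})$ and $\nu(\{t=0\})$. We must show the atoms at $0$ vanish for our constructed measures. The plan is to use monotonicity. Since $u_1\leq u_n\leq u$, we have $u\geq u_1$. Also, $u_1$ satisfies \eqref{eq:weak-initial for u_varepsilon}. Hence
\[
\frac1\tau\int_0^\tau\|(u-u^0)^-\|_{L^1}\dd t\leq\frac1\tau\int_0^\tau\|(u_1-u^0)^-\|_{L^1}\dd t\to0,
\]
so only the positive part needs control.

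\textbf{Step 3: an upper bound on the $L^2$ norm.} This upper bound is the main point. Apply It\^o's formula to $\|u_n(t)\|_{L^2}^2$, or better $\|u_n(t)-\psi(t)\|^2$-type quantities, at the viscous level $u_{\alpha,n}$. The penalty term then contributes
\[
2\int u_{\alpha,n}\,n(u_{\alpha,n}-\psi)^-\dd x\dd s .
\]
This quantity is bounded by $2\int\psi\,\dd\nu_{\alpha,n}$ minus a nonnegative term, because $(u-\psi)(u-\psi)^-\leq0$.

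Taking averages, we aim for
\begin{align*}
\frac1\tau\int_0^\tau\|u_n(t)\|_{L^2}^2\dd t
&\leq\|u^0\|_{L^2}^2+\frac1\tau\int_0^\tau\Big(M_n(t)+C\!\int_0^t(1+\|u_n\|_{L^2}^2)\dd s\\
&\qquad+2\!\int_{\mathbb T^N\times[0,t]}\psi\,\dd\nu_n\Big)\dd t,
\end{align*}
with $M_n$ a continuous martingale. We then pass to the limit $n\to\infty$ by lower semicontinuity on the left-hand side.

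This leaves a term $\frac1\tau\int_0^\tau\int_{[0,t]}\psi\,\dd\nu\,\dd t$. To kill it, we combine it with the linear identity from testing with $\phi=\psi(\cdot,0)$, i.e.\ we expand around $\psi$ rather than $0$.

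\textbf{Main obstacle.} The key difficulty is showing that the contribution of the reflection measure near $t=0$ is nonpositive when paired with $u-u^0$. The intended mechanism is the following. Because $u^0\geq\psi(\cdot,0)$ and $\psi$ is continuous, the penalty pushes only where $u_n<\psi$. Hence
\[
\int(u^0-\psi)\,\dd\nu_n\geq -o(1)\,\nu_n
\]
on short time intervals, which turns the cross term into a nonpositive quantity up to a modulus of continuity of $\psi$ times $\nu([0,\tau])$, and this tends to $0$.

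Once both limits in Step 1 hold almost surely (taking a countable dense set of $\phi$ and a countable sequence $\tau_k$, with monotonicity in $\tau$ for the averages), the $L^2$ convergence follows, and hence the $L^1$ one.
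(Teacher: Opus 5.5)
Your argument works, but it takes a different route from the paper.

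\textbf{How the two proofs differ.} The paper never returns to the approximations. It tests the limiting kinetic formulation with the single quadratic entropy $\Upsilon_\tau(t)\theta_R(\xi)(\xi-u^0(y))\rho_\gamma(x-y)$. Via $2\int_{\mathbb R}(\xi-r)\chi_u\,\dd\xi=|u-r|^2-|r|^2$, this produces $|u(x,t)-u^0(y)|^2$ directly. The dissipation comes from $q\geq0$ (the term $I_{q,2}^{\tau,\gamma,R}\leq0$). The measure $\nu$ enters only through $\psi(x,t)-u^0(y)\leq\omega_\psi(C\gamma+3\tau)$ on the support of $\rho_\gamma(x-y)\Upsilon_\tau(t)$, so atoms of $q$ or $\nu$ at $t=0$ are harmless.

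You instead split $\|u-u^0\|_{L^2}^2$ into two parts. The energy part comes from It\^o's formula for $\|u_{\alpha,n}\|_{L^2}^2$, with the penalty bounded by $2\int\psi\,\dd\nu_{\alpha,n}$, carried through $\alpha\downarrow0$ and then $n\to\infty$. The linear part comes from testing with $\Upsilon_\tau(t)\phi(x)\theta_R(\xi)$, where $q$ drops out as $R\to\infty$. Combined, the reflection terms give $2\int(\psi(x,s)-\phi(x))\Upsilon_\tau(s)\,\dd\nu(x,s)$, which is the same quantity the paper bounds.

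\textbf{What each buys.} Your splitting makes the mechanism transparent. Its variant $\phi\approx\psi(\cdot,0)$ also avoids pairing $\nu$ with the discontinuous $u^0$. There the leftover $\frac2\tau\int_0^\tau\langle u-u^0,\psi(\cdot,0)-u^0\rangle\,\dd t$ is at most $\frac{C}{\tau}\int_0^\tau\|(u_1-u^0)^-\|_{L^1}\,\dd t$. This tends to $0$ by $u\geq u_1$ and the weak initial trace of $u_1$ from Lemma~\ref{lem:vanishing-viscosity-fixed-epsilon}; this is the real role of your monotonicity remark. The paper's route is shorter. It uses only the limit equation, the moment bounds, finiteness of $q,\nu$ and $u^0\geq\psi(\cdot,0)$, which is why it is reused verbatim for the skeleton and viscous-skeleton limits. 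Yours is tied to the penalisation scheme. The exception is if you read the energy inequality off the limit equation by testing with $\Upsilon_\tau\theta_R(\xi)\xi$ and using $q\geq0$; that is exactly the paper's mechanism.

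\textbf{Corrections to your write-up.} First, the two limits of Step~1 are not proved separately; only the combined estimate closes. Step~2's claim that atoms of $q$ and $\nu$ at $t=0$ must first be excluded is unnecessary. It is also unattainable via monotonicity: $u\geq u_1$ controls only $(u-u^0)^-$, while an atom of $\nu$ pushes $u$ upward. Second, $\phi$ must be a spatial mollification ($\rho_\gamma*u^0$ or $\rho_\gamma*\psi(\cdot,0)$), because the flux term needs $\nabla\phi$. This costs errors of order $\omega_\psi(C\gamma)$ and $\|u^0-\rho_\gamma*u^0\|_{L^2}$. They disappear by letting $\gamma\downarrow0$ after $\tau\downarrow0$.
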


\begin{proof}
We write
\[
\chi_u(x,t,\xi):=\one_{\{0<\xi<u(x,t)\}}-\one_{\{u(x,t)<\xi<0\}}=\one_{\{u(x,t)>\xi\}}-\one_{\{0>\xi\}},
\]
and
\[
\chi_{u^0}(x,\xi):=\one_{\{0<\xi<u^0(x)\}}-\one_{\{u^0(x)<\xi<0\}}.
\]

Let $\rho_{\gamma}$ be a standard nonnegative spatial mollifier on $\mathbb{T}^N$ for $\gamma\in(0,1)$.
Note that
\begin{align}\label{eq:initial-spatial-splitting}
&\frac1{\tau}\int_0^\tau\int_{\mathbb T^N}|u(x,t)-u^0(x)|^2\dd x\dd t\\
&\quad\leq\frac2{\tau}\int_0^\tau\int_{(\mathbb T^N)^2}|u(x,t)-u^0(y)|^2\rho_{\gamma}(x-y)\dd x\dd y\dd t\nonumber\\
&\qquad+2\int_{(\mathbb T^N)^2}|u^0(x)-u^0(y)|^2\rho_{\gamma}(x-y)\dd x\dd y.\nonumber
\end{align}
For the second term on the right hand side of \eqref{eq:initial-spatial-splitting}, since $u^0\in L^2(\mathbb T^N)$, the continuity of translations in $L^2(\mathbb T^N)$ yields
\begin{equation}\label{eq:translation-error-zero}
\lim_{\gamma\downarrow0}\int_{(\mathbb T^N)^2}|u^0(x)-u^0(y)|^2\rho_{\gamma}(x-y)\dd x\dd y=0.
\end{equation}

For every sufficiently small $\tau>0$, let $\Upsilon_{\tau}\in C_c^1([0,T))$ be nonnegative and non-increasing, with
\begin{equation*}
\Upsilon_{\tau}(0)=2,\qquad 0\leq\Upsilon_{\tau}\leq2\one_{[0,3\tau]},\qquad -\frac1{\tau}\leq\Upsilon_{\tau}'\leq0,
\end{equation*}
and
\begin{equation*}
\Upsilon_{\tau}'(t)=-\frac1{\tau},\qquad\text{for } t\in[0,\tau].
\end{equation*}

Using the definition of $\Upsilon_\tau$, we have
\begin{align}\label{eq:time-average-by-gamma}
&\frac1{\tau}\int_0^\tau\int_{(\mathbb T^N)^2}|u(x,t)-u^0(y)|^2\rho_{\gamma}(x-y)\dd x\dd y\dd t\\
&\quad\leq-\int_0^T\int_{(\mathbb T^N)^2}\Upsilon_{\tau}'(t)|u(x,t)-u^0(y)|^2\rho_{\gamma}(x-y)\dd x\dd y\dd t.\nonumber
\end{align}
Note that for every $r\in\mathbb R$, one has
\begin{equation*}
|u(x,t)-r|^2-|r|^2=2\int_{\mathbb R}(\xi-r)\chi_u(x,t,\xi)\dd\xi.
\end{equation*}
Then the right-hand side of \eqref{eq:time-average-by-gamma} equals 
\begin{align}\label{eq:time-average-chi}
-2\int_0^T\int_{(\mathbb T^N)^2\times\mathbb R}\Upsilon_{\tau}'(t)(\xi-u^0(y))\chi_u(x,t,\xi)\rho_{\gamma}(x-y)\dd\xi\dd x\dd y\dd t+2\Vert u^0\Vert_{L^2(\mathbb T^N)}^2.
\end{align}

Let $\theta_R\in C_c^\infty(\mathbb R)$ be an even function satisfying \eqref{theta}, that is,
\begin{equation*}
0\leq\theta_R\leq1,\qquad\theta_R(\xi)=1\quad\text{for }|\xi|\leq R,\qquad
\theta_R(\xi)=0\quad\text{for }|\xi|\geq R+1,\qquad|\theta_R'|\leq C.
\end{equation*}
Define
\begin{align*}
\mathcal J_{\tau,\gamma,R}:=-\int_0^T\int_{(\mathbb T^N)^2\times\mathbb R}\Upsilon_{\tau}'(t)\theta_R(\xi)(\xi-u^0(y))\chi_u(x,t,\xi)\rho_{\gamma}(x-y)\dd\xi\dd x\dd y\dd t.
\end{align*}
Based on \eqref{eq:time-average-by-gamma} and \eqref{eq:time-average-chi}, we have
\begin{align}\label{eq:disperse-of-u}
\frac1{\tau}\int_0^\tau\int_{(\mathbb T^N)^2}|u(x,t)-u^0(y)|^2\rho_{\gamma}(x-y)\dd x\dd y\dd t
\leq2\mathcal J_{\tau,\gamma,R}+2\Vert u^0\Vert_{L^2(\mathbb T^N)}^2+\mathcal T_{\tau,R},
\end{align}
where
\begin{align*}
\mathcal T_{\tau,R}:=2\int_0^T\int_{(\mathbb T^N)^2\times\mathbb R}|\Upsilon_{\tau}'(t)|\big(1-\theta_R(\xi)\big)|\xi-u^0(y)||\chi_u(x,t,\xi)|\rho_{\gamma}(x-y)\dd\xi\dd x\dd y\dd t.
\end{align*}
Note that using the support of $\chi_u$, we have for $p>2$,
\begin{align*}
\mathcal T_{\tau,R}&\leq C\sup_{t\in[0,T]}\int_{\mathbb T^N}\big(|u(x,t)|^2+|u(x,t)|\big)\one_{\{|u(x,t)|>R\}}\dd x\\ \notag
&\leq CR^{2-p}\big(1+\sup_{t\in[0,T]}\Vert u(t)\Vert_{L^p(\mathbb T^N)}^p\big).\nonumber
\end{align*}
Using \eqref{eq:usual moment bound}, we have
\begin{equation}\label{eq:velocity-tail-zero}
\lim_{R\to\infty}\sup_{0<\tau<T/3}\mathcal T_{\tau,R}=0,\qquad \mathbb P\text{-a.s.}
\end{equation}

Now, we focus on the estimate of $\mathcal J_{\tau,\gamma,R}$.
For fixed $y\in\mathbb T^N$, taking a smooth approximation of
\begin{equation*}
\varphi_{\tau,\gamma,R,y}(x,t,\xi):=\Upsilon_{\tau}(t)\theta_R(\xi)(\xi-u^0(y))\rho_{\gamma}(x-y)
\end{equation*}
as a test function in the kinetic formulation \eqref{eq:kinetic eqution} for $u$, and then integrating with respect to $y$, we reach
\begin{equation}\label{eq:initial-decomposition-lower}
\mathcal J_{\tau,\gamma,R}=I_0^{\gamma,R}+I_A^{\tau,\gamma,R}+I_\Phi^{\tau,\gamma,R}+I_q^{\tau,\gamma,R}+I_\nu^{\tau,\gamma,R}+I_M^{\tau,\gamma,R},
\end{equation}
where
\begin{align*}
I_0^{\gamma,R}&:=\Upsilon_{\tau}(0)\int_{(\mathbb T^N)^2\times\mathbb R}\chi_{u^0}(x,\xi)\theta_R(\xi)(\xi-u^0(y))\rho_{\gamma}(x-y)\dd\xi\dd x\dd y,\\
I_A^{\tau,\gamma,R}&:=\int_0^{3\tau}\int_{(\mathbb T^N)^2\times\mathbb R}\Upsilon_{\tau}(t)\chi_u(x,t,\xi)a(\xi)\cdot\nabla_x\rho_{\gamma}(x-y)\theta_R(\xi)(\xi-u^0(y))\dd\xi\dd x\dd y\dd t,\\
I_\Phi^{\tau,\gamma,R}&:=\frac12\int_0^{3\tau}\int_{(\mathbb T^N)^2}\Upsilon_{\tau}(t)G^2(x,u(x,t))\rho_{\gamma}(x-y)\partial_\xi\big[\theta_R(\xi)(\xi-u^0(y))\big]\big|_{\xi=u(x,t)}\dd x\dd y\dd t,\\
I_q^{\tau,\gamma,R}&:=-\int_{\mathbb T^N\times[0,3\tau]\times\mathbb R}\int_{\mathbb T^N}\Upsilon_{\tau}(t)\rho_{\gamma}(x-y)\partial_\xi\big[\theta_R(\xi)(\xi-u^0(y))\big]\dd y\dd q(x,t,\xi),\\
I_\nu^{\tau,\gamma,R}&:=\int_{\mathbb T^N\times[0,3\tau]}\int_{\mathbb T^N}\Upsilon_{\tau}(t)\theta_R(\psi(x,t))\big(\psi(x,t)-u^0(y)\big)\rho_{\gamma}(x-y)\dd y\dd\nu(x,t),
\end{align*}
and
\begin{align*}
I_M^{\tau,\gamma,R}&:=\sum_{k\geq1}\int_0^{3\tau}\int_{(\mathbb T^N)^2}\Upsilon_{\tau}(t)\theta_R(u(x,t))\big(u(x,t)-u^0(y)\big)\\
&\hspace{9em}\times\rho_{\gamma}(x-y)g_k(x,u(x,t))\dd x\dd y\dd\beta_k(t).
\end{align*}
We will treat each term on the right-hand side of \eqref{eq:initial-decomposition-lower}.
Fix $\gamma\in(0,1)$ and $R>1$. Since $\theta_R$ is compactly supported, we have almost surely
\begin{equation}\label{eq:IA-lower-zero}
|I_A^{\tau,\gamma,R}|\leq C_{\gamma,R}\int_0^{3\tau}\int_{\mathbb T^N}1\,\dd x\dd t\rightarrow0,\text{ as }\tau\downarrow0.
\end{equation}
Similarly, the growth assumption on $G^2$ in \textbf{Hypothesis (H1)} gives
\begin{equation*}
\lim_{\tau\downarrow0}I_\Phi^{\tau,\gamma,R}=0,\qquad \mathbb P\text{-a.s.}
\end{equation*}
For $I_M^{\tau,\gamma,R}$, fix $\gamma$ and $R$ and define a continuous process
\begin{align*}
M_{\gamma,R}(t)&:=\sum_{k\geq1}\int_0^t\int_{(\mathbb T^N)^2}\theta_R(u(x,s))\big(u(x,s)-u^0(y)\big)\rho_{\gamma}(x-y)g_k(x,u(x,s))\dd x\dd y\dd\beta_k(s),
\end{align*}
which satisfies $M_{\gamma,R}(0)=0$. Then, integration by parts in time gives
\[
I_M^{\tau,\gamma,R}=\int_0^{3\tau}\Upsilon_{\tau}(t)\dd M_{\gamma,R}(t)=-\int_0^{3\tau}\Upsilon_{\tau}'(t)M_{\gamma,R}(t)\dd t.
\]
Therefore, we have almost surely
\begin{equation*}
|I_M^{\tau,\gamma,R}|\leq C\sup_{0\leq t\leq3\tau}|M_{\gamma,R}(t)|\to0,\qquad\text{as }\tau\downarrow0.
\end{equation*}

For $I_\nu^{\tau,\gamma,R}$, based on the fact that $u^0(\cdot)\geq\psi(\cdot,0)$ a.e. on $\mathbb{T}^N$, we have
\begin{align*}
\psi(x,t)-u^0(y)\leq\psi(x,t)-\psi(y,0)\leq|\psi(x,t)-\psi(y,0)|.
\end{align*}
Then, on the support of $\rho_{\gamma}(x-y)$ and $\Upsilon_{\tau}(t)$, one has
\[
\psi(x,t)-u^0(y)\leq\omega_\psi(C\gamma+3\tau), \quad\text{for a.e. }y\in\mathbb{T}^N
\]
where $\omega_\psi$ is a modulus of continuity of $\psi$ on $\mathbb T^N\times[0,T]$.
Combining the boundedness of $\Upsilon_{\tau}$ and $\theta_R$, we obtain almost surely
\begin{align}\label{eq:lower-obstacle-initial-limsup}
I_\nu^{\tau,\gamma,R}\leq C\omega_\psi(C\gamma+3\tau)\nu\big(\mathbb T^N\times[0,3\tau]\big)\leq C\omega_\psi(C\gamma+3\tau)\nu\big(\mathbb T^N\times[0,T)\big).
\end{align}

For $I_q^{\tau,\gamma,R}$, we write
\begin{equation*}
I_q^{\tau,\gamma,R}=I_{q,1}^{\tau,\gamma,R}+I_{q,2}^{\tau,\gamma,R},
\end{equation*}
where
\begin{align*}
I_{q,1}^{\tau,\gamma,R}&:=-\int_{\mathbb T^N\times[0,3\tau]\times\mathbb R}\int_{\mathbb T^N}\Upsilon_{\tau}(t)\theta_R'(\xi)(\xi-u^0(y))\rho_{\gamma}(x-y)\dd y\dd q(x,t,\xi),\\
I_{q,2}^{\tau,\gamma,R}&:=-\int_{\mathbb T^N\times[0,3\tau]\times\mathbb R}\int_{\mathbb T^N}\Upsilon_{\tau}(t)\theta_R(\xi)\rho_{\gamma}(x-y)\dd y\dd q(x,t,\xi).
\end{align*}
Since $q$, $\Upsilon_{\tau}$, $\theta_R$, and $\rho_{\gamma}$ are nonnegative, we have
\begin{equation}\label{eq:Iq2-lower-negative}
I_{q,2}^{\tau,\gamma,R}\leq0.
\end{equation}
On the other hand, set
\[
S_R:=\{\xi\in\mathbb R:R\leq|\xi|\leq R+1\}.
\]
Since $\theta_R'$ is supported in $S_R$ and $u^0\in L^\infty(\mathbb T^N)$, we obtain
\begin{align*}
|I_{q,1}^{\tau,\gamma,R}|&\leq C\big(R+1+\Vert u^0\Vert_{L^\infty(\mathbb T^N)}\big)q\big(\mathbb T^N\times[0,T]\times S_R\big)\\
&\leq R^{1-p}\int_{\mathbb T^N\times[0,T]\times S_R}|\xi|^p\dd q(x,t,\xi)+C(1+\Vert u^0\Vert_{L^\infty(\mathbb T^N)})q\big(\mathbb T^N\times[0,T]\times S_R\big)\\
&\rightarrow0, \text{  as }R\to\infty,
\end{align*}
This, together with \eqref{eq:Iq2-lower-negative}, implies that
\begin{equation}\label{eq:Iq-lower-limsup}
\limsup_{\gamma\downarrow0}\limsup_{R\to\infty}\limsup_{\tau\downarrow0}I_q^{\tau,\gamma,R}\leq0.
\end{equation}

For $I_0^{\gamma,R}$, using the dominated convergence theorem, we have
\begin{align*}
\lim_{R\to\infty}2 I_0^{\gamma,R}
&=4\lim_{R\to\infty}\int_{(\mathbb T^N)^2\times\mathbb R}\chi_{u^0}(x,\xi)\theta_R(\xi)(\xi-u^0(y))\rho_{\gamma}(x-y)\dd\xi\dd x\dd y\\
&=2\int_{(\mathbb T^N)^2}\Big(|u^0(x)-u^0(y)|^2-|u^0(y)|^2\Big)\rho_{\gamma}(x-y)\dd x\dd y,
\end{align*}
which indicates
\begin{align}\label{eq:I0-lower-cancellation}
&\lim_{R\to\infty}\Big[2I_0^{\gamma,R}+2\Vert u^0\Vert_{L^2(\mathbb T^N)}^2\Big]=2\int_{(\mathbb T^N)^2}|u^0(x)-u^0(y)|^2\rho_{\gamma}(x-y)\dd x\dd y.
\end{align}

Combining \eqref{eq:disperse-of-u} and \eqref{eq:initial-decomposition-lower}, by using \eqref{eq:velocity-tail-zero}, \eqref{eq:IA-lower-zero}-\eqref{eq:lower-obstacle-initial-limsup}, \eqref{eq:Iq-lower-limsup}, and \eqref{eq:I0-lower-cancellation}, we obtain almost surely
\begin{align}\label{eq:cross-initial-lower-final}
&\limsup_{\tau\downarrow0}\frac1{\tau}\int_0^\tau\int_{(\mathbb T^N)^2}|u(x,t)-u^0(y)|^2\rho_{\gamma}(x-y)\dd x\dd y\dd t\\
&\leq C\int_{(\mathbb T^N)^2}|u^0(x)-u^0(y)|^2\rho_{\gamma}(x-y)\dd x\dd y+C\omega_\psi(C\gamma)\nu\big(\mathbb T^N\times[0,T)\big)\nonumber
\end{align}
where we have first let $\tau\downarrow0$ for fixed $\gamma$ and $R$, and then $R\to\infty$.

Substituting \eqref{eq:cross-initial-lower-final} into \eqref{eq:initial-spatial-splitting}, we obtain almost surely
\begin{align*}
&\limsup_{\tau\downarrow0}\frac1{\tau}\int_0^\tau\Vert u(t)-u^0\Vert_{L^2(\mathbb T^N)}^2\dd t\\
&\leq C\int_{(\mathbb T^N)^2}|u^0(x)-u^0(y)|^2\rho_{\gamma}(x-y)\dd x\dd y+C\omega_\psi(C\gamma)\nu\big(\mathbb T^N\times[0,T)\big).
\end{align*}
Letting $\gamma\downarrow0$, using \eqref{eq:translation-error-zero}, the continuity of $\psi$, and the finiteness of $\nu$, we have almost surely
\[
\lim_{\tau\downarrow0}\frac1{\tau}\int_0^\tau\Vert u(t)-u^0\Vert_{L^2(\mathbb T^N)}^2\dd t=0,
\]
which completes the proof.
\end{proof}
Combining Theorems \ref{thm:uniqueness} and \ref{thm:existence-obstacle-solution}, we conclude the well-posedness of the lower obstacle problem for \eqref{eq:intro-reflected-model}, in the sense of Definition \ref{def:dfn-1}. It reads as follows.
\begin{cor}\label{cor:well-posedness}
Let \textbf{Hypotheses (H1) and (H2)$'$} hold.
Then the lower obstacle problem \eqref{eq:obstacle-problem-existence} admits a unique kinetic solution $(u,\nu)$ in the sense of Definition \ref{def:dfn-1}.
\end{cor}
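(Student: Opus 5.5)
This corollary follows by combining Theorem \ref{thm:existence-obstacle-solution} for existence with Theorem \ref{thm:uniqueness} for uniqueness. The only point needing care is that the two theorems are stated under different hypotheses on the obstacle.

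First, I note that \textbf{Hypothesis (H2)$'$} implies \textbf{Hypothesis (H2)}. Indeed, $\psi\in C^{0,2}([0,T]\times\mathbb T^N)$ is in particular continuous on $[0,T]\times\mathbb T^N$. The conditions on the initial datum, namely $u^0\in L^\infty(\mathbb T^N)$ and $u^0\geq\psi(\cdot,0)$ a.e., are identical in the two hypotheses. Hence, under \textbf{(H1)} and \textbf{(H2)$'$}, Theorem \ref{thm:existence-obstacle-solution} applies and gives a kinetic solution $(u,\nu)$ in the sense of Definition \ref{def:dfn-1}. Its weak initial trace is supplied by Lemma \ref{lem:weak-initial-condition}.

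Second, let $(u_1,\nu_1)$ and $(u_2,\nu_2)$ be any two kinetic solutions with the same initial datum $u^0$. I apply Theorem \ref{thm:uniqueness} with $u^0_1=u^0_2=u^0$. This is legitimate since the hypotheses are exactly \textbf{(H1)}, \textbf{(H2)$'$} and the compatibility $u^0\geq\psi(\cdot,0)$. The contraction estimate \eqref{eq:l1-contraction} then gives $\E\|u_1(t)-u_2(t)\|_{L^1(\mathbb T^N)}=0$ for a.e.\ $t$. Integrating in time yields $u_1=u_2$ in $L^1(\Omega\times Q_T)$. The last part of that theorem, which tests the difference of the kinetic equations with $\theta_R(\xi)\phi(x,t)$ and lets $R\to\infty$ using \eqref{eq:vanishing kinetic measure}, gives $\nu_1=\nu_2$ almost surely as Radon measures on $\mathbb T^N\times[0,T)$. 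So the pair is unique.

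There is no substantial obstacle here. The one point to check is that uniqueness is understood for the pair $(u,\nu)$, and Theorem \ref{thm:uniqueness} already asserts this. One might also ask about the kinetic measure $q$. Once $u$ and $\nu$ coincide, the kinetic formulation \eqref{eq:kinetic eqution} determines $\partial_\xi q$ as a distribution, and hence $q$ itself, using its vanishing at large $|\xi|$. However, the corollary only claims uniqueness of $(u,\nu)$, so this remark is not needed.
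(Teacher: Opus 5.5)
Your proposal is correct and matches the paper, which obtains the corollary simply by combining Theorem~\ref{thm:existence-obstacle-solution} (applicable since \textbf{(H2)$'$} implies \textbf{(H2)}) with Theorem~\ref{thm:uniqueness} applied with $u^0_1=u^0_2$. Your aside on $q$ is harmless, but the formulation only tests $q$ on $\mathbb T^N\times[0,T)\times\mathbb R$, so it cannot determine $q$ on the slice $t=T$; as you note, this is not needed for the statement.
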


\begin{rem}[Regularity of the obstacle]
\label{rem:obstacle-regularity}
The different assumptions on the obstacle in the existence and uniqueness results arise from different parts of the argument.
For existence, the continuity of $\psi$ is sufficient: on the compact set $[0,T]\times\mathbb T^N$, it provides the uniform continuity needed to pass to the limit in the penalization and reflection terms.

By contrast, the stronger spatial regularity in \textbf{Hypothesis (H2)$'$} is used in the doubling-of-variables argument for uniqueness. 
More precisely, the evenness of $\rho_\gamma$ and the oddness of $\llbracket\kappa_\delta\rrbracket$ cancel the linear part of the Taylor expansion of $\psi$, while the second-order remainder gives
\[
\left|\int_{\mathbb T^N}\rho_\gamma(x-y)\llbracket\kappa_\delta\rrbracket
\bigl(\psi(x,t)-\psi(y,t)\bigr)\dd y\right|\leq C\|D_x^2\psi\|_{L^\infty}\frac{\gamma^2}{\delta}.
\]
In the present proof, this is the only step that requires second spatial derivatives of the obstacle. 
It would therefore be interesting to determine whether the uniqueness result remains valid under weaker spatial regularity by replacing this cancellation estimate with a different argument.
\end{rem}

\section{Large deviation principle for the obstacle problem}
We begin with a brief review of large deviation theory.
Let $(X^\varepsilon)_{\varepsilon>0}$ be a family of random variables defined on a given probability space $(\Omega, \mathcal{F}, \mathbb{P})$ taking values in some Polish space $\mathcal{E}$.
\begin{defn}[Rate function] A function $I: \mathcal{E}\rightarrow [0,\infty]$ is called a rate function if $I$ is lower semicontinuous.
A rate function $I$ is called a good rate function if the level set $\{x\in \mathcal{E}: I(x)\leq M\}$ is compact for each $M<\infty$.
\end{defn}
\begin{defn}[Large deviation principle]
The family $\{X^\varepsilon\}$ is said to satisfy the large deviation principle with rate function $I$ if for each Borel subset $A$ of $\mathcal{E}$
      \[
      -\inf_{x\in A^o}I(x)\leq \liminf_{\varepsilon\rightarrow 0}\varepsilon \log \mathbb{P}(X^\varepsilon\in A)\leq \limsup_{\varepsilon\rightarrow 0}\varepsilon \log \mathbb{P}(X^\varepsilon\in A)\leq -\inf_{x\in \bar{A}}I(x),
      \]
      where $A^o$ and $\bar{A}$ denote the interior and closure of $A$ in $\mathcal{E}$, respectively.
\end{defn}

Suppose $W(t)$ is a cylindrical Wiener process on a Hilbert space $U$ defined on a filtered probability space $(\Omega, \mathcal{F},(\mathcal{F}_t)_{t\in [0,T]}, \mathbb{P})$ (that is, the paths of $W$ take values in $C([0,T];\mathcal{U})$, where $\mathcal{U}$ is another Hilbert space such that the embedding $U\subset \mathcal{U}$ is Hilbert-Schmidt).

Now we define
\begin{align*}
&\mathcal{A}:=\{\phi: \phi\ {\rm{is\ a\ } } U\text{-}{\rm{valued}}\ \{\mathcal{F}_t\}\text{-}{\rm{predictable\ process\ such\ that}} \ \int^T_0 |\phi(s)|^2_U\,\dd s<\infty\ \mathbb{P}\text{-}a.s.\};\\
&S_M:=\{ h\in L^2([0,T];U): \int^T_0 |h(s)|^2_U\,\dd s\leq M\};\\
&\mathcal{A}_M:=\{\phi\in \mathcal{A}: \phi(\omega)\in S_M,\ \mathbb{P}\text{-}a.s.\}.
\end{align*}
Here and throughout the remainder of the paper, we will always refer to the weak topology on the set $S_M$.

Suppose for each $\varepsilon>0$, $\mathcal{G}^{\varepsilon}: C([0,T];\mathcal{U})\rightarrow \mathcal{E}$ is a measurable map and let $X^{\varepsilon}:=\mathcal{G}^{\varepsilon}(W)$. 
To establish a large deviation principle for the family $(X^{\varepsilon})$ as $\varepsilon\rightarrow 0$,
a new sufficient condition was proposed by  Matoussi, Sabbagh and Zhang in \cite{MSZ}, which can be used to verify assumptions in Budhiraja and Dupuis \cite{BD} (hence the large deviation principle). 
It turns out this new sufficient condition is suitable for establishing the large deviation principle for the scalar conservation laws.
\begin{description}
  \item[\textbf{Condition A} ] There exists a measurable map $\mathcal{G}^0: C([0,T];\mathcal{U})\rightarrow \mathcal{E}$ such that the following two items hold:
\end{description}
\begin{description}
  \item[(i)] For every $M<+\infty$, and for any family $\{h^{\varepsilon}; \varepsilon>0\}$ $\subset \mathcal{A}_M$ and any $\delta>0$,
      \[
      \lim_{\varepsilon\rightarrow 0}\mathbb{P}\Big(\rho(Y^\varepsilon, Z^\varepsilon)>\delta\Big)=0,
      \]
     where $Y^\varepsilon:=\mathcal{G}^{\varepsilon}\left(W(\cdot)+\frac{1}{\sqrt{\varepsilon}}\int^{\cdot}_{0}h^\varepsilon(s)\dd s\right)$, $Z^\varepsilon:=\mathcal{G}^0\left(\int^{\cdot}_{0}h^\varepsilon(s)\dd s\right)$,
     and $\rho(\cdot,\cdot)$ stands for the metric in the space $\mathcal{E}$.
  \item[(ii)] For every $M<+\infty$ and any family $\{h^\varepsilon; \varepsilon>0\}\subset S_M$ that weakly converges to some element $h$ as $\varepsilon\rightarrow 0$,
      $\mathcal{G}^0\left(\int^{\cdot}_{0}h^\varepsilon(s)\dd s\right)$ converges to $\mathcal{G}^0\left(\int^{\cdot}_{0}h(s)\dd s\right)$ in the space $\mathcal{E}$.
\end{description}

We also refer to Condition (ii) as the weak--strong continuity of the skeleton equation.

The following result is due to Budhiraja and Dupuis in \cite{BD}.
\begin{thm}\label{thm-7}
If $\{\mathcal{G}^{\varepsilon}\}$ satisfies {Condition A}, then $X^{\varepsilon}$ satisfies the large deviation principle on $\mathcal{E}$ with the
following good rate function $I$ defined by
\begin{equation}\label{equ-27-1}
I(f)=\inf_{\{h\in L^2([0,T];U): f= \mathcal{G}^0(\int^{\cdot}_{0}h(s)\dd s)\}}\Big\{\frac{1}{2}\int^T_0|h(s)|^2_{U}\dd s\Big\},\ \ \forall f\in\mathcal{E}.
\end{equation}
By convention, $I(f)=\infty$, if  $\Big\{h\in L^2([0,T];U): f= \mathcal{G}^0(\int^{\cdot}_{0}h(s)\dd s)\Big\}=\emptyset.$
\end{thm}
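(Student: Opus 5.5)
The plan is to reduce the statement to the abstract criterion of Budhiraja and Dupuis \cite{BD}. That criterion yields a Laplace principle, equivalently an LDP with a good rate function given by \eqref{equ-27-1}, provided two properties hold:
\begin{itemize}
\item[(a)] for every $M<\infty$, the set $K_M:=\{\mathcal G^0(\int_0^\cdot h(s)\dd s): h\in S_M\}$ is compact in $\mathcal E$;
\item[(b)] for every $M<\infty$ and every family $\{h^\varepsilon\}\subset\mathcal A_M$ converging in distribution, as $S_M$-valued random elements with the weak topology, to some $h$, the processes $Y^\varepsilon=\mathcal G^\varepsilon(W+\varepsilon^{-1/2}\int_0^\cdot h^\varepsilon\dd s)$ converge in distribution in $\mathcal E$ to $\mathcal G^0(\int_0^\cdot h\dd s)$.
\end{itemize}
So the task is to derive (a) and (b) from Condition A. Then I invoke the variational representation of \cite{BD98,BD} for bounded continuous functionals of $W$ to get the Laplace principle. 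Finally, the equivalence of the Laplace principle and the LDP for good rate functions (Dupuis--Ellis \cite{DE97}) concludes.

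\textbf{Step (a).} The ball $S_M$ with the weak topology of $L^2([0,T];U)$ is a compact metrizable space. By Condition A(ii), the map $h\mapsto\mathcal G^0(\int_0^\cdot h\dd s)$ is sequentially continuous, hence continuous, from $S_M$ into $\mathcal E$. Therefore $K_M$ is compact, being a continuous image of a compact set. The same continuity shows that the infimum in \eqref{equ-27-1} is attained whenever $I(f)<\infty$: take a minimizing sequence, which lies in some $S_{M}$, extract a weakly convergent subsequence, and pass to the limit using continuity of $\mathcal G^0$ and weak lower semicontinuity of the $L^2$-norm. Consequently $\{I\leq M\}=\{\mathcal G^0(\int_0^\cdot h\dd s): h\in S_{2M}\}$, which is compact. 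Since all these level sets are closed, $I$ is lower semicontinuous, so $I$ is a good rate function.

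\textbf{Step (b).} Let $h^\varepsilon\to h$ in distribution in $S_M$. Because $h\mapsto\mathcal G^0(\int_0^\cdot h\dd s)$ is continuous on $S_M$, the continuous mapping theorem gives $Z^\varepsilon=\mathcal G^0(\int_0^\cdot h^\varepsilon\dd s)\to\mathcal G^0(\int_0^\cdot h\dd s)$ in distribution in $\mathcal E$. By Condition A(i), $\rho(Y^\varepsilon,Z^\varepsilon)\to0$ in probability. A Slutsky-type argument in the Polish space $\mathcal E$ then transfers the convergence to $Y^\varepsilon$: for bounded Lipschitz $F$,
\[
|\E F(Y^\varepsilon)-\E F(Z^\varepsilon)|\leq \mathrm{Lip}(F)\,\delta+2\|F\|_\infty\,\mathbb P(\rho(Y^\varepsilon,Z^\varepsilon)>\delta),
\]
and bounded Lipschitz functions determine weak convergence. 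This proves (b).

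\textbf{Main obstacle.} The delicate point is not (a) or (b) themselves but the measurability bookkeeping needed to apply \cite{BD} verbatim. Specifically, I need that $\mathcal G^0(\int_0^\cdot h\dd s)$ depends measurably on $h$, so that $Z^\varepsilon$ is a random variable for $h^\varepsilon\in\mathcal A_M$; continuity on each $S_M$ provides this. I also need the upper and lower Laplace bounds to be obtained from the variational formula
\[
-\log\E e^{-F(X^\varepsilon)/\varepsilon}=\inf_{v\in\mathcal A}\E\Big[\tfrac12\int_0^T|v|_U^2\dd s+F\big(\mathcal G^\varepsilon(W+\varepsilon^{-1/2}\textstyle\int_0^\cdot v\dd s)\big)\Big]\varepsilon^{0},
\]
after rescaling $v=h/\sqrt\varepsilon$. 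Here the restriction to controls in $\mathcal A_M$ is justified by the boundedness of $F$, and near-optimal controls are extracted with tightness in $S_M$. Once (a) and (b) are established, these steps are exactly those of \cite[Theorem 4.4]{BD}, and I would cite them rather than reproduce them.
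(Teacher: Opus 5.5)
Your proposal is correct and follows the same route as the paper. The paper gives no proof: it attributes the result to \cite{BD}, with \cite{MSZ} supplying the fact that Condition A implies the Budhiraja--Dupuis hypotheses, and your Steps (a)--(b) (compactness of $\{\mathcal G^0(\int_0^\cdot h\,\dd s):h\in S_M\}$ from A(ii), and convergence in distribution of $Y^\varepsilon$ via the continuous mapping theorem plus A(i) and a Slutsky argument) are exactly that reduction.

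There is one cosmetic slip in your displayed variational identity: its left-hand side should be $-\varepsilon\log\E e^{-F(X^\varepsilon)/\varepsilon}$, and the stray factor $\varepsilon^{0}$ should be dropped. This does not affect the argument, since you defer that part to \cite[Theorem 4.4]{BD}.
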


\subsection{Statement of large-deviations result}
We are concerned with the obstacle problem for stochastic conservation laws driven by small noise \eqref{eq:intro-small-noise} for $\varepsilon>0$ and $u^0\in L^\infty(\mathbb{T}^N)$. 
Under \textbf{Hypotheses (H1) and (H2)$'$}, by Corollary \ref{cor:well-posedness}, there exists a unique kinetic solution $u^\varepsilon$ and a Radon measure $\nu^{\varepsilon}$ to \eqref{eq:intro-small-noise} with initial data $u^0$ such that
\begin{equation*}
u^{\varepsilon}(x,t)\geq\psi(x,t),\quad \text{for }\mathbb{P}\otimes\dd x\otimes\dd t-a.s. (\omega,x,t),
\end{equation*}
\begin{align*}
\E\Big(\underset{0\leq t\leq T}{{\rm{ess\sup}}}\ \Vert u^\varepsilon(t)\Vert ^p_{L^p(\mathbb{T}^N)}\Big)\leq C_p(T,D_0),
\end{align*}
\begin{equation*}
\lim_{\tau\downarrow0}\frac{1}{\tau}\int_0^\tau\Vert u^{\varepsilon}(t)-u^0\Vert _{L^1(\mathbb{T}^N)}\dd t=0, \quad \text{a.s.},
\end{equation*}
and there exists a kinetic measure $q^{\varepsilon}$ such that $f:=\one_{u^{\varepsilon}>\xi}$ fulfills, for all $\varphi\in C^1_c(\mathbb{T}^N\times [0,T)\times \mathbb{R})$,
\begin{align*}\notag
&\int^T_0\langle f(t), \partial_t \varphi(t)\rangle \dd t+\langle f^0, \varphi(0)\rangle +\int^T_0\langle f(t), a(\xi)\cdot \nabla \varphi (t)\rangle \dd t\\
&= -\sqrt{\varepsilon}\sum_{k\geq 1}\int^T_0\int_{\mathbb{T}^N} \int_{\mathbb{R}}g_k(x,\xi)\varphi (x,t,\xi)\dd\mu^{\varepsilon}_{x,t}(\xi)\dd x\dd\beta_k(t) \\ \notag
&\quad -\frac{\varepsilon}{2}\sum_{k\geq1}\int^T_0\int_{\mathbb{T}^N}\int_{\mathbb{R}}\partial_{\xi}\varphi (x,t,\xi)G^2(x,\xi)\dd\mu^{\varepsilon}_{x,t}(\xi)\dd x\dd t\\
&\quad + q^\varepsilon(\partial_{\xi} \varphi)-\int_0^T\int_{\mathbb{T}^N}\varphi(x,t,\psi(x,t))\dd\nu^{\varepsilon}(x,t), \quad\ \text{a.s.} ,\nonumber
\end{align*}
where $\mu^{\varepsilon}_{x,t}(\dd\xi)=\delta_{u^{\varepsilon}(x,t)}(\dd\xi)$.
Therefore, there exists a Borel-measurable function
\[
\mathcal{G}^{\varepsilon}: C([0,T];\mathcal{U})\rightarrow L^1(0,T;L^1(\mathbb{T}^N))
\]
such that $u^{\varepsilon}(\cdot)=\mathcal{G}^{\varepsilon}(W(\cdot))$.

For $h\in L^2(0,T;U)$, the skeleton equation corresponding to \eqref{eq:intro-small-noise} is given by
\begin{equation}\label{eq:skeleton-obstacle-problem}
\left\{
\begin{aligned}
\partial_t u^h+\Div A(u^h)&=\Phi(u^h)h(t)+\nu^h,&&\text{in }\mathbb T^N\times(0,T),\\
u^h&\geq\psi,&&\text{a.e. in }\mathbb T^N\times(0,T),\\
u^h(0)&=u^0.
\end{aligned}
\right.
\end{equation}
Motivated by Definition \ref{def:dfn-1}, we propose the definition of kinetic solution to \eqref{eq:skeleton-obstacle-problem} as follows.
\begin{defn}\label{def:dfn-2}
A pair $(u^h,\nu^h)$ is called a kinetic solution of \eqref{eq:skeleton-obstacle-problem} with initial data $u^0$ and obstacle $\psi$ if the following conditions hold.
\begin{enumerate}[label=\textbf{\arabic*}.]
  \item For every $p\geq1$, $u^h\in L^\infty(0,T;L^p(\mathbb T^N))$ and
\begin{align*}
\underset{0\leq t\leq T}{{\rm{ess\,sup}}}\ \Vert u^h(t)\Vert ^p_{L^p(\mathbb{T}^N)} \leq C(p,\Vert h\Vert_{L^2(0,T;U)},T,D_0),
\end{align*}
\item  We have 
\begin{equation*}
u^h(x,t)\geq\psi(x,t),\qquad\text{for } \dd x\otimes\dd t\text{-a.e. } (x,t),
\end{equation*}
and $\nu^h$ is a nonnegative finite Radon measure on $\mathbb{T}^N\times[0,T)$.
\item  The function $u^h$ satisfies the weak initial trace condition
\begin{equation*}
\lim_{\tau\downarrow0}\frac{1}{\tau}\int_0^\tau\Vert u^h(t)-u^0\Vert _{L^1(\mathbb{T}^N)}\dd t=0.
\end{equation*}
\item There exists a (deterministic) kinetic measure $q^h$ such that, with $f:= \one_{\{u^h>\xi\}}$ and $\mu^h:=\delta_{u^h}$, for all $\varphi\in C^1_c(\mathbb{T}^N\times [0,T)\times \mathbb{R})$,
\begin{align}\notag
&\int^T_0\langle f(t), \partial_t \varphi(t)\rangle \dd t+\langle f^0, \varphi(0)\rangle +\int^T_0\langle f(t), a(\xi)\cdot \nabla \varphi (t)\rangle \dd t\\
&= -\sum_{k\geq 1}\int^T_0\int_{\mathbb{T}^N} \int_{\mathbb{R}}g_k(x,\xi)\varphi (x,t,\xi)h_k(t)\dd \mu^h_{x,t}(\xi)\dd x\dd t \label{eq:kinetic-skeleton-obstacle}\\
&\quad + q^h(\partial_{\xi} \varphi)-\int_0^T\int_{\mathbb{T}^N}\varphi(x,t,\psi(x,t))\dd\nu^h(x,t),\nonumber
\end{align}
 where  $\mu^h_{x,t}(\dd\xi)=\delta_{u^h(x,t)}(\dd\xi)$.
\end{enumerate}
\end{defn}

Well-posedness of kinetic solutions to \eqref{eq:skeleton-obstacle-problem} is proved by the Theorem \ref{thm:existence-uniqueness-skeleton-obstacle} below. 
Consequently, the solution $u^h$ induces a measurable mapping $\mathcal{G}^0: C([0,T];\mathcal{U})\rightarrow L^1(0,T;L^1(\mathbb{T}^N))$ so that  $\mathcal{G}^0\Big(\int^{\cdot}_0 h(s)\dd s\Big):=u^h(\cdot)$.

\smallskip

We are now ready to state our large-deviations result.

\begin{thm}\label{thm-3}
 Under \textbf{Hypotheses (H1) and (H2)$'$}, the family $\{u^{\varepsilon}\}_{\varepsilon>0}$  satisfies the large deviation principle on the space $L^1(0,T;L^1(\mathbb{T}^N))$, with the good rate function $I$ given by (\ref{equ-27-1}).
\end{thm}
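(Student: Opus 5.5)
By Theorem \ref{thm-7}, it suffices to verify Condition A for $\mathcal G^\varepsilon$ and the skeleton map $\mathcal G^0$ on $\mathcal E=L^1(0,T;L^1(\mathbb T^N))$. The plan has three steps.

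\textbf{Step 1: well-posedness of the skeleton equation.} We first show that \eqref{eq:skeleton-obstacle-problem} is well posed in the sense of Definition \ref{def:dfn-2} for every $h\in L^2(0,T;U)$, so that $\mathcal G^0$ is well defined and measurable. This is a deterministic repetition of Section 4.
\begin{itemize}[label={}]
\item \emph{Existence.} Use App Equ (i) and App Equ (ii). The stochastic integral is replaced by the control term $\Phi(u)h$, and the It\^o correction is absent. The $L^p$ estimates of Lemma \ref{lem:priori estimates} then close with a Gr\"onwall factor $\exp(C\|h\|^2_{L^2(0,T;U)})$, using Young's inequality on $|u|^{p-1}G(u)|h|_U$.
\item \emph{Uniqueness.} Follow Theorem \ref{thm:uniqueness}. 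The only new term is the control term. Under \eqref{eq:assumption for g Lip} and Cauchy--Schwarz, it contributes
\[
\int_0^t |h(s)|_U\,\|(u_1-u_2)(s)\|_{L^1}\,\dd s
\]
plus vanishing errors, which Gr\"onwall absorbs.
\end{itemize}

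\textbf{Step 2: Condition A(ii), weak--strong continuity.} Take $h^\varepsilon\rightharpoonup h$ in $S_M$ and proceed in three substeps.
\begin{itemize}[label={}]
\item \emph{(a) Compactness of the viscous skeleton.} Introduce the viscous skeleton App Equ $(\ast)$, obtained from App Equ (i) by letting $n\to\infty$ at fixed $\alpha$. For fixed $\alpha$, derive the following bounds uniformly over $S_M$:
\begin{itemize}[label={}]
\item an $H^1$ bound $\alpha\int_0^T\|\nabla u^{h}_\alpha\|_{L^2}^2\,\dd t\leq C_M$;
\item a mass bound on $\nu^h_\alpha$, obtained by testing with $1$;
\item a time-translation estimate. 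For this, view $\partial_t u^h_\alpha$ in $H^{-m}$ with $m>N/2+1$; the measure $\nu^h_\alpha$ embeds there by its total mass, and $\Phi(u)h$ and the flux are handled through the $L^p$ bounds.
\end{itemize}
By Aubin--Lions--Simon, $\{u^{h^\varepsilon}_\alpha\}$ is precompact in $L^2(0,T;L^2)$. Pass to the limit in the kinetic/weak formulation, which is linear in $\nu$ and weak-star in $\nu$. The term $\Phi(u^{h^\varepsilon}_\alpha)h^\varepsilon$ converges since strong times weak converges. Uniqueness of App Equ $(\ast)$ then gives $u^{h^\varepsilon}_\alpha\to u^h_\alpha$.
\item \emph{(b) Uniform removal of viscosity.} Show
\[
\sup_{h\in S_M}\|u^h_\alpha-u^h\|_{L^1(Q_T)}\to 0 \quad\text{as }\alpha\to0.
\]
Use doubling of variables between the kinetic formulations of $u^h_\alpha$ and $u^h$, treating the reflection terms exactly as in \eqref{eq:I-nu-final-estimate}. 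This is where (H2)$'$ is needed, together with the $\nu$-mass bounds uniform in $S_M$. The viscous error is of order $\alpha\gamma^{-2}$, plus a translation modulus of $u^0$. Choosing $\gamma,\delta$ as powers of $\alpha$ makes the bound uniform in $h\in S_M$.
\item \emph{(c) Conclusion.} Combine (a) and (b) with a $3\epsilon$-argument.
\end{itemize}

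\textbf{Step 3: Condition A(i), convergence in probability.} By Girsanov, $Y^\varepsilon$ solves the controlled stochastic obstacle problem with noise $\sqrt\varepsilon\Phi\,\dd W+\Phi h^\varepsilon\,\dd t$, and $Z^\varepsilon=u^{h^\varepsilon}$. Compare them by doubling of variables as in Theorem \ref{thm:uniqueness}, one solution stochastic and the other deterministic. The contributions are:
\begin{itemize}[label={}]
\item the $\sqrt\varepsilon$ martingale, which is of order $\sqrt\varepsilon\,\gamma^{-?}$ after BDG, with localization by stopping times;
\item the It\^o correction, of order $\varepsilon\delta^{-1}$;
\item the control term, handled by Gr\"onwall with the random weight $|h^\varepsilon|_U$. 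Since $\int_0^T|h^\varepsilon|_U^2\le M$ almost surely, this weight is harmless.
\end{itemize}
Uniform moment bounds for $Y^\varepsilon$ and $\nu$ over $\mathcal A_M$ follow as in Lemma \ref{lem:priori estimates}. Together these give $\E\|Y^\varepsilon-Z^\varepsilon\|_{L^1}\to0$, hence convergence in probability.

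\textbf{Main obstacle.} The hardest step is 2(b)/(a): doubling variables between a viscous and an inviscid reflected solution, with singular reflection measures depending on the control, while keeping all constants uniform in $h\in S_M$. If a direct doubling-of-variables estimate fails, I would first try to obtain the bound via the penalized approximations at fixed $\alpha$ and use monotonicity in $n$.
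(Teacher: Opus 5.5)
Your proposal follows the paper's proof step for step. Condition A(ii) is obtained through the viscous skeleton equation: Aubin--Lions--Simon compactness from the $H^1$ bound, with the reflection measure controlled in $H^{-r}$ by its mass and a BV-type time-translation estimate; then uniform-in-$S_M$ vanishing viscosity by doubling variables, using the (H2)$'$ Taylor cancellation and an $\alpha\gamma^{-2}$ viscous error; then a $3\epsilon$ argument. Condition A(i) is obtained by Girsanov plus doubling variables between the controlled stochastic solution and the skeleton, with a pathwise Gr\"onwall in $|h^\varepsilon|_U$. Two minor points: the martingale term is in fact $O(\sqrt{\varepsilon})$ uniformly in $\gamma,\delta$ (because $|\Theta_\delta|\leq 1$), so no localization or $\gamma$-dependence arises; and your penalization fallback is unnecessary, which is fortunate because penalization errors are not uniform over the controls.
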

\begin{proof}
  Based on  Theorem \ref{thm-7}, the large deviation principle follows from Propositions \ref{prp-3} and \ref{prp-1} below.
\end{proof}

\section{Weak--strong continuity of the skeleton equation }
The main purpose of this section is to prove the weak--strong continuity of the skeleton
equation  (\ref{eq:skeleton-obstacle-problem}), which is used to verify (ii) in condition A for large deviations. In the sequel, we fix $h\in S_M$ and write
\[
h(t)=\sum_{k\geq1}h_k(t)e_k,\qquad h_k(t):=\langle h(t),e_k\rangle_U .
\]

\subsection{Well-posedness of the skeleton equation}\label{sec-skeleton}

Similarly to the proof of the
existence of kinetic solutions to \eqref{eq:obstacle-problem-existence}, to prove  the well-posedness of the skeleton equation \eqref{eq:skeleton-obstacle-problem}, we need to introduce two auxiliary approximate equations.

Recall that $b_n(r):=nr^-$. For $\alpha\in(0,1)$ and $n\in\mathbb N$, consider the viscous penalized skeleton equation
\begin{equation}\label{P-3}
\text{\textbf{App Equ (i)}}\quad
\left\{
\begin{aligned}
\partial_t u^h_{\alpha,n}+\Div A(u^h_{\alpha,n})-\alpha\Delta u^h_{\alpha,n}
&=\Phi_\alpha(u^h_{\alpha,n})h(t)+b_n(u^h_{\alpha,n}-\psi),\\
u^h_{\alpha,n}(0)
&=u^0_{\alpha}.
\end{aligned}
\right.
\end{equation}
where $u^0_{\alpha}$, $\Phi_\alpha$ are the same as those in \eqref{eq:penalized-viscous}.
The weak solution to (\ref{P-3}) can be defined similarly to Definition \ref{def:penalized-viscous-solution}. 
Since $u^0\in L^\infty(\mathbb{T}^N)$, referring to \cite[Theorem 2.1]{GR00} again, we deduce that \eqref{P-3} has a unique $L^{p}(\mathbb{T}^N)$ solution.

Moreover, to obtain estimates uniform in $\alpha$ and $n$, it remains to control the term involving $h$.
Recall that  $M_\psi:=1+\Vert\psi\Vert_{L^\infty(Q_T)}$.
Under \textbf{Hypothesis (H1)}, by H\"{o}lder inequality and Young inequality, for every $p\geq 2$, this term can be estimated as follows:
\begin{align}\notag
&p\int_0^t\int_{\mathbb{T}^N}|u_{\alpha,n}^h-M_\psi|^{p-2}(u_{\alpha,n}^h-M_\psi)\sum_{k\geq1}g_{k,\alpha}(x,u^h_{\alpha,n})h_k(s)\dd x\dd s\\ \notag
&\leq p\int_0^t\int_{\mathbb{T}^N}|u_{\alpha,n}^h-M_\psi|^{p-1}\Big(\sum_{k\geq1}g^2_{k,\alpha}(x,u^h_{\alpha,n})\Big)^{\frac{1}{2}}\Big(\sum_{k\geq1}h^2_k(s)\Big)^{\frac{1}{2}}\dd x\dd s\\ \notag
&\leq p\int_0^t|h(s)|_U\int_{\mathbb{T}^N}|u_{\alpha,n}^h-M_\psi|^{p-1}(1+|u_{\alpha,n}^h|)\dd x\dd s\\ \notag
&\leq C(p)\int_0^t|h(s)|_U\|u_{\alpha,n}^h-M_\psi\|^{p}_{L^p(\mathbb{T}^N)}\dd s
+p(1+M_\psi)\int_0^t|h(s)|_U\|u_{\alpha,n}^h-M_\psi\|^{p-1}_{L^p(\mathbb{T}^N)}\dd s\\
\label{control term}
&\leq   C(p,M_\psi)\int_0^t|h(s)|_U\|u_{\alpha,n}^h-M_\psi\|^{p}_{L^p(\mathbb{T}^N)}\dd s+C(p,M_\psi)\int_0^t|h(s)|_U \dd s.
\end{align}
By Gr\"onwall's inequality,
we deduce that the results of Lemma \ref{lem:priori estimates} hold for $u^h_{\alpha,n}$ with the constant $C$ depending also on $\Vert h\Vert_{L^2(0,T;U)}$. That is, for every $p\geq2$, there exists $C_p>0$ independent of $\alpha\in(0,1)$ and $n\in\mathbb N$ such that
\begin{align}\label{eq:priori-p-1}
&\sup_{t\in[0,T]}\int_{\mathbb T^N}|u^h_{\alpha,n}(t)-M_\psi|^p\dd x+\alpha\int_0^T\int_{\mathbb T^N}|u^h_{\alpha,n}-M_\psi|^{p-2}|\nabla u^h_{\alpha,n}|^2\dd x\dd s\\
&\quad+\int_0^T\int_{\mathbb T^N}|u^h_{\alpha,n}-M_\psi|^{p-2}n|(u^h_{\alpha,n}-\psi)^-|^2\dd x\dd s+\int_0^T\int_{\mathbb T^N}|u^h_{\alpha,n}-M_\psi|^{p-2}n(u^h_{\alpha,n}-\psi)^-\dd x\dd s\nonumber\\
&\quad\leq C(p,D_0,M,T) \Big(1+\int_{\mathbb T^N}|u^0_{\alpha}-M_\psi|^p\dd x\Big),\nonumber
\end{align}
The weak solution $u^h_{\alpha,n}$ has a kinetic formulation.
Precisely, define
\begin{align*}
\dd\nu_{\alpha,n}^h(x,t)&:=n(u_{\alpha,n}^h-\psi)^-\dd x\dd t,\\
\dd\lambda_{\alpha,n}^h(x,t,\xi)&:=\int_0^1 n[(u_{\alpha,n}^h(x,t)-\psi(x,t))^-]^2\delta_{\psi+\theta(u_{\alpha,n}^h-\psi)}(\dd\xi)\dd\theta\,\dd x\dd t\nonumber\\
q_{\alpha,n}^h&:=m_{\alpha,n}^h+\lambda_{\alpha,n}^h,\nonumber
\end{align*}
where $\dd m_{\alpha,n}^h=\alpha|\nabla u_{\alpha,n}^h|^2\delta_{u_{\alpha,n}^h}(\dd\xi)\dd x\dd t$.
Then
$
f(x,t,\xi):=\one_{\{u^h_{\alpha,n}(x,t)>\xi\}}$ satisfies, for any $\varphi\in C^\infty_c(\mathbb{T}^N\times [0,T)\times \mathbb{R})$,
\begin{align*}
&\int_0^T \langle f(t),\partial_t\varphi(t)\rangle\dd t+\langle f_\alpha^0,\varphi(0)\rangle+\int_0^T\langle f(t),a(\xi)\cdot\nabla_x\varphi(t)+\alpha\Delta_x\varphi(t)\rangle\dd t\\
&=-\sum_{k\geq1}\int_0^T\int_{\mathbb T^N}g_{k,\alpha}(x,u^h_{\alpha,n})h_k(t)\varphi(x,t,u^h_{\alpha,n})\dd x\dd t\nonumber\\
&\quad+q^h_{\alpha,n}(\partial_\xi\varphi)-\int_0^T\int_{\mathbb T^N}\varphi(x,t,\psi(x,t))
\dd\nu^h_{\alpha,n}(x,t),\nonumber
\end{align*}
where $f_\alpha^0=\one_{\{u^0_\alpha(x)>\xi\}}$

With the aid of the above uniform estimates \eqref{eq:priori-p-1}, we have the following result.
\begin{cor}[Uniform bounds for the defect measures of $u^h_{\alpha,n}$]\label{cor:uniform-bounds-defect-measure-2}
For every $p\geq0$, there exists $C>0$ independent of $\alpha\in(0,1)$ and $n\in\mathbb N$, such that
\begin{align*}
\int_{\mathbb{T}^N\times[0,T]\times\mathbb{R}}(1+|\xi|^p)\dd q^h_{\alpha,n}\leq C(p,D_0,M,T).
\end{align*}
\end{cor}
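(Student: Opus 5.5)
The proof mirrors Corollary \ref{cor:uniform-bounds-defect-measure}, now in the deterministic setting. Since the skeleton equation carries no expectation, there is no need for second moments in $\omega$. We decompose $q^h_{\alpha,n}=m^h_{\alpha,n}+\lambda^h_{\alpha,n}$ and bound the weighted mass of each piece separately, using the deterministic estimate \eqref{eq:priori-p-1} with an enlarged exponent.

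\textbf{Viscous part.} By the definition of $m^h_{\alpha,n}$,
\[
\int(1+|\xi|^p)\,\dd m^h_{\alpha,n}=\alpha\int_0^T\int_{\mathbb T^N}(1+|u^h_{\alpha,n}|^p)|\nabla u^h_{\alpha,n}|^2\dd x\dd t .
\]
Since $|u|^p\le C_p(|u-M_\psi|^p+M_\psi^p)$, it suffices to bound
\[
\alpha\int_0^T\int_{\mathbb T^N}\bigl(1+|u^h_{\alpha,n}-M_\psi|^{p}\bigr)|\nabla u^h_{\alpha,n}|^2\dd x\dd t .
\]
This is controlled by \eqref{eq:priori-p-1} applied with exponents $2$ and $p+2$: the exponent-$2$ case gives the gradient term with weight $|u-M_\psi|^0=1$, and the exponent-$(p+2)$ case gives it with weight $|u-M_\psi|^{p}$.

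\textbf{Obstacle defect part.} By the definition of $\lambda^h_{\alpha,n}$,
\[
\int(1+|\xi|^p)\,\dd\lambda^h_{\alpha,n}=n\int_0^T\int_{\mathbb T^N}|(u^h_{\alpha,n}-\psi)^-|^2\int_0^1\bigl(1+|\psi+\theta(u^h_{\alpha,n}-\psi)|^p\bigr)\dd\theta\dd x\dd t .
\]
The inner integrand is at most $C(1+|\psi|^p+|u^h_{\alpha,n}-\psi|^p)$. Moreover, on the support $\{u^h_{\alpha,n}<\psi\}$ we have $|u^h_{\alpha,n}-\psi|\le|u^h_{\alpha,n}-M_\psi|$, since there $u^h_{\alpha,n}<\psi<M_\psi$. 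Hence
\[
\int(1+|\xi|^p)\,\dd\lambda^h_{\alpha,n}\le C(M_\psi,p)\,n\int_0^T\int_{\mathbb T^N}|(u^h_{\alpha,n}-\psi)^-|^2\bigl(1+|u^h_{\alpha,n}-M_\psi|^p\bigr)\dd x\dd t .
\]
This is again bounded by the penalization terms in \eqref{eq:priori-p-1} with exponents $2$ and $p+2$.

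\textbf{Uniformity.} The right-hand side of \eqref{eq:priori-p-1} is $C(1+\|u^0_\alpha-M_\psi\|^{p+2}_{L^{p+2}})$, and this is uniform in $\alpha$ because $\sup_\alpha\|u^0_\alpha\|_{L^\infty}<\infty$. The only delicate point is to check that the constant in \eqref{eq:priori-p-1} depends on $h$ only through $M$. This holds because the Gr\"onwall argument built on \eqref{control term} produces a factor $\exp\bigl(C\int_0^T|h|_U\,\dd s\bigr)\le\exp\bigl(C\sqrt{TM}\bigr)$. With that verified, the two bounds combine to give the claim.
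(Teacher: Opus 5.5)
Your proof is correct and follows the paper's route: the paper obtains this corollary exactly as in Corollary~\ref{cor:uniform-bounds-defect-measure}. It splits $q^h_{\alpha,n}=m^h_{\alpha,n}+\lambda^h_{\alpha,n}$, uses $|u^h_{\alpha,n}-\psi|\le|u^h_{\alpha,n}-M_\psi|$ on the penalization support, and invokes the deterministic estimate \eqref{eq:priori-p-1}, whose constant depends on $h$ only through $M$. Your additional use of exponent $2$ (besides $p+2$) to control the unweighted term $\alpha\int_0^T\int_{\mathbb T^N}|\nabla u^h_{\alpha,n}|^2$ is a small but genuine tightening of the paper's ``exponent $p+2$'' justification, since the weight $|u^h_{\alpha,n}-M_\psi|^p$ alone does not dominate $1$ away from the set $\{u^h_{\alpha,n}<\psi\}$.
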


By an argument similar to the proof of Lemma \ref{lem:vanishing-viscosity-fixed-epsilon} (but simpler), when the viscosity parameter $\alpha\rightarrow 0$, the solution $u^h_{\alpha,n}$ converges to the kinetic solution $u_n^h$ of
 the penalized skeleton equation:
 \begin{equation}\label{eq:penalized-skeleton}
\text{\textbf{App Equ (ii)}}\quad
\left\{
\begin{aligned}
\partial_t u_n^h+\Div A(u_n^h)
&=\sum_{k\geq1}g_k(x,u_n^h)h_k(t)+b_n(u_n^h-\psi),\\
u_n^h(0)
&=u^0.
\end{aligned}
\right.
\end{equation}
Moreover, let $m_n^h$ denote the kinetic measure obtained as the vanishing-viscosity limit of $m_{\alpha,n}^h$.
Then $f_n^h(x,t,\xi):=\one_{\{u_n^h(x,t)>\xi\}}$ fulfills, for any $\varphi\in C^1_c(\mathbb{T}^N\times [0,T)\times \mathbb{R})$,
\begin{align}\label{eq:kinetic-penalized-skeleton-obstacle-form}
&\int_0^T\langle f_n^h(t),\partial_t\varphi(t)\rangle\dd t+\langle f^0,\varphi(0)\rangle+\int_0^T\langle f_n^h(t),a(\xi)\cdot\nabla_x\varphi(t)\rangle\dd t\nonumber\\
&=-\sum_{k\geq1}\int_0^T\int_{\mathbb T^N}g_k(x,u_n^h)h_k(t)\varphi(x,t,u_n^h)\dd x\dd t\nonumber\\
&\quad+q_n^h(\partial_\xi\varphi)-\int_0^T\int_{\mathbb T^N}\varphi(x,t,\psi(x,t))\dd\nu_n^h(x,t),
\end{align}
where the measures are given by
\begin{align}
\dd\nu_n^h(x,t)&:=n(u_n^h-\psi)^-\dd x\dd t,\nonumber\\
\dd\lambda_n^h(x,t,\xi)&:=\int_0^1 n[(u_n^h(x,t)-\psi(x,t))^-]^2\delta_{\psi+\theta(u_n^h-\psi)}(\dd\xi)\dd\theta\,\dd x\dd t,\nonumber\\
q_n^h&:=m_n^h+\lambda_n^h.\nonumber
\nonumber
\end{align}
Furthermore, for every $p\geq2$, there exists $C_p>0$ independent of $n\in\mathbb N$ such that
\begin{align}\label{eq:priori-p-2}
&\sup_{t\in[0,T]}\int_{\mathbb T^N}|u^h_{n}(t)-M_\psi|^p\dd x+\int_0^T\int_{\mathbb T^N}|u^h_{n}-M_\psi|^{p-2}n|(u^h_{n}-\psi)^-|^2\dd x\dd s\\
&\quad+\int_0^T\int_{\mathbb T^N}|u^h_{n}-M_\psi|^{p-2}n(u^h_{n}-\psi)^-\dd x\dd s\nonumber\\
&\quad\leq C(p,M,D_0,T) \Big(1+\int_{\mathbb T^N}|u^0-M_\psi|^p\dd x\Big),\nonumber
\end{align}
Due to the uniform estimates \eqref{eq:priori-p-2}, we have the following result.
\begin{cor}[Uniform bounds for the defect measures of $u^h_{n}$]\label{cor:uniform-bounds-defect-measure-1}
For every $p\geq0$, there exists $C>0$ independent of $n\in\mathbb N$, such that
\begin{align}\label{eq:bounds for m-2}
 \int_{\mathbb{T}^N\times[0,T]\times\mathbb{R}}(1+|\xi|^p)\dd q^h_{n}\leq C(p,M,D_0,T).
\end{align}
\end{cor}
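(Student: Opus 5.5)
The statement to prove is the uniform bound \eqref{eq:bounds for m-2} for $q_n^h=m_n^h+\lambda_n^h$. My plan is to bound the two pieces separately: $\lambda_n^h$ directly from its explicit density, and $m_n^h$ by lower semicontinuity from the viscous approximations.

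\textbf{The obstacle defect part $\lambda_n^h$.} I will follow the computation in Corollary \ref{cor:uniform-bounds-defect-measure} verbatim, with no expectation. By the definition of $\lambda_n^h$,
\[
\int(1+|\xi|^p)\,\dd\lambda_n^h=n\int_0^T\int_{\mathbb T^N}\big|(u_n^h-\psi)^-\big|^2\int_0^1\Big(1+\big|\psi+\theta(u_n^h-\psi)\big|^p\Big)\dd\theta\,\dd x\,\dd t .
\]
On the set where $(u_n^h-\psi)^-\neq0$, the point $\psi+\theta(u_n^h-\psi)$ lies between $u_n^h$ and $\psi$. Hence
\[
\big|\psi+\theta(u_n^h-\psi)\big|\leq |u_n^h-M_\psi|+2M_\psi,
\]
and so
\[
1+\big|\psi+\theta(u_n^h-\psi)\big|^p\leq C(p,M_\psi)\big(1+|u_n^h-M_\psi|^p\big).
\]
Since $u_n^h<\psi\leq M_\psi-1$ on this set, we also have $|u_n^h-M_\psi|\geq1$. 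Therefore
\[
1+|u_n^h-M_\psi|^p\leq 2\,|u_n^h-M_\psi|^{p}.
\]
The whole integral is then bounded by the penalization term of \eqref{eq:priori-p-2} taken with exponent $p+2$:
\[
C\int_0^T\int_{\mathbb T^N}|u_n^h-M_\psi|^{(p+2)-2}\,n\big|(u_n^h-\psi)^-\big|^2\,\dd x\,\dd s .
\]
This is bounded independently of $n$, because $u^0\in L^\infty$.

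\textbf{The kinetic part $m_n^h$.} Here I will use that $m_n^h$ is the vanishing-viscosity limit of $m_{\alpha,n}^h$. For $\alpha\in(0,1)$ I have
\[
\int(1+|\xi|^p)\,\dd m_{\alpha,n}^h=\alpha\int_0^T\int_{\mathbb T^N}\big(1+|u_{\alpha,n}^h|^p\big)|\nabla u_{\alpha,n}^h|^2\,\dd x\,\dd t .
\]
Writing
\[
1+|u|^p\leq C\big(1+|u-M_\psi|^p\big)\leq C\big(|u-M_\psi|^{p}+|u-M_\psi|^{0}\cdot 1\big),
\]
I will control the right-hand side by the gradient terms of \eqref{eq:priori-p-1} at exponents $p+2$ and $2$. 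This gives a bound uniform in $\alpha$ and $n$, since $\sup_\alpha\|u^0_\alpha\|_{L^\infty}<\infty$; this is exactly Corollary \ref{cor:uniform-bounds-defect-measure-2}.

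To transfer the bound to the limit, I note that $m_{\alpha,n}^h\to m_n^h$ weakly-$*$ as measures on $\mathbb T^N\times[0,T]\times\mathbb R$ along a subsequence. Then, for the nonnegative continuous weight $(1+|\xi|^p)\theta_R(\xi)$, lower semicontinuity gives
\[
\int(1+|\xi|^p)\theta_R\,\dd m_n^h\leq\liminf_{\alpha\downarrow0}\int(1+|\xi|^p)\,\dd m_{\alpha,n}^h\leq C .
\]
Letting $R\to\infty$ by monotone convergence yields the bound for $m_n^h$. Adding the two pieces gives \eqref{eq:bounds for m-2}.

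\textbf{Main difficulty.} The only delicate point is this lower-semicontinuity step. I need the limit measure $m_n^h$ identified in the vanishing-viscosity passage to be the weak-$*$ limit of $m_{\alpha,n}^h$ tested against compactly supported continuous functions. This is how the limit is constructed in the deterministic analogue of Lemma \ref{lem:vanishing-viscosity-fixed-epsilon}. With that identification, cutting off in $\xi$ with $\theta_R$ and then using monotone convergence finishes the argument, and no uniform-integrability issue arises because the weights are nonnegative.
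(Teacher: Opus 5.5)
Your proposal is correct and follows essentially the paper's route. You control $\lambda_n^h$ by the penalization term of \eqref{eq:priori-p-2} at exponent $p+2$, as in Corollary~\ref{cor:uniform-bounds-defect-measure}. You then control $m_n^h$ by carrying the $\alpha$-uniform viscous bound of Corollary~\ref{cor:uniform-bounds-defect-measure-2} through the weak-$*$ vanishing-viscosity limit by lower semicontinuity, which is exactly how the paper bounds $q_n$ in Section~4. The paper justifies this corollary only by citing \eqref{eq:priori-p-2}, which has no gradient term, so you are right that the limiting step is needed for $m_n^h$. Your observation that $|u_n^h-M_\psi|\geq1$ on $\{u_n^h<\psi\}$ also neatly absorbs the constant term.
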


\begin{thm}[Existence and uniqueness of kinetic solutions to the skeleton obstacle problem]\label{thm:existence-uniqueness-skeleton-obstacle} Let \textbf{Hypotheses (H1) and (H2)$'$} hold.
Then, for every $h\in S_M$, the skeleton equation \eqref{eq:skeleton-obstacle-problem} admits a unique kinetic solution $(u^h,\nu^h)$ in the sense of Definition \ref{def:dfn-2}.

Furthermore, for every $p\geq2$, there exists a positive constant $C$ depending only on $p$, $T$, $D_0$, $M$, $\Vert u^0\Vert_{L^p(\mathbb T^N)}$ and $\Vert\psi\Vert_{L^\infty(Q_T)}$, such that
\begin{equation}\label{eq:skeleton-moment-bound}
\sup_{t\in[0,T]}\Vert u^h(t)\Vert_{L^p(\mathbb T^N)}^p+\int_{\mathbb T^N\times[0,T]\times\mathbb R}(1+|\xi|^p)\dd q^h(x,t,\xi)
+\nu^h(\mathbb T^N\times[0,T))\leq C(p,M,D_0,T).
\end{equation}
\end{thm}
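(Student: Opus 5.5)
The plan is to mirror the stochastic proofs of Theorems \ref{thm:existence-obstacle-solution} and \ref{thm:uniqueness}, using the deterministic penalized skeleton solutions $u_n^h$ of \eqref{eq:penalized-skeleton} as approximations. We fix $h\in S_M$. By \eqref{eq:priori-p-2} and Corollary \ref{cor:uniform-bounds-defect-measure-1}, the $L^\infty(0,T;L^p)$ norms of $u_n^h$, the masses $\nu_n^h(\mathbb T^N\times[0,T])=\int n(u_n^h-\psi)^-$, and the weighted masses $\int(1+|\xi|^p)\dd q_n^h$ are bounded uniformly in $n$. The constants depend only on $p,M,D_0,T,\|u^0\|_{L^p}$ and $\|\psi\|_{L^\infty}$, because the term involving $h$ is handled through \eqref{control term} and Gr\"onwall.

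\textbf{Step 1 (monotonicity).} We prove the analogue of Lemma \ref{lem:comparison-penalty-parameter} for $u_n^h$. The doubling-of-variables computation is unchanged except that the stochastic and It\^o terms are replaced by the control term $\sum_k g_k(x,\xi)h_k$. After mollification this term contributes
\[
\int\rho_\gamma\kappa_\delta\,\big(g_k(x,\xi)-g_k(y,\zeta)\big)h_k,
\]
which by \eqref{eq:assumption for g Lip} and Cauchy--Schwarz is bounded by $|h(t)|_U(\gamma+\delta)$ plus an $L^1$-difference term. That difference term is absorbed by Gr\"onwall with weight $|h|_U\in L^1(0,T)$. The penalization term is treated exactly as in Cases 1--3. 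This yields $u_n^h\le u_m^h$ for $n\le m$.

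\textbf{Step 2 (limits).} We set $u^h:=\sup_n u_n^h$. Vitali gives $u_n^h\to u^h$ in $L^p(Q_T)$, the moment bound follows by Fatou, and $u^h\ge\psi$ follows from $\int(u_n^h-\psi)^-\le C/n$. Weak-$*$ compactness of the Radon measures, with the weighted bound controlling tails in $\xi$, gives $\nu_n^h\rightharpoonup\nu^h$ and $q_n^h\rightharpoonup q^h$ along a subsequence. The limit $q^h$ vanishes for large $\xi$. No predictability issue arises since everything is deterministic.

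\textbf{Step 3 (kinetic formulation and initial trace).} We pass to the limit in \eqref{eq:kinetic-penalized-skeleton-obstacle-form}. The control term converges because $\sum_k|g_k(x,u_n^h)-g_k(x,u^h)||h_k|\le D_0^{1/2}|u_n^h-u^h||h|_U$ and $h\in L^2$. The $\nu$ term converges because $\varphi(x,t,\psi(x,t))$ is continuous. The weak initial trace is obtained by repeating Lemma \ref{lem:weak-initial-condition}. The martingale term $I_M$ is replaced by $\int_0^{3\tau}\Upsilon_\tau\int\cdots g_k h_k\,\dd t$, which tends to $0$ as $\tau\downarrow0$ for fixed $\gamma,R$ since $h\in L^1(0,T;U)$. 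The $\nu$ and $q$ terms are treated exactly as in that lemma, using $u^0\ge\psi(\cdot,0)$. The bound \eqref{eq:skeleton-moment-bound} follows from lower semicontinuity of the uniform estimates.

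\textbf{Step 4 (uniqueness).} We run the argument of Theorem \ref{thm:uniqueness} with two solutions having the same $h$. We first note that Lemma \ref{lem:no atom of q nu} and Proposition \ref{prop:trace-obstacle-scl} hold verbatim in the deterministic setting. The terms $I^{\rm flux}$, $I^q\le0$ and $I^\nu$ are estimated as there; the bound on $I^\nu$ uses $\psi\in C^{0,2}$, which is where (H2)$'$ enters. The main new point, which I expect to be the only delicate step, is the control term. It replaces the quadratic and martingale terms and gives
\[
\int_0^t|h(r)|_U\Big(C\gamma+C\delta+\int\!\!\int\rho_\gamma\kappa_\delta|\xi-\zeta|\,\dd\mu^1\dd\mu^2\Big)\dd r ,
\]
which is bounded by $C\int_0^t|h|_U\big(\gamma+\delta+\|u_1(r)-u_2(r)\|_{L^1}+\text{mollification error}\big)\dd r$. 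The sign bookkeeping for $(u_1-u_2)^\pm$ must be combined so that the $\frac12[\nu_1-\nu_2]$ terms cancel before Gr\"onwall is applied. Once they cancel, we obtain
\[
\|u_1(t)-u_2(t)\|_{L^1}\le \|u_1^0-u_2^0\|_{L^1}\exp\Big(C\int_0^T|h|_U\Big),
\]
hence $u_1=u_2$. Equality $\nu_1=\nu_2$ then follows by testing with $\theta_R(\xi)\phi(x,t)$, exactly as at the end of the proof of Theorem \ref{thm:uniqueness}.
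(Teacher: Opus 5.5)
Your proposal is correct and follows the paper's proof essentially step for step. The steps are: monotonicity of $u_n^h$ in $n$ by doubling variables with Gr\"onwall on the control term; the monotone limit plus weak-$*$ extraction of $\nu^h,q^h$; the initial trace via Lemma~\ref{lem:weak-initial-condition} with the control replacing the martingale term; and uniqueness by doubling variables, with the $\tfrac12[\nu_1-\nu_2]$ masses cancelled by adding the two one-sided inequalities before Gr\"onwall.

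There are two small precision points, both caused by writing $\kappa_\delta$ where its primitive $K_\delta$ actually appears:
\begin{enumerate}
\item In Step~1 the quantity fed into Gr\"onwall must be the one-sided $\|(u_n^h-u_m^h)^{+}\|_{L^1(\mathbb T^N)}$, because the penalization term has no sign in the reverse direction. This is exactly what the doubling produces: after the velocity integrations the weight is $K_\delta$, and $K_\delta(z)|z|\le z^{+}+\delta$.
\item In your Step~4 display, the weight in front of $|\xi-\zeta|$ should likewise be this primitive, which is bounded by $1$, not $\kappa_\delta$ itself. The bound you state right after the display is the correct one.
\end{enumerate}
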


\begin{proof}

To find a solution of \eqref{eq:skeleton-obstacle-problem}, we prove the comparison principle for two kinetic solutions $u^h_{n}$ and $u^h_{m}$ of the penalized equation \eqref{eq:penalized-skeleton}. 
For $n\leq m$, similar to Lemma \ref{lem:comparison-penalty-parameter}, we have
\begin{align}\label{eq:comparison-epsilon-local_b_epsilon-h}
\big\Vert(u^h_{n}(t)-u^h_{m}(t))^+\big\Vert_{L^1(\mathbb T^N)}&\leq\int_0^t\int_{\mathbb T^N}\one_{\{u^h_{n}\geq u^h_{m}\}}\big[b_{m}(u^h_{n}-\psi)-b_{m}(u^h_{m}-\psi)\big]\dd x\dd s\\
&\quad+\int_0^t\int_{\mathbb T^N}\one_{\{u^h_{n}\geq u^h_{m}\}}\sum_{k\geq1}\big[g_{k}(x,u^h_{n})-g_{k}(x,u^h_{m})\big]h_k(s)\dd x\dd s.\nonumber
\end{align}
The first term on the right-hand side of \eqref{eq:comparison-epsilon-local_b_epsilon-h} is nonpositive. Thus, we only need to handle the term involving $h$. Due to \eqref{eq:assumption for g Lip}, we have
\begin{align*}
&\int_0^t\int_{\mathbb T^N}\one_{\{u^h_{n}\geq u^h_{m}\}}\sum_{k\geq1}\big[g_{k}(x,u^h_{n})-g_{k}(x,u^h_{m})\big]h_k(s)\dd x\dd s\\
&\leq C\int_0^t|h(s)|_U\Vert (u_n^h(s)-u_m^h(s))^+\Vert_{L^1(\mathbb T^N)}\dd s .
\end{align*}
By employing Gr\"onwall's inequality, we reach
\[
\Vert (u_n^h(t)-u_m^h(t))^+\Vert_{L^1(\mathbb T^N)}=0.
\]
Thus, $(u_n^h)_n$ is increasing.
We define
\begin{equation*}
u^h(x,t):=\lim_{n\to\infty}u_n^h(x,t).
\end{equation*}
By the uniform a priori estimates, Vitali's theorem implies
\begin{equation}\label{eq:strong-convergence-un-skeleton}
u_n^h\rightarrow u^h\quad\text{strongly in }L^p(Q_T)
\end{equation}
for every finite $p\geq1$.
Moreover, from
\[
\int_0^T\int_{\mathbb T^N}(u_n^h-\psi)^-\dd x\dd t\leq \frac{C_h}{n},
\]
we obtain
\[
(u^h-\psi)^-=0,\qquad\text{a.e. in }Q_T,
\]
and hence $u^h\geq\psi$ a.e.

As in \cite[Subsection 4.1.2]{DV10-publish}, using the Banach--Alaoglu theorem, a diagonal argument, and the uniform a priori estimates, there exist nonnegative measures $\nu^h$ and $q^h$ such that, up to a subsequence,
\begin{align*}
\nu_n^h\stackrel{\ast}{\rightharpoonup}\nu^h\quad\text{in }\mathcal M(\mathbb T^N\times[0,T)),\quad q_n^h\stackrel{\ast}{\rightharpoonup}q^h\quad\text{in }\mathcal M(\mathbb T^N\times[0,T]\times\mathbb R).
\end{align*}
The uniform a priori estimates also imply that $q^h$ has the vanishing property at infinity, and therefore $q^h$ is a kinetic measure.

It remains to pass to the limit in \eqref{eq:kinetic-penalized-skeleton-obstacle-form}.
The strong convergence \eqref{eq:strong-convergence-un-skeleton} gives
\[
\int_{Q_T\times\mathbb R}|f_n^h-f^h|\dd x\dd t\dd\xi
=
\int_{Q_T}|u_n^h-u^h|\dd x\dd t
\rightarrow0.
\]
Thus the time derivative term and the transport term converge.
Moreover, by \textbf{Hypothesis (H1)} and $h\in L^2(0,T;U)$,
\[
\sum_{k\geq1}g_k(x,u_n^h)h_k(t)\rightarrow\sum_{k\geq1}g_k(x,u^h)h_k(t)\quad\text{in }L^1(Q_T),
\]
and hence the term involving $h$ also converges.
Passing to the limit in the measure terms gives \eqref{eq:kinetic-skeleton-obstacle}.
The weak initial condition follows by the same argument as Lemma \ref{lem:weak-initial-condition}.
Therefore $(u^h,\nu^h)$ is a kinetic solution to \eqref{eq:skeleton-obstacle-problem}. The estimate \eqref{eq:skeleton-moment-bound} is a consequence of \eqref{eq:bounds for m-2} and \eqref{eq:priori-p-2}.

We finally prove uniqueness. 
Let $(u^h,\nu^h)$ and $(\widetilde u^h,\widetilde\nu^h)$ be two kinetic solutions corresponding to the same control $h$.
Repeating the doubling-of-variables argument used in the proof of Theorem \ref{thm:uniqueness}, and estimating the obstacle terms by the Taylor-expansion argument leading to \eqref{eq:I-nu-final-estimate}, we obtain, for a.e. $t\in(0,T)$,
\begin{align}
\bigl\|(u^h(t)-\widetilde u^h(t))^+\bigr\|_{L^1(\mathbb T^N)}&\leq C\int_0^t\bigl(1+|h(s)|_U\bigr)\|u^h(s)-\widetilde u^h(s)\|_{L^1(\mathbb T^N)}\dd s \notag\\
&\quad+\frac12\Bigl[\nu^h(\mathbb T^N\times[0,t])-\widetilde\nu^h(\mathbb T^N\times[0,t])\Bigr].\notag
\end{align}
Interchanging the two solutions and adding the two inequalities eliminates the difference of the total masses of the reflection measures. 
Therefore,
\begin{align}
\|u^h(t)-\widetilde u^h(t)\|_{L^1(\mathbb T^N)}&\leq C\int_0^t\bigl(1+|h(s)|_U\bigr) \|u^h(s)-\widetilde u^h(s)\|_{L^1(\mathbb T^N)}\dd s.\label{eq:skeleton-l1-contraction}
\end{align}
Gr\"onwall's inequality yields $u^h=\widetilde u^h$ a.e. in $Q_T$ when the initial data coincide.
The proof of $\nu^h=\tilde{\nu}^h$ follows by the same argument as in the proof of Theorem \ref{thm:uniqueness}.
We complete the proof.
\end{proof}

\subsection{The viscous skeleton equation and its approximation convergence}\label{subsection:viscous skeleton approximation}
For any $\alpha>0$, $h\in S_M$, we consider the viscous skeleton equation
\begin{equation}\label{eq:skeleton-obstacle-problem-viscous}
\mathbf{App\ Equ}\ (\ast)\quad
\left\{
\begin{aligned}
\partial_t u_\alpha^h+\Div A(u_\alpha^h)-\alpha\Delta u_\alpha^h
&=\Phi(u_\alpha^h)h(t)+\nu_\alpha^h,
&&\text{in }\mathbb T^N\times(0,T),\\
u_\alpha^h
&\geq\psi,
&&\text{a.e. in }\mathbb T^N\times(0,T),\\
u_\alpha^h(0)
&=u^0.
\end{aligned}
\right.
\end{equation}
Similar to Definition \ref{def:dfn-2}, we propose the following definition.
\begin{defn}\label{def:dfn-2-viscous}
A pair $(u_\alpha^h,\nu_\alpha^h)$ is a kinetic solution of \eqref{eq:skeleton-obstacle-problem-viscous} with initial data $u^0$ and obstacle $\psi$ if the following conditions are satisfied.
\begin{enumerate}[label=\textbf{\arabic*}.]
  \item For any $p\geq1$, $u_\alpha^h\in L^\infty(0,T;L^p(\mathbb T^N))\cap L^2(0,T;H^1(\mathbb T^N)),$ and
\begin{align}\label{P-6-vicous}
\underset{0\leq t\leq T}{{\rm{ess\sup}}}\ \Vert u_\alpha^h(t)\Vert ^p_{L^p(\mathbb{T}^N)}+\alpha \int_0^T\int_{\mathbb T^N}|\nabla u^h_{\alpha}|^2\dd x\dd s\leq C(p,M,T,D_0),
\end{align}
\item  We have 
\begin{equation*}
u^h_{\alpha}(x,t)\geq\psi(x,t),\quad\text{for }\dd x\otimes\dd t\text{-a.e. }(x,t),
\end{equation*}
and $\nu_\alpha^h$ is a nonnegative finite Radon measure on $\mathbb{T}^N\times[0,T)$.
\item  The function $u^h_{\alpha}$ satisfies
\begin{equation*}
\lim_{\tau\downarrow0}\frac{1}{\tau}\int_0^\tau\Vert u^h_{\alpha}(t)-u^0\Vert _{L^1(\mathbb{T}^N)}\dd t=0.
\end{equation*}
\item Let $\dd o^h_{\alpha}:=\alpha|\nabla u^h_{\alpha}|^2\delta_{u^h_{\alpha}(x,t)}(\dd\xi)\dd x\dd t$ be the parabolic defect measure. There exists a (deterministic) kinetic measure $q^h_{\alpha}\geq o^h_{\alpha}$ such that $f:= \one_{u^h_{\alpha}>\xi}$ satisfies for all $\varphi\in C^\infty_c(\mathbb{T}^N\times [0,T)\times \mathbb{R})$,
\begin{align}\notag
&\int^T_0\langle f(t), \partial_t \varphi(t)\rangle \dd t+\langle f^0, \varphi(0)\rangle +\int^T_0\langle f(t), a(\xi)\cdot \nabla_x \varphi (t)\rangle \dd t+\alpha\int^T_0\langle f(t), \Delta_x \varphi (t)\rangle \dd t \\
&= -\sum_{k\geq 1}\int^T_0\int_{\mathbb{T}^N} \int_{\mathbb{R}}g_k(x,\xi)\varphi (x,t,\xi)h_k(t)\dd \mu^{\alpha,h}_{x,t}(\xi)\dd x\dd t \label{eq:kinetic-skeleton-obstacle-viscous}\\
&\quad + q^h_{\alpha}(\partial_{\xi} \varphi)-\int_0^T\int_{\mathbb{T}^N}\varphi(x,t,\psi(x,t))\dd\nu^h_{\alpha}(x,t),\nonumber
\end{align}
 where  $\mu^{\alpha,h}_{x,t}(\dd\xi)=\delta_{u^h_{\alpha}(x,t)}(\dd\xi)$.
\end{enumerate}
\end{defn}

Compared with equation \eqref{eq:skeleton-obstacle-problem}, only the extra viscous term $\alpha \Delta u^h_{\alpha}$ has been added in \eqref{eq:skeleton-obstacle-problem-viscous}. As a result,
the well-posedness of \eqref{eq:skeleton-obstacle-problem-viscous} is closely analogous to that of  \eqref{eq:skeleton-obstacle-problem}, so we only outline the key steps of the proof.
We first address the existence of solutions to \eqref{eq:skeleton-obstacle-problem-viscous}.
As shown in Section \ref{sec-skeleton}, for all $\alpha>0$ and $n\geq 1$, \eqref{P-3} possesses a unique weak solution $u^h_{\alpha,n}$ satisfying estimates \eqref{eq:priori-p-1} uniformly in $n$. 
In fact, one intermediate step requires replacing the coefficient $\Phi_{\alpha}$ with $\Phi_{\alpha'}$ and subsequently taking  $\alpha'\rightarrow 0$, which can be carried out following the proof in \cite[Section 4.1]{DV10}. 
With this preparation in place, we can construct a solution to  \eqref{eq:skeleton-obstacle-problem-viscous} by passing to the limit $n\rightarrow \infty$ via an argument analogous to that for Theorem \ref{thm:existence-uniqueness-skeleton-obstacle}. 
Specifically, the comparison estimate \eqref{eq:comparison-epsilon-local_b_epsilon-h} remains valid, due to the fact that the contributions arising from the viscous term $\alpha \Delta u^h_{\alpha,n}$ and the parabolic defect measure $o^h_{\alpha,n}$ sum to a nonpositive quantity, see, for instance, \cite[Theorem 3.3]{DHV16} for details. 
Consequently, the sequence $(u^h_{\alpha,n})_{n\geq 1}$ is increasing, and we can define
\begin{equation*}
u^h_{\alpha}(x,t):=\lim_{n\to\infty}u^h_{\alpha,n}(x,t).
\end{equation*}

Note that estimate \eqref{eq:priori-p-1} indicates, after extracting a subsequence, the weak convergence of $u_{\alpha,n}^h$ to $u^h_\alpha$ in $L^2(0,T;H^1(\mathbb{T}^N))$.
By the uniform measure estimates, there exist a subsequence $(n_j)_{j\geq1}$ and nonnegative finite measures $q_\alpha^h$ and $\nu_\alpha^h$ such that
\[
q_{\alpha,n_j}^h\overset{\ast}{\rightharpoonup}q_\alpha^h\quad\text{locally on }\mathbb T^N\times[0,T]\times\mathbb R,
\]
and
\[
\nu_{\alpha,n_j}^h\overset{\ast}{\rightharpoonup}\nu_\alpha^h\quad\text{in }C_0\bigl(\mathbb T^N\times[0,T)\bigr)^*.
\]
We now check that for every nonnegative $\phi\in C_c(\mathbb{T}^N\times[0,T)\times\mathbb{R})$,
\[
\alpha\int_{Q_T}\phi(x,t,u_\alpha^h)|\nabla u_{\alpha}^h|^2\dd x\dd t\leq\liminf_{n\to\infty}\alpha\int_{Q_T}\phi(x,t,u^h_{\alpha,n})|\nabla u^h_{\alpha,n}|^2\dd x\dd t.
\]
Set
\[
\phi_n(x,t):=\sqrt{\phi(x,t,u_{\alpha,n}^h(x,t))},\qquad
\phi_{\infty}(x,t):=\sqrt{\phi(x,t,u_\alpha^h(x,t))}.
\]
Since $u_{\alpha,n}^h\to u_\alpha^h$ a.s. and $\phi$ is bounded and
continuous, for every $\Psi\in L^2(Q_T;\mathbb R^N)$, the dominated
convergence theorem gives
\[
\phi_n\Psi\to \phi_{\infty}\Psi\qquad\text{strongly in }L^2(Q_T;\mathbb R^N).
\]
Together with
$\nabla u_{\alpha,n}^h\rightharpoonup\nabla u_\alpha^h$ in $L^2$,
this yields
\[
\phi_n\nabla u_{\alpha,n}^h
\rightharpoonup
\phi_{\infty}\nabla u_\alpha^h
\qquad\text{weakly in }L^2(Q_T;\mathbb R^N).
\]
Hence, by the weak lower semicontinuity of the $L^2$ norm,
\begin{equation}\label{eq:parabolic-defect-liminf}
\int_{Q_T}\phi(x,t,u_\alpha^h)|\nabla u_\alpha^h|^2\dd x\dd t
\leq\liminf_{n\to\infty}
\int_{Q_T}\phi(x,t,u_{\alpha,n}^h)
|\nabla u_{\alpha,n}^h|^2\dd x\dd t,
\end{equation}
which indicates $q_\alpha^h\geq o_\alpha^h$.
Proceeding exactly as in Theorem \ref{thm:existence-uniqueness-skeleton-obstacle}, $(u^h_{\alpha},\nu^h_{\alpha})$ is a solution to \eqref{eq:skeleton-obstacle-problem-viscous} in the sense of Definition \ref{def:dfn-2-viscous}. 
As for uniqueness, it follows from estimate \eqref{eq:skeleton-l1-contraction} and the fact that the combined contributions of the viscous term $\alpha \Delta u^h_{\alpha}$ and
the parabolic defect measure $o^h_{\alpha}$ are nonpositive.

Similar to \eqref{eq:skeleton-moment-bound}, we have for any $p\geq 2$,
\begin{equation}\label{eq:skeleton-moment-bound-viscous}
\int_{\mathbb T^N\times[0,T]\times\mathbb R}(1+|\xi|^p)\dd q^h_{\alpha}(x,t,\xi)
+\nu^h_{\alpha}(\mathbb T^N\times[0,T))\leq C(p,M,D_0,T).
\end{equation}

In the following, we are devoted to proving the convergence of viscous approximations to the skeleton obstacle problem \eqref{eq:skeleton-obstacle-problem}.

\begin{lem} \label{lem-1}
The kinetic solution $u^{h}_{\alpha}$ of \eqref{eq:skeleton-obstacle-problem-viscous} converges to the kinetic solution $u^{h}$ of \eqref{eq:skeleton-obstacle-problem} uniformly on $h\in S_M$. That is,
  \begin{align}\label{rs-18}
  \lim_{\alpha\rightarrow 0}\sup_{h\in S_M}\Vert u^{h}_{\alpha}-u^{h}\Vert _{L^1(Q_T)}=0.
\end{align}
\end{lem}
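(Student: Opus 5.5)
We compare $u^h_\alpha$ and $u^h$ directly by doubling variables, exactly as in the proof of Theorem \ref{thm:uniqueness}, now with one parabolic and one hyperbolic kinetic formulation. Both are deterministic, so no It\^o product formula is needed. Let $f_1=\one_{u^h_\alpha>\xi}$, $\bar f_2=1-\one_{u^h>\zeta}$. We test \eqref{eq:kinetic-skeleton-obstacle-viscous} and \eqref{eq:kinetic-skeleton-obstacle} with $\varphi_{\gamma,\delta}(x,\xi,y,\zeta)=\rho_\gamma(x-y)\kappa_\delta(\xi-\zeta)$, using the fixed-time formulation analogous to \eqref{eq:weak-kinetic on [0,t]}. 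Lemma \ref{lem:no atom of q nu} (whose proof is deterministic and applies verbatim) ensures neither $q$ nor $\nu$ charges $t=0$. This gives, for a.e. $t$,
\[
\int f_1^+\bar f_2^+(t)\varphi_{\gamma,\delta}
\leq \int f_1(0)\bar f_2(0)\varphi_{\gamma,\delta}
+I^{\rm flux}_{\gamma,\delta}+I^{h}_{\gamma,\delta}+I^{q}_{\gamma,\delta}+I^{\nu}_{\gamma,\delta}+I^{\alpha}_{\gamma,\delta}.
\]
The terms are handled as follows.
\begin{itemize}
\item $I^{\rm flux}$ is bounded by $C\delta\gamma^{-1}$ as in \eqref{eq:estimates for Ia-1}, using the $L^p$ bounds in \eqref{P-6-vicous} and \eqref{eq:skeleton-moment-bound}, which are uniform in $h\in S_M$ and $\alpha$.
\item $I^q\le0$ by the monotonicity argument \eqref{eq:estimates for Iqvar}. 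Here we use the part of $q^h_\alpha$ beyond $o^h_\alpha$.
\item $I^\nu$ is bounded by \eqref{eq:I-nu-final-estimate}: $C\gamma^2\delta^{-1}$ times the uniform masses, plus $\tfrac12[\nu^h_\alpha-\nu^h](\mathbb T^N\times[0,t])$.
\item The control term $I^h$ is treated as in \eqref{eq:skeleton-l1-contraction}. Using \eqref{eq:assumption for g Lip} it gives $C\int_0^t|h(s)|_U\big(\|(u^h_\alpha-u^h)(s)\|_{L^1}+\gamma+\delta\big)\dd s$ after removing the mollifiers, with an error controlled by $\sqrt{M}\sqrt T(\gamma+\delta)$, uniformly in $h\in S_M$.
\end{itemize}

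The new term is the viscous one,
\[
I^\alpha_{\gamma,\delta}=\alpha\int_0^t\int f_1\bar f_2\,\Delta_x\varphi_{\gamma,\delta}\;-\;o^h_\alpha\text{-contribution}.
\]
Following the standard vanishing-viscosity comparison (as in \cite{DV10-publish}, Theorem 19 and \cite[Theorem 3.3]{DHV16}), I would not integrate by parts twice. Instead I would use $\int f_1\bar f_2\Delta_x\rho_\gamma\kappa_\delta=\int\nabla_x u^h_\alpha\cdot\nabla_x\rho_\gamma(x-y)\,\kappa_\delta\text{-terms}$, which moves one derivative onto $u^h_\alpha$. Cauchy–Schwarz then bounds it by
\[
\alpha\|\nabla u^h_\alpha\|_{L^2(Q_T)}\|\nabla\rho_\gamma\|_{L^1}\le C\sqrt{\alpha}\,\big(\alpha\|\nabla u^h_\alpha\|^2_{L^2}\big)^{1/2}\gamma^{-1}\le C\sqrt\alpha\,\gamma^{-1},
\]
where I drop the sign-definite $o^h_\alpha$ piece (it enters with $\le0$). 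The bound is uniform in $h$ by \eqref{P-6-vicous}. This is the step I expect to be delicate: one must make sure the parabolic defect measure appears with the right sign and that $\alpha\|\nabla u^h_\alpha\|^2_{L^2}$ is bounded uniformly over $S_M$. The latter follows from the energy estimate that led to \eqref{eq:priori-p-1}, combined with \eqref{control term}.

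Next we remove the mollifiers. The initial term differs from $\|(u^0-u^0)^+\|=0$ by the translation error of $u^0$ in $L^1$, which is $o(1)$ as $\gamma,\delta\to0$ and independent of $h$ and $\alpha$. The error between $\int f_1^+\bar f_2^+\varphi_{\gamma,\delta}$ and $\|(u^h_\alpha-u^h)^+(t)\|_{L^1}$ is bounded by $C\delta$ plus the spatial modulus $\int\rho_\gamma(x-y)|u^h(x,t)-u^h(y,t)|$. This modulus is not uniform in $h$ a priori, so I would instead keep both solutions mollified and accept, after integrating in $t$, the term $\sup_{h\in S_M}\int_0^T\!\!\int\rho_\gamma(x-y)|u^h(x)-u^h(y)|$. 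Its smallness is exactly what relative compactness of $\{u^h:h\in S_M\}$ in $L^1(Q_T)$ provides. By Fréchet–Kolmogorov it suffices to know that $\{u^h\}$ is precompact. I expect to obtain this from the compactness of the viscous family (Proposition \ref{prop:weak-continuity-viscous-skeleton-obstacle}) for a fixed small $\alpha_0$, combined with the same doubling estimate. To avoid circularity, I would first perform the argument with the modulus of continuity of $u^0$ only, choosing to mollify against $u^h_\alpha$ and propagating the $L^1$ translation estimate of $u^h$ itself via its own doubling with a shifted copy $u^h(\cdot+z)$. That copy solves the equation with obstacle $\psi(\cdot+z)$ and coefficients $g_k(\cdot+z,\cdot)$, so the $C^2$ regularity of $\psi$ and \eqref{eq:assumption for g Lip} give a translation bound $C(|z|+\omega_{u^0}(|z|))$ uniform on $S_M$.

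Finally, we symmetrize and choose parameters. Adding the inequality for $(u^h_\alpha-u^h)^+$ and the symmetric one for $(u^h_\alpha-u^h)^-$ cancels the $\nu$-mass differences, as in the proof of Theorem \ref{thm:uniqueness}. This yields
\[
\|u^h_\alpha(t)-u^h(t)\|_{L^1}\le \epsilon(\gamma,\delta)+C\sqrt\alpha\gamma^{-1}+C\int_0^t(1+|h(s)|_U)\|u^h_\alpha(s)-u^h(s)\|_{L^1}\dd s,
\]
with $\epsilon(\gamma,\delta)=C(\delta\gamma^{-1}+\gamma^2\delta^{-1}+\delta+\gamma)+C\omega_{u^0}(\gamma)$ uniform in $h\in S_M$. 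Gronwall, with $\exp(C(T+\sqrt{MT}))$, then gives $\sup_{h\in S_M}\|u^h_\alpha-u^h\|_{L^1(Q_T)}\le C(\epsilon(\gamma,\gamma^{4/3})+\sqrt\alpha\gamma^{-1})$. Choosing $\gamma=\alpha^{1/4}$ and letting $\alpha\to0$ yields \eqref{rs-18}.
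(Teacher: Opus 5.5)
Your architecture matches the paper's: doubling $u^h_\alpha$ against $u^h$, using bounds on the flux, $q$, $\nu$ and control terms that are uniform on $S_M$, symmetrizing to cancel the $\nu$-masses, then Gronwall and a parameter choice. You treat the viscous term differently, and validly. You write $\int\!\!\int f_1\bar f_2\kappa_\delta\,\dd\xi\,\dd\zeta=L(u^h_\alpha(x)-u^h(y))$ with $0\le L'\le1$ and integrate by parts once in $x$. By \eqref{P-6-vicous} this gives $C\sqrt\alpha\,\gamma^{-1}$. The paper instead puts both derivatives on $\rho_\gamma$ and gets $C\alpha\gamma^{-2}$ from $L^1$ bounds alone. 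No separate sign discussion of $o^h_\alpha$ is needed, since all of $q^h_\alpha\ge0$ enters the $q$-term with a nonpositive sign.

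\textbf{The gap: removing the spatial mollifier uniformly in $h$.} This is the one step where uniformity in $h$ is genuinely at stake. The compactness route is circular, as you note. The shifted-copy route does not give what you claim, for two reasons.
\begin{itemize}
\item Doubling $u^h$ against $u^h(\cdot+z)$ controls only $\int\rho_\gamma(x-y)|u^h(x,t)-u^h(y+z,t)|\,\dd x\,\dd y$. Turning this into $\|u^h(t)-u^h(t,\cdot+z)\|_{L^1(\mathbb T^N)}$ again needs the modulus of $u^h$ at scale $\gamma$, which is exactly what you are trying to prove.
\item The two copies have different obstacles, $\psi$ and $\psi(\cdot+z)$. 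The cancellation behind \eqref{eq:I-nu-final-estimate} then yields an error of order $(\gamma+|z|)^2\delta^{-1}$, even after using that the reflection measures are translates of each other. Together with the flux error $\delta\gamma^{-1}$, this prevents letting $\gamma,\delta\to0$ at fixed $z$.
\end{itemize}
So no bound of the form $C(|z|+\omega_{u^0}(|z|))$ comes out.

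\textbf{The missing idea: take $z=0$ and double $u^h$ against itself.} The two reflection measures then coincide, so the half-mass term vanishes. The same estimates apply: flux, $q$-term $\le0$, the Taylor bound for $\nu$, and the control term followed by Gronwall. They give, uniformly in $h\in S_M$ and $t$, with a constant depending only on $M,T,D_0$ and the data,
\[
\int\rho_\gamma(x-y)\,|u^h(x,t)-u^h(y,t)|\,\dd x\,\dd y\le C\Big(\int\rho_\gamma(x-y)\,|u^0(x)-u^0(y)|\,\dd x\,\dd y+\gamma+\delta+\delta\gamma^{-1}+\gamma^{2}\delta^{-1}\Big).
\]
This holds because, by \eqref{appro-1}, the left side differs by at most $2\delta$ from the doubled quantity $\int\rho_\gamma\kappa_\delta\,(f\bar f+\bar f f)$ of $u^h$ with itself. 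This is exactly how the paper bounds its term $H_1$.

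It is also simpler to run Gronwall on the mollified doubled quantity, as the paper does. The control term is bounded by $\int_0^t|h(s)|_U\,(\text{mollified quantity}+2\delta+\gamma)\,\dd s$, so the mollifiers only need to be removed once, at the final time.
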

\begin{proof}
Let  $f_1=\one_{u^{h}_{\alpha}>\xi}$, $f_2=\one_{u^{h}>\xi}$. 
The corresponding kinetic and reflection measures are denoted by $(q^h_{\alpha}, \nu^h_{\alpha})$ and $(q^h, \nu^h)$. 
Set $\mu^1_{x,s}(\dd\xi)=\delta_{u^{h}_{\alpha}}(\dd\xi)$ and $\mu^2_{x,s}(\dd\xi)=\delta_{u^{h}}(\dd\xi)$.

According to Lemma \ref{lem:no atom of q nu}, $t=0$ is not an atom of kinetic measures and reflection measures. Moreover, by the same method as in the derivation of \eqref{eq:weak-kinetic on [0,t]}, the weak formulations \eqref{eq:kinetic-skeleton-obstacle} and
\eqref{eq:kinetic-skeleton-obstacle-viscous} can be strengthened to be weak only with respect to $(x,\xi)$. This means that for test function  $\varphi_1(x,\xi)$, 
\begin{equation*}
\begin{aligned}
&\langle f_1^+(t),\varphi_1\rangle -\langle f_1(0),\varphi_1\rangle
 -\int_0^t\!\langle f_1(s),a(\xi)\cdot\nabla_x\varphi_1\rangle\dd s \\
&= \alpha\int_0^t\!\langle f_1(s),\Delta_x\varphi_1\rangle\dd s+
\sum_{k\geq1}\int_0^t\int_{\mathbb{T}^N}\int_{\mathbb{R}}g_k(x,\xi)\varphi_1(x,\xi)h_k(s)\dd \mu^1_{x,s}(\xi)\dd x\dd s\\
&\quad -\int_{\mathbb{T}^N\times[0,t]\times{\mathbb{R}}}\partial_{\xi}\varphi_1(x,\xi)\dd q^h_{\alpha}(x,s,\xi)
+\int_{\mathbb{T}^N\times[0,t]}\varphi_1(x,\psi(x,s))\dd\nu^h_{\alpha}(x,s).
\end{aligned}
\end{equation*}
Similarly, for $\bar{f}_2(y,t,\zeta)$, with the test function $\varphi_2(y,\zeta)$, it follows that
\begin{equation}
\begin{aligned}
&\langle \bar{f}_2^+(t),\varphi_2\rangle -\langle \bar{f}_2(0),\varphi_2\rangle
 -\int_0^t\!\langle \bar{f}_2(s),a(\zeta)\cdot\nabla_y\varphi_2\rangle\dd s \\
&= -\sum_{k\geq1}\int_0^t\int_{\mathbb{T}^N}g_k(y,\zeta)\varphi_2(y,\zeta)h_k(s)\dd \mu^2_{y,s}(\zeta)\dd y\dd s\\
&\quad +\int_{\mathbb{T}^N\times[0,t]\times{\mathbb{R}}}\partial_{\zeta}\varphi_2(y,\zeta)\dd q^h(y,s,\zeta)
-\int_{\mathbb{T}^N\times[0,t]}\varphi_2(y,\psi(y,s))\dd\nu^h(y,s).
\end{aligned}\label{eq:weak-kinetic-2}
\end{equation}
Proceeding similarly to Theorem \ref{thm:uniqueness}, with the tensorization argument of \cite[proof of Proposition 9]{DV10-publish}, taking $\varphi_{\gamma,\delta}(x,\xi,y,\zeta)=\rho_{\gamma}(x-y)\kappa_\delta(\xi-\zeta)$, we reach
\begin{align*}
\mathcal{R}(t)&:=\int_{(\mathbb{T}^N)^2}\int_{\mathbb{R}^2} (f^+_1(t)\bar{f}^+_2(t)+\bar{f}^+_1(t)f^+_2(t))\rho_{\gamma}(x-y)\kappa_\delta(\xi-\zeta)\dd x\dd\xi\dd y\dd\zeta \\
\notag
&\leq \int_{(\mathbb{T}^N)^2}\int_{\mathbb{R}^2}(f_1(0)\bar{f}_2(0)+\bar{f}_1(0)f_2(0))\rho_{\gamma}(x-y)\kappa_\delta(\xi-\zeta)\dd x\dd\xi\dd y\dd\zeta \\
&\quad+J_{\gamma,\delta}^{\rm{vis}}(t)+J_{\gamma,\delta}^{\rm{flux}}(t)+J_{\gamma,\delta}^h(t)+
J_{\gamma,\delta}^{q}(t)+J_{\gamma,\delta}^{\nu}(t),\notag
\end{align*}
where $J_{\gamma,\delta}^{\rm{flux}}$ and $J_{\gamma,\delta}^{\nu}$ are the same as those in Theorem \ref{thm:uniqueness}, the viscous-related term is given by
\begin{align*}
 J_{\gamma,\delta}^{\rm{vis}}(t):=\alpha \int^t_0 \int_{(\mathbb{T}^N)^2}\int_{\mathbb{R}^2} (f_1(s)\bar{f}_2(s)+\bar{f}_1(s)f_2(s))\Delta_x\rho_{\gamma}(x-y)\kappa_\delta(\xi-\zeta)\dd x\dd\xi\dd y\dd\zeta\dd s,
\end{align*}
the control-related term is given by
\begin{align*}
J_{\gamma,\delta}^h(t)&:=\sum_{k\geq1}\int_0^t\int_{(\mathbb{T}^N)^2}\int_{\mathbb{R}^2}(\bar{f}_2-f_2)g_k(x,\xi)h_k(s)\rho_{\gamma}(x-y)\kappa_\delta(\xi-\zeta)\dd \mu^1_{x,s}(\xi)\dd \zeta \dd x \dd y \dd s\\
&\quad-\sum_{k\geq1}\int_0^t\int_{(\mathbb{T}^N)^2}\int_{\mathbb{R}^2}(f_1-\bar{f}_1)g_k(y,\zeta)h_k(s)\rho_{\gamma}(x-y)\kappa_\delta(\xi-\zeta)\dd \mu^2_{y,s}(\zeta)\dd \xi\dd x \dd y\dd s,
\end{align*}
and the term generated by the kinetic measure is expressed as
\begin{align*}
J_{\gamma,\delta}^{q}(t)&:=\int_{\mathbb{T}^N}\int_{\mathbb{R}}\int_{\mathbb{T}^N\times[0,t]\times\mathbb{R}}\rho_{\gamma}(x-y)\partial_\zeta\kappa_\delta(\xi-\zeta)({f}^{-}_1-\bar{f}^-_1)\dd\xi\dd q^h(y,s,\zeta)\dd x\\
&\quad-\int_{\mathbb{T}^N}\int_{\mathbb{R}}\int_{\mathbb{T}^N\times[0,t]\times\mathbb{R}}\rho_{\gamma}(x-y)\partial_\xi\kappa_\delta(\xi-\zeta)(\bar{f}_2^+-f_2^+)\dd q^h_{\alpha}(x,s,\xi)\dd\zeta\dd y.
\end{align*}
Referring to \eqref{eq:estimates for Ia-1}, we have
\begin{align*}
|J_{\gamma,\delta}^{\rm{flux}}(t)|\leq C_pT\delta\gamma^{-1}.
\end{align*}
Owing to \eqref{eq:I-nu-final-estimate}, by using \eqref{eq:skeleton-moment-bound} and \eqref{eq:skeleton-moment-bound-viscous}, we obtain
\begin{equation*}
J_{\gamma,\delta}^{\nu}(t)\leq C(M,D_0,T)\frac{\gamma^2}{\delta}.
\end{equation*}
Similarly to \eqref{eq:estimates for Iqvar}, by the nonnegativity of $q^h$ and $q^h_{\alpha}$, we have
\begin{equation*}
J_{\gamma,\delta}^{q}(t)\leq 0.
\end{equation*}
For the viscous term $J_{\gamma,\delta}^{\rm{vis}}$, define
\begin{align*}
  l_{\delta}(\xi,\zeta)=\int^{\infty}_{\zeta}\int^{\xi}_{-\infty} \kappa_{\delta}(\xi'-\zeta')\dd \xi' \dd \zeta'.
\end{align*}
By integration by parts, we have
\begin{align*}
 J_{\gamma,\delta}^{\rm{vis}}(t)&=\alpha \int^t_0 \int_{(\mathbb{T}^N)^2}\Delta_x\rho_{\gamma}(x-y)\Big[\int_{\mathbb{R}^2} (f_1(s)\bar{f}_2(s)+\bar{f}_1(s)f_2(s))\kappa_\delta(\xi-\zeta)\dd \xi \dd \zeta \Big] \dd x \dd y \dd s\\
 &= \alpha \int^t_0 \int_{(\mathbb{T}^N)^2}\Delta_x\rho_{\gamma}(x-y)\int_{\mathbb{R}^2}\Big[ l_{\delta}(\xi,\zeta)+ l_{\delta}(\zeta,\xi)\Big]\dd \mu^1_{x,s}\otimes  \mu^2_{y,s}(\xi,\zeta) \dd x \dd y \dd s.
\end{align*}
Setting $\xi''=\xi'-\zeta'$, we have
\begin{align*}
l_{\delta}(\xi, \zeta)=\int^{\infty}_{\zeta}\int^{\xi-\zeta'}_{-\infty} \kappa_{\delta}(\xi'')\dd \xi'' \dd \zeta'.
\end{align*}
Since {\rm{supp}}$\kappa_{\delta}\subset (-\delta,\delta)$, we have
\begin{align*}
  \int^{\xi-\zeta'}_{-\infty} \kappa_{\delta}(\xi'')\dd \xi''=0,
\end{align*}
if $\xi-\zeta'<-\delta$, that is, $\zeta'>\xi+\delta$. Therefore the upper limit of integration can be restricted to $\xi+\delta$. 
If $\zeta\ge\xi+\delta$, the integration interval is empty and $l_\delta(\xi,\zeta)=0$. Otherwise, 
there exists a constant $C>0$ only depending on $\|\kappa_{1}\|_{L^{\infty}(\mathbb{R})}$ such that
\begin{align*}
l_{\delta}(\xi, \zeta)&\leq  
\int^{\xi+\delta}_{\zeta}\Big(\int^{\xi-\zeta'}_{-\infty} \kappa_{\delta}(\xi'')\dd \xi''\Big)\dd \zeta'\\
&\leq  \int^{\xi+\delta}_{\zeta}2\delta \|\kappa_{\delta}\|_{L^{\infty}(\mathbb{R})}\dd\zeta'\\
&\leq  C(|\xi|+|\zeta|+\delta).
\end{align*}
The same upper bound holds for $l_{\delta}( \zeta,\xi)$. Then, we obtain
\begin{align*}
\int_{\mathbb{R}^2}\Big[l_{\delta}(\xi, \zeta)+ l_{\delta}(\zeta,\xi)\Big]\dd\mu^{1}_{x,s}\otimes\mu^{2}_{y,s}(\xi,\zeta)\leq 2C(|u^h_{\alpha}(x,s)|+|u^h(y,s)|+\delta).
\end{align*}
As a result, we reach
\begin{align*}
 J_{\gamma,\delta}^{\rm{vis}}(t)&\leq 2\alpha C \int^T_0 \int_{(\mathbb{T}^N)^2}|\Delta_x\rho_{\gamma}(x-y)|\big(|u^h_{\alpha}(x,s)|+|u^h(y,s)|+\delta\big) \dd x \dd y \dd s\\
 &\leq C(M,D_0,T,\|\kappa_1\|_{L^{\infty}(\mathbb{R})})\alpha\gamma^{-2}(1+\delta),
\end{align*}
where we have used \eqref{eq:skeleton-moment-bound} and \eqref{P-6-vicous}.

With regard to $J_{\gamma,\delta}^h(t)$, we introduce
\begin{align*}
  \Lambda_1(\xi,\zeta):=\int^{\xi}_{-\infty}\kappa_{\delta}(\xi'-\zeta)\dd\xi', \quad   \Lambda_2(\zeta,\xi):=\int^{+\infty}_{\zeta}\kappa_{\delta}(\xi-\zeta')\dd\zeta'.
\end{align*}
It holds that $\Lambda_1(\xi,\zeta)=\Lambda_2(\zeta,\xi)=\int^{\xi-\zeta}_{-\infty}\kappa_{\delta}(y) \dd y$ and $\Lambda_1(\xi,\zeta)\leq 1$.
Similarly, define
\begin{align*}
  \tilde{\Lambda}_1(\xi,\zeta)=\int^{+\infty}_{\xi}\kappa_{\delta}(\xi'-\zeta)\dd\xi',\quad \tilde{\Lambda}_2(\zeta,\xi)=\int^{\zeta}_{-\infty}\kappa_{\delta}(\xi-\zeta')\dd\zeta',
\end{align*}
we have $\tilde{\Lambda}_1(\xi,\zeta)=\tilde{\Lambda}_2(\zeta,\xi)=\int^{+\infty}_{\xi-\zeta}\kappa_{\delta}(y)\dd y$ and $\tilde{\Lambda}_1(\xi,\zeta)\leq 1$.

With the aid of the functions defined above, integration by parts yields
\begin{align*}
 J_{\gamma,\delta}^{h}(t)&\leq \sum_{k\geq 1}\int^t_0\int_{(\mathbb{T}^N)^2}\rho_{\gamma}(x-y)\int_{\mathbb{R}^2}\Lambda_1(\xi,\zeta)(g_{k}(x,\xi)-g_{k}(y,\zeta))h_k(s)\dd \mu^{1}_{x,s}\otimes \mu^{2}_{y,s}(\xi,\zeta) \dd x\dd y\dd s\\
 &+\quad \sum_{k\geq 1}\int^t_0\int_{(\mathbb{T}^N)^2}\rho_{\gamma}(x-y)\int_{\mathbb{R}^2}\tilde{\Lambda}_1(\xi,\zeta)(g_{k}(x,\xi)-g_{k}(y,\zeta))h_k(s)\dd \mu^{1}_{x,s}\otimes \mu^{2}_{y,s}(\xi,\zeta) \dd x\dd y\dd s\\
&\leq 2\int^t_0|h(s)|_U\int_{(\mathbb{T}^N)^2}\rho_{\gamma}(x-y)\\
&\qquad\qquad\qquad\times\int_{\mathbb{R}^2}\Big(\sum_{k\geq 1}|g_{k}(x,\xi)-g_{k}(y,\zeta)|^2\Big)^{\frac{1}{2}}\dd \mu^{1}_{x,s}\otimes \mu^{2}_{y,s}(\xi,\zeta) \dd x\dd y\dd s.
\end{align*}
In view of (\ref{eq:assumption for g Lip}), we have
\begin{align*}
 J_{\gamma,\delta}^{h}(t)
&\leq 2\sqrt{D_0}\int^t_0|h(s)|_U\int_{(\mathbb{T}^N)^2}\rho_{\gamma}(x-y)\int_{\mathbb{R}^2}|x-y|\dd \mu^{1}_{x,s}\otimes \mu^{2}_{y,s}(\xi,\zeta) \dd x\dd y\dd s\\
&\quad+ 2\sqrt{D_0}\int^t_0|h(s)|_U\int_{(\mathbb{T}^N)^2}\rho_{\gamma}(x-y)\int_{\mathbb{R}^2}|\xi-\zeta|\dd \mu^{1}_{x,s}\otimes \mu^{2}_{y,s}(\xi,\zeta) \dd x\dd y\dd s\\
&\leq 2\sqrt{D_0}\gamma \int^t_0|h(s)|_U \dd s+2\sqrt{D_0}\int^t_0|h(s)|_U\\
&\qquad\times\int_{(\mathbb{T}^N)^2}\int_{\mathbb{R}}(f^+_1(x,s,\xi)\bar{f}^+_2(y,s,\xi)+\bar{f}^+_1(x,s,\xi)f^+_2(y,s,\xi))\rho_{\gamma}(x-y)\dd\xi \dd x\dd y \dd s.
\end{align*}
Referring to \cite[(3.14)]{DV18} or \cite[(5.16)-(5.17)]{DWZZ20}, we have for all $s\in(0,T)$,
\begin{align}\label{appro-1}
&  \Big|\int_{(\mathbb{T}^N)^2}\int_{\mathbb{R}}\rho_{\gamma}(x-y)(f^+_1(x,s,\xi)\bar{f}^+_2(y,s,\xi)+\bar{f}^+_1(x,s,\xi)f^+_2(y,s,\xi))\dd\xi \dd x\dd y\\ \notag
& \ -\int_{(\mathbb{T}^N)^2}\int_{\mathbb{R}^2}(f^+_1(x,s,\xi)\bar{f}^+_2(y,s,\zeta)+\bar{f}^+_1(x,s,\xi)f^+_2(y,s,\zeta))\rho_{\gamma}(x-y)\kappa_{\delta}(\xi-\zeta)\dd x\dd y\dd\xi \dd\zeta\Big|\\
&\leq 2\delta.\notag
\end{align}
Consequently, the term $J_{\gamma,\delta}^{h}(t)$ can be further estimated as
\begin{align} \label{rs-20}
 J_{\gamma,\delta}^{h}(t)
 &\leq 2\sqrt{D_0}\gamma \int^t_0|h(s)|_U \dd s+ 4 \delta \sqrt{D_0}\int^t_0|h(s)|_U \dd s+2\sqrt{D_0}\int^t_0|h(s)|_U\mathcal{R}(s)\dd s\\
\notag
&\leq 2\sqrt{D_0}\gamma \sqrt{M}\sqrt{T}+4 \delta \sqrt{D_0}\sqrt{M}\sqrt{T}
+ 2\sqrt{D_0}\int^t_0|h(s)|_U\mathcal{R}(s)\dd s.
\end{align}
Collecting all the previous estimates, we arrive at
\begin{align*}\notag
\mathcal{R}(t)
&\leq \int_{(\mathbb{T}^N)^2}\int_{\mathbb{R}^2}(f_1(0)\bar{f}_2(0)+\bar{f}_1(0)f_2(0))\rho_{\gamma}(x-y)\kappa_\delta(\xi-\zeta)\dd x\dd\xi\dd y\dd\zeta \\
&\quad+ C(M,D_0,T,\|\kappa_1\|_{L^{\infty}(\mathbb{R})})\alpha\gamma^{-2}(1+\delta)+R(\gamma,\delta)
+ 2\sqrt{D_0}\int^t_0|h(s)|_U\mathcal{R}(s)\dd s,
\end{align*}
where
\begin{equation}\label{eq:esti for errorR}
R(\gamma,\delta):=C(M,D_0,T)\frac{\gamma^2}{\delta}+C_pT\delta\gamma^{-1}+C(M,D_0,T)(\gamma+\delta).
\end{equation}

Employing Gr\"onwall's inequality, we deduce that
\begin{align}\label{eq:rs-13}
&\int_{(\mathbb{T}^N)^2}\int_{\mathbb{R}^2} (f^+_1(t)\bar{f}^+_2(t)+\bar{f}^+_1(t)f^+_2(t))\rho_{\gamma}(x-y)\kappa_\delta(\xi-\zeta) \dd x\dd\xi\dd y\dd\zeta\\ \notag
&\leq \exp\{2\sqrt{D_0}\sqrt{M}\sqrt{T}\}\bigg[\int_{(\mathbb{T}^N)^2}\int_{\mathbb{R}^2}(f_1(0)\bar{f}_2(0)+\bar{f}_1(0)f_2(0))\rho_{\gamma}(x-y)\kappa_\delta(\xi-\zeta)\dd x\dd\xi \dd y\dd\zeta\\
\notag
&\quad+ C(M,D_0,T,\|\kappa_1\|_{L^{\infty}(\mathbb{R})})\alpha\gamma^{-2}(1+\delta)+R(\gamma,\delta)\bigg].
\end{align}
For any $t\in [0,T]$, define the error term
\begin{align*}
&\mathcal{E}_t(\alpha,\gamma,\delta)\\
&:=\int_{\mathbb{T}^N}\int_{\mathbb{R}}(f^+_1(x,t,\xi)\bar{f}^+_2(x,t,\xi)+\bar{f}^{+}_1(x,t,\xi){f}^+_2(x,t,\xi))\dd x\dd\xi\\
&\quad-\int_{(\mathbb{T}^N)^2}\int_{\mathbb{R}^2}\rho_\gamma (x-y)\kappa_{\delta}(\xi-\zeta)(f^{+}_1(x,t,\xi)\bar{f}^{+}_2(y,t,\zeta)+\bar{f}^{+}_1(x,t,\xi){f}^{+}_2(y,t,\zeta))\dd\xi \dd\zeta \dd x\dd y\\
&= \Big[\int_{\mathbb{T}^N}\int_{\mathbb{R}}(f^{+}_1(x,t,\xi)\bar{f}^{+}_2(x,t,\xi)+\bar{f}^{+}_1(x,t,\xi){f}^{+}_2(x,t,\xi))\dd x\dd\xi\\
& \quad-\int_{(\mathbb{T}^N)^2}\int_{\mathbb{R}}\rho_{\gamma}(x-y)(f^{+}_1(x,t,\xi)\bar{f}^{+}_2(y,t,\xi)+\bar{f}^{+}_1(x,t,\xi){f}^{+}_2(y,t,\xi))\dd\xi \dd x\dd y\Big]\\
& \quad+\Big[\int_{(\mathbb{T}^N)^2}\int_{\mathbb{R}}\rho_{\gamma}(x-y)(f^{+}_1(x,t,\xi)\bar{f}^{+}_2(y,t,\xi)+\bar{f}^{+}_1(x,t,\xi){f}^{+}_2(y,t,\xi))\dd\xi \dd x\dd y\\
& \quad-\int_{(\mathbb{T}^N)^2}\int_{\mathbb{R}^2}\rho_\gamma (x-y)\kappa_{\delta}(\xi-\zeta)(f^{+}_1(x,t,\xi)\bar{f}^{+}_2(y,t,\zeta)+\bar{f}^{+}_1(x,t,\xi){f}^{+}_2(y,t,\zeta))\dd\xi \dd\zeta \dd x\dd y\Big]\\
&=: H_1+H_2.
\end{align*}
The estimate \eqref{appro-1} yields 
\begin{align}\label{eq:rs-14}
|H_2|\leq 2\delta.
\end{align}

For $H_1$, with the definition of $\bar{f}^+_i$, we have
\begin{align*}
H_1&=\int_{(\mathbb{T}^N)^2}\int_{\mathbb{R}}\rho_\gamma(x-y)\big(1-2f^+_1(x,t,\xi)\big)\big[{f}^+_2(x,t,\xi)-{f}^+_2(y,t,\xi)\big]\dd\xi\dd x\dd y.
\end{align*}
Based on the facts
\[
|a-b|\leq a(1-b)+(1-a)b,\qquad \text{for }a,b\in[0,1],
\]
and $0\leq|f_i^+|\leq1$, which derived from $0\leq|f_i|\leq1$ and Proposition \ref{prop:trace-obstacle-scl}, we have
\begin{align*}
|H_1|&\leq\int_{(\mathbb{T}^N)^2}\int_{\mathbb{R}}\rho_\gamma(x-y)\big|{f}^+_2(x,t,\xi)-{f}^+_2(y,t,\xi)\big|\dd\xi\dd x\dd y\\
&\leq\int_{(\mathbb{T}^N)^2}\int_{\mathbb{R}}\rho_\gamma(x-y)\big[{f}^+_2(x,t,\xi)\bar{f}^+_2(y,t,\xi)+\bar{f}^+_2(x,t,\xi){f}^+_2(y,t,\xi)\big]\dd\xi\dd x\dd y.
\end{align*}
Combining a similar proof of \eqref{appro-1}, we obtain
\begin{align*}\notag
|H_1|&\leq 2\delta+ \int_{(\mathbb{T}^N)^2}\int_{\mathbb{R}^2}\rho_{\gamma}(x-y)\kappa_{\delta}(\xi-\zeta)(f^+_2(x,t,\xi)\bar{f}^+_2(y,t,\zeta)+\bar{f}^+_2(x,t,\xi)f^+_2(y,t,\zeta))\dd\xi \dd\zeta \dd x\dd y.
\end{align*}
Similar to \eqref{eq:rs-13} ($\alpha=0$), we have
\begin{align}\notag
&\int_{(\mathbb{T}^N)^2}\int_{\mathbb{R}^2} (f^+_2(t)\bar{f}^+_2(t)+\bar{f}^+_2(t)f^+_2(t))\rho_{\gamma}(x-y)\kappa_\delta(\xi-\zeta)\dd x\dd\xi\dd y\dd\zeta \\ \notag
&\leq \exp\{2\sqrt{D_0}\sqrt{M}\sqrt{T}\}\Big(\mathcal{E}_0(\gamma,\delta)
+ R(\gamma,\delta)\Big),
\end{align}
where
\[\mathcal{E}_0(\gamma,\delta):=
\int_{(\mathbb{T}^N)^2}\int_{\mathbb{R}^2}(f_1(0)\bar{f}_2(0)+\bar{f}_1(0)f_2(0))\rho_{\gamma}(x-y)\kappa_\delta(\xi-\zeta)\dd x\dd\xi\dd y\dd\zeta,
\]
which will vanish as $\gamma,\delta\downarrow0$ using the continuity of translation in $L^1(\mathbb{T}^N)$.
Hence, we reach
\begin{align}\label{rs-16}
|H_1|&\leq 2\delta+ \exp\{2\sqrt{D_0}\sqrt{M}\sqrt{T}\}\Big(\mathcal{E}_0(\gamma,\delta)
+R(\gamma,\delta)\Big).
\end{align}
Combining \eqref{eq:rs-14} and \eqref{rs-16}, we have
\begin{align}\label{eq:rs-17}
\mathcal{E}_t(\alpha,\gamma,\delta)&\leq 4\delta+ \exp\{2\sqrt{D_0}\sqrt{M}\sqrt{T}\}\Big(\mathcal{E}_0(\gamma,\delta)
+ R(\gamma,\delta)\Big).
\end{align}

Returning to \eqref{eq:rs-13} and using Proposition \ref{prop:trace-obstacle-scl}, we have
\begin{align*}\notag
&\|u^{h}_{\alpha}(t)-u^{h}(t)\|_{L^1(\mathbb{T}^N)}\\
&\leq \mathcal{E}_t(\alpha,\gamma,\delta)+ \int_{(\mathbb{T}^N)^2}\int_{\mathbb{R}^2} (f^+_1(t)\bar{f}^+_2(t)+\bar{f}^+_1(t)f^+_2(t))\rho_{\gamma}(x-y)\kappa_\delta(\xi-\zeta)\dd x\dd\xi\dd y\dd\zeta \\ \notag
&\leq 4\delta+2\exp\{2\sqrt{D_0}\sqrt{M}\sqrt{T}\} \Big(\mathcal{E}_0(\gamma,\delta)+C(M,D_0,T,\|\kappa_1\|_{L^{\infty}(\mathbb{R})})\alpha\gamma^{-2}(1+\delta)+R(\gamma,\delta)\Big).
\end{align*}
Note that the right hand side is uniformly bounded on $h\in S_M$.  
With the definition of $R(\gamma,\delta)$ in \eqref{eq:esti for errorR}, let $\gamma=\alpha^{\frac{1}{3}}$ and $\delta=\gamma^{\frac{3}{2}}$. 
Taking $\alpha\rightarrow 0$, we get
\begin{align*}
\lim_{\alpha\rightarrow 0}\sup_{h\in S_M}\int^T_0\|u^{h}_{\alpha}(t)-u^{h}(t)\|_{L^1(\mathbb{T}^N)}\dd t=0,
\end{align*}
which is the desired result (\ref{rs-18}).
\end{proof}

\subsection{The continuity of the viscous skeleton equation}
This section concerns the weak--strong continuity of the viscous skeleton equation \eqref{eq:skeleton-obstacle-problem-viscous}. In the sequel, we fix $\alpha>0$.
Our strategy relies on the following Aubin--Lions--Simon compactness criterion, whose proof can be found in \cite{Simon1987}.

\begin{lem}[Aubin--Lions--Simon compactness criterion]
\label{lem:simon-compactness}
Let
\[
B_0\subset B\hookrightarrow B_1
\]
be three Banach spaces, where the first embedding is compact and the second one is continuous.
For $1\leq p<\infty$, let $\mathcal{G}$ be a bounded subset of $L^p(0,T;B_0)$ such that
\[
\lim_{\tau\downarrow0}\sup_{u\in \mathcal{G}}\Vert u(\cdot+\tau)-u(\cdot)\Vert _{L^p(0,T-\tau;B_1)}=0.
\]
Then, $\mathcal{G}$ is relatively compact in $L^p(0,T;B)$.
\end{lem}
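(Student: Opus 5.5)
The statement is the classical Aubin--Lions--Simon lemma, so the plan is to reproduce Simon's argument in three steps: an interpolation (Ehrling) inequality, compactness of time averages in $B$, and closure of the estimate by the uniform time-translation bound.

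\emph{Ehrling-type inequality.} For every $\eta>0$ there is $C_\eta$ with
\[
\Vert v\Vert_B\leq\eta\Vert v\Vert_{B_0}+C_\eta\Vert v\Vert_{B_1}\qquad\text{for all }v\in B_0 .
\]
I prove this by contradiction. If it fails, there are $v_k\in B_0$ with $\Vert v_k\Vert_{B_0}=1$ and $\Vert v_k\Vert_B>\eta+k\Vert v_k\Vert_{B_1}$. Compactness of $B_0\hookrightarrow B$ gives a subsequence with $v_k\to v$ in $B$, and $\Vert v\Vert_B\geq\eta$. On the other hand $\Vert v_k\Vert_{B_1}\leq C/k\to0$, and continuity of $B\hookrightarrow B_1$ forces $v=0$, a contradiction.

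\emph{Time averages.} For $a>0$ and $u\in\mathcal G$ set $(M_au)(t):=\frac1a\int_t^{t+a}u(s)\,\dd s$ on $[0,T-a]$. By H\"older, $\Vert (M_au)(t)\Vert_{B_0}\leq a^{-1/p}\Vert u\Vert_{L^p(0,T;B_0)}$, uniformly in $t$ and $u$. Hence for each fixed $t$ the set $\{(M_au)(t):u\in\mathcal G\}$ is bounded in $B_0$, so relatively compact in $B$. Moreover $(M_au)(t_2)-(M_au)(t_1)$ is an integral of $u$ over two intervals of length at most $|t_2-t_1|$, so its $B_0$-norm is at most $2a^{-1}|t_2-t_1|^{1-1/p}\Vert u\Vert_{L^p(0,T;B_0)}$. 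The family is therefore equicontinuous into $B_0\hookrightarrow B$. Arzel\`a--Ascoli then makes $\{M_au\}$ relatively compact in $C([0,T-a];B)$, hence in $L^p(0,T-a;B)$.

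\emph{Approximation and conclusion.} Minkowski's inequality gives
\[
\Vert u-M_au\Vert_{L^p(0,T-a;B_1)}\leq\sup_{0<\tau\leq a}\Vert u(\cdot+\tau)-u\Vert_{L^p(0,T-\tau;B_1)} ,
\]
which tends to $0$ uniformly over $\mathcal G$ as $a\downarrow0$ by hypothesis. I apply the Ehrling inequality pointwise in time and integrate. This gives
\[
\Vert u-M_au\Vert_{L^p(0,T-a;B)}\leq2\eta\sup_{\mathcal G}\Vert u\Vert_{L^p(B_0)}+C_\eta\,o(1) .
\]
Choosing first $\eta$ small and then $a$ small, $\mathcal G$ restricted to $(0,T-a)$ lies within $\varepsilon$ of a relatively compact set in $L^p(0,T-a;B)$, so it is totally bounded there.

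The main obstacle is the endpoint layer $(T-a,T)$, where $M_au$ is not defined. I would handle it by repeating the same construction with backward averages $\frac1a\int_{t-a}^tu$ on $(a,T)$, which gives total boundedness on $(a,T)$. The two pieces are combined by gluing at any interior time, for instance by covering $[0,T]$ with $[0,T/2]$ and $[T/2,T]$ once $a<T/2$. Total boundedness on both overlapping subintervals yields total boundedness in $L^p(0,T;B)$. Since $L^p(0,T;B)$ is complete, $\mathcal G$ is relatively compact there.
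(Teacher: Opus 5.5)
The paper does not prove this lemma; it only cites \cite{Simon1987}. Your architecture is Simon's: the Ehrling inequality, forward averages $M_au$ with Arzel\`a--Ascoli, backward averages for the layer near $T$, and gluing on $[0,T/2]\cup[T/2,T]$. For $1<p<\infty$ your argument is complete, and this covers the case $p=2$ used in Proposition~\ref{prop:weak-continuity-viscous-skeleton-obstacle}.

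There is, however, a genuine gap in the equicontinuity step at $p=1$, which the statement explicitly allows. At $p=1$ your bound $2a^{-1}|t_2-t_1|^{1-1/p}\Vert u\Vert_{L^p(0,T;B_0)}$ collapses to the constant $2a^{-1}\Vert u\Vert_{L^1(0,T;B_0)}$, which does not tend to zero. No sharper use of the $L^1(0,T;B_0)$ bound can rescue this. Take $v\in B_0\setminus\{0\}$ and $u_n=n\one_{[T/2,T/2+1/n]}v$. These functions are bounded in $L^1(0,T;B_0)$, yet $M_au_n$ rises from $0$ to $v/a$ on a time interval of length $1/n$. So $\{M_au_n\}$ is equicontinuous in none of the three norms, and equicontinuity has to come from the time-translation hypothesis.

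The repair is Simon's own step. For $0\leq t<t+h\leq T-a$, one has $[t,t+a]\subset[0,T-h]$, so
\[
\Vert M_au(t+h)-M_au(t)\Vert_{B_1}\leq\frac1a\int_t^{t+a}\Vert u(s+h)-u(s)\Vert_{B_1}\,\dd s\leq a^{-1/p}\sup_{w\in\mathcal G}\Vert w(\cdot+h)-w\Vert_{L^p(0,T-h;B_1)} .
\]
At the same time, $\Vert M_au(t+h)-M_au(t)\Vert_{B_0}\leq 2a^{-1/p}\sup_{w\in\mathcal G}\Vert w\Vert_{L^p(0,T;B_0)}$. Insert both bounds into your Ehrling inequality, keep $a$ fixed, and choose first $\eta$ and then $h$ small. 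This gives equicontinuity of $\{M_au:u\in\mathcal G\}$ in $B$, uniformly over $\mathcal G$, for every $p\in[1,\infty)$. The backward averages are handled the same way.

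Alternatively, you can replace $M_a$ by $M_a\circ M_a$. Its time derivative $a^{-1}\bigl(M_au(t+a)-M_au(t)\bigr)$ is bounded in $B_0$ by $2a^{-1-1/p}\sup_{\mathcal G}\Vert u\Vert_{L^p(0,T;B_0)}$, so the family is uniformly Lipschitz.

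With either change, the rest of your proof goes through as written: the Minkowski estimate for $u-M_au$ in $B_1$, the interpolation into $B$, total boundedness on both subintervals, and completeness of $L^p(0,T;B)$.
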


The following result establishes the weak--strong continuity of the viscous skeleton equation.
\begin{prop}
\label{prop:weak-continuity-viscous-skeleton-obstacle} Assume \textbf{Hypotheses (H1)} and \textbf{(H2)$'$} hold.
Let $\{h^\varepsilon\}_{\varepsilon>0}\subset S_M$ and $h\in S_M$ satisfy
\[
h^\varepsilon\rightharpoonup h
\qquad
\text{weakly\ in } L^2(0,T;U),\quad \text{as }\ \varepsilon\rightarrow 0.
\]
Then, for any $\alpha>0$, the kinetic solutions $u_\alpha^{h^\varepsilon}$ and $u_\alpha^h$ of \eqref{eq:skeleton-obstacle-problem-viscous} with controls $h^{\varepsilon}$ and $h$, and with
initial data $u^0$ satisfy that, as $\varepsilon\rightarrow 0$,
\[
u_\alpha^{h^\varepsilon}\rightarrow u_\alpha^h\qquad\text{strongly in }L^2(Q_T).
\]
\end{prop}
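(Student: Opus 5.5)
Fix $\alpha>0$ and write $u_\varepsilon:=u_\alpha^{h^\varepsilon}$, $\nu_\varepsilon:=\nu_\alpha^{h^\varepsilon}$, $q_\varepsilon:=q_\alpha^{h^\varepsilon}$. The plan is a compactness-plus-uniqueness argument: show $\{u_\varepsilon\}$ is relatively compact in $L^2(Q_T)$, identify every limit point as the kinetic solution $u_\alpha^h$ of \eqref{eq:skeleton-obstacle-problem-viscous}, and conclude by uniqueness that the whole family converges.

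\textbf{Compactness.} We apply Lemma \ref{lem:simon-compactness} with $B_0=H^1(\mathbb T^N)$, $B=L^2(\mathbb T^N)$, $B_1=H^{-m}(\mathbb T^N)$ and $p=2$, where $m>N/2+1$. This choice of $m$ gives $H^{m}\hookrightarrow C^1$, so measures act continuously on $H^m$.

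By \eqref{P-6-vicous}, $\{u_\varepsilon\}$ is bounded in $L^2(0,T;H^1)$ uniformly in $\varepsilon$, since $h^\varepsilon\in S_M$.

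For the time translates, test the equation with $\phi\in H^m$, $\|\phi\|_{H^m}\le1$. Here the kinetic formulation with $\varphi=\phi(x)\theta_R(\xi)$ and $R\to\infty$ recovers the weak form, using that $q_\varepsilon$ vanishes at infinity uniformly via \eqref{eq:skeleton-moment-bound-viscous}. This gives
\[
\langle u_\varepsilon(t+\tau)-u_\varepsilon(t),\phi\rangle=\int_t^{t+\tau}\!\!\int\Big(A(u_\varepsilon)\cdot\nabla\phi+\alpha u_\varepsilon\Delta\phi+\textstyle\sum_k g_k(x,u_\varepsilon)h^\varepsilon_k\phi\Big)+\int_{\mathbb T^N\times(t,t+\tau]}\phi\,\dd\nu_\varepsilon .
\]
The Lebesgue integrals are bounded by $C\tau^{1/2}$. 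This uses the polynomial growth of $A$, the $L^p$ bounds, and Cauchy--Schwarz in time for the control term.

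The reflection term is the delicate one. It is bounded by $\|\phi\|_\infty\,\nu_\varepsilon(\mathbb T^N\times(t,t+\tau])$, which need not be small pointwise in $t$ because $\nu_\varepsilon$ may have atoms. Integrating over $t$, however, Fubini gives
\[
\int_0^{T-\tau}\nu_\varepsilon(\mathbb T^N\times(t,t+\tau])\,\dd t\le\tau\,\nu_\varepsilon(\mathbb T^N\times[0,T))\le C\tau
\]
by \eqref{eq:skeleton-moment-bound-viscous}. Each integrand is bounded by $C$, so the translation estimate in $L^2(0,T-\tau;H^{-m})$ is of order $C\tau^{1/2}$ uniformly in $\varepsilon$. (Working with the $L^1$-in-time version and interpolating is an alternative.) Lemma \ref{lem:simon-compactness} then yields a subsequence with $u_\varepsilon\to u$ in $L^2(Q_T)$, and also a.e.

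\textbf{Identifying the limit.} By the uniform mass bounds we extract further subsequences with $\nu_\varepsilon\stackrel{*}{\rightharpoonup}\nu$ and $q_\varepsilon\stackrel{*}{\rightharpoonup}q$. The limit $q$ vanishes at infinity by the weighted bound, and $u\ge\psi$ a.e. Strong convergence gives $f_\varepsilon\to f=\one_{u>\xi}$ in $L^1_{loc}$, which handles the transport and viscous terms. The $\nu$ term passes to the limit because $\varphi(x,t,\psi(x,t))$ is continuous.

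The control term is the key point. We write
\[
g_k(u_\varepsilon)h^\varepsilon_k=\big(g_k(u_\varepsilon)-g_k(u)\big)h^\varepsilon_k+g_k(u)h^\varepsilon_k .
\]
The first piece tends to $0$ by \eqref{eq:assumption for g Lip}, the strong convergence, and $\|h^\varepsilon\|\le\sqrt M$. The second converges by weak convergence of $h^\varepsilon$ in $L^2(0,T;U)$, tested against the fixed element $t\mapsto\big(\int g_k(x,u)\varphi(x,t,u)\,\dd x\big)_k\in L^2(0,T;U)$.

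Since the weak $H^1$ limit is $u$, the lower semicontinuity \eqref{eq:parabolic-defect-liminf} gives $q\ge o^h_\alpha$. The moment and $H^1$ bounds pass to the limit by Fatou.

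\textbf{The initial trace.} We expect this to be the main obstacle, since the time-translate estimate does not control $t=0$. We will redo the argument of Lemma \ref{lem:weak-initial-condition} directly on the limit pair $(u,\nu)$, whose only inputs are the kinetic formulation, the compatibility $u^0\ge\psi(\cdot,0)$ and finiteness of the measures. This argument also absorbs the extra viscous term, which vanishes as $\tau\downarrow0$ exactly like $I_A$.

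\textbf{Conclusion.} Hence $(u,\nu)$ solves \eqref{eq:skeleton-obstacle-problem-viscous} with control $h$, and uniqueness gives $u=u_\alpha^h$. Since every subsequence has a further subsequence converging to this same limit, the whole family converges, $u_\varepsilon\to u_\alpha^h$ in $L^2(Q_T)$.
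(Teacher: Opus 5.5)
Your proposal is correct and follows essentially the same route as the paper's proof. The shared steps are: Aubin--Lions--Simon compactness with $H^1\subset\subset L^2\hookrightarrow H^{-r}$; a Fubini-type time-translation bound that absorbs possible atoms of $\nu_\varepsilon$ through its uniform total mass; passage to the limit in the kinetic formulation, pairing strong convergence of $u_\varepsilon$ with weak convergence of $h^\varepsilon$ in the control term; the initial trace via the argument of Lemma~\ref{lem:weak-initial-condition}; and uniqueness to upgrade subsequential to full convergence. The only difference is your smaller index $m>N/2+1$ (the paper takes $r>N/2+2$). For $N=1$ this means the viscous term must be handled by integrating by parts against the $L^2(0,T;H^1)$ bound rather than by bounding $\Delta\phi$ in $L^2$, which is harmless either way.
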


\begin{proof}
Since $\alpha>0$ is fixed, for simplicity, we set
\[
u_\varepsilon:=u_\alpha^{h^\varepsilon},\qquad\nu_\varepsilon:=\nu_\alpha^{h^\varepsilon},\qquad q_\varepsilon:=q_\alpha^{h^\varepsilon}.
\]
Our goal is to show that for any sequence $\varepsilon_j\downarrow0$, there exists a subsequence (still denoted by $\varepsilon$), such that
\[
u_\varepsilon\to u_\alpha^h\qquad\text{strongly in }L^2(Q_T).
\]
Our proof proceeds in two steps. 
First, we establish that the family $\{u_\varepsilon\}_{\varepsilon>0}$ is relatively compact in $L^2(0,T;L^2(\mathbb T^N))$. 
Second, we verify that the limit of any such subsequence coincides precisely with $u^h_{\alpha}$.

\noindent \textbf{Step 1: }\quad Let $\chi_\varepsilon$ be the modification of the kinetic function $\one_{\{\xi<u_{\varepsilon}(x,t)\}}$, that is,
\[
\chi_\varepsilon(x,t,\xi):=\one_{\{\xi<u_{\varepsilon}(x,t)\}}-\one_{\{\xi<0\}}=\one_{\{0<\xi<u_\varepsilon(x,t)\}}-\one_{\{u_\varepsilon(x,t)<\xi<0\}}.
\]
For any $h\in S_M$, define
\[
G_h(x,t,\xi):=\sum_{k\geq1}g_k(x,\xi)h_k(t).
\]
From the kinetic formulation \eqref{eq:kinetic-skeleton-obstacle-viscous} for $u_\varepsilon$, we have for every $\varphi\in C_c^\infty(\mathbb T^N\times[0,T)\times\mathbb R)$,
\[
\begin{aligned}
&\int_0^T
\langle \chi_\varepsilon(t),\partial_t\varphi(t)\rangle\dd t+\langle \chi^0,\varphi(0)\rangle+\int_0^T\langle \chi_\varepsilon(t),a(\xi)\cdot\nabla_x\varphi(t)\rangle\dd t
+\alpha\int_0^T\langle \chi_\varepsilon(t),\Delta_x\varphi(t)\rangle\dd t\\
&=-\int_0^T\int_{\mathbb T^N}G_{h^\varepsilon}(x,t,u_\varepsilon(x,t))\varphi(x,t,u_\varepsilon(x,t))\dd x\dd t\\
&\quad+q_\varepsilon(\partial_\xi\varphi)-\int_{Q_T}\varphi(x,t,\psi(x,t))\dd\nu_\varepsilon(x,t).
\end{aligned}
\]

Choose $\varrho\in C_c^\infty(\mathbb R)$ such that
\[
0\leq\varrho\leq1,\qquad\varrho(\xi)=1\ \text{for }|\xi|\leq1,\qquad\varrho(\xi)=0\ \text{for }|\xi|\geq2,
\]
and define
\[
\varrho_R(\xi):=\varrho(\xi/R),\qquad R>1.
\]
Taking the special test function $\varphi_R(x,t,\xi)=\theta(x)\zeta(t)\varrho_R(\xi)$ with $\theta\in C^\infty(\mathbb T^N)$ and $\zeta\in C_c^1([0,T))$ in the above kinetic formulation, we deduce that
\[
\begin{aligned}
&\int_0^T\int_{\mathbb T^N}\int_{\mathbb R}\chi_\varepsilon(x,t,\xi)\theta(x)\zeta'(t)\varrho_R(\xi)\dd\xi\dd x\dd t+\int_{\mathbb T^N}\int_{\mathbb R}\chi^0(x,\xi)\theta(x)\zeta(0)\varrho_R(\xi)\dd\xi\dd x\\
&\quad+\int_0^T\int_{\mathbb T^N}\int_{\mathbb R}\chi_\varepsilon(x,t,\xi)a(\xi)\cdot\nabla_x\theta(x)\zeta(t)\varrho_R(\xi)\dd\xi\dd x\dd t\\
&\quad+\alpha\int_0^T\int_{\mathbb T^N}\int_{\mathbb R}\chi_\varepsilon(x,t,\xi)\Delta_x\theta(x)\zeta(t)\varrho_R(\xi)\dd\xi\dd x\dd t\\
&=-\int_0^T\int_{\mathbb T^N}G_{h^\varepsilon}(x,t,u_\varepsilon(x,t))\theta(x)\zeta(t)\varrho_R(u_\varepsilon(x,t))\dd x\dd t\\
&\quad+q_\varepsilon\left(\theta(x)\zeta(t)\varrho_R'(\xi)\right)-\int_{Q_T}\theta(x)\zeta(t)\varrho_R(\psi(x,t))\dd\nu_\varepsilon(x,t).
\end{aligned}
\]
Letting $R\to\infty$, and then using the dominated convergence theorem and vanishing property at infinity of $q_\varepsilon$, we obtain
\[
\begin{aligned}
&\int_0^T\bigg(\int_{\mathbb T^N}u_\varepsilon(x,t)\theta(x)\dd x\bigg)\zeta'(t)\dd t+\bigg(\int_{\mathbb T^N}u^0(x)\theta(x)\dd x\bigg)\zeta(0)\\
&\quad+\int_0^T\bigg(\int_{\mathbb T^N}A(u_\varepsilon(x,t))\cdot\nabla_x\theta(x)\dd x\bigg)\zeta(t)\dd t\\
&\quad+\alpha\int_0^T\bigg(\int_{\mathbb T^N}u_\varepsilon(x,t)\Delta_x\theta(x)\dd x\bigg)\zeta(t)\dd t\\
&=-\int_0^T\bigg(\int_{\mathbb T^N}G_{h^\varepsilon}(x,t,u_\varepsilon(x,t))\theta(x)\dd x\bigg)\zeta(t)\dd t-\int_{Q_T}\theta(x)\zeta(t)\dd\nu_\varepsilon(x,t).
\end{aligned}
\]

Define the scalar function
\[
F_\varepsilon^\theta(t):=\int_{\mathbb T^N}u_\varepsilon(x,t)\theta(x)\dd x,
\]
and the finite signed measure $\tilde\mu_\varepsilon^\theta$ on $[0,T]$ by
\[
\begin{aligned}
\tilde\mu_\varepsilon^\theta(E)&:=\int_E\int_{\mathbb T^N}A(u_\varepsilon(x,t))\cdot\nabla_x\theta(x)\dd x\dd t+\alpha\int_E\int_{\mathbb T^N}u_\varepsilon(x,t)\Delta_x\theta(x)\dd x\dd t\\
&\quad+\int_E\int_{\mathbb T^N}G_{h^\varepsilon}(x,t,u_\varepsilon(x,t))\theta(x)\dd x\dd t+\int_{\mathbb T^N\times E}\theta(x)\dd\nu_\varepsilon(x,t).
\end{aligned}
\]
For notational convenience, we extend $\nu_\varepsilon$ to $\mathbb T^N\times[0,T]$ by setting
\[
\nu_\varepsilon(\mathbb T^N\times\{T\})=0.
\]
Then, we have
\[
\int_0^T F_\varepsilon^\theta(t)\zeta'(t)\dd t+F_\varepsilon^\theta(0)\zeta(0)=-\int_{[0,T)}\zeta(t)\dd\tilde\mu_\varepsilon^\theta(t).
\]
Thus, in the sense of distributions on $[0,T)$, we have
\[
\dd F_\varepsilon^\theta=\dd\tilde\mu_\varepsilon^\theta.
\]
Hence, $F_\varepsilon^\theta$ admits a right-continuous representative $F_{\varepsilon}^{\theta,+}$ on $[0,T)$ and a left-continuous representative $F_{\varepsilon}^{\theta,-}$ on $(0,T]$ such that, for every $0\leq s<t\leq T$,
\begin{align}\label{rs-27}
 F_{\varepsilon}^{\theta,+}(t)-F_{\varepsilon}^{\theta,+}(s)=\tilde\mu_\varepsilon^\theta((s,t]),\quad\text{and}\quad F_{\varepsilon}^{\theta,-}(t)-F_{\varepsilon}^{\theta,-}(s)=\tilde\mu_\varepsilon^\theta([s,t)). \end{align}

With the density of $\theta$ in $H^r(\mathbb{T}^N)$ for $r>N/2+2$, the solution $u_\varepsilon$ has right-continuous and left-continuous representatives in the space
\[
Y:=H^{-r}(\mathbb T^N),\qquad r>N/2+2.
\]
In the following estimate, we use the right-continuous representative and still denote it by $u_\varepsilon$.
Let $\theta\in H^r(\mathbb T^N)$ with $\Vert \theta\Vert _{H^r(\mathbb T^N)}\leq1$.
Since $r>N/2+2$, we have $\Vert \theta\Vert _{C^2(\mathbb T^N)}\leq C$.

From the first identity of \eqref{rs-27}, for $0\leq s<t\leq T$, we have
\begin{align}\notag
|\langle u_\varepsilon(t)-u_\varepsilon(s),\theta\rangle|&\leq C\int_s^t\Vert A(u_\varepsilon(\ell))\Vert _{L^1(\mathbb T^N)}\dd\ell+\alpha C\int_s^t\Vert u_\varepsilon(\ell)\Vert _{L^1(\mathbb T^N)}\dd\ell\\
\label{rs-32}
&\quad+C\int_s^t\Vert G_{h^\varepsilon}(\ell,u_\varepsilon(\ell))\Vert _{L^1(\mathbb T^N)}\dd\ell+C\nu_\varepsilon(\mathbb T^N\times(s,t]).
\end{align}

Define the finite positive measure $\Lambda_\varepsilon$ on $[0,T]$ by
\[d
\begin{aligned}
\Lambda_\varepsilon(E)&:=\int_E\Big(1+\Vert A(u_\varepsilon(t))\Vert _{L^1(\mathbb T^N)}+\alpha\Vert u_\varepsilon(t)\Vert _{L^1(\mathbb T^N)}\Big)\dd t\\
&\quad+\int_E\Vert G_{h^\varepsilon}(t,u_\varepsilon(t))\Vert _{L^1(\mathbb T^N)}\dd t+\nu_\varepsilon(\mathbb T^N\times E).
\end{aligned}
\]
Taking the supremum over all $\theta\in H^r(\mathbb{T}^N)$ in \eqref{rs-32}, we have
\begin{align}\label{rs-28}
\Vert u_\varepsilon(t)-u_\varepsilon(s)\Vert _Y\leq C\Lambda_\varepsilon((s,t]).
\end{align}

By \eqref{eq:assumption for g} and \eqref{P-6-vicous}, we have
\[
\begin{aligned}
&\int_0^T\Vert G_{h^\varepsilon}(t,u_\varepsilon(t))\Vert _{L^1(\mathbb T^N)}\dd t\\
&\leq C\int_0^T|h^\varepsilon(t)|_U\big(1+\Vert u_\varepsilon(t)\Vert _{L^1(\mathbb T^N)}\big)\dd t\\
&\leq C\Vert h^\varepsilon\Vert _{L^2(0,T;U)}\big(1+\Vert u_\varepsilon\Vert _{L^2(Q_T)}\big)\\
&\leq C_{\alpha,M}.
\end{aligned}
\]
This, together with \eqref{P-6-vicous} and \eqref{eq:skeleton-moment-bound-viscous}, yields
\begin{align}\label{rs-29}
\sup_\varepsilon\Lambda_\varepsilon([0,T])\leq C_{\alpha,M}.
\end{align}

For any $0<\tau<T$,
integrating $t$ over $[0,T-\tau]$, by \eqref{rs-28}, we have
\begin{align*}
\int_0^{T-\tau}\Vert u_\varepsilon(t+\tau)-u_\varepsilon(t)\Vert _Y^2\dd t&\leq  C\int_0^{T-\tau}\left[\Lambda_\varepsilon((t,t+\tau])\right]^2\dd t\\
&\leq C\Lambda_\varepsilon([0,T])\int_0^{T-\tau}\Lambda_\varepsilon((t,t+\tau])\dd t.
\end{align*}
By Fubini's theorem, we have
\[
\begin{aligned}
\int_0^{T-\tau}\Lambda_\varepsilon((t,t+\tau])\dd t&=\int_0^{T-\tau}\int_{[0,T]}\one_{\{t<r\leq t+\tau\}}\dd\Lambda_\varepsilon(r)\dd t\\
&=\int_{[0,T]}\bigg(\int_0^{T-\tau}\one_{\{t<r\leq t+\tau\}}\dd t\bigg)\dd\Lambda_\varepsilon(r)\\
&=\int_{[0,T]}\bigg(\int_0^{T-\tau}\one_{\{r-\tau\leq t\leq r\}}\dd t\bigg)\dd\Lambda_\varepsilon(r)\\
&\leq\tau\Lambda_\varepsilon([0,T]).
\end{aligned}
\]
Then, we reach
\[
\int_0^{T-\tau}\Vert u_\varepsilon(t+\tau)-u_\varepsilon(t)\Vert _Y^2\dd t\leq C\tau[\Lambda_\varepsilon([0,T])]^2.
\]
Thus, it follows from \eqref{rs-29} that
\[
\sup_\varepsilon\Vert u_\varepsilon(\cdot+\tau)-u_\varepsilon(\cdot)\Vert _{L^2(0,T-\tau;Y)}\leq C_{\alpha,M}\tau^{1/2}\rightarrow0,\qquad\text{as }\tau\downarrow0.
\]

Now, we can apply Lemma \ref{lem:simon-compactness} with
\[
B_0=H^1(\mathbb T^N),\qquad B=L^2(\mathbb T^N), \qquad B_1=H^{-r}(\mathbb T^N).
\]
Since
\[
H^1(\mathbb T^N)\subset\subset L^2(\mathbb T^N)  \hookrightarrow H^{-r}(\mathbb T^N),
\]
and
\[
\sup_\varepsilon\Vert u_\varepsilon\Vert _{L^2(0,T;H^1(\mathbb T^N))}\leq C_{\alpha,M},
\]
Lemma \ref{lem:simon-compactness} implies that the family $\{u_\varepsilon\}_{\varepsilon>0}$ is relatively compact in $L^2(0,T;L^2(\mathbb T^N))$.

\noindent \textbf{Step 2:}\quad
From Step 1, it follows that, up to a subsequence, there exists $v\in L^2(0,T;H^1(\mathbb{T}^N))$ such that
\begin{align}\label{rs-30}
  u_\varepsilon\to v\qquad\text{strongly in }L^2(Q_T).
\end{align}
Since
\[
u_\varepsilon\geq\psi\qquad\text{a.e. in }Q_T,
\]
we also have
\[
v\geq\psi\qquad\text{a.e. in }Q_T.
\]

As in \cite[Subsection 4.1.2]{DV10-publish}, using the Banach--Alaoglu theorem, a diagonal argument, and the uniform estimate \eqref{eq:skeleton-moment-bound-viscous} on the obstacle measures, there exists a nonnegative finite Radon measure $\nu$ on $\mathbb{T}^N\times[0,T)$ such that, up to a subsequence,
\[
\nu_\varepsilon\overset{\ast}{\rightharpoonup}\nu\qquad\text{in }\mathcal M(\mathbb{T}^N\times[0,T)).
\]
Similarly, there exists a kinetic measure $q$ such that, up to a subsequence,
\[
q_\varepsilon\overset{\ast}{\rightharpoonup} q\qquad\text{in }\mathcal M(\mathbb T^N\times[0,T]\times\mathbb R).
\]
In what follows, we will verify that
 the limit $(v,\nu,q)$ is a kinetic solution of \eqref{eq:skeleton-obstacle-problem-viscous} with control $h$.
 
The weak initial trace follows by applying the argument of Lemma \ref{lem:weak-initial-condition} with the deterministic control term included. Moreover, the weak convergence $\nabla u_\varepsilon\rightharpoonup\nabla v$ in $L^2(Q_T)$ and the lower-semicontinuity argument in \eqref{eq:parabolic-defect-liminf} yield
\[
q\geq\alpha|\nabla v|^2\delta_v(\dd\xi)\,\dd x\,\dd t.
\]
Moreover, 
from \eqref{eq:kinetic-skeleton-obstacle-viscous}, the kinetic function $f_\varepsilon(x,t,\xi)=\one_{\{u_\varepsilon(x,t)>\xi\}}$ satisfies,
for any $\varphi\in C_c^\infty(\mathbb T^N\times[0,T)\times\mathbb R)$,
\[
\begin{aligned}
&\int_0^T
\langle f_\varepsilon(t),\partial_t\varphi(t)\rangle\dd t+\langle f^0,\varphi(0)\rangle+\int_0^T\langle f_\varepsilon(t),a(\xi)\cdot\nabla_x\varphi(t)\rangle\dd t\\
&\quad+\alpha\int_0^T\langle f_\varepsilon(t),\Delta_x\varphi(t)\rangle\dd t\\
&=-\int_0^T\int_{\mathbb T^N}G_{h^\varepsilon}(x,t,u_\varepsilon(x,t))\varphi(x,t,u_\varepsilon(x,t))\dd x\dd t\\
&\quad+q_\varepsilon(\partial_\xi\varphi)-\int_{Q_T}\varphi(x,t,\psi(x,t))\dd\nu_\varepsilon(x,t).
\end{aligned}
\]
Define $f:=\one_{\{v(x,t)>\xi\}}$. Note that
\[
\int_{\mathbb{R}}|f_\varepsilon(x,t,\xi)-f(x,t,\xi)|\dd\xi\leq|u_\varepsilon(x,t)-v(x,t)|.
\]
Therefore, \eqref{rs-30} yields
\[
f_\varepsilon\to f \qquad\text{strongly in }L^1(Q_T\times\mathbb{R}).
\]
Then, the terms containing $f_\varepsilon$ converge to the corresponding terms containing $f$.
Moreover, from the convergence of $\nu_\varepsilon$ and $q_\varepsilon$, we have
\[
q_\varepsilon(\partial_\xi\varphi)\to q(\partial_\xi\varphi),
\]
and
\[
\int_{Q_T}\varphi(x,t,\psi(x,t))\dd\nu_\varepsilon(x,t)\to\int_{Q_T}\varphi(x,t,\psi(x,t))\dd\nu(x,t).
\]
It remains to pass to the limit in the control term.
Define
\[
\Gamma_\varepsilon(t):=\bigg(\int_{\mathbb T^N}g_k(x,u_\varepsilon(x,t))\varphi(x,t,u_\varepsilon(x,t))\dd x\bigg)_{k\geq1},
\]
and
\[
\Gamma(t):=\bigg(\int_{\mathbb T^N}g_k(x,v(x,t))\varphi(x,t,v(x,t))\dd x\bigg)_{k\geq1}.
\]
Then
\[
\int_0^T\int_{\mathbb T^N}G_{h^\varepsilon}(x,t,u_\varepsilon)\varphi(x,t,u_\varepsilon)\dd x\dd t=\int_0^T\langle h^\varepsilon(t),\Gamma_\varepsilon(t)\rangle_U\dd t.
\]

We now prove that
\[
\Gamma_\varepsilon\to\Gamma\qquad\text{strongly in }L^2(0,T;U).
\]
Since $\varphi$ is smooth and compactly supported in $\xi$, by (\ref{eq:assumption for g Lip}),
there exists $C_\varphi>0$ such that
\[
\Bigg(\sum_{k\geq1}|g_k(x,\xi)\varphi(x,t,\xi)-g_k(x,\zeta)\varphi(x,t,\zeta)|^2\Bigg)^{1/2}\leq C_\varphi|\xi-\zeta|.
\]
Therefore, we have
\[
|\Gamma_\varepsilon(t)-\Gamma(t)|_U\leq C_\varphi\int_{\mathbb T^N}|u_\varepsilon(x,t)-v(x,t)|\dd x.
\]
Hence, by \eqref{rs-30}, we obtain
\[
\begin{aligned}
\Vert \Gamma_\varepsilon-\Gamma\Vert _{L^2(0,T;U)}^2\leq C_\varphi\int_0^T\int_{\mathbb T^N}|u_\varepsilon(x,t)-v(x,t)|^2\dd x\dd t\to0.
\end{aligned}
\]
This, together with the weak convergence of $h^\varepsilon$, implies that, as $\varepsilon\rightarrow 0$,
\[
\begin{aligned}
&\int_0^T\langle h^\varepsilon(t),\Gamma_\varepsilon(t)\rangle_U\dd t-\int_0^T\langle h(t),\Gamma(t)\rangle_U\dd t\\
&=\int_0^T\langle h^\varepsilon(t),\Gamma_\varepsilon(t)-\Gamma(t)\rangle_U\dd t+\int_0^T\langle h^\varepsilon(t)-h(t),\Gamma(t)\rangle_U\dd t\\
&\rightarrow0.
\end{aligned}
\]
Hence, we arrive at
\[
\begin{aligned}
\int_0^T\int_{\mathbb T^N}G_{h^\varepsilon}(x,t,u_\varepsilon)\varphi(x,t,u_\varepsilon)\dd x\dd t\to\int_0^T\int_{\mathbb T^N}G_h(x,t,v)\varphi(x,t,v)\dd x\dd t.
\end{aligned}
\]
These show that $(v,\nu)$ is a kinetic solution of the viscous skeleton obstacle problem \eqref{eq:skeleton-obstacle-problem-viscous} with control $h$.

By uniqueness of kinetic solutions for \eqref{eq:skeleton-obstacle-problem-viscous} , we have
$
v=u_\alpha^h.
$
Therefore, for every sequence $\varepsilon_j\downarrow0$, there exists  a subsequence such that
\[
u_{\varepsilon_j}\to u_\alpha^h\qquad\text{strongly in }L^2(Q_T).
\]
This implies the convergence of the whole family:
\[
u_\alpha^{h^\varepsilon}\to u_\alpha^h\qquad\text{strongly in }L^2(Q_T).
\]
The proof is complete.
\end{proof}

We are ready to show the weak--strong continuity of the skeleton equation.
\begin{prop}\label{prp-3}
Assume \textbf{Hypotheses (H1)} and \textbf{(H2)$'$} hold.
Let $\{h^\varepsilon\}_{\varepsilon>0}\subset S_M$ and $h\in S_M$ satisfy
\[
h^\varepsilon\rightharpoonup h
\qquad
\text{weakly in }L^2(0,T;U)\quad \text{as}\ \varepsilon\rightarrow 0.
\]
Then, the kinetic solutions $u^{h^\varepsilon}$ and $u^h$ of \eqref{eq:skeleton-obstacle-problem} with controls $h^{\varepsilon}$ and $h$, and with
initial data $u^0$ satisfy that
\begin{align*}
  \lim_{\varepsilon\rightarrow 0}\Vert u^{h^{\varepsilon}}-u^{h}\Vert _{L^1(Q_T)}=0.
\end{align*}
\end{prop}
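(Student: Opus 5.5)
The plan is a standard $3\varepsilon$ argument that combines Lemma~\ref{lem-1} (uniform vanishing viscosity over $S_M$) with Proposition~\ref{prop:weak-continuity-viscous-skeleton-obstacle} (weak--strong continuity for fixed $\alpha>0$). For any $\alpha>0$ we split
\begin{align*}
\Vert u^{h^\varepsilon}-u^h\Vert_{L^1(Q_T)}
&\leq \Vert u^{h^\varepsilon}-u^{h^\varepsilon}_\alpha\Vert_{L^1(Q_T)}
+\Vert u^{h^\varepsilon}_\alpha-u^{h}_\alpha\Vert_{L^1(Q_T)}
+\Vert u^{h}_\alpha-u^{h}\Vert_{L^1(Q_T)}\\
&\leq 2\sup_{g\in S_M}\Vert u^{g}_\alpha-u^{g}\Vert_{L^1(Q_T)}
+|Q_T|^{1/2}\Vert u^{h^\varepsilon}_\alpha-u^{h}_\alpha\Vert_{L^2(Q_T)}.
\end{align*}
The first and third terms are controlled by the supremum over $S_M$, which is legitimate because both $h^\varepsilon$ and $h$ belong to $S_M$. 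The middle term uses Cauchy--Schwarz on the bounded domain $Q_T$.

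Given $\eta>0$, we first use Lemma~\ref{lem-1} to fix $\alpha>0$ with $\sup_{g\in S_M}\Vert u^{g}_\alpha-u^{g}\Vert_{L^1(Q_T)}<\eta/3$. The key point is that this choice does not depend on $\varepsilon$. With $\alpha$ now fixed, Proposition~\ref{prop:weak-continuity-viscous-skeleton-obstacle} gives $u^{h^\varepsilon}_\alpha\to u^h_\alpha$ strongly in $L^2(Q_T)$ as $\varepsilon\to0$. Hence the middle term is below $\eta/3$ for all sufficiently small $\varepsilon$, and so $\limsup_{\varepsilon\to0}\Vert u^{h^\varepsilon}-u^h\Vert_{L^1(Q_T)}\leq\eta$. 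Since $\eta>0$ was arbitrary, the claim follows.

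The only point needing care is the uniformity in Lemma~\ref{lem-1}. Its error bound depends on $h$ only through $\int_0^T|h|_U\,\dd s\leq\sqrt{MT}$ and through the moment bounds \eqref{eq:skeleton-moment-bound} and \eqref{eq:skeleton-moment-bound-viscous}, whose constants depend only on $M$. This is exactly what allows $\alpha$ to be chosen before letting $\varepsilon\to0$. The other ingredients, namely existence and uniqueness of $u^{h^\varepsilon}$, $u^h$ and their viscous counterparts, are already provided by Theorem~\ref{thm:existence-uniqueness-skeleton-obstacle} and the construction in Subsection~\ref{subsection:viscous skeleton approximation}.
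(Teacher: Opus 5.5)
Your proposal is correct and matches the paper's proof: the same three-term splitting, with $\alpha$ fixed first via the uniform-in-$S_M$ vanishing-viscosity estimate of Lemma~\ref{lem-1}, and the middle term then handled by Proposition~\ref{prop:weak-continuity-viscous-skeleton-obstacle}. Your explicit Cauchy--Schwarz step, which turns the $L^2(Q_T)$ convergence into $L^1(Q_T)$ convergence, is a detail the paper leaves implicit.
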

\begin{proof}
Note that for any $\alpha>0$, we have
\begin{align}\label{eq:rs-19}
  \Vert u^{h^{\varepsilon}}-u^{h}\Vert _{L^1(Q_T)}
  \leq \Vert u^{h^{\varepsilon}}-u^{h^{\varepsilon}}_{\alpha}\Vert _{L^1(Q_T)}
  +\Vert u^{h^{\varepsilon}}_{\alpha}-u^{h}_{\alpha}\Vert _{L^1(Q_T)}
  +\Vert u^{h}_{\alpha}-u^{h}\Vert _{L^1(Q_T)}.
\end{align}

For any $\iota>0$, in view of Lemma \ref{lem-1}, there exists $\alpha_0$ such that for all $\varepsilon>0$,
\begin{align*}
  \Vert u^{h^{\varepsilon}}-u^{h^{\varepsilon}}_{\alpha_0}\Vert _{L^1(Q_T)}\leq \frac{\iota}{2},\quad \Vert u^{h}_{\alpha_0}-u^{h}\Vert _{L^1(Q_T)}\leq \frac{\iota}{2}.
\end{align*}
Taking $\alpha=\alpha_0$ in \eqref{eq:rs-19}, we obtain
\begin{align*}
  \Vert u^{h^{\varepsilon}}-u^{h}\Vert _{L^1(Q_T)}
  \leq \iota
  +\Vert u^{h^{\varepsilon}}_{\alpha_0}-u^{h}_{\alpha_0}\Vert _{L^1(Q_T)}.
\end{align*}
Further, it follows from Proposition
\ref{prop:weak-continuity-viscous-skeleton-obstacle} that
\begin{align*}
  \limsup_{\varepsilon\rightarrow0}\Vert u^{h^{\varepsilon}}-u^{h}\Vert _{L^1(Q_T)}
  \leq \iota.
\end{align*}
Since the constant $\iota$ is arbitrary, we obtain the desired result.
\end{proof}

\section{Proof of large deviations for the obstacle problem}
For any sequence $\{h^{\varepsilon}; 0<\varepsilon<1\}\subset \mathcal{A}_M$, let $h^{\varepsilon}=\sum_{k\geq 1}h^{\varepsilon,k}e_k$, where $(e_k)_{k\geq 1}$ is an orthonormal basis of $U$. By the definition of $\mathcal{A}_M$, we have
\begin{align}\label{rr-2}
\int_0^T|h^{\varepsilon}(t)|_U^2\dd t\leq M,\qquad\mathbb P\text{-a.s.},\quad \varepsilon\in(0,1).
\end{align}

Consider the following obstacle problem of stochastic conservation laws with control
\begin{equation}\label{rs-1}
\left\{
  \begin{aligned}
  \dd\bar{u}^{\varepsilon}(x,t)+\Div A(\bar{u}^{\varepsilon}(x,t))\dd t&=\Phi(\bar{u}^{\varepsilon}(x,t)) h^{\varepsilon}(t)\dd t+\sqrt{\varepsilon}\Phi(\bar{u}^{\varepsilon}(x,t)) \dd W(t)+\bar{\nu}^{\varepsilon}(\dd x,\dd t),\\
  \bar{u}^{\varepsilon}(x,t)&\geq \psi(x,t),\\
\bar{u}^{\varepsilon}(0)&=u^0.
  \end{aligned}
\right.
\end{equation}

\begin{lem}[Well-posedness of the controlled stochastic equation]\label{lem:well-posedness-controlled-stochastic}
Assume \textbf{Hypotheses (H1) and (H2)$'$}. 
For every $\varepsilon\in(0,1)$ and $h^\varepsilon\in\mathcal A_M$, \eqref{rs-1} admits a unique kinetic solution $(\bar u^\varepsilon,\bar\nu^\varepsilon)$, with an associated
kinetic measure $\bar q^\varepsilon$. 
Moreover, for every $p\geq2$,
\begin{align}\label{rs-22}
\sup_{0<\varepsilon<1}\mathbb E\bigg[&\underset{0\leq t\leq T}{\operatorname{ess\,sup}}\|\bar u^\varepsilon(t)\|_{L^p(\mathbb T^N)}^p+\bar\nu^\varepsilon(\mathbb T^N\times[0,T))\notag\\
&\quad+\int_{\mathbb T^N\times[0,T]\times\mathbb R}(1+|\xi|^p)\,\dd\bar q^\varepsilon(x,t,\xi)\bigg]\leq C_{p,M},
\end{align}
where $C_{p,M}$ depends only on $p$, $M$, $T$, $D_0$, the initial datum, and the obstacle, and is independent of $\varepsilon$ and of the chosen family
$(h^\varepsilon)_{\varepsilon\in(0,1)}\subset\mathcal A_M$.
\end{lem}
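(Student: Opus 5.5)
The plan is to repeat the well-posedness scheme of Sections 3--4, now with the extra random drift $\Phi(\bar u^\varepsilon)h^\varepsilon$ and noise $\sqrt\varepsilon\Phi$. The only new feature is that the control $h^\varepsilon$ is predictable but only pathwise in $S_M$. The point to watch is that every constant is uniform in $\varepsilon\in(0,1)$ and in the family $(h^\varepsilon)\subset\mathcal A_M$.

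\textbf{Existence.} We introduce the viscous penalized controlled equation, obtained from \textbf{App Equ (1)} by adding the drift $\Phi_\alpha(u)h^\varepsilon(t)$ and replacing $\Phi_\alpha$ by $\sqrt\varepsilon\Phi_\alpha$. Since $h^\varepsilon$ is predictable with $\int_0^T|h^\varepsilon|_U^2\le M$ a.s., this is still a semilinear SPDE with Lipschitz coefficients, solvable as in \cite{GR00}. We then apply Itô's formula to $\int|u-M_\psi|^p$, as in Lemma \ref{lem:priori estimates}. The control term is estimated pathwise exactly as in \eqref{control term}, by $C\int_0^t|h^\varepsilon(s)|_U\big(1+\|u-M_\psi\|_{L^p}^p\big)\dd s$.

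The main obstacle is that the Gronwall factor $\exp(C\int_0^t|h^\varepsilon|_U)$ is random. It is, however, bounded by $\exp(C\sqrt{MT})$ almost surely. So I would apply a stochastic Gronwall lemma, or equivalently use the localization $\sup_{t\le\tau}$ and Itô's formula for $e^{-C\int_0^t|h^\varepsilon|_U}\|u-M_\psi\|_{L^p}^p$. Here the weight is adapted, of bounded variation and bounded above and below by deterministic constants, so the BDG--Young argument of Lemma \ref{lem:priori estimates} goes through unchanged. This yields \eqref{eq:priori-p} and \eqref{eq:priori-p_L2omega} with constants depending only on $p,M,T,D_0,u^0,\psi$. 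The factor $\sqrt\varepsilon\le1$ only helps. Testing with $\phi\equiv1$ bounds $\E\nu^{\varepsilon}_{\alpha,n}(\mathbb T^N\times[0,T])^p$ uniformly, by the same argument as \eqref{eq:uniform Lp nu}, with the extra term $\int|h^\varepsilon|_U(1+\|u\|_{L^1})$ controlled by Cauchy--Schwarz. Corollary \ref{cor:uniform-bounds-defect-measure} then gives the weighted bounds on $m$ and $\lambda$.

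\textbf{Limits.} The vanishing viscosity limit (Lemma \ref{lem:vanishing-viscosity-fixed-epsilon}) and the monotone limit $n\to\infty$ (Lemma \ref{lem:comparison-penalty-parameter}, Theorem \ref{thm:existence-obstacle-solution}) carry over. The only additional terms in the doubling of variables are the control terms, handled pathwise as $J^h_{\gamma,\delta}$ in Lemma \ref{lem-1}. They contribute $C\int_0^t|h^\varepsilon|_U\|(u_n-u_m)^+\|_{L^1}$ plus vanishing errors, and the comparison then closes by the same weighted-exponential trick as in the a priori estimate. The weak initial trace follows as in Lemma \ref{lem:weak-initial-condition}: the control integral over $[0,3\tau]$ vanishes almost surely because $h^\varepsilon\in L^2$ pathwise. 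Lower semicontinuity under weak-star convergence and Fatou's lemma then transfer the uniform bounds to give \eqref{rs-22}.

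\textbf{Uniqueness.} We argue as in Theorem \ref{thm:uniqueness}. The $\nu$-terms are estimated via the Taylor expansion \eqref{eq:I-nu-final-estimate}, which uses \textbf{(H2)$'$}, and cancel after symmetrization. The control term gives the pathwise Gronwall term of \eqref{eq:skeleton-l1-contraction}. Because the expectation must be taken, I would first multiply by the adapted weight $e^{-C\int_0^t|h^\varepsilon|_U}$ before taking expectations, so the Gronwall step is applied inside $\E$. Equality of the reflection measures then follows by the $\theta_R$ argument.
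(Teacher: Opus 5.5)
Your argument is correct, but you obtain existence and uniqueness by a different route than the paper.

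\textbf{What the paper does.} It does not rebuild the solution. It sets $W^{\varepsilon,h}=W+\varepsilon^{-1/2}\int_0^\cdot h^\varepsilon\,\dd s$ and notes that Novikov's condition holds because $\int_0^T|h^\varepsilon|_U^2\,\dd t\le M$. It then defines $\bar u^\varepsilon:=\mathcal G^\varepsilon(W^{\varepsilon,h})$. Under the equivalent measure $\dd\mathbb Q=Z_T^{\varepsilon,h}\dd\mathbb P$, this solves the uncontrolled small-noise problem driven by $W^{\varepsilon,h}$, hence solves \eqref{rs-1} $\mathbb P$-a.s. Uniqueness is transferred the same way from Corollary~\ref{cor:well-posedness}. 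Only the bound \eqref{rs-22} is obtained by rerunning \textbf{App Equ (1)}--\textbf{(2)} with the control term.

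\textbf{What each route buys.} The Girsanov route is much shorter and reuses well-posedness already proved. It relies, however, on two points the paper asserts without detail:
\begin{enumerate}
\item a measurable solution map that also carries $q$ and $\nu$;
\item moving the solution class between $\mathbb P$ and $\mathbb Q$, which needs moments of both the density and the solution.
\end{enumerate}
Your direct route is longer but self-contained. The penalized/viscous scheme gives existence together with the estimates. Uniqueness then comes from doubling variables with the weight $e^{-C\int_0^t|h^\varepsilon|_U\dd s}$. The $\nu$-contributions still cancel under this weight, since both one-sided inequalities (or the symmetric quantity) carry the same nonnegative weight against $\dd\nu_i$.

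Your weight also makes explicit a step the paper glosses over when it says to combine Lemma~\ref{lem:priori estimates} with the control estimate. Since $|h^\varepsilon|_U$ is random, Gronwall cannot be applied directly to a bound containing $\E\int_0^t|h^\varepsilon|_U\|u-M_\psi\|^p_{L^p}\dd s$. Some device is needed: your weight, bounded between $e^{-C\sqrt{MT}}$ and $1$, or the pathwise Gronwall the paper uses later in Proposition~\ref{prp-1}.

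The one place where Girsanov is genuinely convenient is solvability of your viscous penalized controlled equation. Its drift has Lipschitz constant $|h^\varepsilon(t)|_U$, which is only square integrable in time, not bounded. Quoting \cite{GR00} directly therefore requires a truncation of $h^\varepsilon$, a weighted fixed-point argument, or Girsanov at that level.
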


\begin{proof}
Fix $\varepsilon\in(0,1)$ and write $h=h^\varepsilon$. 
Set
\[
W^{\varepsilon,h}(t):=W(t)+\frac1{\sqrt\varepsilon}\int_0^t h(s)\dd s
\]
and
\[
Z_T^{\varepsilon,h}:=\exp\left[-\frac1{\sqrt\varepsilon}\int_0^T\langle h(s),\dd W(s)\rangle_U-\frac1{2\varepsilon}\int_0^T|h(s)|_U^2\dd s\right].
\]
By \eqref{rr-2}, Novikov's condition holds without localization.
Under the probability measure
\[
\dd\mathbb Q^{\varepsilon,h}=Z_T^{\varepsilon,h}\dd\mathbb P,
\]
the process $W^{\varepsilon,h}$ is a cylindrical Wiener process.
Therefore, using the measurable solution map for \eqref{eq:intro-small-noise},
\[
\bar u^\varepsilon:=\mathcal G^\varepsilon(W^{\varepsilon,h})
\]
solves \eqref{eq:intro-small-noise} driven by $W^{\varepsilon,h}$ under $\mathbb Q^{\varepsilon,h}$. 
Since
\[
\sqrt\varepsilon\,\dd W^{\varepsilon,h}=\sqrt\varepsilon\dd W+h\dd t,
\]
it satisfies \eqref{rs-1}. 
The measures $\mathbb Q^{\varepsilon,h}$ and $\mathbb P$ are equivalent, so the kinetic formulation also holds $\mathbb P$-almost surely.
Uniqueness follows in the same way from the uniqueness for \eqref{eq:intro-small-noise}.

The uniform estimate \eqref{rs-22} follows by repeating the viscous-penalization scheme (\textbf{App Equ (1)} and \textbf{App Equ (2)}) developed in Section \ref{sec-app}.
Since
\[
\int_0^T|h(t)|_U^2\,\dd t\leq M\qquad\mathbb P\text{-a.s.},
\]
combining the arguments of Lemma \ref{lem:priori estimates} together with \eqref{control term}, we obtain \eqref{rs-22} uniformly in $\varepsilon$ and $h$.
In addition, the estimates for $\bar\nu^\varepsilon$ and $\bar q^\varepsilon$, and the passage
to the weak initial trace, follow from the remaining steps of the same approximation argument in Section \ref{sec-app}.
\end{proof}

Let
\[
f^\varepsilon:=\one_{\{\bar u^\varepsilon>\xi\}},\qquad\bar\mu^\varepsilon_{x,r}(\dd\xi):=\delta_{\bar u^\varepsilon(x,r)}(\dd\xi).
\]
The proof of Proposition \ref{prop:trace-obstacle-scl} applies without change, since the additional control term is absolutely continuous in time and is absorbed into the continuous part of the scalar pairing. Then, for all $\varphi\in C^1_c(\mathbb{T}^N\times \mathbb{R})$,
\begin{equation}
\begin{aligned}
&\langle {f}^{\varepsilon,+}(t),\varphi\rangle -\langle f(0),\varphi\rangle -\int_0^t\!\langle {f}^{\varepsilon}(r),a(\xi)\cdot\nabla_x\varphi\rangle\dd r \\
&=\sqrt{\varepsilon}\sum_{k\geq1}\int_0^t\int_{\mathbb{T}^N}\int_{\mathbb{R}}g_k(x,\xi)\varphi(x,\xi)\dd \bar{\mu}^{\varepsilon}_{x,r}(\xi)\dd x\dd\beta_k(r)\\
&\quad + \sum_{k\geq1}\int_0^t\int_{\mathbb{T}^N}\int_{\mathbb{R}}g_k(x,\xi)\varphi(x,\xi)h^{\varepsilon,k}(r)\dd \bar{\mu}^{\varepsilon}_{x,r}(\xi)\dd x\dd r\\
&\quad +\frac{\varepsilon}{2}\int_0^t\int_{\mathbb{T}^N}\int_{\mathbb{R}}G^2(x,\xi)\partial_\xi\varphi(x,\xi)\dd \bar{\mu}^{\varepsilon}_{x,r}(\xi)\dd x\dd r\\
&\quad -\int_{\mathbb{T}^N\times[0,t]\times{\mathbb{R}}}\partial_{\xi}\varphi(x,\xi)\dd \bar{q}^{\varepsilon}(x,r,\xi)
+\int_{\mathbb{T}^N\times[0,t]}\varphi(x,\psi(x,r))\dd\bar{\nu}^{\varepsilon}(x,r).
\end{aligned}\label{P-10}
\end{equation}

The weak--strong continuity established by Proposition \ref{prp-3} implies that (ii) in Condition A holds. 
Thus, according to Theorem \ref{thm-7}, to obtain the LDP, it remains to show (i) in Condition A, which is the content of the following proposition.

\begin{prop}\label{prp-1}
For any $M>0$, let $\{h^{\varepsilon}; 0<\varepsilon<1\}\subset \mathcal{A}_M$. Then
\begin{align*}
        \lim_{\varepsilon\rightarrow 0}\bigg\Vert \mathcal{G}^{\varepsilon}\Big(W(\cdot)+\frac{1}{\sqrt{\varepsilon}}\int^{\cdot}_0h^{\varepsilon}(s)\dd s\Big)-\mathcal{G}^0\Big(\int^{\cdot}_0h^{\varepsilon}(s)\dd s\Big)\bigg\Vert _{L^1(0,T;L^1(\mathbb{T}^N))}
    =0,
    \end{align*}
    in probability.
\end{prop}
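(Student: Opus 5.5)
By the Girsanov argument of Lemma \ref{lem:well-posedness-controlled-stochastic}, the first term is $\bar u^\varepsilon$, the kinetic solution of \eqref{rs-1}. The second is $u^{h^\varepsilon}$, the skeleton solution from Theorem \ref{thm:existence-uniqueness-skeleton-obstacle} with the random control $h^\varepsilon(\omega)\in S_M$; it can be taken predictable because the skeleton map is measurable. The plan is to compare $\bar u^\varepsilon$ and $u^{h^\varepsilon}$ pathwise by the doubling-of-variables argument of Theorem \ref{thm:uniqueness} and Lemma \ref{lem-1}. We show that
\[
\E\int_0^T\|\bar u^\varepsilon(t)-u^{h^\varepsilon}(t)\|_{L^1(\mathbb T^N)}\,\dd t\to0
\]
after a suitable stopping/Gr\"onwall argument, and Markov's inequality then gives convergence in probability.

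Concretely, let $f_1=f^\varepsilon$, which satisfies \eqref{P-10}, and $f_2=\one_{u^{h^\varepsilon}>\xi}$, which satisfies \eqref{eq:weak-kinetic-2} with $h=h^\varepsilon$. The latter is a finite-variation identity holding pathwise, so Itô's product formula only involves the martingale of $f_1$. Using Lemma \ref{lem:no atom of q nu} to start at $t=0$, and noting that the initial data coincide, we write
\[
\mathcal R(t)=\int (f_1^+\bar f_2^++\bar f_1^+f_2^+)\varphi_{\gamma,\delta}\le \mathcal E_0(\gamma,\delta)+J^{\rm flux}+J^{q}+J^{\nu}+J^{h}+J^{\rm quad}_\varepsilon+M^\varepsilon(t).
\]
The terms are estimated as follows.
\begin{itemize}
\item $J^{\rm flux}$ is bounded by $C\delta\gamma^{-1}$ using the moment bounds \eqref{rs-22} and \eqref{eq:skeleton-moment-bound}.
\item $J^q\le0$ by the sign argument \eqref{eq:estimates for Iqvar}.
\item $J^\nu$ is handled by the Taylor cancellation behind \eqref{eq:I-nu-final-estimate}, which needs (H2)$'$. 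Adding the two symmetric inequalities cancels the total-mass difference of $\bar\nu^\varepsilon$ and $\nu^{h^\varepsilon}$, leaving $C\gamma^2\delta^{-1}\bigl(\bar\nu^\varepsilon+\nu^{h^\varepsilon}\bigr)(\mathbb T^N\times[0,T))$, whose expectation is bounded by $C\gamma^2\delta^{-1}$.
\item $J^h$ is treated exactly as in \eqref{rs-20}, giving $C(\gamma+\delta)\sqrt M+2\sqrt{D_0}\int_0^t|h^\varepsilon(s)|_U\mathcal R(s)\,\dd s$.
\item $J^{\rm quad}_\varepsilon$ is the Itô correction $\frac{\varepsilon}{2}\int G^2\partial_\xi\varphi_{\gamma,\delta}\bar f_2\,\dd\bar\mu^\varepsilon$. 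It is not compensated by a second noise and is bounded by $C\varepsilon\delta^{-1}\E\int(1+|\bar u^\varepsilon|^2)\le C\varepsilon\delta^{-1}$.
\item $M^\varepsilon(t)=\sqrt\varepsilon\sum_k\int_0^t\int g_k\varphi_{\gamma,\delta}(\bar f_2-f_2)\,\dd\bar\mu^\varepsilon\,\dd\beta_k$ is a martingale. By the Burkholder--Davis--Gundy inequality, $\E\sup_t|M^\varepsilon|\le C\sqrt\varepsilon\,\gamma^{-N}$, or more carefully $C\sqrt\varepsilon$ times a bound on $\int\rho_\gamma\kappa_\delta$ integrated in $\zeta$, which gives $C\sqrt\varepsilon$ since $\int\kappa_\delta\,\dd\zeta=1$ and $\int\rho_\gamma\,\dd y=1$.
\end{itemize}

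The main obstacle is the Gr\"onwall step, because the weight $|h^\varepsilon(s)|_U$ is random. Since $\int_0^T|h^\varepsilon|_U^2\le M$ almost surely, the pathwise Gr\"onwall factor $\exp\bigl(2\sqrt{D_0}\int_0^T|h^\varepsilon|_U\bigr)\le e^{2\sqrt{D_0MT}}$ is deterministic. We therefore apply Gr\"onwall pathwise to the inequality with $\sup_{s\le t}|M^\varepsilon(s)|$ and the random measure masses kept inside, and only then take expectations. This yields
\[
\E\,\mathcal R(t)\le C_M\Bigl[\mathcal E_0(\gamma,\delta)+\delta\gamma^{-1}+\gamma^2\delta^{-1}+\gamma+\delta+\varepsilon\delta^{-1}+\sqrt\varepsilon\Bigr].
\]
The mixed products of random masses with the Gr\"onwall factor are harmless because that factor is bounded.

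Finally, we pass from $\mathcal R(t)$ to $\|\bar u^\varepsilon(t)-u^{h^\varepsilon}(t)\|_{L^1}$ via the error term $\mathcal E_t$, estimated as in \eqref{eq:rs-14}--\eqref{eq:rs-17}. The $H_1$ part requires the spatial modulus of $u^{h^\varepsilon}$, and we bound it by the skeleton self-comparison estimate, uniformly on $S_M$ as in Lemma \ref{lem-1}. Integrating in $t$, we first let $\varepsilon\to0$ with $\gamma$ and $\delta=\gamma^{4/3}$ fixed, and then let $\gamma\to0$. This gives the $L^1(\Omega\times Q_T)$ convergence, hence convergence in probability.
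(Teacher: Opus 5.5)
Your proposal is correct and follows essentially the same route as the paper. It identifies $\bar u^\varepsilon$ via Girsanov, then runs a pathwise doubling of variables against $u^{h^\varepsilon}$ with the same term-by-term bounds: the sign of the $q$-terms, the Taylor cancellation for the $\nu$-terms, the $\varepsilon\delta^{-1}$ It\^o correction, and the $C\sqrt{\varepsilon}$ BDG bound for the martingale. It then applies Gr\"onwall pathwise, using the deterministic factor $e^{2\sqrt{D_0MT}}$ before taking expectations, and controls the error term $\bar{\mathcal{E}}_t$ by the self-comparison of $u^{h^\varepsilon}$.

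The differences from the paper are cosmetic. You let $\varepsilon\to0$ at fixed $\gamma$ and $\delta=\gamma^{4/3}$ before sending $\gamma\to0$, instead of coupling $\gamma=\varepsilon^{1/3}$, $\delta=\gamma^{3/2}$, and you integrate in $t$ rather than bounding the essential supremum. One point is phrased loosely: the adaptedness of $u^{h^\varepsilon}$ needed for the product formula follows from causality of the skeleton solution map (uniqueness on each $[0,t]$), not from its measurability alone.
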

\begin{proof}
By Lemma \ref{lem:well-posedness-controlled-stochastic},
\[
\bar u^\varepsilon=\mathcal G^\varepsilon\left(W(\cdot)+\frac1{\sqrt\varepsilon}\int_0^{\cdot}h^\varepsilon(s)\dd s\right)
\]
is the unique kinetic solution of \eqref{rs-1}, with associated measures $\bar q^\varepsilon$ and $\bar\nu^\varepsilon$. 
Moreover,
\[
u^{h^\varepsilon}=\mathcal G^0\left(\int_0^{\cdot}h^\varepsilon(s)\dd s\right)
\]
is the kinetic solution of \eqref{eq:skeleton-obstacle-problem}, with associated measures $q^{h^\varepsilon}$ and $\nu^{h^\varepsilon}$.

For simplicity in this proof, let $f_1:=\one_{\bar{u}^{\varepsilon}>\xi}$ and $f_2:=\one_{u^{h^{\varepsilon}}>\xi}$, $\mu^1_{x,s}(\dd\xi):=\delta_{\bar{u}^{\varepsilon}(x,s)}(\dd\xi)$ and $\mu^2_{y,s}(\dd\zeta):=\delta_{u^{h^{\varepsilon}}(y,s)}(\dd\zeta)$.

Similarly to Theorem \ref{thm:uniqueness}, with the tensorization argument of \cite[proof of Proposition 9]{DV10-publish} and based on \eqref{P-10} and \eqref{eq:weak-kinetic-2}, taking $\varphi_{\gamma,\delta}(x,\xi,y,\zeta)=\rho_{\gamma}(x-y)\kappa_\delta(\xi-\zeta)$, we reach $\mathbb{P}$-a.s.,
\begin{align*}\notag
\bar{\mathcal{R}}(t)&:=\int_{(\mathbb{T}^N)^2}\int_{\mathbb{R}^2} (f^+_1(t)\bar{f}^+_2(t)+\bar{f}^+_1(t)f^+_2(t))\rho_{\gamma}(x-y)\kappa_\delta(\xi-\zeta)\dd x\dd\xi\dd y\dd\zeta \\
\notag
&\leq\bar{\mathcal{E}}_0(\gamma,\delta)+K_{\gamma,\delta}^{\rm{flux}}(t)+K_{\gamma,\delta}^h(t)+K_{\gamma,\delta}^{\rm{mart}}(t)+K_{\gamma,\delta}^{\rm{quad}}(t)+K_{\gamma,\delta}^{q}(t)+K_{\gamma,\delta}^{\nu}(t),
\end{align*}
where 
\[
\bar{\mathcal{E}}_0(\gamma,\delta):=\int_{(\mathbb{T}^N)^2}\int_{\mathbb{R}^2}(f_1(0)\bar{f}_2(0)+\bar{f}_1(0)f_2(0))\rho_{\gamma}(x-y)\kappa_\delta(\xi-\zeta)\dd x\dd\xi\dd y\dd\zeta,
\]
and $K_{\gamma,\delta}^{\rm{flux}}, K_{\gamma,\delta}^{q}(t)$ and $K_{\gamma,\delta}^{\nu}$ are the same as those in Theorem \ref{thm:uniqueness},
the term related to the control term is given by
\begin{align*}
K_{\gamma,\delta}^h(t)&:=\sum_{k\geq1}\int_0^t\int_{(\mathbb{T}^N)^2}\int_{\mathbb{R}^2}(\bar{f}_2-f_2)g_k(x,\xi)h^{\varepsilon,k}(r)\rho_{\gamma}(x-y)\kappa_\delta(\xi-\zeta)\dd \mu^1_{x,r}(\xi)\dd \zeta \dd x \dd y \dd r\\
  &\quad-\sum_{k\geq1}\int_0^t\int_{(\mathbb{T}^N)^2}\int_{\mathbb{R}^2}(f_1-\bar{f}_1)g_k(y,\zeta)h^{\varepsilon,k}(r)\rho_{\gamma}(x-y)\kappa_\delta(\xi-\zeta)\dd \mu^2_{y,r}(\zeta)\dd \xi\dd x \dd y\dd r,
\end{align*}
the term associated with the martingale term is
\begin{align*}
K_{\gamma,\delta}^{\rm{mart}}(t):=\sqrt{\varepsilon}\sum_{k\geq1}\int_0^t\int_{(\mathbb{T}^N)^2}(\bar{f}_2-f_2)g_k(x,\xi)\rho_{\gamma}(x-y)\kappa_{\delta}(\xi-\zeta)\dd \zeta\dd \mu^1_{x,r}(\xi)\dd x\dd y \dd\beta_k(r),
\end{align*}
and the term generated by the quadratic term is
\begin{align*}
K^{{\rm{quad}}}_{\gamma,\delta}(t):= \frac{\varepsilon}{2}\int^t_0\int_{(\mathbb{T}^N)^2} \int_{\mathbb{R}^2}(\bar{f}_2-f_2)\rho_{\gamma}(x-y)\partial_{\xi}\kappa_{\delta}(\xi-\zeta)G^2(x,\xi)\dd\mu^1_{x,r}(\xi)\dd \zeta \dd x\dd y \dd r.
\end{align*}

We will estimate them one by one.
Referring to \eqref{eq:estimates for Ia-1} in Theorem \ref{thm:uniqueness}, we have $\mathbb{P}$-a.s.,
\begin{align*}
K_{\gamma,\delta}^{\rm{flux}}&\leq C\delta \int^t_0\int_{(\mathbb{T}^N)^2}|\nabla_x \rho_{\gamma}(x-y)|(1+|\bar{u}^{\varepsilon}(x,r)|^p+|u^{h^{\varepsilon}}(y,r)|^p)\dd x\dd y\dd r\\
&\leq C\delta \gamma^{-1}\int^t_0(1+\|\bar{u}^{\varepsilon}\|^p_{L^p(\mathbb{T}^N)}+\|u^{h^{\varepsilon}}\|^p_{L^p(\mathbb{T}^N)})\dd r.
\end{align*}
With regard to $K_{\gamma,\delta}^{\nu}$, it follows from \eqref{eq:I-nu-final-estimate} that $\mathbb{P}$-a.s.,
\begin{align*}
K_{\gamma,\delta}^{\nu}(t)\leq C\frac{\gamma^2}{\delta}\Big[\bar{\nu}^\varepsilon(\mathbb T^N\times[0,t])+\nu^{h^\varepsilon}(\mathbb T^N\times[0,t])\Big].
\end{align*}
Furthermore, \eqref{eq:estimates for Iqvar} yields that $\mathbb{P}$-a.s.,
\begin{align*}
K_{\gamma,\delta}^{q}(t)\leq 0.
\end{align*}

By the same method as (\ref{rs-20}), using \eqref{rr-2}, we have $\mathbb{P}$-a.s.,
\begin{align*}\notag
  K_{\gamma,\delta}^h(t)
 &\leq 2\sqrt{D_0}\gamma \int^t_0|h^{\varepsilon}(r)|_U \dd r+ 4 \delta \sqrt{D_0}\int^t_0|h^{\varepsilon}(r)|_U \dd r+2\sqrt{D_0}\int^t_0|h^{\varepsilon}(r)|_U\bar{\mathcal{R}}(r)\dd r\\
&\leq 2\sqrt{D_0}\gamma \sqrt{M}\sqrt{T}+4 \delta \sqrt{D_0}\sqrt{M}\sqrt{T}
+ 2\sqrt{D_0}\int^t_0|h^{\varepsilon}(r)|_U\bar{\mathcal{R}}(r)\dd r.
\end{align*}
By integration by parts and (\ref{eq:assumption for g}), we have $\mathbb{P}$-a.s.,
\begin{align*}
K^{{\rm{quad}}}_{\gamma,\delta}&\leq D_0\varepsilon\int^t_0\int_{(\mathbb{T}^N)^2} \int_{\mathbb{R}^2}\rho_{\gamma}(x-y)\kappa_{\delta}(\xi-\zeta)(1+|\xi|^2)\dd\mu^1_{x,r}\otimes \mu^2_{y,r}(\xi,\zeta) \dd x\dd y\dd r\\
&\leq D_0\varepsilon\delta^{-1}\int^t_0\int_{(\mathbb{T}^N)^2}\rho_{\gamma}(x-y)(1+|\bar{u}^{\varepsilon}(x,r)|^2)\dd x\dd y\dd r\\
&\leq D_0\varepsilon\delta^{-1}\bigg(T+\int^t_0\|\bar{u}^{\varepsilon}(r)\|^2_{L^2(\mathbb{T}^N)}\dd r\bigg).
\end{align*}

For the martingale term, define
\[
\Theta_\delta(r,s):=\int_{\mathbb R}\left(\one_{\{s\leq\zeta\}}-\one_{\{s>\zeta\}}\right)\kappa_\delta(r-\zeta)\dd\zeta.
\]
Since $\kappa_\delta\geq0$ and $\int_{\mathbb R}\kappa_\delta(z)\dd z=1$, we have
\begin{equation}\label{eq:Theta-delta-bound}
|\Theta_\delta(r,s)|\leq1,\qquad r,s\in\mathbb R.
\end{equation}
Using the Young measures $\mu^1_{x,r}=\delta_{\bar u^\varepsilon(x,r)}$ and $\mu^2_{y,r}=\delta_{u^{h^\varepsilon}(y,r)}$, the martingale term can
be written as
\begin{equation*}
K_{\gamma,\delta}^{\rm mart}(t)=\sqrt\varepsilon\sum_{k=1}^\infty\int_0^t B_k(r)\dd\beta_k(r),
\end{equation*}
where
\[
\begin{aligned}
B_k(r):=\int_{(\mathbb T^N)^2}\rho_\gamma(x-y)g_k\bigl(x,\bar u^\varepsilon(x,r)\bigr)\Theta_\delta\bigl(\bar u^\varepsilon(x,r),u^{h^\varepsilon}(y,r)\bigr)\dd x\dd y.
\end{aligned}
\]
Since $\rho_\gamma(x-y)\dd x\dd y$ has uniformly bounded total mass, Jensen's inequality and \eqref{eq:Theta-delta-bound} yield
\begin{align*}
\sum_{k\geq1}|B_k(r)|^2&=\sum_{k\geq1}\left|\int_{(\mathbb T^N)^2}\rho_\gamma(x-y)g_k\bigl(x,\bar u^\varepsilon(x,r)\bigr)
\Theta_\delta\bigl(\bar u^\varepsilon(x,r),u^{h^\varepsilon}(y,r)\bigr)\dd x\dd y\right|^2\\
&\leq C\Big(\int_{(\mathbb T^N)^2}\rho_\gamma(x-y)G^2\bigl(x,\bar u^\varepsilon(x,r)\bigr)\dd x\dd y\Big)\Big(\int_{(\mathbb T^N)^2}\rho_\gamma(x-y)\dd x\dd y \Big)\\
&\leq C\left(1+\|\bar u^\varepsilon(r)\|_{L^2(\mathbb T^N)}^2\right).
\end{align*}
Therefore, the Burkholder--Davis--Gundy inequality gives
\begin{align}
\mathbb E\sup_{t\in[0,T]}\left|K_{\gamma,\delta}^{\rm mart}(t)\right|
&\leq C\sqrt\varepsilon\mathbb E\left[\int_0^T\sum_{k\geq1}|B_k(r)|^2\,\dd r
\right]^{1/2}\notag\\
&\leq C\sqrt\varepsilon\mathbb E\left[T+\int_0^T\|\bar u^\varepsilon(r)\|_{L^2(\mathbb T^N)}^2\dd r\right]^{1/2}\notag\\
&\leq C\sqrt\varepsilon.
\label{rs-26}
\end{align}

Denote by
\begin{align}\notag
  \Upsilon_{\varepsilon,\gamma,\delta}(t)&:=C\delta \gamma^{-1}\int^t_0(1+\|\bar{u}^{\varepsilon}\|^p_{L^p(\mathbb{T}^N)}+\|u^{h^{\varepsilon}}\|^p_{L^p(\mathbb{T}^N)})\dd r\\ \notag
  &+C\frac{\gamma^2}{\delta}
\Big[\bar{\nu}^\varepsilon(\mathbb T^N\times[0,t])+\nu^{h^\varepsilon}(\mathbb T^N\times[0,t])\Big]
+ (2\gamma +4 \delta) \sqrt{D_0}\sqrt{M}\sqrt{T}
\\
\label{rs-23}
&+ D_0\varepsilon\delta^{-1}\bigg(T+\int^t_0\|\bar{u}^{\varepsilon}\|^2_{L^2(\mathbb{T}^N)}\dd r\bigg).
\end{align}
Then, based on the above estimates, we arrive at $\mathbb{P}$-a.s.,
\begin{align*}
\bar{\mathcal{R}}(t)&\leq \bar{\mathcal{E}}_0(\gamma,\delta)+\Upsilon_{\varepsilon,\gamma,\delta}(t)+2\sqrt{D_0}\int^t_0|h^{\varepsilon}(r)|_U\bar{\mathcal{R}}(r)\dd r+\sup_{0\leq s\leq t}|K^{{\rm{mart}}}_{\gamma,\delta}(s)|,
\end{align*}
where $\Upsilon_{\varepsilon,\gamma,\delta}(t)$ is given by \eqref{rs-23}.

Applying Gr\"onwall's inequality, using \eqref{rr-2}, we deduce that $\mathbb{P}$-a.s.,
\begin{align}\notag
\bar{\mathcal{R}}(t)&\leq \exp\{2\sqrt{D_0}\int^t_0|h^{\varepsilon}(r)|_U\dd r\}\Big[ \bar{\mathcal{E}}_0(\gamma,\delta)+\Upsilon_{\varepsilon,\gamma,\delta}(t)+\sup_{0\leq s\leq t}|K^{{\rm{mart}}}_{\gamma,\delta}(s)|\Big]\\ 
&\leq \exp\{2\sqrt{D_0}\sqrt{MT}\}\Big[ \bar{\mathcal{E}}_0(\gamma,\delta)+\Upsilon_{\varepsilon,\gamma,\delta}(t)+\sup_{0\leq s\leq t}|K^{{\rm{mart}}}_{\gamma,\delta}(s)|\Big].\label{rs-24}
\end{align}
For any $t\in [0,T]$, define the error term
\begin{align*}
&\bar{\mathcal{E}}_t(\varepsilon,\gamma,\delta)\\
&:=\int_{\mathbb{T}^N}\int_{\mathbb{R}}(f^+_1(x,t,\xi)\bar{f}^+_2(x,t,\xi)+\bar{f}^{+}_1(x,t,\xi){f}^+_2(x,t,\xi))\dd x\dd\xi\\
&\ -\int_{(\mathbb{T}^N)^2}\int_{\mathbb{R}^2}\rho_\gamma (x-y)\kappa_{\delta}(\xi-\zeta)(f^{+}_1(x,t,\xi)\bar{f}^{+}_2(y,t,\zeta)+\bar{f}^{+}_1(x,t,\xi){f}^{+}_2(y,t,\zeta))\dd\xi \dd\zeta \dd x\dd y.
\end{align*}
Similarly to the proof of \eqref{eq:rs-14}-\eqref{eq:rs-17}, we have
\begin{align*}
&\bar{\mathcal{E}}_t(\varepsilon,\gamma,\delta)\\
&\leq 4\delta+ \int_{(\mathbb{T}^N)^2}\int_{\mathbb{R}^2}\rho_\gamma (x-y)\kappa_{\delta}(\xi-\zeta)(f^{+}_2(x,t,\xi)\bar{f}^{+}_2(y,t,\zeta)+\bar{f}^{+}_2(x,t,\xi){f}^{+}_2(y,t,\zeta))\dd\xi \dd\zeta \dd x\dd y.
\end{align*}
Observe that $f_2$ does not have martingale or quadratic terms. 
Applying the deterministic analogue of \eqref{rs-24} to two copies of $u^{h^\varepsilon}$, and observing that there are no martingale or It\^o-correction terms, we obtain $\mathbb P$-a.s.
\begin{align}\notag
&\bar{\mathcal{E}}_t(\varepsilon,\gamma,\delta)\\ \notag
&\leq 4\delta+ \exp\{2\sqrt{D_0}\sqrt{MT}\}\Big[ \bar{\mathcal{E}}_0(\gamma,\delta)
+C\delta \gamma^{-1}\int^t_0(1+2\|u^{h^{\varepsilon}}\|^p_{L^p(\mathbb{T}^N)})\dd r\\
\label{rs-25}
  &\quad+2C\frac{\gamma^2}{\delta}\nu^{h^\varepsilon}(\mathbb T^N\times[0,t])
+ (2\gamma+4 \delta) \sqrt{D_0}\sqrt{M}\sqrt{T}\Big].
\end{align}
It follows from \eqref{rs-24} and \eqref{rs-25} that $\mathbb{P}$-a.s.,
\begin{align*}
&\underset{0\leq t\leq T}{{\rm{ess\sup}}}\ \|\bar{u}^{\varepsilon}(t)-u^{h^{\varepsilon}}(t)\|_{L^1(\mathbb{T}^N)}\\
&\leq \sup_{t\in [0,T]}\bar{\mathcal{E}}_t(\varepsilon,\gamma,\delta)+\exp\{2\sqrt{D_0}\sqrt{MT}\}\Big[ \bar{\mathcal{E}}_0(\gamma,\delta)
+\sup_{t\in [0,T]}\Upsilon_{\varepsilon,\gamma,\delta}(t)+\sup_{t\in [0,T]}|K^{{\rm{mart}}}_{\gamma,\delta}(t)|\Big]\\ \notag
&\leq 4\delta+ \exp\{2\sqrt{D_0}\sqrt{MT}\}\Big[ \bar{\mathcal{E}}_0(\gamma,\delta)
+C\delta \gamma^{-1}\int^T_0(1+2\|u^{h^{\varepsilon}}\|^p_{L^p(\mathbb{T}^N)})\dd r\\
  &\quad+2C\frac{\gamma^2}{\delta}
\nu^{h^\varepsilon}(\mathbb T^N\times[0,T))
+ (2\gamma+4 \delta) \sqrt{D_0}\sqrt{M}\sqrt{T}\Big]\\
&\quad+\exp\{2\sqrt{D_0}\sqrt{MT}\}\Big[ \bar{\mathcal{E}}_0(\gamma,\delta)
+\sup_{t\in [0,T]}\Upsilon_{\varepsilon,\gamma,\delta}(t)+\sup_{t\in [0,T]}|K^{{\rm{mart}}}_{\gamma,\delta}(t)|\Big].
\end{align*}
Taking the expectation, then using \eqref{rs-26}, \eqref{eq:skeleton-moment-bound} and \eqref{rs-22},  we obtain
\begin{align*}
&\E \ \underset{0\leq t\leq T}{{\rm{ess\sup}}}\ \|\bar{u}^{\varepsilon}(t)-u^{h^{\varepsilon}}(t)\|_{L^1(\mathbb{T}^N)}\\
&\leq 4\delta+ \exp\{2\sqrt{D_0}\sqrt{MT}\}\Big[ 2\bar{\mathcal{E}}_0(\gamma,\delta)
+C(M,p,D_0,T)\delta \gamma^{-1}\\
 &\quad +C(M,p,D_0,T)\frac{\gamma^2}{\delta}
+C(M,D_0,T)(2\gamma+ 4\delta+\varepsilon \delta^{-1}+\sqrt{\varepsilon})\Big].
\end{align*}
Let $\gamma=\varepsilon^{\frac{1}{3}}$ and $\delta=\gamma^{\frac{3}{2}}$.
Using the continuity of spatial translations in
$L^1(\mathbb T^N)$, we reach
\begin{align*}
&\lim_{\varepsilon\rightarrow 0}\E \ \underset{0\leq t\leq T}{{\rm{ess\sup}}}\ \|\bar{u}^{\varepsilon}(t)-u^{h^{\varepsilon}}(t)\|_{L^1(\mathbb{T}^N)}=0.
\end{align*}
This, together with the Chebyshev inequality, yields
\begin{align*}
\|\bar{u}^{\varepsilon}-u^{h^{\varepsilon}}\|_{L^1(0,T;L^1(\mathbb{T}^N))}\longrightarrow0\qquad\text{in probability}.
\end{align*}
This completes the proof.
\end{proof}

\bibliographystyle{plain}
\bibliography{bi}
\end{document}